\DeclareMathOperator{\spn}{span}
\let\ul\underline
\def\Jac{\ensuremath{\mathrm{Jac}}}
\def\Vol{\ensuremath{\mathrm{Vol}}}
\newtheorem{thm}{Theorem}[section]
\newtheorem{definition}[thm]{Definition}
\newtheorem{theorem}[thm]{Theorem}
\newtheorem*{oseledec*}{Oseledec Theorem}
\newtheorem{cor}[thm]{Corollary}
\newtheorem{claim}[thm]{Claim}
\newtheorem{lemma}[thm]{Lemma}
\newtheorem{prop}[thm]{Proposition}
\def\moverlay{\mathpalette\mov@rlay}
\def\mov@rlay#1#2{\leavevmode\vtop{%
   \baselineskip\z@skip \lineskiplimit-\maxdimen
   \ialign{\hfil$\m@th#1##$\hfil\cr#2\crcr}}}
\newcommand{\charfusion}[3][\mathord]{
    #1{\ifx#1\mathop\vphantom{#2}\fi
        \mathpalette\mov@rlay{#2\cr#3}
      }
    \ifx#1\mathop\expandafter\displaylimits\fi}
\newcommand{\RST}{\mathrm{RST}}
\newcommand{\ST}{\mathrm{ST}}
\newcommand{\bigcupdot}{\charfusion[\mathop]{\bigcup}{\cdot}}
\DeclareFontFamily{U}{mathx}{\hyphenchar\font45}
\DeclareFontShape{U}{mathx}{m}{n}{
      <5> <6> <7> <8> <9> <10>
      <10.95> <12> <14.4> <17.28> <20.74> <24.88>
      mathx10
      }{}
\DeclareSymbolFont{mathx}{U}{mathx}{m}{n}
\DeclareMathAccent{\widecheck}{0}{mathx}{"71}
\DeclareMathAccent{\wideparen}{0}{mathx}{"75}
\begin{document}

\title{summable orbits}
\author{Snir Ben Ovadia}
\address{Department of Mathematics, Eberly College of Science, Pennsylvania State University} \email{sfb5896@psu.edu}

\date{}

\maketitle

\begin{abstract}
We introduce a class of orbits which may have $0$ Lyapunov exponents, but still demonstrate some sensitivity to initial conditions. We construct a countable Markov partition with a finite-to-one almost everywhere induced coding, and which lifts the geometric potential with summable variations (for a $C^{1+}$ diffeomorphism of a closed manifold of dimension $\geq2$). An important tool we use is a shadowing theory for orbits which may have $0$ Lyapunov exponents. 

We construct (weak) stable and unstable leaves for such orbits using a Graph Transform method, and prove the absolute continuity of these foliations w.r.t holonomies. In particular, we discuss setups where these foliations exist, and are strictly weak- i.e do not demonstrate exponential contraction. One example is a family of non-uniformly hyperbolic diffeomorhpims where we are able to simultaneously code all invariant measures in a finite-to-one almost everyhwere fashion.

   
   
   
\end{abstract}

\tableofcontents
\section{Introduction}
Given a dynamical system, symbolic dynamics are a method of recasting the dynamics in a way where the iterates of the system become easier to study. This approach serves as an important tool in many areas in the field of dynamical systems. In particular, constructions such as Markov partitions (finite or countable) have had many uses in smooth dynamics and in Pesin theory.

The history of Markov partitions of hyperbolic systems goes back to \cite{Aw,AW70}, where Adler and Weiss constructed a finite Markov partition for hyperbolic toral automorphisms. The uniformly hyperbolic case was done in \cite{Si1,Si2} and in \cite{B1}. The non-uniformly hyperbolic case raises many new difficulties which don't exist in the uniformly hyperbolic case, and Sarig was first able to overcome those in \cite{Sarig}, in the context of $C^{1+\beta}$ closed surface diffeomorphisms.

Sarig's methods were later extended in several contexts- maps with singularities \cite{LM16}, flows \cite{SL14}, $C^{1+\beta}$ maps of closed manifolds of dimension $\geq 2$ \cite{SBO}, non-invertible maps \cite{Lima1D, LimaNonInvetHighDim}, and injective semi-conjugacies \cite{InjectiveBuzzi}. In addition, an improved construction in \cite{LifeOfPi} allows to identify the set of points which are covered by the Markov partition. 

The engine which allows Sarig's methods to work is Pesin theory. Namely, given a non-uniformly hyperbolic orbit, one can apply a certain local change of coordinates, where the action of the differential becomes a hyperbolic block-form matrix (plus a smaller order error term). Fixing a $\chi>0$, and a non-uniformly hyperbolic orbit, this approach allows one to study the iterates of the differential on the orbit as iterates of this block-form matrix, whose blocks have bounded norms, but one contracts with rate at least $e^{\chi}$, and one expands in rate at least $e^{\chi}$. 

This approach therefore produces uniform estimates in charts (i.e after the local change of coordinates). These uniform estimates in charts are the main tool for carrying out Pesin's stable manifold theorem, which is then used in Sarig's work to construct a coding. 

The main down-side of these approaches mentioned above and the coding in the non-uniformly hyperbolic contexts which are mentioned, is the fact that one has to restrict to code only $\chi$-hyperbolic orbits, for some fixed $\chi>0$.\footnote{Trying to capture a decreasing sequence of $\chi_n$-hyperbolic sets admits an infinite-to-one coding, which loses many of its advantages as a recasting of the dynamics.} This down-side becomes more evident when one wishes to capture some diffusing phenomena with the coding, in oppose to coding a single measure. For example, let $f\in \mathrm{Diff}^{1+\beta}(M)$ where $M$ is a closed Riemannian manifold, and let $\varphi:=\log\Jac(d_xf^{-1}|_{E^u(x)})$ be the geometric potential which is defined for orbits of hyperbolic points. If one wishes to code a sequence of hyperbolic measures $\mu_n$, where $h_{\mu_n}(f)+\int \varphi d\mu_n\to0$, while the entropy of the measures in this sequence deteriorates to $0$, then their Lyapunov exponents must deteriorate as well, and so they cannot be captured by a single coding for any $\chi>0$. This phenomenon cannot be studied symbolically with those codings.

While this, and natural curiosity, motivate the studying of symbolic codings which capture measures with deteriorating hyperbolicity, it raises several difficulties. The most obvious difficulty is the fact that we could no longer assume uniform estimates in charts, as we do not have a uniform $\chi>0$. Heuristically, one can think of it as ``non-uniform non-uniform hyperbolicity". A natural naive approach in overcoming these difficulties, is assigning each chart $\psi$ a corresponding $\chi_\psi$, which will correspond to the ``local Lyapunov exponent", and carry out such a change of coordinates. However, this approach is conceptually flawed, since Lyapunov exponents are a property of the tail of an orbit (or chain), and not of a point (or symbol); this approach will break the Markovian structure one wishes to construct. Therefore, an effort to code simultaneously (in a finite-to-one fashion) hyperbolic measures with deteriorating Lyapunov exponents must identify some underlying structure which is common to all such measures. 

A corollary of this observation, is that upon a successful construction of a Markovian coding of measures with deteriorating hyperbolicity, one can tailor orbits whose Lyapunov exponents are $0$ (as it is a tail property), but which nonetheless demonstrate sub-exponential sensitivity to initial conditions. By the nature of the coding, one has to carry out Graph Transform constructions which preserve a certain suitable space of admissible manifolds, to carry out the construction of local (weak) stable and unstable manifolds; while the estimates in charts cannot be held uniform over a chain or an orbit  (this is treated in \textsection \ref{GTrans}, see \textsection \ref{SectionOverview} for its use in the construction of the Markov partition). In particular, this implies that even the orbits that we can construct with $0$ Lyapunov exponents admit local stable or unstable leaves which are strictly weak (i.e exhibit strictly sub-exponential contraction). These local manifolds exist, yet were unseen before due to their lack of hyperbolic behavior. A natural question to ask would be, are the foliations by such weak manifolds absolutely continuous w.r.t holonomies? In \textsection \ref{chapter7} we show that the answer is yes.

The tradeoff in losing the uniform estimates in charts is the fact that one has to require a more restrictive ``temperability" property of the orbit. Temperability refers to the asymptotic growth-rate of the norm of the inverse change of coordinates. A classical result in Pesin theory is the Pesin Tempered Kernel Lemma, which implies that for almost every orbit (for every hyperbolic probability measure), this growth-rate is sub-exponential. The uniform exponential contraction in charts therefore overpowers it, and allows for Pesin's constructions to work. When one gives up the uniform exponential contraction in charts, one has to find also a more refined temperability property. In \textsection \ref{codability} we present a condition which is sufficient to show the temperability that this coding requires. We also mention a family of examples where this condition can be tested and allows us to simultaneously code all invariant probabilities of the system in a finite-to-one fashion, and also code Lebesgue almost every point in some cases (when there is no invariant density). 

The search for a suitable refined temperability property (which is sufficient, but not too restrictive), reveals an interesting relationship between the temperability properties of an orbit, and the asymptotic contraction rates on stable/unstable manifolds. This allows for the study of rates of contraction of strictly weak stable manifolds, through the study of the temperability properties of the orbit. We discuss this at \textsection \ref{duality}.

When studying this relationship, one can see that positive recurrence of orbits (which holds almost everywhere w.r.t every invariant probability measure) implies hyperbolicity. This does not contradict the existence of strictly weak foliations, which may carry infinite (conservative, $\sigma$-finite, ergodic, invariant) measures with sensitivity to initial conditions. Through temperability properties, one can study asymptotic contraction on weak stable/unstable leaves, which may be strictly sub-exponential. The question of temperability of a general hyperbolic probability measure, and possibly infinite measures carried by weak leaves, is as interesting as it is difficult. We hope that this work will provide some framework which allows to study these questions.

\subsection{Main results}
Let $\mathcal{G}$ be a directed graph with a countable collection of vertices $\mathcal{V}$ s.t every vertex has at least one ingoing and one outgoing edge. The {\em topological Markov shift} (TMS) associated to $\mathcal{G}$ is the set
$$\Sigma=\Sigma(\mathcal{G}):=\{(v_i)_{i\in\mathbb{Z}} \in\mathcal{V}^\mathbb{Z} :v_i \rightarrow v_{i+1}\text{  }, \forall i\in\mathbb{Z}\},$$
equipped with the left-shift $\sigma:\Sigma\rightarrow\Sigma$, $\sigma((v_i)_{i\in\mathbb{Z}})=(v_{i+1})_{i\in\mathbb{Z}}$, and the metric 
$d(u,v):=\exp(-\min\{n\in\mathbb{N}_0:u_n\neq v_n\text{ or }u_{-n}\neq v_{-n}\})$ for $u=(u_n)_{n\in\mathbb{Z}},$ $v=(v_n)_{n\in\mathbb{Z}}$. With this metric $\Sigma$ is a complete separable metric space. $\Sigma$ is compact iff $\mathcal{G}$ is finite. $\Sigma$ is locally compact iff every vertex of $\mathcal{G}$ has finite ingoing and outgoing degree.

A {\em Markovian subshift} of $\Sigma(\mathcal{G})$ is a subset of the form $\Sigma(\mathcal{G}')$, where $\mathcal{G}'$ is a subgraph of $\mathcal{G}$.

\medskip
The following definition is due to Sarig: given a TMS $\Sigma$,
$$\Sigma^\#:=\{(v_i)_{i\in\mathbb{Z}}\in\Sigma:\exists v',w'\in\mathcal{V}(\Sigma)\text{ , }\exists n_k,m_k\uparrow\infty\text{ s.t }v_{n_k}=v'\text{ , }v_{-m_k}=w',\text{ }\forall k\in\mathbb{Z}\}.$$
Notice that by the Poincar\'e recurrence theorem every $\sigma$-invariant probability measure is carried by $\Sigma^\#$. Furthermore, every periodic point of $\sigma$ is in $\Sigma^\#$. 


Let $f$ be a $C^{1+\beta}$ diffeomorphism on a compact smooth boundaryless manifold $M$ of dimension greater than $1$. Here $\beta\in (0,1)$ is the H\"older exponent of $df$.


A point $x\in M$ is said to be {\em $0$-summable} (see Definition \ref{summability}) if its tangent space decomposes uniquely into $T_xM=H^s(x)\oplus H^u(x)$ where $\forall \xi^s\in H^s(x), \xi^u\in H^u(x)$,
	\begin{align*}
	\sum_{m\geq0}|d_xf^m\xi ^s |^2<\infty,\text{ }	\sum_{m\geq0}|d_xf^{-m}\xi ^u |^2<\infty.
	\end{align*}
	
This splitting allows for the definition of a linear change of coordinates $C_0^{-1}(x):T_xM\to\mathbb{R}^d$ (which depends measurably on $x$) s.t 	$\forall \xi^s\in H^s(x), \xi^u\in H^u(x) $
$$|C_0^{-1}(x)\xi^s|^2=2\sum_{m\geq0}|d_xf^m\xi ^s |^2,\text{ }|C_0^{-1}(x)\xi^u|^2=2\sum_{m\geq0}|d_xf^m\xi ^u |^2.$$
See Theorem \ref{pesinreduction}. This linear change of coordinates is defined up to an orthogonal self-map of $H^s(x)$, $H^s(x)$, but this choice does not affect $\|C_0^{-1}(x)\|$.

For any choice of $\Gamma>0$ and $\gamma>\frac{5}{\beta}$,   let $I(x):=xe^{\Gamma x^\frac{1}{\gamma}}$. Let $\mathcal{I}:=\{I^\frac{-\ell}{4}(1)\}_{\ell\geq0}$, where $I^\frac{-1}{4}$ is a 4th iterative root of the inverse of $I$ (see Definition \ref{ladderFunc} and Lemma \ref{forI}).

A $0$-summable point $x$ is called {\em strongly temperable} (w.r.t $\beta,\Gamma,\gamma$,  and $\epsilon>0$) if  there is a function $q:\{f^n(x)\}_{n\in\mathbb{Z}}\rightarrow \mathcal{I}$ s.t
\begin{enumerate}
	\item $q\circ f=I^{\pm1}(q)$,
	\item $\forall n\in\mathbb{Z}$, $q(f^n(x))\leq \frac{C_{\beta,\epsilon}}{\|C_0^{-1}(f^n(x))\|^{2\gamma}}$,
\end{enumerate}
where $C_{\beta,\epsilon}>0$ is a constant depending only $\epsilon$ and $\beta$ (see Definition \ref{calibrationParams}). If in addition to (1) and (2), $q:\{f^n(x)\}_{n\in\mathbb{Z}}\rightarrow \mathcal{I}$ can be chosen to also satisfy,

\begin{enumerate}
\item[(3)]$\limsup\limits_{n\rightarrow\pm\infty}q(f^n(x))>0$,
\end{enumerate}
we say that $x$ is {\em recurrently strongly temperable}. Denote by $\RST=\RST(\beta,\gamma,\Gamma,\epsilon)$ the set of all recurrently strongly temperable points, which is clearly an invariant set (see Definition \ref{strongtemp}).

\begin{theorem}\label{t4.1.1} For every $\epsilon>0$ sufficiently small (w.r.t $\beta,M,f,\Gamma,$ and $\gamma$) there exist a locally compact TMS $\Sigma$, and a map $\pi:\Sigma\rightarrow M$ s.t: \begin{enumerate}
	\item[{\rm (1)}] $\pi$ is uniformly continuous on $\Sigma$, with summable variations.
   \item[{\rm (2)}] $\pi\circ\sigma=f\circ\pi$.
   \item[{\rm (3)}] $\pi[\Sigma^\#]=\RST$, and every point in $\pi[\Sigma^\#]$ has finitely many pre-images in $\Sigma^\#$.
    
  \item[{\rm (4)}]  For every $x\in \pi[\Sigma]$, $T_xM=H^s(x)\oplus H^u(x)$, where
$$\text{ }\sum_{n\geq0}\|d_xf^n|_{H^s(x)}\|^2<\infty\text{ , }\text{ }\sum_{n\geq0}\|d_xf^{-n}|_{H^u(x)}\|^2<\infty.
$$
The maps $\underline{R}\mapsto E^{s/u}(\pi(\underline{R}))$ have summable variations as maps from $\Sigma$ to $TM$.
    
\end{enumerate}
\end{theorem}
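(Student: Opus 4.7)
The plan is to follow the blueprint of Sarig's construction of a countable Markov partition, but to replace the uniform in-chart hyperbolicity estimates (unavailable when $\chi=0$) by the strong temperability bound $q\le C_{\beta,\epsilon}\|C_0^{-1}\|^{-2\gamma}$ together with the ladder structure $q\circ f=I^{\pm1}(q)$, $q\in\mathcal{I}$. The choice $\gamma>5/\beta$ is what permits the nonlinear H\"older-$\beta$ errors to be absorbed even when contraction in charts is merely sub-exponential.

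First I would build Pesin-type charts $\psi_x\colon B_{q(x)}(0)\to M$ using the reduction $C_0(x)$ of Theorem \ref{pesinreduction}, and define an $\epsilon$-chain to be a sequence $\underline{v}=(v_n)$, $v_n=(\psi_{x_n},q_n)$, obeying overlap and regularity conditions in which the evolution of $q_n$ is governed by $I^{\pm1}$ rather than by a geometric factor $e^{\pm\epsilon}$; since $\mathcal{I}$ is countable, the alphabet is countable. The central step is a shadowing theorem via the Graph Transform of \S\ref{GTrans}: for each chain I introduce spaces of admissible $s$- and $u$-manifolds and iterate the conjugated map $\psi_{x_n}^{-1}\circ f\circ\psi_{x_{n-1}}$ on them. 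The contraction of successive graphs is not uniform; it is driven by $\sum_n\|d_xf^n|_{H^s}\|^2<\infty$, while the bound $q_n\le C_{\beta,\epsilon}\|C_0^{-1}\|^{-2\gamma}$ dominates the worst-case nonlinear error once $\gamma>5/\beta$. Intersecting the resulting admissible leaves yields a unique shadow $\pi(\underline{v})$ together with weak stable and unstable leaves $V^s(\underline v),V^u(\underline v)$, whose tangent spaces at $\pi(\underline v)$ furnish the splitting $H^s\oplus H^u$ appearing in (4). Since each Graph Transform iterate displaces the leaf by an amount controlled by $q_n$ times a factor summable along the chain, the maps $\underline v\mapsto\pi(\underline v)$ and $\underline v\mapsto E^{s/u}(\pi(\underline v))$ have summable variations, proving (1) and the summable-variations assertion in (4); the square-summability in (4) is then inherited from $0$-summability of the shadow.

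Equivariance (2) is automatic from the definition of the shadow. To turn the countable cover $\{\psi_{x_v}[B_{q_v}(0)]\}$ into a Markov partition I would apply the Bowen--Sinai refinement in Sarig's form, using the local product structure of admissible leaves at each intersection; local compactness of the resulting TMS $\Sigma$ follows from the countability of $\mathcal{I}$ and an Ascoli-type compactness at each fixed $q$. For (3) I need both inclusions in $\pi[\Sigma^\#]=\RST$ together with fiber-finiteness. The inverse inclusion $\RST\subseteq\pi[\Sigma^\#]$ is the hardest half: for $x\in\RST$ I use the temperability function $q$ of Definition \ref{strongtemp}, together with $\limsup_{n\to\pm\infty}q(f^n(x))>0$, to manufacture an $\epsilon$-chain shadowing $x$ in which some symbol recurs infinitely often in both time directions, placing the chain in $\Sigma^\#$. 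Conversely, for $\underline v\in\Sigma^\#$, the pullback of $q_{v_n}$ to $\{f^n(\pi\underline v)\}$ is a valid temperability function with positive limsup, so $\pi(\underline v)\in\RST$. Finite-to-one on $\Sigma^\#$ follows Sarig's argument: two distinct $\Sigma^\#$-preimages of a point must share a recurring symbol, and uniqueness of the shadow inside each chart then forces only finitely many preimages.

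The principal obstacle is the Graph Transform in this weak-hyperbolic regime: Sarig's argument closes all errors at a uniform rate $e^{-\chi}$, whereas here one must simultaneously track the slow ladder $I$ governing $q$ and the H\"older-$\beta$ error from $df$, and prove that their combined effect still contracts admissible leaves site by site. The inequality $\gamma>5/\beta$, together with the choice of $\mathcal{I}$ as the orbit of a fourth root of $I^{-1}$, is precisely what reconciles these two scales; checking this compatibility while maintaining the bookkeeping needed for the summable-variations statements in (1) and (4) is where the technical weight of the proof lies.
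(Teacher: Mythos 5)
Your proposal follows essentially the same architecture as the paper: $I$-chains governed by the ladder function $\mathcal{I}$, a Graph Transform on admissible $s/u$-manifolds with site-dependent contraction rates $e^{-1/S^2}$, $e^{-1/U^2}$ in place of a uniform $e^{-\chi}$, Bowen--Sinai refinement into a locally finite Markov partition, and an inverse-problem argument to identify $\pi[\Sigma^\#]=\RST$ and to obtain finite-to-one-ness. The one technical point you gloss over is how the $C^1$-summable variations in (1) and the last part of (4) are actually extracted: since the Graph Transform estimate for $\mathrm{dist}_{C^1}$ carries an extra term $(\sqrt\epsilon+\epsilon^\beta)\|F_1-F_2\|_\infty^{\beta/2}$, the naive iteration applied to full admissible leaves (whose representing functions are only bounded by $p^u$) degrades the decay exponent by a factor $\beta/2$; the paper remedies this by introducing \emph{leaf cores} — restrictions of the representing functions to the smaller box $R_{(p^s\wedge p^u)^2}(0)$ — which squares the initial $C^0$-distance and lets the $C^1$-iteration close with only $\gamma>2/\beta$. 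Without that device your scheme still closes, but only because $\gamma>5/\beta>4/\beta$ happens to be strong enough, so you should be aware that the summable-variations bookkeeping is tighter than the phrase ``a factor summable along the chain'' suggests.
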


\medskip
\noindent\textbf{Remark:} In \cite[Theorem~1.4]{InjectiveBuzzi}, Buzzi shows that one cannot hope for a H\"older continuous coding map $\pi$ which captures all hyperbolic measures. Our result is ``good news" in light of that, since we indeed get continuity of the factor map which is weaker than H\"older continuity, but we get a uniform modulus of continuity, and further more- summable variations. This is the threshold property needed for Sarig's thermodynamic formalism of countable Markov shifts. Composition of $\pi$ with a Lipschitz function retains its summable variations, and Lipschitz functions are dense in the continuous functions on the manifold. In \cite[Question~3]{InjectiveBuzzi}, Buzzi asks if one can achieve a continuous finite-to-one almost everywhere coding of all hyperbolic measures. Our coding does that on some examples, but the question of wether or not $\RST$ carries all hyperbolic measures in \textit{every} system, is an interesting and open question so far.

\medskip
Notice that every shift-invariant probability measure on $\Sigma$ projects to an invariant probability measure on the manifold. The following theorem implies that the projection preserves the entropy.
\begin{theorem}\label{t4.1.2} For $\Sigma$ given by Theorem \ref{t4.1.1}, we denote the set of states of $\Sigma$ by $\mathcal{V}$. There exists a function $N:\mathcal{V}\rightarrow\mathbb{N}$ s.t for every $x\in M$ which can be written as $x=\pi((v_i)_{i\in\mathbb{Z}})$ with $v_i=u$ for infinitely many $i<0$, and $v_i=w$ for infinitely many $i
>0$, it holds that: $|\pi^{-1}[\{x\}]\cap \Sigma^\#|\leq N(u)\cdot N(w)$.
\end{theorem}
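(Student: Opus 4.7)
The plan is to follow the inverse-theorem strategy Sarig uses for countable Markov partitions in \cite{Sarig}, adapted to the summable-orbit setting developed here. The main geometric ingredient, which I assume is delivered by the construction of $\Sigma$ carried out in the body of the paper, is an \emph{Inverse Theorem}: if two states $v,v'\in\mathcal{V}$ both contain a common point $y$ in their admissible chart-domains, then their discretized chart parameters --- the stable/unstable block dimensions, the chart center, and the $q$-value drawn from the ladder $\mathcal{I}$ --- must be comparable up to the tolerance fixed by $\epsilon$. I then define $N(v)$ to be the number of states $v'\in\mathcal{V}$ comparable to $v$ in this sense. Finiteness of $N(v)$ is forced by the comparability conditions, which restrict each parameter of $v'$ to a finite window on the discrete ladder $\mathcal{I}$.

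Given $x\in M$ with two preimages $\underline{v},\underline{v}'\in\pi^{-1}(\{x\})\cap\Sigma^\#$, the conjugacy identity $f^n(x)=\pi(\sigma^n\underline{v})=\pi(\sigma^n\underline{v}')$ shows that $f^n(x)$ lies in the admissible domain of both $v_n$ and $v'_n$ for every $n\in\mathbb{Z}$. The Inverse Theorem therefore gives that $v'_n$ is comparable to $v_n$ at every $n$. Specializing to the past return times $n_k\downarrow-\infty$ with $v_{n_k}=u$, each $v'_{n_k}$ lies in a set of size at most $N(u)$, so by pigeonhole I extract a subsequence on which $v'_{n_k}$ is constant, equal to some $u'$. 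Symmetrically I extract a future-recurrent state $w'$ (one of at most $N(w)$ possibilities) for $\underline{v}'$. In particular this already shows $\underline{v}'\in\Sigma^\#$ with recurrent past state $u'$ and recurrent future state $w'$.

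It remains to show that the pair $(u',w')$, together with the orbit of $x$, determines $\underline{v}'$ uniquely. I plan to argue by double-sided propagation: pick past return times with $v'_{n_k}=u'$ and future return times with $v'_{m_k}=w'$, and work inside each finite window $[n_k,m_k]$. The Markov property combined with the already-established per-coordinate constraint (each $v'_j$ lies in the comparable set of $v_j$) leaves only finitely many admissible words in each window. Injectivity of $\pi$ on finite admissible words with prescribed endpoint states and prescribed projection then forces a unique choice: two distinct admissible orbit segments passing through comparable charts and projecting to the same point must coincide, a fact I intend to derive from the graph-transform construction of \textsection\ref{GTrans} applied in both time directions. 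Letting the windows $[n_k,m_k]$ exhaust $\mathbb{Z}$ recovers $\underline{v}'$ uniquely from $(u',w')$, so the total count is at most $N(u)\cdot N(w)$.

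The hard part is the injectivity step. In uniformly hyperbolic or fixed-$\chi$ Pesin settings, injectivity of $\pi$ on comparable-chart segments is an immediate consequence of exponential contraction along the stable and unstable manifolds, but here we only have summable contraction. I expect to replace the exponential estimate with the summability of $\sum_n\|d_xf^n|_{H^s(x)}\|^2$ and $\sum_n\|d_xf^{-n}|_{H^u(x)}\|^2$ provided by Theorem \ref{t4.1.1}(4): the cumulative effect of the squared errors along the orbit is finite, and this should be enough to rule out a second shadowing orbit through comparable charts that agrees with the first at infinitely many past and future times yet differs in between. Together with the counting of $(u',w')$-pairs above, this yields $|\pi^{-1}(\{x\})\cap\Sigma^\#|\leq N(u)\cdot N(w)$.
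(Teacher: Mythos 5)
There is a genuine gap, and it starts with a mismatch of alphabets. The $\Sigma$ in Theorem~\ref{t4.1.2} is the shift $\widehat\Sigma$ built from the Markov \emph{partition} $\mathcal{R}$ of \textsection\ref{Forpartitione}, not the auxiliary shift $\Sigma(\mathcal{G})$ whose states are the double charts $\psi_x^{p^s,p^u}$. Your ``discretized chart parameters --- stable/unstable block dimensions, chart center, $q$-value on the ladder $\mathcal{I}$'' all refer to the double charts, and indeed the paper explicitly notes that for that alphabet $\pi$ is \emph{not} finite-to-one (end of \textsection\ref{GTrans}). The correct quantity to count with is the number of partition elements $S\in\mathcal{R}$ that are \emph{affiliated} to a given $R$ (i.e.\ $R\subset Z_1$, $S\subset Z_2$ for some $Z_1,Z_2\in\mathcal{Z}$ with $Z_1\cap Z_2\neq\varnothing$), finite by local finiteness of $\mathcal{Z}$ and the fact that each $Z$ contains finitely many $R$'s.

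The second gap is the injectivity step, which fails for a structural reason that summability cannot repair. Your claim that ``two distinct admissible orbit segments passing through comparable charts and projecting to the same point must coincide'' is false: $\widehat\pi(\underline{R})$ is defined by $\bigcap_n\overline{\bigcap_{|i|\le n}f^{-i}[R_i]}$ (closures!), so the coded point $x$ need only satisfy $f^i(x)\in\overline{R_i}$, and a boundary point can belong simultaneously to the closures of disjoint partition elements. This is precisely why the map can be many-to-one. The graph transform of \textsection\ref{GTrans} gives uniqueness of the point shadowed by a chain, not uniqueness of the chain shadowing a point, so ``summable contraction'' attacks the wrong direction of the implication. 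What the argument actually needs --- and what both the Sarig/SBO proofs the paper invokes and the (commented) Bowen--Sinai style proof in the source supply --- is the Smale bracket on $Z$-elements: assuming more than $N(u)\cdot N(w)$ preimages, pigeonhole gives two coding sequences $\vec R,\vec S$ that agree at some $i_-$ and $i_+$ but differ on $[-n,n]$; one then \emph{builds} a point by bracketing that lies in the interior of the $\vec R$-cylinder on $[i_-,i_+]$ and yet, by Bowen's property with finite degree (affiliated chains shadow the same point, item (7) of \textsection\ref{summaryextension}), must coincide with a point in the disjoint $\vec S$-cylinder --- a contradiction. Your proposal contains no analogue of this bracket construction, so the core step is missing.
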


In \textsection \ref{GTrans} we construct local stable and unstable manifolds for chains in $\Sigma$ (and similarly for strongly temperable points). In \textsection \ref{chapter7} we prove the following:
\begin{theorem}
Local stable and unstable leaves of strongly temperable points are absolutely continuous w.r.t holonomies. 
\end{theorem}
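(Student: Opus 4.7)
The plan is to adapt the classical Anosov--Pesin argument for absolute continuity of hyperbolic foliations to the strongly temperable setting, replacing the uniform exponential estimates in Pesin charts by the $\ell^2$-summability of the splitting together with strong temperability. It suffices to treat the stable holonomy, as the unstable case follows by symmetry on applying the argument to $f^{-1}$. First I would fix a strongly temperable $p$ and two transversal local unstable admissible manifolds $V^u_1,V^u_2$ close to $p$ inside a single chart, and, using the weak stable leaves produced in Section \ref{GTrans}, define the holonomy $\pi^s:V^u_1\to V^u_2$ sending $x$ to the unique point of $V^u_2$ on the weak stable leaf through $x$. The target is $(\pi^s)_\ast\mathrm{Vol}_{V^u_1}\ll\mathrm{Vol}_{V^u_2}$ with Radon--Nikodym derivative bounded away from $0$ and $\infty$.

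The core identity is the standard telescoping formula
$$\Jac(\pi^s)(x)=\Jac(\pi^s_n)(f^nx)\cdot\prod_{k=0}^{n-1}\frac{\Jac(df|_{T_{f^kx}f^kV^u_1})}{\Jac(df|_{T_{f^k\pi^s(x)}f^kV^u_2})},$$
where $\pi^s_n$ is the holonomy between $f^nV^u_1$ and $f^nV^u_2$. The $0$-summability forces $d(f^nx,f^n\pi^s(x))\to 0$, so $\Jac(\pi^s_n)(f^nx)\to 1$ and $\Jac(\pi^s)(x)$ is represented by the infinite product, provided the associated logarithmic series converges absolutely. By the $\beta$-H\"older regularity of $df$ and the standard angle bounds, that series is dominated by a constant times
$$\sum_{k\geq 0}\bigl(d(f^kx,f^k\pi^s(x))^\beta+\mathrm{angle}(T_{f^kx}f^kV^u_1,T_{f^k\pi^s(x)}f^kV^u_2)^\beta\bigr).$$

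This is where strong temperability enters essentially. The Graph Transform of Section \ref{GTrans} yields admissible manifolds whose fibres and tangent spaces are controlled by the chart scale $q(f^nx)\in\mathcal{I}$; condition (2) tying $q$ to $\|C_0^{-1}\|^{-2\gamma}$, combined with the choice $\gamma>5/\beta$ and the summable variations of $\underline{R}\mapsto E^{s/u}(\pi(\underline{R}))$ supplied by Theorem \ref{t4.1.1}(4), is engineered precisely so that the $\ell^2$ bound on the stable component upgrades to an $\ell^\beta$ bound for intrinsic distances and tangent-space angles along the orbit. Condition (3), recurrent return of $q$ to positive values, allows a subsequential passage to the limit without losing the quantitative estimates. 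Once absolute convergence of the log-series is established, the symmetric roles of $V^u_1$ and $V^u_2$ immediately give two-sided bounds on $\Jac(\pi^s)$, yielding absolute continuity with positive bounded density.

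The main obstacle is precisely this trade-off. The H\"older exponent $\beta$ is fixed by $f$, while weak stable leaves may contract only sub-exponentially, so $\ell^2$-summability of stable vectors does not in itself yield $\ell^\beta$-summability of distances along the orbit. The argument cannot proceed by the chart-uniform contraction used in Pesin's original proof; instead one must use the precise relationship between $q$, chart radii, and intrinsic distances on leaves produced by the Graph Transform, and then exploit (2) with $\gamma>5/\beta$ to absorb the H\"older loss. Verifying that the chart-level distortion estimates pull back to the manifold with constants summable in the sense above is the technical heart of the proof, and is the bridge between the summable-variations construction of the coding and the absolute continuity claim.
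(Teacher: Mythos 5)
Your distortion mechanism is the right one, and in fact it matches the paper's: the uniform-in-$n$ bound on your telescoping product $\prod_{k=0}^{n-1}\Jac(df|_{T_{f^kx}f^kV^u_1})/\Jac(df|_{T_{f^ky}f^kV^u_2})$ is exactly what the paper proves in Lemma \ref{Bethisda} (via Proposition \ref{Lambda}(3)), and the statement ``$\Jac(\pi^s_n)\to 1$ with summable rate'' is the content of Lemma \ref{niniloulou}, which rests on the summable variations of leaf cores (Proposition \ref{firstofchapter}(5)) and on $\gamma>5/\beta$ to absorb the factor $\|C_0^{-1}(x_n)\|$; summability despite sub-exponential contraction comes from Lemma \ref{FaveForNow}, since $d(f^kx,f^ky)\leq 4I^{-k}(p^s)$ and $\beta>5/\gamma$. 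So the analytic heart you flag as ``to be verified'' is genuinely available in the paper's framework.

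The genuine gap is in how you pass from these estimates to absolute continuity. You treat $\pi^s$ as a map defined (and differentiable) Lebesgue-a.e.\ on $V^u_1$, so that convergence of the infinite product yields a Radon--Nikodym density for $(\pi^s)_\ast\mathrm{Vol}_{V^u_1}$. But in this setting there is no stable \emph{foliation}: weak stable leaves exist only through the coded (strongly temperable) points, so the holonomy is defined only on the set $A(u)\cap V^u_1$ with $A(u)=\pi[\Sigma\cap[u]]$, which is compact and may have empty interior and even zero leaf volume in one transversal a priori. A pointwise Jacobian formula on such a set does not by itself control $(\pi^s)_\ast$ of the restricted leaf measure, and one cannot invoke a change-of-variables theorem for a map that is only defined on a thin set. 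This is precisely the difficulty Pesin's scheme is designed to handle, and it is the part your plan omits: the paper replaces the pointwise Jacobian by finite-time volume comparisons of matched ``rectangles'' $D_i^1,D_i^2$ produced by a Besicovitch--Pesin covering lemma (Lemma \ref{BesiPesi}), combined with a measure-theoretic lemma comparing $A(u)$ with its $\delta$-neighborhoods inside each leaf (Lemma \ref{measuretheoretic}); only after pulling the matched covers back by $f^{-n}$ with bounded distortion (Corollaries \ref{afterafterniniloulou}, \ref{CorBesiPesi}) does one obtain the two-sided bound \eqref{finallyAC} on the ratio of the leaf volumes, which is what ``absolute continuity of the holonomy'' means here. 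Without this covering/measure-theoretic step (or an explicit argument that the holonomy extends to a set of full leaf measure on which your product formula is a true derivative), the proposal does not close; also note that your appeal to condition (3) (recurrence of $q$) for ``subsequential passage to the limit'' is not the role it plays in the proof---what is actually used is the chart-restricted summability above, with the partition element $A(u)$ playing the role of a Pesin block.
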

In \textsection \ref{chapter7} we give a general condition for the existence of strictly weak leaves (i.e with strictly sub-exponential contraction rates), and in \textsection \ref{codability} we mention some examples.

\subsection{Overview of the construction of the Markov partition}\label{SectionOverview}
Bowen's construction of Markov partitions for uniformly hyperbolic diffeomorphisms \cite{B3,B4} uses Anosov's shadowing theory for pseudo-orbits. This theory fails for general non-uniformly hyperbolic diffeomorphisms. In dimension two, Sarig developed a refined shadowing theory which does work in the non-uniformly hyperbolic setup. It consists of: \begin{enumerate}
    \item $\epsilon$-chains: a definition of pseudo orbits which allows for neighborhoods with varrying size.
    \item Shadowing lemma: every $\epsilon$-chain ``shadows" a unique orbit.
    \item Solution for the inverse problem: control of $\epsilon$-chains which shadow the same orbit.
\end{enumerate}
These properties, once established, allow the construction of Markov partitions using the method of Bowen  (\cite{Sarig}). Sarig worked in dimension two. In \cite{SBO}, we generalize his work to the higher dimensional case. 

In this work, we follow a similar scheme, but with refined adapted notions. 
\begin{enumerate}
    \item $\epsilon$-chains are replaced by {\em $I$-chains}, which require the sizes of neighborhoods on the pseudo orbit to be significantly more restricted. This is the content of \textsection \ref{loclin}.
    \item A shadowing theory for orbits with possible $0$ Lyapunov exponents: charts no longer have uniform estimates, and we need a suitable space of admissible manifolds on which the Graph Transform process converges in $C^1$, with a convergence rate which is summable. This is the content of \textsection \ref{GTrans}.
    \item A refined solution for the inverse problem: control of $I$-chains which shadow the same orbit. The fact that the sizes of neighborhoods in the $I$-chains are significantly more restricted, requires one to get significantly tighter estimates for the inverse problem, while having weaker (non-uniform) estimates in charts to begin with. This is the content of \textsection \ref{chapter333}. 
\end{enumerate}
In \textsection\ref{MarkoPolo} and  \textsection \ref{chapter5} we show how these properties are sufficient for the construction of a Markov partition which induces a finite-to-one almost everywhere coding.

\subsection{Notation}\label{notations}
\begin{enumerate}
    \item For any normed vector space $V$, we denote the unit sphere in $V$ by $V(1)$.
    \item Let $L$ be a linear transformation between two inner-product spaces $V,W$ of finite dimension. The {\em Frobenius norm} of $L$ is defined by $\|L\|_{Fr}=\sqrt{\sum_{i,j}a_{ij}^2}$, where $(a_{ij})$ is the representing matrix for $L$, under the choice of some pair of orthonormal bases for $V$ and $W$. This definition does not depend on the choices of bases.
    \item When a statement about two different cases that are indexed by their subscripts and/or their superscripts holds for both cases, we write in short the two cases together, with the two scripts separated by a ``/". For example, $\pi(u_{x/y})=x/y$ means $\pi(u_x)=x$ and $\pi(u_y)=y$.
    \item Given $A,B,C\in\mathbb{R}^+$ we write $A=e^{\pm B}C$ when $e^{-B}C\leq A\leq e^B C$.
    \item $|\cdot|$ denotes the Riemannian norm on $T_xM$ ($x\in M$) when applied to a tangent vector in $T_xM$, and denotes the Euclidean norm when applied to a vector in $\mathbb{R}^d$.
    \item $|\cdot|_\infty$ denotes the highest absolute value of the coordinates of a vector.
\end{enumerate}

\section*{Acknowledgements}
I would like to thank the Eberly College of Science in the Pennsylvania State University for excellent working conditions. I wish to thank Prof. Omri Sarig for many interesting and fruitful discussions. 
\section{Local linearization- chains of charts as pseudo-orbits}\label{loclin}
Let $f$ be a $C^{1+\beta}$ diffeomorphism of a smooth, compact and boundary-less manifold $M$ of dimension $d\geq 2$.

In this section we introduce $I$-overlapping, and $I$-chains for a refined version of Pesin charts (adapted from \cite{Sarig,SBO}, which in turn were adapted from \cite{BP,KM,K1}). There are several differences between those definitions, which are needed to get the fine Graph Transform estimates of the next section.

\subsection{Summability}\label{summability}

\begin{definition}[Summability]\label{0-summ}
	A point $x\in M$ is called {\em summable} if $T_xM$ decomposes uniquely $T_xM=H^s(x)\oplus H^u(x)$ s.t
	\begin{align*}
	\sup_{\xi^s\in H^s(x),|\xi^s|=1}\sum_{m\geq0}|d_xf^m\xi ^s |^2<\infty,	\sup_{\xi^u\in H^u(x),|\xi ^u |=1}\sum_{m\geq0}|d_xf^{-m}\xi ^u |^2.
	\end{align*}
 The set of summable points is called {\em $0$-$\mathrm{summ}$}. For a summable points $x$, $s(x):=\mathrm{dim}(H^s(x)), u(x):=\mathrm{dim}(H^u(x))$.
\end{definition}

\medskip
\noindent\textbf{Remark:} Aside from the power $\cdot^2$, Definition \ref{0-summ} does not depend on any choice of a constant. Moreover, the parallelogram law tells us that $\cdot^2$ is the only power for which this norm on the tangent space is induced by an inner-product (see Theorem \ref{pesinreduction}).

\begin{theorem}\label{pesinreduction} Oseledec-Pesin $\epsilon$-reduction  theorem (\cite{Pesin},\cite[\textsection~S.2.10]{KM}):

Let $M$  be  a compact Riemannian manifold. Then for each summable point $x\in M$ there exists an invertible linear transformation $C_0(x):\mathbb{R}^d\rightarrow T_xM$ ($C_0(\cdot):0\text{-}\mathrm{summ}\rightarrow GL({\mathbb{R}}^d,T_\cdot M)$) such that the map $D_0(x)=C_0^{-1}(f(x)) \circ d_xf \circ C_0(x)$ has the Lyapunov block form:
$$
\begin{pmatrix}D_s(x)  &   \\  & D_u(x)
\end{pmatrix},
$$
where $D_{s}(x)\in GL(s(x),\mathbb{R}),D_{u}(x)\in GL(u(x),\mathbb{R})$. In addition, for every $x \in  0$-summ we can decompose $T_xM=H^s(x)\oplus H^u(x)$ and $\mathbb{R}^d=\mathbb{R}^{s(x)}\oplus\mathbb{R}^{u(x)}$, and $C_0(x)$ sends each $\mathbb{R}^{s(x)/u(x)}$ to $H^{s/u}(x)$. Furthermore, $\exists \kappa(f)>1$ s.t  $$\kappa^{-1}\leq\frac{|D_sv_s|}{|v_s|}\leq e^{-\frac{1}{S^2(x)}},\kappa^{-1}\leq\frac{|D_u^{-1}v_u|}{|v_u|}\leq e^{-\frac{1}{U^2(x)}},\text{for all  non-zero }v_s\in\mathbb{R}^{s(x)},v_u\in \mathbb{R}^{u(x)}.$$ 
\end{theorem}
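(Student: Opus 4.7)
The plan is to follow the classical Lyapunov-metric construction, adapted to the summability hypothesis, which replaces the usual hyperbolicity in Pesin's version. First I would use the two summable series in the statement to endow $T_xM$ with a new inner product $\langle\cdot,\cdot\rangle_x'$ adapted to the splitting $H^s(x)\oplus H^u(x)$: set $\langle\xi,\eta\rangle_x^s:=2\sum_{m\ge 0}\langle d_xf^m\xi,d_xf^m\eta\rangle$ on $H^s(x)$ and $\langle\xi,\eta\rangle_x^u:=2\sum_{m\ge 0}\langle d_xf^{-m}\xi,d_xf^{-m}\eta\rangle$ on $H^u(x)$, declaring the two subspaces to be orthogonal. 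Summability is exactly what is needed to make these sums converge and define genuine inner products. Then $C_0(x):\mathbb{R}^d=\mathbb{R}^{s(x)}\oplus\mathbb{R}^{u(x)}\to T_xM$ is defined to be any linear isometry from the standard Euclidean inner product to $\langle\cdot,\cdot\rangle_x'$ that sends the two standard factors to $H^{s/u}(x)$ respectively. Such an isometry exists, is unique up to composition with block orthogonal maps of $\mathbb{R}^{s(x)}\oplus\mathbb{R}^{u(x)}$, and satisfies the two identities for $|C_0^{-1}(x)\cdot|$ in the definition of $0$-summability by construction.

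Next I would verify the block-diagonal form. The key fact is that $d_xf[H^{s/u}(x)]=H^{s/u}(f(x))$: indeed, $d_xf\xi^s$ is easily seen to be $0$-summable into the future (its series is the tail of the one for $\xi^s$), and similarly $d_xf\xi^u$ is $0$-summable into the past, so by the uniqueness part of the summability decomposition at $f(x)$, the splitting is $df$-invariant. Consequently $D_0(x)=C_0^{-1}(f(x))\circ d_xf\circ C_0(x)$ preserves the $\mathbb{R}^{s(x)}\oplus\mathbb{R}^{u(x)}$ decomposition, giving the Lyapunov block form with $D_s(x)\in GL(s(x),\mathbb{R})$ and $D_u(x)\in GL(u(x),\mathbb{R})$.

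For the sharp rate estimates I would use a telescoping identity. Given $v\in\mathbb{R}^{s(x)}\setminus\{0\}$ and $\xi:=C_0(x)v\in H^s(x)$, the defining identity for $C_0^{-1}$ gives
\[
|v|^2=2\sum_{m\ge 0}|d_xf^m\xi|^2,\qquad |D_s(x)v|^2=2\sum_{m\ge 0}|d_{f(x)}f^m(d_xf\xi)|^2=2\sum_{m\ge 1}|d_xf^m\xi|^2,
\]
so $|D_s(x)v|^2=|v|^2-2|\xi|^2$. Setting $S(x)^2:=\sup_{0\ne\xi\in H^s(x)}\sum_{m\ge 0}|d_xf^m\xi|^2/|\xi|^2$, one has $2|\xi|^2/|v|^2\ge 1/S(x)^2$, and the elementary inequality $1-t\le e^{-t}$ yields $|D_s(x)v|/|v|\le e^{-1/(2S(x)^2)}$; the stated exponent is then a matter of absorbing the factor $2$ into the definition of $S(x)$. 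For the lower bound, compactness of $M$ gives a uniform bound $\|d_xf\|,\|d_xf^{-1}\|\le K$, hence $\sum_{m\ge 0}|d_xf^m\xi|^2\ge(1+K^{-2})|\xi|^2$ (using just the $m=0$ and $m=1$ terms), so $2|\xi|^2/|v|^2\le 1/(1+K^{-2})$ and $|D_s(x)v|/|v|\ge\kappa^{-1}$ with $\kappa:=\sqrt{1+K^2}$ depending only on $f$. The estimate on $D_u^{-1}$ is obtained by applying the same telescoping argument to $d_{f(x)}f^{-1}$ acting on $H^u(f(x))$.

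No deep obstacle is expected; the proof is essentially a direct computation once the Lyapunov inner product is in place. The one subtle step is the $df$-invariance of the splitting, which relies on the uniqueness of the summable decomposition and on the elementary observation that the summability defining property is preserved by the dynamics. Everything else reduces to rearranging the two defining series and invoking $1-t\le e^{-t}$ together with compactness of $M$.
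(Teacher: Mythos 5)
Your proposal is correct and follows essentially the same route as the paper: the Lyapunov inner products built from the two summable series, $C_0(x)$ as an isometry respecting the splitting, $df$-invariance of $H^{s/u}$ from uniqueness of the summable decomposition, and the telescoping identity $|D_s v|^2=|v|^2-2|\xi|^2$ combined with $1-t\le e^{-t}$ and the uniform bound $M_f$ for the two-sided estimates (your $\kappa=\sqrt{1+K^2}$ agrees with the paper's constant). The only detail the paper adds is a measurable choice of $C_0(x)$ resolving the block-orthogonal ambiguity, which is orthogonal to the statement itself.
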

\begin{proof} Denote the Riemannian inner product on $T_xM$ by $\langle\cdot,\cdot\rangle$. Set
$$\langle u,v \rangle_{x,s}':=2\sum_{m=0}^\infty\langle d_xf^mu,d_xf^mv\rangle ,\text{ for }u,v\in H^s(x);$$
$$\langle u,v \rangle_{x,u}':=2\sum_{m=0}^\infty\langle d_xf^{-m}u,d_xf^{-m}v\rangle ,\text{ for }u,v\in H^u(x).$$
Since $x\in 0$-summ, the series converge absolutely. Thus, the series can be rearranged, and $\langle \cdot,\cdot \rangle_{x,s}',\langle \cdot,\cdot \rangle_{x,u}'$ are bilinear and are well defined as inner products on $H^s(x),H^u(x)$, respectively.
Define:
\begin{equation}\label{LIP}
\langle u,v \rangle_x':=\langle \pi_s u,\pi_s v \rangle_{x,s}'+\langle \pi_u u,\pi_u v \rangle_{x,u}',
\end{equation}
where $u,v\in T_xM$, and $v=\pi_sv+\pi_uv$ is the unique decomposition s.t $\pi_{s}v\in H^{s}(x),\pi_{u}v\in H^{u}(x)$ (the tangent vector $u$ decomposes similarly).

\medskip
Choose measurably\footnote{Notice that $C_0^{-1}(x)$ are only determined up to an orthogonal self map of $H^{s/u}(x)$- thus a measurable choice is needed. Let $\{D_i\}_{i=1}^N$ be a finite cover of open discs for the compact manifold $M$. Each open disc is orientable, and there exists a continuous choice of orthonormal
(w.r.t to the Riemannian metric) bases of $T_xM$, $(\widetilde{e}_1(x),...,\widetilde{e}_d(x))$. The contiuous choice of bases on the discs induces a meaurable choice of bases on the whole manifold by the partition $D_1, D_2\setminus D_1,D_3\setminus (D_1\cup D_2),...,D_N\setminus\cup_{i=1}^{N-1}D_i$. Using projections to $H^{s/u}(x)$ (which are defined measurably), and the Gram-Schmidt process, we construct orthonormal bases for $H^{s/u}(x)$ in a measurable way, w.r.t to $\langle\cdot,\cdot\rangle_{x,s/u}'$. Denote these bases by $\{b_i(x)\}_{i=1}^{s(x)}$ and $\{b_i(x)\}_{i=s(x)+1}^{d}$, respectively. Define $C_0^{-1}(x):b_i(x)\mapsto e_i$, where $\{e_i\}_{i=1}^d$ is the standard basis for $\mathbb{R}^d$.} $C_0^{-1}(x):T_xM\rightarrow\mathbb{R}^d$ to be a linear transformation satisfying 
\begin{equation}\label{omrisuggestnumber}
    \langle u,v \rangle_x'=\langle C_0^{-1}(x)u,C_0^{-1}(x)v \rangle,
\end{equation}
and $C_0^{-1}[H^s(x)]=\mathbb{R}^{s(x)}$ (and hence $C_0^{-1}[H^u(x)]=(\mathbb{R}^{s(x)})^\perp$). Thus $C_0^{-1}[H^s(x)]\perp C_0^{-1}[H^u(x)]$, and therefore $D_0$ is in this block form. Therefore, when $v_s\neq 0$, we get
$$\langle d_xfv_s,d_xfv_s\rangle_{f(x),s}'=2\sum_{m=0}^\infty |d_xf^{m+1}v_s|^2=(\langle v_s,v_s\rangle_{x,s}'-2|v_s|^2)$$
\begin{equation}\label{lowerboundstoonice}\Rightarrow \frac{\langle d_xfv_s,d_xfv_s\rangle_{f(x),s}'}{\langle v_s,v_s\rangle_{x,s}'}= 1-\frac{2|v_s|^2}{\langle v_s,v_s\rangle_{x,s}'}.\end{equation}
Call $M_f:=\max_{x\in M}\{\|d_xf\|,\|d_xf^{-1}\|\}$ and notice $\langle v_s,v_s\rangle_{x,s}'>2(|v_s|^2+|d_xfv_s|^2)\geq2(|v_s|^2+M_f^{-2}|v_s|^2)$. So:
$$\frac{1}{\kappa^2}\leq\frac{\langle d_xfv_s,d_xfv_s\rangle_{f(x),s}'}{\langle v_s,v_s\rangle_{x,s}'}\leq e^{-\frac{2|v_s|^2}{\langle v_s,v_s\rangle_{x,s}'}}$$
where $\kappa=
(1-\frac{1}{1+M_f^{-2}})^{-1/2}
$.

The same holds for $D_u$:
$$\langle d_xf^{-1}v_u,d_xf^{-1}v_u\rangle_{x,u}'=2\sum_{m=0}^\infty |d_xf^{-m-1}v_u|^2=\langle v_u,v_u\rangle_{f(x),u}'-2|v_u|^2,$$
Then, $$\frac{1}{\kappa^2}\leq\frac{\langle d_xf^{-1}v_u,d_xf^{-1}v_u\rangle_{x,u}'}{\langle v_u,v_u\rangle'_{f(x),u}}\leq e^{-\frac{2|v_u|^2}{\langle v_u,v_u\rangle'_{f(x),u}}}
.$$ Let $w_s\in \mathbb{R}^{s(x)}$, and define $v_s:=C_0(x)w_s$.
$$\frac{|D_s(x)w_s|^2}{|w_s|^2}=\frac{|C_0^{-1}(f(x))d_xfv_s|^2}{|C_0^{-1}(x)v_s|^2}=\frac{|d_xfv_s|_{f(x),s}^{'2}}{|v_s|_{x,s}^{'2}}\in[\kappa^{-2}, e^{-2\frac{1}{S^2(x)}}].$$
Similarly $\frac{|D_u^{-1}(x)w_u|^2}{|w_u|^2}\in [\kappa^{-2}, e^{-2\frac{1}{U^2(x)}}] $.
\end{proof}

We fix three real positive constants which calibrate our construction.
\begin{definition}\label{calibrationParams} The {\em calibration parameters} are
\begin{enumerate}
	\item $\Gamma>0$, the {\em ladder scale},
	\item $\gamma>\frac{5}{\beta}$, the {\em windows' exponent}  (up to \textsection \ref{forGammaBounds} $\gamma>\frac{2}{\beta}$ is sufficient), and
	\item $C_{\beta,\epsilon}:=\frac{1}{3^\frac{6}{\beta}}\epsilon^\frac{90}{\beta}$, given $\epsilon>0$, the {\em windows' scalar}.
\end{enumerate}	
\end{definition}
 The objects we define next depend on the calibration of these three parameters. We abuse notation and not always write when the objects depend on these parameters. 

\begin{definition}[Ladder functions]\label{ladderFunc}
	$I(x):=xe^{\Gamma x^\frac{1}{\gamma}}$, $I:(0,1)\rightarrow(0,\infty)$. $I^\frac{1}{4}:(0,1)\rightarrow(0,\infty)$ is a strictly increasing $C^1$ positive function s.t $(I^\frac{1}{4})^4= I^\frac{1}{4}\circ I^\frac{1}{4}\circ I^\frac{1}{4}\circ I^\frac{1}{4} = I $ on $(0,1)$ and $I^\frac{1}{4}(x)>x$ for all $x\in(0,1)$. $I,I^\frac{1}{4}$ are called {\em the ladder functions}.
\end{definition}
\begin{lemma}\label{forI}\text{ }

\begin{enumerate}
\item	$I^\frac{1}{4}$ is well-defined
. 
\item $I$ and $I^\frac{1}{4}$ are invertible, and $I^{\frac{-n}{4}}(x)\xrightarrow[n\rightarrow\infty]{}0$ $\forall x\in(0,1)$.
\item $I^{-1}(x)\geq xe^{-\Gamma x^\frac{1}{\gamma}}$.
\item $\sum_{n\geq0}(I^{-n} (x))^\frac{1}{\gamma}=\infty$ for all $x>0$.
\end{enumerate}
	
\end{lemma}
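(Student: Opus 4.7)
I would treat the four claims in order, with (4) being the only substantive point; (1)--(3) follow from one-line manipulations of the explicit formula $I(x)=xe^{\Gamma x^{1/\gamma}}$.

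For (1), construct $I^{1/4}$ by the standard fundamental-domain recipe for iterative roots of a near-identity diffeomorphism. Since $I'(x)=e^{\Gamma x^{1/\gamma}}(1+\frac{\Gamma}{\gamma}x^{1/\gamma})>0$ on $(0,1)$ and $I(x)>x$, the map $I$ is a $C^1$ diffeomorphism of $(0,1)$ onto $(0,e^{\Gamma})$ with no fixed points in $(0,1)$. Fix a basepoint $a\in(0,1)$, choose three intermediate points $a<c_1<c_2<c_3<I(a)$, and on $[a,c_1]$, $[c_1,c_2]$, $[c_2,c_3]$ define three $C^1$ strictly increasing bijections onto $[c_1,c_2]$, $[c_2,c_3]$, $[c_3,I(a)]$; on $[c_3,I(a)]$ take the unique bijection onto $[I(a),I(c_1)]$ that makes the four-fold composition equal $I$. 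The freedom in the three initial bijections is more than enough to arrange $C^1$ matching at the interior joins and at the endpoints of the fundamental domain $[a,I(a)]$. Extend globally via $I^{1/4}\circ I=I\circ I^{1/4}$; the conditions $I^{1/4}(x)>x$ and strict monotonicity are preserved.

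For (2), $I$ is strictly increasing, hence invertible, and $I^{1/4}$ is invertible by construction. Since $I^{-1/4}(y)<y$ on $(0,1)$ (equivalent to $I^{1/4}(y)>y$), the sequence $a_n:=I^{-n/4}(x)$ is strictly decreasing and bounded below by $0$; its limit $L$ satisfies $I^{-1/4}(L)=L$ by continuity, forcing $L=0$. For (3), let $y:=I^{-1}(x)$, so $x=y\,e^{\Gamma y^{1/\gamma}}$; since $I(y)\geq y$ gives $y\leq x$, one has $e^{\Gamma y^{1/\gamma}}\leq e^{\Gamma x^{1/\gamma}}$, hence $x\leq y\,e^{\Gamma x^{1/\gamma}}$, i.e.\ $y\geq xe^{-\Gamma x^{1/\gamma}}$.

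The main step is (4). Set $a_n:=I^{-n}(x)$. By (3), $a_{n+1}\geq a_n e^{-\Gamma a_n^{1/\gamma}}$, and taking logarithms gives $\log a_n-\log a_{n+1}\leq \Gamma\,a_n^{1/\gamma}$. Telescoping yields $\log a_0-\log a_N\leq \Gamma\sum_{n=0}^{N-1}a_n^{1/\gamma}$. By (2), applied to $I^{-n}=(I^{-1/4})^{4n}$, $a_N\to 0$ as $N\to\infty$, so the left-hand side tends to $+\infty$ and the sum diverges. The conceptual point is that the ladder scale $\Gamma$ is exactly what forces this divergence: the slow growth rate $e^{\Gamma x^{1/\gamma}}$ makes $I^{-n}(x)$ decay to $0$ only sub-geometrically, too slowly for the $\gamma$-th root to be summable. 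The only mildly delicate step is the $C^1$ extension in (1), which is standard.
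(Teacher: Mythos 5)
Your proposal is correct, and parts (2)--(4) essentially track the paper: your telescoping of $\log a_n-\log a_{n+1}\leq\Gamma a_n^{1/\gamma}$ is a direct, contradiction-free rewriting of the paper's lower bound $(I^{-n}(x))^{1/\gamma}\geq x^{1/\gamma}e^{-\frac{\Gamma}{\gamma}\sum_{k\leq n}(I^{-k}(x))^{1/\gamma}}$, and your arguments for (2) and (3) are the same one-line manipulations. Part (1), however, goes a genuinely different way. The paper does not hand-build the root: it observes $I'>1$, passes to $I^{-1}\in C^1((0,e^{\Gamma}))$, which is strictly increasing, fixed-point free, and satisfies $I^{-1}(x)<x$, and then invokes the cited result on strictly increasing $C^1$ iterative square roots (\cite{IterativeRoot}) twice to produce $I^{-1/4}$ with $(I^{-1/4})^4=I^{-1}$; after checking $(I^{-1/4})'>0$ by a chain-rule argument it inverts, obtaining $I^{1/4}\in C^1\bigl((0,I^{-1/4}(e^{\Gamma}))\bigr)$ with $I^{-1/4}(e^{\Gamma})=I^{3/4}(1)>1$, so the root is automatically defined on all of $(0,1)$. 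Your fundamental-domain construction is the classical mechanism behind such theorems and can be made to work, but it is exactly where you gloss: the $C^1$ matching at the interior joins and across the orbit boundary $a\mapsto I(a)$ (your ``more than enough freedom'' is precisely what the cited theorem certifies), and the bookkeeping needed to see that the functional-equation extension is defined on all of $(0,1)$ even though $I$ itself pushes points out of $(0,1)$ --- the paper's trick of taking roots of $I^{-1}$ on the larger interval $(0,e^{\Gamma})$ and then inverting disposes of both issues at once. What your route buys is self-containedness; what the paper's buys is brevity and clean control of domains, which matters later since $\mathcal{I}=\{I^{-\ell/4}(1)\}_{\ell\geq0}$ is built from iterates started at $1$.
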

\begin{proof}
\begin{enumerate}
\item Notice that $I'>1$ on $(0,1)$, and so there exists an inverse $I^{-1}\in C^1((0,e^\Gamma))$ by The Inverse Function Theorem. Since $I(x)>x$ on $(0,1)$, $I^{-1}(x)<x$ on $(0,e^\Gamma)$; and since $I$ is strictly increasing, $I^{-1}$ is strictly increasing. We now may use \cite [Theorem~1]{IterativeRoot} to conclude the existence of a strictly increasing $C^{1}$ iterative root of $I^{-1}$; i.e $I^\frac{-1}{2}\in C^1((0,e^\Gamma))$ s.t $(I^\frac{-1}{2})^2:= I^\frac{-1}{2} \circ I^\frac{-1}{2} =I^{-1}$. In addition, $I^\frac{-1}{2}(x)<x$ on $(0, e^\Gamma)$, otherwise if $x_0\leq I^\frac{-1}{2}(x_0)$, then by applying $I^\frac{-1}{2}$: $I^\frac{-1}{2}(x_0)\leq(I^\frac{-1}{2})^2(x_0)$; and so $x_0\leq I^\frac{-1}{2}(x_0) \leq(I^\frac{-1}{2})^2(x_0) =I^{-1}(x_0)<x_0$, a contradiction. Therefore, we may use \cite{IterativeRoot} again with $I^\frac{-1}{2}$ to conclude similarly the existence of a strictly increasing $C^{1}$ iterative $4$-th root of $I^{-1}$; i.e $I^\frac{-1}{4}\in C^1((0, e^\Gamma))$ s.t $(I^\frac{-1}{4})^4:= I^\frac{-1}{4} \circ I^\frac{-1}{4}\circ I^\frac{-1}{4}\circ I^\frac{-1}{4} =I^{-1}$; and $I^\frac{-1}{4}(x)<x$ on $(0, e^\Gamma)$. 

Next we claim that $(I^\frac{-1}{4})'>0$ on $(0, e^\Gamma)$. Since $I^\frac{-1}{4}$ is strictly increasing and $C^1$, $(I^\frac{-1}{4})'\geq0$. If $\exists x'\in (0, e^\Gamma)$ s.t $(I^\frac{-1}{4})'(x')=0$, then $0<\frac{1}{I'(I^{-1}(x'))}=(I^{-1})'(x')=(I^\frac{-1}{4})' (I^\frac{-3}{4}(x'))\cdot (I^\frac{-1}{4})'(I^\frac{-2}{4}(-x))\cdot (I^\frac{-1}{4})'(I^\frac{-1}{4} (x'))\cdot (I^\frac{-1}{4})'(x') =0$, which is a contradiction.

Thus, by The Inverse Function Theorem there exists an inverse $I^\frac{1}{4}\in C^1((0,I^\frac{-1}{4}(e^\Gamma)))$. Since $I^\frac{-1}{4}(x)<x$ and is strictly increasing on $(0, e^\Gamma)$, $I^\frac{1}{4}(x)>x$ and is strictly increasing on $(0,I^\frac{-1}{4}(e^\Gamma))$. Notice, $I^\frac{-1}{4}(e^\Gamma) =I^\frac{3}{4}(1)>1$.

\item Invertibility is given by the first item. In addition, we saw that $I^\frac{- 1}{4}(x)<x$ on $(0,e^\Gamma)$. From continuity, the sequence $\{I^\frac{-n}{4}(x)\}_{n\geq0}$ cannot be bounded from below by a positive number and so it converges to $0$. 
\item $I^{-1}(xe^{\Gamma x^\frac{1}{\gamma}})= x=xe^{\Gamma x^\frac{1}{\gamma}} \cdot e^{-\Gamma x^\frac{1}{\gamma}} \geq xe^{\Gamma x^\frac{1}{\gamma}} \cdot e^{-\Gamma \left(x e^{\Gamma x^\frac{1}{\gamma}}\right)^\frac{1}{\gamma}}$, then $I^{-1}(x)\geq xe^{-\Gamma x^\frac{1}{\gamma}}$.
\item Assume $\sum_{n\geq 0}(I^{-n}(x))^\frac{1}{\gamma}<\infty$, then 
$$\infty> \sum_{n\geq 0}(I^{-n}(x))^\frac{1}{\gamma}\geq x^\frac{1}{\gamma}\sum_{n\geq0}e^{-\frac{\Gamma}{\gamma}\sum_{k=0}^n(I^{-k}(x))^\frac{1}{\gamma}}\Rightarrow \sum_{k=0}^n(I^{-k}(x))^\frac{1}{\gamma} \xrightarrow[n\rightarrow\infty]{}\infty.$$
	A contradiction.
\end{enumerate}
	
\end{proof}

\begin{lemma}\label{FaveForNow}
	$\forall \tau>0$, $\forall x>0$, $\sum_{n\geq0}(I^{-n}(x))^\frac{1+\tau}{\gamma}<\infty$.
\end{lemma}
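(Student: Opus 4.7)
The plan is to establish the decay rate $I^{-n}(x) = O(n^{-\gamma})$, which immediately yields the claim since $(I^{-n}(x))^{(1+\tau)/\gamma} = O(n^{-(1+\tau)})$ is then summable for every $\tau > 0$.

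Set $a_n := I^{-n}(x)$ and $b_n := a_n^{1/\gamma}$. The identity $I(a_{n+1}) = a_n$ unwinds, via the definition $I(y) = y e^{\Gamma y^{1/\gamma}}$, into the recursion
$$b_n = b_{n+1}\exp\!\left(\tfrac{\Gamma}{\gamma} b_{n+1}\right).$$
Lemma \ref{forI}(2) gives $a_n \downarrow 0$, hence $b_n \downarrow 0$ as well.

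The key step is to derive a linear-in-$n$ lower bound on $1/b_n$. Using $e^t - 1 \geq t$, the recursion gives
$$b_n - b_{n+1} = b_{n+1}\!\left(e^{\Gamma b_{n+1}/\gamma} - 1\right) \geq \tfrac{\Gamma}{\gamma}\, b_{n+1}^2,$$
so that
$$\frac{1}{b_{n+1}} - \frac{1}{b_n} = \frac{b_n - b_{n+1}}{b_n b_{n+1}} \geq \frac{\Gamma}{\gamma}\cdot\frac{b_{n+1}}{b_n}.$$
Because $b_n/b_{n+1} = \exp(\Gamma b_{n+1}/\gamma) \to 1$, one can pick $N_0$ so that $b_{n+1}/b_n \geq 1/2$ for all $n \geq N_0$. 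Telescoping from $N_0$ then yields
$$\frac{1}{b_n} \geq \frac{1}{b_{N_0}} + \frac{\Gamma}{2\gamma}(n - N_0) \qquad (n \geq N_0),$$
so $b_n \leq C/n$ for some constant $C = C(x,\Gamma,\gamma) > 0$ and all $n \geq 1$ (absorbing the first $N_0$ terms into $C$).

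The lemma then follows: $(I^{-n}(x))^{(1+\tau)/\gamma} = b_n^{1+\tau} \leq C^{1+\tau}/n^{1+\tau}$, and $\sum_{n\geq 1} n^{-(1+\tau)} < \infty$. The main subtlety is controlling the ratio $b_{n+1}/b_n$ uniformly from below, but this is automatic from the explicit formula $b_n/b_{n+1} = e^{\Gamma b_{n+1}/\gamma}$ together with $b_n \to 0$. Note the contrast with Lemma \ref{forI}(4): there one uses $\sum b_n = \infty$ to assert that $b_n$ does not decay too fast, while here the sharper bound $b_n = O(1/n)$ makes every super-harmonic exponent $1+\tau > 1$ summable, which is exactly the threshold behavior one expects from a sequence satisfying $b_n - b_{n+1} \asymp b_{n+1}^2$.
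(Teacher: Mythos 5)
Your proof is correct, and it is cleaner and strictly sharper than the argument in the paper. Both arguments begin by absorbing the $\gamma$-th power: you pass to $b_n = (I^{-n}(x))^{1/\gamma}$, which is the same normalization the paper effects by conjugating $I = S\circ J\circ S^{-1}$ with $S(x)=x^\gamma$ and working with $J^{-n}$. After that the routes diverge. The paper proves a \emph{lower} bound $J^{-n}(x)\geq \frac{1}{A(n+n_x)}$ by an induction adapted from Hu--Young (Claim~1), separately computes the Taylor expansion $J^{-1}(x)=x-\frac{\Gamma}{\gamma}x^2+O(x^3)$ (Claim~2) to get the one-step upper bound $J^{-1}(x)\leq xe^{-Bx}$, and then substitutes the lower bound into the exponent of the iterated upper bound. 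Because the constants $A$ and $B$ carry factors $e^{\pm\tau/3}$, this yields only $J^{-n}(x)=O\bigl(n^{-e^{-2\tau/3}}\bigr)$ with an exponent strictly less than $1$, which forces the paper to restrict w.l.o.g.\ to small $\tau$ so that $(1+\tau)e^{-2\tau/3}>1$. Your argument sidesteps all of this: the single elementary inequality $e^t-1\geq t$ gives $b_n-b_{n+1}\geq\frac{\Gamma}{\gamma}b_{n+1}^2$, the reciprocal trick converts this to $\frac{1}{b_{n+1}}-\frac{1}{b_n}\geq\frac{\Gamma}{\gamma}\cdot\frac{b_{n+1}}{b_n}$, and since $b_n/b_{n+1}=e^{\Gamma b_{n+1}/\gamma}\to1$ the ratio is eventually bounded below, so telescoping gives the clean $b_n=O(1/n)$. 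This is the exact decay rate (a matching lower bound $b_n\gtrsim 1/n$ follows from $e^t-1\leq te^t$), it requires no Taylor-series claim and no inductive lemma, and it handles every $\tau>0$ at once. As a bonus, your bound is at least as strong as the displayed estimate \eqref{forSummVar} that the paper later invokes in the proof of Proposition~\ref{firstofchapter}(5), so it is a genuine drop-in replacement. The only cosmetic gap is the implicit assumption that $x$ lies in the domain where $I^{-n}(x)$ is defined for all $n\geq 0$ (i.e.\ $x<e^\Gamma$); the paper makes the same implicit assumption.
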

\begin{proof} Let $\tau>0$. Let $S(x):=x^\gamma$, $J(x):=x e^{\frac{\Gamma}{\gamma}x}$, then $I=S\circ J\circ S^{-1}$. $J$ is a strictly increasing function, and so it is invertible. $J(x)=x+\frac{\Gamma}{\gamma}x^2+O(x^3)$. Notice, $\forall n\geq0$, $(I^{-n}(x))^\frac{1+\tau}{\gamma}=(S\circ J^{-n}\circ S^{-1}(x))^{1+\tau}=(J^{-n}(x^\frac{1}{\gamma}))^{1+\tau}$, then it is enough to show $\sum_{n\geq 0}(J^{-n}(x))^{1+\tau}<\infty $ $\forall x>0$. We therefore continue to get estimates for $J^{-n}(x)$. Assume w.l.o.g that $x$ is so small that $J(x)\leq x+ \frac{\Gamma}{\gamma}e^\frac{\tau}{3}x^2
$ (which is possible since $J^{-n}(x)$ is a strictly decreasing diminishing sequence, see Lemma \ref{ladderFunc}). 
$J^{-n}(x)$ does not decrease faster than a linear rate. We give a short induction proof of this which is adapted from \cite{YoungCounterExample}, together with an estimate on the linear rate.

\medskip
\textit{Claim 1:} $\forall n\geq0$, $J^{-n}(x)\geq \frac{1}{A (n+n_x)}$ where $A:=\frac{\Gamma}{\gamma}e^\frac{\tau}{3}$ and $n_x:=\frac{1}{Ax}$.

\textit{Proof:} Proof is by induction, and the basis of the induction is given by the definition of $n_x$. Assume that the statement holds for all $n\leq k$, and assume for contradiction that $J^{-(k+1)}(x)< \frac{1}{A (k+1+n_x)} $, then
\begin{align*}
J^{-k}(x)=&J(J^{-(k+1)}(x))\leq J^{-(k+1)}(x)+A \cdot(J^{-(k+1)}(x))^2<\frac{1}{A (k+1+n_x)}\cdot (1+\frac{1}{k+1+n_x})\\
=& \frac{1}{A (k+n_x)}\cdot(1-\frac{1}{k+1+n_x}) (1+\frac{1}{k+1+n_x})= \frac{1}{A (k+n_x)}\cdot(1-\frac{1}{(k+1+n_x)^2})< \frac{1}{A (k+n_x)}.
\end{align*}
A contradiction.

\medskip
\textit{Claim 2:} $J^{-1}(x)=x-\frac{\Gamma}{\gamma}x^2+O(x^3)$.

\textit{Proof:} Write $J^{-1}(x)=x-\frac{\Gamma}{\gamma}x^2+\delta(x)$, and let $\omega(x)=J(x)-x-\frac{\Gamma}{\gamma} x^2=O(x^3)$. Then,
\begin{align*}x=&J^{-1}(J(x))=x+\frac{\Gamma}{\gamma}x^2+\omega(x)- \frac{\Gamma}{\gamma}(x+\frac{\Gamma}{\gamma}x^2+\omega(x))^2+\delta\circ J(x)=x+O(x^3)+ \delta\circ J(x).
\end{align*}
Then we get $\delta(x)=O((J^{-1}(x))^3)=O(x^3)$. This concludes claim 2.

Therefore we may choose $x$ so small w.l.o.g that $J^{-1}(x)\leq xe^{-Bx}$, $B=\frac{\Gamma}{\gamma}e^\frac{-\tau}{3}$. Then $\forall n\geq0$,

\begin{align*}
	J^{-n}(x)\leq & x e^{-\sum_{k=0}^{n-1}BJ^{-k}(x)}\leq xe^{-\frac{B}{A}\sum_{k=0}^{n-1}\frac{1}{k+n_x}}\leq xe^{-e^\frac{-2\tau}{3}\sum_{k=0}^{n-1}\frac{1}{k+n_x}}\\
	=& xe^{-e^\frac{-2\tau}{3}\sum_{k=0}^{n-1}\frac{1}{k+1}+ e^\frac{-2\tau}{3}(\sum_{k=0}^{n-1}\frac{1}{k+1}-\frac{1}{k+n_x})}= xe^{-e^\frac{-2\tau}{3}\sum_{k=0}^{n-1}\frac{1}{k+1}+ e^\frac{-2\tau}{3}\sum_{k=0}^{n-1}\frac{n_x-1}{(k+1)(k+n_x)}}.
\end{align*}
Let $E_0:=\sup_{n\geq0}|\log (n+1)-\sum_{k=0}^{n-1}\frac{1}{k+1}|<\infty$, then
\begin{align}\label{forSummVar}
	[J^{-n}(x)]^{1+\tau}\leq & \left[e^{E_0+n_x\frac{\pi^2}{6}}xe^{-e^\frac{-2\tau}{3}\log (n+1)}\right]^{1+\tau}= e^{(1+\tau)(E_0+n_x\frac{\pi^2}{6})}x^{1+\tau}\cdot \frac{1}{(n+1)^{(1+\tau)e^\frac{-2\tau}{3}}}.
\end{align}
Therefore, when w.l.o.g $\tau>0$ is so small that $(1+\tau)e^\frac{-2\tau}{3}>1$, $\sum_{n\geq0 } [J^{-n}(x)]^{1+\tau}<\infty$.
\end{proof}

\begin{definition}[Strong temperability]\label{strongtemp}
	$\mathcal{I}:=\{I^\frac{-\ell}{4}(1)\}_{\ell\geq 0}$, $\widetilde{Q}_\epsilon(x):=C_{\beta,\epsilon}\cdot\frac{1}{\|C_0^{-1}(x)\|^{2\gamma}}$,
$$Q_\epsilon(x):=\max\{q\in \mathcal{I} |q\leq \widetilde{Q}_\chi(x)\}$$
\end{definition}
\begin{definition}\label{NUHsharp}
A point $x\in0\text{-}\mathrm{summ}$ is called {\em strongly temperable} if  $\exists q:\{f^n(x)\}_{n\in\mathbb{Z}}\rightarrow \mathcal{I}$ s.t
\begin{enumerate}
	\item $q\circ f=I^{\pm1}(q)$,
	\item $\forall n\in\mathbb{Z}$, $q(f^n(x))\leq Q_\epsilon(f^n(x))$.
\end{enumerate}
If in addition to (1),(2), $q:\{f^n(x)\}_{n\in\mathbb{Z}}\rightarrow \mathcal{I}$ can be chosen to also satisfy,

\begin{enumerate}
\item[(3)]$\limsup\limits_{n\rightarrow\pm\infty}q(f^n(x))>0$,
\end{enumerate}
then we say that $x$ is {\em recurrently strongly temperable}.
Define $\ST:= \{x\in0\text{-}\mathrm{summ}:x\text{ is }\text{strongly temperable}\}$, and  $\RST:= \{x\in0\text{-}\mathrm{summ}:x\text{ is recurrently }\text{strongly temperable}\}$. $\ST$ is the set of {\em strongly temperable points}, and $\RST\subseteq\ST$ is the set of {\em recurrently strongly temperable points}.

A function $q :\{f^n(x)\}_{n\in\mathbb{Z}}\rightarrow \mathcal{I} $ which satisfies conditions (1) and (2) is called {\em a strongly tempered kernel} for the point $x$.
\end{definition}

\noindent\textbf{Remark:} Notice that the supremum over a collection of strongly tempered kernels for a point is also a strongly tempered kernel for the point, and so we may discuss ``the optimal" one. In addition, this definition depends on the choice of the scaling parameters; the scaling parameters are fixed in this paper, and so we abuse notation and don't notate the dependence on them. However, in the general case, ``strong temperability" refers to being strongly temperable w.r.t any (proper) choice of scaling parameters.

\begin{definition}\label{scalingfuncs} {\em Scaling functions ($S/U$ parameters)}: For any $x\in M$, $\xi\in T_xM$ we define:
$$S^2(x,\xi)=2\sum_{m=0}^\infty|d_xf^m\xi|^2, U^2(x,\xi)=2\sum_{m=0}^\infty|d_xf^{-m}\xi|^2\in [2|\xi|^2,\infty].$$
Here, $|\cdot|$ is the Riemannian norm on $T_xM$.
\end{definition}
\noindent\textbf{Remark}: $U(x,\xi),S(x,\xi)$ will be calculated in two cases: \begin{enumerate}
    \item Where $x\in 0$-$\mathrm{summ}$, $\xi\in H^{u/s}(x)$ respectively (in this case the quantities are finite).
    \item Where $x$ belongs to a $u/s$- manifold $V^{u/s}$ (Definition \ref{def135}), and $\xi\in T_x V^{u/s}$ respectively.
\end{enumerate}
\begin{theorem}\label{SandU} There exists an $F_0>0$ only depending on $f$ and $M$  such that for any $x\in 0$-$\mathrm{summ}$: $$\frac{\|C_0^{-1}(f(x))\|}{\|C_0^{-1}(x)\|}\in[F_0^{-1},F_0]$$
\end{theorem}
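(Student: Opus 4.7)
The plan is to compare the two operator norms by exploiting the recursive structure of the Pesin inner products $\langle\cdot,\cdot\rangle'$ defined inside the proof of Theorem~\ref{pesinreduction}. Equation (\ref{omrisuggestnumber}) gives $|C_0^{-1}(x)v|^2=\langle v,v\rangle_x'$, so
\[
\|C_0^{-1}(x)\|^2=\sup_{v\in T_xM,\,|v|=1}\langle v,v\rangle_x',
\]
where the supremum is taken over \emph{Riemannian} unit vectors.

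First I would compare $C_0^{-1}(f(x))\circ d_xf$ with $C_0^{-1}(x)$ on a single tangent vector $\xi=\xi^s+\xi^u\in H^s(x)\oplus H^u(x)$, using the telescoping identities
\begin{align*}
\langle d_xf\xi^s,d_xf\xi^s\rangle_{f(x),s}' &= \langle\xi^s,\xi^s\rangle_{x,s}'-2|\xi^s|^2,\\
\langle d_xf\xi^u,d_xf\xi^u\rangle_{f(x),u}' &= \langle\xi^u,\xi^u\rangle_{x,u}'+2|d_xf\xi^u|^2,
\end{align*}
which are already visible in (\ref{lowerboundstoonice}) and its unstable counterpart. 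Because $2|\xi^{s/u}|^2$ is a single term of the corresponding defining series, and because $\|d_yf^{\pm1}\|\leq M_f:=\max_{y\in M}\{\|d_yf\|,\|d_yf^{-1}\|\}$ uniformly in $y$, each of the ratios $\langle d_xf\xi^{s/u},d_xf\xi^{s/u}\rangle_{f(x),s/u}'\big/\langle\xi^{s/u},\xi^{s/u}\rangle_{x,s/u}'$ can be trapped in $\bigl[(1+M_f^2)^{-1},\,1+M_f^2\bigr]$. Summing the $s$- and $u$-contributions, which are orthogonal in the Pesin inner product by construction, yields $|C_0^{-1}(f(x))d_xf\xi|=e^{\pm\log\sqrt{1+M_f^2}}\,|C_0^{-1}(x)\xi|$ for every $\xi\in T_xM$.

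To finish, I would convert this two-sided bound on the composition $C_0^{-1}(f(x))\circ d_xf$ into a two-sided bound on $C_0^{-1}(f(x))$ alone by changing variables $\eta=d_xf\xi$ (respectively $\xi=d_xf^{-1}\eta$) and absorbing the extra derivative factor via the uniform Riemannian bound $M_f^{-1}|\xi|\leq|d_xf\xi|\leq M_f|\xi|$. Taking the supremum over Riemannian unit vectors then gives the theorem with $F_0:=M_f\sqrt{1+M_f^2}$, a constant depending only on $f$ and $M$. There is no substantive obstacle in this argument; the only bookkeeping point worth flagging is that the operator norm of $C_0^{-1}$ is measured against the Riemannian unit sphere (not against the $\langle\cdot,\cdot\rangle'$ unit sphere, on which $C_0^{-1}$ is automatically an isometry), which is what forces the extra factor of $M_f$ in the final step.
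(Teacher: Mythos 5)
Your argument is correct and rests on the same two ingredients the paper uses: the telescoping identities $S^2(f(x),d_xfv)=S^2(x,v)-2$ and $U^2(f(x),d_xfv)=U^2(x,v)+2$, together with the uniform bound $M_f$ on $\|d_xf^{\pm1}\|$. The paper packages these by fixing a maximizing pair $(\xi,\eta)\in H^s(f(x))\times H^u(f(x))$ for $\|C_0^{-1}(f(x))\|^2$, testing $\|C_0^{-1}(x)\|^2$ against the pulled-back pair, and then splitting the resulting quotient into two factors, each bounded by the respective ingredient (its Claims~2 and~3). You instead first establish the pointwise two-sided bound $|C_0^{-1}(f(x))d_xf\xi|\asymp|C_0^{-1}(x)\xi|$ uniformly in $\xi$ --- which is exactly the statement $\|D_0(x)^{\pm1}\|\le\kappa$ already recorded in Theorem~\ref{pesinreduction}, since $C_0^{-1}(f(x))\circ d_xf = D_0(x)\circ C_0^{-1}(x)$ --- and then peel off $d_xf$ at a cost of $M_f$. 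The two organizations are interchangeable and cost the same $F_0=O(\kappa M_f)$; yours is arguably the cleaner of the two because it reuses the block-form bound from Theorem~\ref{pesinreduction} rather than re-deriving it inside the supremum. One minor point of care you handled implicitly but should make explicit if writing this up: the step $d_xf[H^{s/u}(x)]\subseteq H^{s/u}(f(x))$ follows from the summability condition together with the \emph{uniqueness} of the splitting at $f(x)$ asserted in Definition~\ref{0-summ}; without uniqueness, orthogonality of the two pieces in $\langle\cdot,\cdot\rangle_{f(x)}'$ would not be automatic.
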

\begin{proof} Notice that by definition:
$$\|C_0^{-1}(x)\|^2=\sup_{\xi\in H^s(x),\eta\in H^u(x),\xi+\eta\neq0}\frac{S^2(x,\xi)+U^2(x,\eta)
}{|\xi+\eta|^2}.$$

\textit{Claim 1}: For a fixed $x\in 0$-$\mathrm{summ}$ $(\xi,\eta)\mapsto \frac{S^2(x,\xi)+U^2(x,\eta)}{|\xi+\eta|^2}$ is continuous where defined.

\textit{Proof}: The norms of vectors are clearly  continuous. So is
$S^2$,
since $S^2(x,\xi)=\langle\xi,\xi\rangle_{x,s}'$- 
a bilinear form
on a finite-dimensional linear space
. The same argument applies for $U^2$
. This concludes the proof of claim 1.

\medskip
A continuous function on a compact set attains its maximum, and the mapping $(\xi,\eta)\mapsto 
\sqrt{S^2(x,\xi)+U^2(x,\eta)}
$ is homogeneous. So let $\xi\in H^s(f(x))$ and $\eta\in H^u(f(x))$ be any two tangent vectors s.t $\frac{S^2(f(x),\xi)+U^2(f(x),\eta)}{|\xi+\eta|^2}$ is maximal. Whence,
\begin{align}\label{eq1}\frac{\|C_0^{-1}(f(x))\|^2}{\|C_0^{-1}(x)\|^2}=&\frac{\sup_{\xi_1,\eta_1} (S^2(f(x),\xi_1)+U^2(f(x),\eta_1))/|\xi_1+\eta_1|^2}{\sup_{\xi_2,\eta_2} (S^2(x,\xi_2)+U^2(x,\eta_2))/|\xi_2+\eta_2|^2}\nonumber\\
\leq&\frac{\sup_{\xi_1,\eta_1} (S^2(f(x),\xi_1)+U^2(f(x),\eta_1))/|\xi_1+\eta_1|^2}{(S^2(x,d_{f(x)}f^{-1}\xi)+U^2(x,d_{f(x)}f^{-1}\eta))/|d_{f(x)}f^{-1}\xi+d_{f(x)}f^{-1}\eta|^2}\nonumber\\
    =&\frac{S^2(f(x),\xi)+U^2(f(x),\eta)}{S^2(x,d_{f(x)}f^{-1}\xi)+U^2(x,d_{f(x)}f^{-1}\eta)}\cdot\frac{|d_{f(x)}f^{-1}\xi+d_{f(x)}f^{-1}\eta|^2}{|\xi+\eta|^2}.
\end{align}
\textit{Claim 2}: The left fraction of (\ref{eq1}) is bounded away from $0$ and $\infty$ uniformly by a constant (and its inverse) depending only on $f$ and $M$.

\textit{Proof}: Let $v\in H^s(x)$ (w.l.o.g $|v|=1$),
$$S^2(f(x),d_xfv)=2\sum_{m=0}^\infty|d_{f(x)}f^md_xfv|^2=2\sum_{m=0}^\infty|d_{x}f^mv|^2-2=S^2(x,v)-2.$$
This, and the inequality $S^2(f(x),d_xfv)>2M_f^{-2}(M_f^{-2}+1)>2M_f^{-4}$, yields
$$S^2(x,v)(1-\frac{1}{M_f^{-4}})<S^2(f(x),d_xfv)< S^2(x,v).$$
A similar calculation gives $U^2(f(x),d_xfv)=U^2(x,v)+2$, and so since $U^2(x,v)>2$, $\frac{U^2(f(x),d_xfv)}{U^2(x,v)}\in[1,2]$. The claim follows.

\textit{Claim 3}: The right fraction of (\ref{eq1}) is bounded away from $0$ and $\infty$ uniformly by constants
depending only on $f$ and $M$.

\textit{Proof}: Recall that $M_f:=\max_{x\in M}\{\|d_xf\|,\|d_xf^{-1}\|^{-1}\}$. Then
$$\frac{|d_{f(x)}f^{-1}\xi+d_{f(x)}f^{-1}\eta|^2}{|\xi+\eta|^2}=\frac{|d_{f(x)}f^{-1}(\xi+\eta)|^2}{|\xi+\eta|^2}\in [M_f^{-2},M_f^2].$$
The three claims together give the upper bound in the theorem. If we repeat the previous argument with $\xi,\eta$ maximizing the denominator instead of the numerator, we get the lower bound.
\end{proof}
\begin{lemma}\label{contraction}
Given $x\in 0$-$\mathrm{summ}$, $C_0(x)$ is a contraction (w.r.t to the Eucleadan norm on $\mathbb{R}^d$ and the Riemannian norm on the tangent space).
\end{lemma}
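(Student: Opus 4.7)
The claim is that $\|C_0(x)\|\le 1$, equivalently $|w|\le|C_0^{-1}(x)w|$ for every $w\in T_xM$, with $|\cdot|$ the Riemannian norm on the left and the Euclidean norm on the right. My plan is to unwind the defining identity (\ref{omrisuggestnumber}) of $C_0^{-1}$, extract the $m=0$ contributions in the series defining $\langle\cdot,\cdot\rangle_x'$, and combine this with a Cauchy--Schwarz estimate that absorbs the non-orthogonality of $H^s(x)$ and $H^u(x)$.

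First, I would fix $w\in T_xM$ and write $w=\pi_s w+\pi_u w$ along the splitting $T_xM=H^s(x)\oplus H^u(x)$. By (\ref{omrisuggestnumber}) and (\ref{LIP}),
\begin{equation*}
|C_0^{-1}(x)w|^2=\langle w,w\rangle'_x=\langle\pi_sw,\pi_sw\rangle'_{x,s}+\langle\pi_uw,\pi_uw\rangle'_{x,u}.
\end{equation*}
Each of these summands is an absolutely convergent series (this is the role of summability), and keeping only the $m=0$ term in each gives
\begin{equation*}
\langle\pi_sw,\pi_sw\rangle'_{x,s}\ge 2|\pi_sw|^2,\qquad \langle\pi_uw,\pi_uw\rangle'_{x,u}\ge 2|\pi_uw|^2,
\end{equation*}
so that
\begin{equation*}
|C_0^{-1}(x)w|^2\ge 2\bigl(|\pi_sw|^2+|\pi_uw|^2\bigr).
\end{equation*}

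The second (and only delicate) step is to compare the right-hand side with $|w|^2$, which would be trivial were $H^s(x)\perp H^u(x)$, but in general is not. Using the Riemannian inner product and Cauchy--Schwarz,
\begin{equation*}
|w|^2=|\pi_sw|^2+2\langle\pi_sw,\pi_uw\rangle+|\pi_uw|^2\le |\pi_sw|^2+|\pi_uw|^2+2|\pi_sw|\,|\pi_uw|\le 2\bigl(|\pi_sw|^2+|\pi_uw|^2\bigr),
\end{equation*}
the last inequality being $2ab\le a^2+b^2$. Combining this with the previous display yields $|w|^2\le|C_0^{-1}(x)w|^2$, which is exactly what was to be shown.

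The only place where there is any content is the interaction between the non-orthogonal decomposition $T_xM=H^s(x)\oplus H^u(x)$ and the orthogonal decomposition $\mathbb{R}^d=\mathbb{R}^{s(x)}\oplus(\mathbb{R}^{s(x)})^\perp$ that $C_0^{-1}(x)$ realizes; this is precisely compensated by the factor $2$ built into the definition of the $\langle\cdot,\cdot\rangle'_{x,s/u}$, which is why the Remark after Definition \ref{0-summ} highlights that power. I do not foresee any obstacle beyond the bookkeeping above.
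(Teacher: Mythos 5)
Your proof is correct and is essentially the paper's own argument run from the other side of the map: the paper decomposes $w\in\mathbb{R}^d$ orthogonally and uses $|C_0^{-1}(x)\xi|^2>2|\xi|^2$ on each of $H^{s}(x),H^u(x)$ together with $(a+b)^2\le 2(a^2+b^2)$, while you decompose $w\in T_xM$ along $H^s(x)\oplus H^u(x)$ and absorb the non-orthogonality by Cauchy--Schwarz, which is the same elementary inequality. Since both arguments rest on exactly the same two ingredients (the factor $2$ built into $\langle\cdot,\cdot\rangle_x'$ and $2ab\le a^2+b^2$), there is nothing further to add.
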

\begin{proof} 
$\forall v_{s}\in H^{s}$,
$|C_0^{-1}(x)v_{s}|^2={|v_{s}|'}_{x,s}^2>2|v_{s}|^2$. Similarly $\forall v_{u}\in H^{u}$,
$|C_0^{-1}(x)v_{u}|^2={|v_{u}|'}_{x,u}^2>2|v_{u}|^2$. Hence, $\forall w\in \mathbb{R}^d:w=w_s+w_u$:
$$|C_0(x)w|^2=(|C_0(x)w_s|+|C_0(x)w_u|)^2<(\frac{1}{2}(|w_s|+|w_u|))^2\leq(|w_s+w_u|)^2,$$
where the last inequality is due to the fact that $w_s\bot w_u$. \end{proof}

\subsection{Pesin charts}\label{forrho}

Let $\exp_x:T_xM\rightarrow M$ be the exponential map. Since $M$ is compact, there exist $r=r(M),\rho=\rho(M) >0$ s.t $\forall x\in M$ $\exp_x$ maps $B_{\sqrt{d}\cdot r}^x:=\{v\in T_xM: |v|_x<\sqrt{d\cdot} r\}$ diffeomorphically onto a neighborhood of $B_\rho(x)=\{y\in M: d(x,y)<\rho\}$, where $d(\cdot,\cdot)$ is the distance function on $M\times M$, w.r.t to the Riemannian metric.
\begin{definition}\label{balls}$\text{ }$\\
\begin{enumerate}
\item[(a)] 
    \begin{enumerate}
        \item[(i)] $B_\eta(0), R_\eta(0)$ are open balls located at the origin with radius $\eta$ in $\mathbb{R}^d$ w.r.t Euclidean norm and supremum norm respectively.
        \item[(ii)] For every $x\in M$, we define the following open ball in $T_xM$: $\forall v\in T_xM, r>0$, $B_r^x(x):=\{w\in T_xM: |v-w|<r\}$.
    \end{enumerate}

\item[(b)] We take $\rho$ so small that $(x,y)\mapsto \exp_x^{-1}(y)$ is well defined and 2-Lipschitz on $B_\rho(z)\times B_\rho(z)$ for $z\in M$, and so small that $\|d_y \exp_x^{-1}\|\leq2$ for all $y\in B_\rho(x)$.

\item[(c)] For all $x\in 0$-$\mathrm{summ}$: $$\psi_x:=\exp_x\circ C_0(x):R_r(0)\rightarrow M.$$ Since $C_0(x)$ is a contraction, $\psi_x$ maps $R_r(0)$ diffeomorphically into $M$.

\item[(d)] $$f_x:=\psi_{f(x)}^{-1}\circ f\circ \psi_x :R_r(0)\rightarrow \mathbb{R}^d.$$ Then we get $d_0f_x=D_0(x)$ (c.f Theorem \ref{pesinreduction}).

\end{enumerate}
\end{definition}

\noindent\textbf{Remark}: Depending on context, the notations of $(a)(i)$ will be used to denote balls with the same respective norms in some subspace of $\mathbb{R}^d$ (i.e $\{v\in \mathbb{R}^{s(x)}:|v|<\eta\}$, $x\in 0$-$\mathrm{summ}$).

\begin{theorem}\label{linearization} (See \cite{BP}, theorem 5.6.1) For all $\epsilon$ small enough
, and $x\in 0$-$\mathrm{summ}$:
\begin{enumerate}
    \item $\psi_x(0)=x$ and $\psi_x:R_{Q_\epsilon(x)}(0)\rightarrow M$ is a diffeomorphism onto its image s.t $\|d_u\psi_x\|\leq2$ for every $u\in R_{Q_\epsilon(x)}(0)$.
\item $f_x$ is well defined and injective on $R_{Q_\epsilon(x)}(0)$, and \begin{enumerate}
\item $f_x(0)=0,d_0f_x=D_0(x)$. 
\item $\|f_x-d_0f_x\|_{C^{1+\frac{3\beta}{4}}}\leq \epsilon^2 |v|^\frac{3\beta}{4}$ on $R_{|v|}(0)$ $\forall v\in R_{Q_\epsilon(x)}(0)$ and in particular $\|f_x-d_0f_x\|_{C^{1+\frac{\beta}{2}}}\leq Q_\epsilon(x)^\frac{\beta}{2}$ on $R_{Q_\epsilon(x)}(0)$.    
\end{enumerate}
\item The symmetric statement holds for $f_x^{-1}$.
\end{enumerate}
\end{theorem}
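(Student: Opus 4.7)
My strategy is to follow the classical Pesin chart construction (compare \cite{BP}, Theorem~5.6.1) adapted to the summable setting, where $C_0(x)$ replaces the usual $\chi$-dependent transformation and the smallness of the domain is engineered by $Q_\epsilon(x) \leq C_{\beta,\epsilon}/\|C_0^{-1}(x)\|^{2\gamma}$.

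For Part~(1) I would first check that $\psi_x = \exp_x \circ C_0(x)$ is defined on $R_{Q_\epsilon(x)}(0)$. Since $C_0(x)$ is a contraction (Lemma~\ref{contraction}), the image $C_0(x)[R_{Q_\epsilon(x)}(0)]$ lies inside the tangent ball $B_{\sqrt{d}\, Q_\epsilon(x)}^{\,x}$; as $Q_\epsilon(x) \leq C_{\beta,\epsilon} < r$ for sufficiently small $\epsilon$, the exponential map $\exp_x$ is a diffeomorphism there, so $\psi_x$ is as well. The identity $\psi_x(0) = x$ is immediate, and the bound $\|d_u \psi_x\| \leq 2$ follows by composing the 2-Lipschitz property of $\exp_x$ (Definition~\ref{balls}(b)) with the contraction $\|C_0(x)\| \leq 1$.

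For Part~(2a), to verify that $f_x$ is well-defined on $R_{Q_\epsilon(x)}(0)$ I would chain together the 2-Lipschitz bound on $\exp^{-1}$, the global bound $\|df\| \leq M_f$, the 2-Lipschitz bound on $\psi_x$, and Theorem~\ref{SandU}, obtaining
\[|\psi_{f(x)}^{-1}(f(\psi_x(u)))| \leq 4 M_f F_0 \sqrt{d}\, \|C_0^{-1}(x)\|\, Q_\epsilon(x) \leq 4 M_f F_0 \sqrt{d}\, C_{\beta,\epsilon}\, \|C_0^{-1}(x)\|^{1-2\gamma},\]
which is $< r$ for small $\epsilon$ since $\gamma > 1/2$ and $\|C_0^{-1}(x)\| \geq 1$ (the latter is a consequence of Lemma~\ref{contraction}). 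Injectivity of $f_x$ is inherited from injectivity of $\psi_x$, $f$, and $\psi_{f(x)}^{-1}$, and the identity $d_0 f_x = C_0^{-1}(f(x))\,d_xf\,C_0(x) = D_0(x)$ follows from $d_0 \exp_x = \mathrm{Id}$ and the chain rule.

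The main obstacle is the H\"older estimate in (2b). By the chain rule,
\[df_x(u) = C_0^{-1}(f(x))\cdot(d\exp_{f(x)}^{-1})_{f(\psi_x(u))}\cdot d_{\psi_x(u)}f\cdot (d\exp_x)_{C_0(x)u}\cdot C_0(x).\]
The $u$-dependent composition to the right of $C_0^{-1}(f(x))$ is $\beta$-H\"older in $u$ with constant bounded only in terms of $f$, $M$, and $\|C_0(x)\|^{1+\beta} \leq 1$, using the $C^{1+\beta}$ regularity of $f$ and the smoothness of $\exp$. Pulling out $C_0^{-1}(f(x))$ and invoking Theorem~\ref{SandU} yields $[df_x]_{\beta,\, R_{Q_\epsilon(x)}(0)} \leq K_f F_0 \|C_0^{-1}(x)\|$ for a constant $K_f$ depending only on $f$ and $M$. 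The crucial step is then to convert this $\beta$-H\"older bound on the large domain into a $3\beta/4$-H\"older bound on the smaller ball $R_{|v|}(0)$ via the standard interpolation
\[[df_x]_{3\beta/4,\, R_{|v|}(0)} \leq [df_x]_{\beta,\, R_{|v|}(0)} \cdot (2|v|)^{\beta/4},\]
and to absorb the factor $|v|^{\beta/4}$ using $|v| \leq Q_\epsilon(x) = C_{\beta,\epsilon}/\|C_0^{-1}(x)\|^{2\gamma}$. The hypothesis $\gamma > 2/\beta$ makes $\|C_0^{-1}(x)\|^{1 - \gamma\beta/2} \leq 1$, so the bound collapses to a multiple of $C_{\beta,\epsilon}^{\beta/4} = 3^{-3/2}\epsilon^{45/2}$, which is far smaller than $\epsilon^2$. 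The remaining sup-norm components of the $C^{1+3\beta/4}$ estimate (on $f_x - d_0 f_x$ and $df_x - D_0(x)$) follow by integrating along rays from $0$, using $f_x(0) = 0$ and $df_x(0) = D_0(x)$. Finally, the symmetric statement for $f_x^{-1}$ is obtained by applying the same argument with the roles of $x$ and $f(x)$ interchanged: $f_x^{-1} = \psi_x^{-1} \circ f^{-1} \circ \psi_{f(x)}$ has the same formal structure.
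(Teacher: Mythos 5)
Your proposal follows exactly the same route as the paper's (which is itself a minor adaptation of Brin--Pesin \cite[Theorem~5.6.1]{BP} and Sarig \cite[Theorem~2.7]{Sarig}): chain-rule decomposition of $f_x$, the observation that $\|C_0(x)\|\leq 1$ keeps the inner Hölder constant controlled, and then absorption of the potentially unbounded factor $\|C_0^{-1}\|$ via the definition $Q_\epsilon(x)\leq C_{\beta,\epsilon}/\|C_0^{-1}(x)\|^{2\gamma}$ together with $\gamma>2/\beta$ — the same distortion-compensating mechanism the paper relies on. Your interpolation to exponent $3\beta/4$ and the reduction to the $\beta/2$ corollary by a further interpolation on $R_{Q_\epsilon(x)}(0)$ is correct and is precisely how the paper obtains both clauses of (2b); there is no substantive difference.
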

\begin{proof} The proof is similar to \cite[Theorem~5.6.1]{BP} and to \cite[Theorem~2.7]{Sarig}.
\end{proof}
\begin{definition} Suppose $x\in 0$-$\mathrm{summ}$ and $\eta\in (0,Q_\epsilon(x)]$, then the Pesin Chart $\psi_x^\eta$ is the map $\psi_x:R_\eta(0)\rightarrow M$. Recall, $R_\eta(0)=[-\eta,\eta]^d$.
\end{definition}
\begin{lemma}\label{omega0} For every $x\in 0$-$\mathrm{summ}$:

\begin{enumerate}
\item[(1)] $Q_\epsilon(x)<C_{\beta,\epsilon}\leq 2M_f^2$ on $0$-$\mathrm{summ}$.

\item[(2)]$\|C_0^{-1}(f^i(x))\|^{2\gamma}<F_0^{2\gamma}C_{\beta,\epsilon}/Q_\epsilon(x)$ for i=-1,0,1, where $F_0>0$ is a constant given by Theorem \ref{SandU}.

\item[(3)] For all $t>0$ $\#\{Q_\epsilon(x)| Q_\epsilon(x)>t, x\in M\}<\infty$.

\item[(4)] $\exists\omega_0$ depending only on $M,f,\Gamma$ and $\gamma$ s.t $\omega_0^{-1}\leq Q_\epsilon\circ f/Q_\epsilon\leq \omega_0$ on $0$-$\mathrm{summ}$.
\end{enumerate}
\end{lemma}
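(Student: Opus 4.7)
The four items follow from the defining formulas together with the uniform cocycle bound from Theorem \ref{SandU}, the strict contraction property of $C_0$ from Lemma \ref{contraction}, and the structure of the ladder $\mathcal{I}$ described in Lemma \ref{forI}. I would prove them in the stated order, with (4) being the only item that requires more than direct substitution, since it must combine the bounds used in (1) and (2).

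For item (1), Lemma \ref{contraction} gives $\|C_0^{-1}(x)\| > 1$ strictly on $0\text{-}\mathrm{summ}$, so $Q_\epsilon(x) \leq \widetilde Q_\epsilon(x) = C_{\beta,\epsilon}/\|C_0^{-1}(x)\|^{2\gamma} < C_{\beta,\epsilon}$. The bound $C_{\beta,\epsilon} \leq 2 M_f^2$ is a smallness assumption on $\epsilon$: since the chain rule forces $\|d_xf\|\cdot\|d_xf^{-1}\|\geq 1$ pointwise we have $M_f \geq 1$, so $2M_f^2 \geq 2$, and $C_{\beta,\epsilon} = \epsilon^{90/\beta}/3^{6/\beta} \leq 2$ for all sufficiently small $\epsilon$. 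Item (2) is immediate from Theorem \ref{SandU}: for $i \in \{-1,0,1\}$, $\|C_0^{-1}(f^i(x))\| \leq F_0^{|i|}\|C_0^{-1}(x)\| \leq F_0 \|C_0^{-1}(x)\|$, hence $\|C_0^{-1}(f^i(x))\|^{2\gamma} \leq F_0^{2\gamma}\|C_0^{-1}(x)\|^{2\gamma} = F_0^{2\gamma} C_{\beta,\epsilon}/\widetilde Q_\epsilon(x) \leq F_0^{2\gamma} C_{\beta,\epsilon}/Q_\epsilon(x)$. For item (3), Lemma \ref{forI}(2) shows that $\mathcal{I} = \{I^{-\ell/4}(1)\}_{\ell\geq 0}$ is a sequence decreasing to $0$, so $\mathcal{I} \cap (t,\infty)$ is a finite set for every $t > 0$; since $Q_\epsilon$ takes values in $\mathcal{I}$, the claimed image is contained in this finite set.

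The heart of the lemma is item (4). Theorem \ref{SandU} directly gives $\widetilde Q_\epsilon(f(x))/\widetilde Q_\epsilon(x) \in [F_0^{-2\gamma}, F_0^{2\gamma}]$, and the task is to transfer this ratio bound from $\widetilde Q_\epsilon$ to the quantized $Q_\epsilon$. The key observation is that, writing $Q_\epsilon(y) = I^{-\ell/4}(1)$ for the largest element of $\mathcal{I}$ not exceeding $\widetilde Q_\epsilon(y)$, we have $Q_\epsilon(y) \leq \widetilde Q_\epsilon(y) < I^{1/4}(Q_\epsilon(y))$; by Definition \ref{ladderFunc}, $I^{1/4}(q)/q \leq I(q)/q = e^{\Gamma q^{1/\gamma}}$, and item (1) bounds this uniformly by $e^{\Gamma C_{\beta,\epsilon}^{1/\gamma}}$. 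Chaining these inequalities with the $\widetilde Q_\epsilon$ ratio bound applied at $y=x$ for the upper estimate on $Q_\epsilon(f(x))/Q_\epsilon(x)$, and at $y=f(x)$ for the lower estimate, produces $\omega_0 := F_0^{2\gamma} e^{\Gamma C_{\beta,\epsilon}^{1/\gamma}}$, which depends only on $f, M, \Gamma, \gamma$ as required (the $\epsilon$-dependence hides in $C_{\beta,\epsilon}$, which is uniformly bounded above for all small $\epsilon$). The only subtlety is this transfer, and it succeeds precisely because (1) keeps $Q_\epsilon$ away from the region in which a single ladder step $I^{1/4}$ could distort ratios by more than a bounded factor.
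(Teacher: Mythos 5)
Your proof is correct and takes essentially the same route as the paper's; items (1)--(3) match exactly, and in item (4) you bound $\widetilde Q_\epsilon/Q_\epsilon$ via $\widetilde Q_\epsilon < I^{1/4}(Q_\epsilon)$ together with $I^{1/4}(q)/q\le I(q)/q = e^{\Gamma q^{1/\gamma}}$, which is an equivalent rephrasing of the paper's $Q_\epsilon > I^{-1/4}(\widetilde Q_\epsilon)> e^{-\Gamma}\widetilde Q_\epsilon$ (Lemma \ref{forI}(3)). One small point worth tightening: your $\omega_0 = F_0^{2\gamma}e^{\Gamma C_{\beta,\epsilon}^{1/\gamma}}$ formally depends on $\epsilon$ and $\beta$, which the lemma statement does not permit; since $C_{\beta,\epsilon}<1$ for small $\epsilon$ you should just discard this factor and take $\omega_0 = F_0^{2\gamma}e^{\Gamma}$, as the paper does.
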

\begin{proof}

\begin{enumerate}
\item[(1)]  Clear by definition of $\widetilde{Q}_\epsilon(x)$.
\item[(2)] Recall $F_0$ from Theorem \ref{SandU}. Then,
$$\|C_0^{-1}(f^i(x))\|^{2\gamma}\leq F_0^{2\gamma}\|C_0^{-1}(x)\|^{2\gamma}<F_0^{2\gamma} C_{\beta,\epsilon}/Q_\epsilon(x).$$
%
%
%

\item[(3)]  For all $t>0$: $\{Q_\epsilon(x)| Q_\epsilon(x)>t, x\in M\}\subset\{I^{- \frac{\ell}{4}}(1)\}_{\ell\in\mathbb{N}}\cap (t,C_{\beta,\epsilon}]$.

\item[(4)] By Theorem \ref{SandU}:  $F_0^{-1}\leq\frac{\|C_0^{-1}(f(x))\|}{\|C_0^{-1}(x)\|}\leq F_0$ on $0$-$\mathrm{summ}$. Now, $\widetilde{Q}_\epsilon(x)=(\|C_0^{-1}(x)\|)^{-2\gamma}C_{\beta,\epsilon}$ hence
$$\frac{\widetilde{Q}_\epsilon(f(x))}{\widetilde{Q}_\epsilon(x)}\in[F_0^{-2\gamma},F_0^{2\gamma}].$$
By the definition of $Q_\epsilon(x)$: $Q_\epsilon\in(I^{-1/4}(\widetilde{Q}_\epsilon),\widetilde{Q}_\epsilon]\subseteq (e^{-\Gamma}\widetilde{Q}_\epsilon,\widetilde{Q}_\epsilon]$. Hence,
 $$F_0^{-2\gamma}e^{-\Gamma }\leq Q_\epsilon\circ f/Q_\epsilon\leq F_0^{2\gamma}e^{\Gamma }.$$
 Set $\omega_0:= F_0^{2\gamma}e^{\Gamma }$.

\end{enumerate}
\end{proof}

\subsection{Overlapping charts}
The following definition is motivated by \cite[\textsection~3.1]{Sarig}.
\begin{definition}\label{isometries}$\text{ }$\\
\begin{enumerate}
\item[(a)] For every $x\in M$ there is an open neighborhood $D$ of diameter less than $\rho$ and a smooth map $\Theta_D:TD\rightarrow\mathbb{R}^d$ s.t:
\begin{enumerate}
    \item[(1)] $\Theta_D:T_xM\rightarrow\mathbb{R}^d$ is a linear isometry for every $x\in D$

    \item[(2)] Define $\nu_x:=\Theta_D|_{T_xM}^{-1}:\mathbb{R}^d\rightarrow T_xM$, then $(x,u)\mapsto(\exp_x\circ\nu_x)(u)$ is smooth and Lipschitz on $D\times B_2(0)$ w.r.t the metric $d(x,x')+|u-u'|$

    \item[(3)] $x\mapsto\nu_x^{-1}\circ \exp_x^{-1}$ is a Lipschitz map from $D$ to $C^2(D,\mathbb{R}^d)=\{C^2\text{-maps from }D\text{ to }\mathbb{R}^d\}$.

    Let $\mathcal{D}$ be a finite cover of $M$ by such neighborhoods. Denote with $\varpi(\mathcal{D})$ the Lebesgue number of that cover: If $d(x,y)<\varpi(\mathcal{D})$ then $x$ and $y$ belong to the same $D$ for some $D$.
\end{enumerate}
\item[(b)] We say that two Pesin charts $\psi_{x_1}^{\eta_1},\psi_{x_2}^{\eta_2}$ {\em $I$-overlap} if $\eta_1=I^{\pm1}(\eta_2)$, and for some $D\in\mathcal{D}$, $x_1,x_2\in D$ and $d(x_1,x_2)+\|\Theta_D\circ C_0(x_1)-\Theta_D\circ C_0(x_2)\|<\eta_1^4\eta_2^4$.
\end{enumerate}

\noindent\textbf{Remark}: The overlap condition is symmetric and monotone: if $\psi_{x_i}^{\eta_i},i=1,2$ $I$-overlap, then $\psi_{x_i}^{\xi_i},i=1,2$ $I$-overlap for all $\eta_i\leq\xi_i\leq Q_\epsilon(x_i)$ s.t $\xi_1=I^{\pm1}(\xi_2)$.
    
\end{definition}

\begin{prop}\label{chartsofboxes} The following holds for all $\epsilon$ small enough: If $\psi_x:R_\eta(0)\rightarrow M$ and $\psi_y:R_\zeta(0)\rightarrow M$ $I$-overlap, then: \begin{enumerate}
\item $\psi_x[R_{I^{-2}(\eta)}(0)]\subset\psi_y[R_\zeta(0)]$ and $\psi_y[R_{I^{-2}(\zeta)}(0)]\subset\psi_x[R_\eta(0)]$,
\item $\mathrm{dist}_{C^{1+\beta/2}}(\psi_{x/y}^{-1}\circ\psi_{y/x},Id)<\epsilon\eta^3\zeta^3$ (recall \textsection \ref{notations}) where the $C^{1+\beta/2}$ distance is calculated on $R_{I^{-1} (r(M))}(0)$ and $r(M)$ is defined in \textsection \ref{forrho}.
\end{enumerate}
\end{prop}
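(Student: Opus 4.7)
\medskip
\noindent\textbf{Proof plan for Proposition \ref{chartsofboxes}.}

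The plan is to rewrite each Pesin chart in the form $\psi_x=\Phi_x\circ A_x$, where
$$\Phi_x:=\exp_x\circ\nu_x\colon\mathbb{R}^d\to M,\qquad A_x:=\Theta_D\circ C_0(x)\in\mathrm{End}(\mathbb{R}^d),$$
using the chart structure of Definition \ref{isometries}, and to decompose
$$\psi_y^{-1}\circ\psi_x\;=\;A_y^{-1}\circ(\Phi_y^{-1}\circ\Phi_x)\circ A_x.$$
The two factors on the right are analyzed separately: the middle factor is nearly the identity because $x,y$ are close in $D$, while the outer factor $A_y^{-1}A_x$ is nearly the identity because $A_x$ and $A_y$ are close as linear maps on $\mathbb{R}^d$. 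The $I$-overlap hypothesis bounds both $d(x,y)$ and $\|A_x-A_y\|$ by $\eta^4\zeta^4$.

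First I would handle $\Phi_y^{-1}\circ\Phi_x$. By part (3) of Definition \ref{isometries}, the map $z\mapsto\Phi_z^{-1}$ is Lipschitz into $C^2(D,\mathbb{R}^d)$, so $\|\Phi_y^{-1}-\Phi_x^{-1}\|_{C^2}\lesssim d(x,y)<\eta^4\zeta^4$. Writing $\Phi_y^{-1}\circ\Phi_x(v)=v+\varepsilon(v)$ with $\varepsilon:=(\Phi_y^{-1}-\Phi_x^{-1})\circ\Phi_x$, this yields a bound $\|\varepsilon\|_{C^2}=O(\eta^4\zeta^4)$ on the relevant domain, and in particular a $C^{1+\beta/2}$ bound of the same order. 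Next I would bound $A_y^{-1}A_x-I=A_y^{-1}(A_x-A_y)$. The key input here is that $\|A_y^{-1}\|=\|C_0^{-1}(y)\|$ cannot be too large relative to $\zeta$: by Lemma \ref{omega0}(2), $\|C_0^{-1}(y)\|^{2\gamma}\leq F_0^{2\gamma}C_{\beta,\epsilon}/\zeta$, so $\|C_0^{-1}(y)\|\lesssim\zeta^{-1/(2\gamma)}$. Since $\gamma>5/\beta$, the exponent $1/(2\gamma)$ is much smaller than one, and thus
$$\|A_y^{-1}(A_x-A_y)\|\;\lesssim\;\zeta^{-1/(2\gamma)}\cdot\eta^4\zeta^4\;\leq\;\epsilon\,\eta^3\zeta^3$$
provided $\epsilon$ is small (the extra factors $\eta\zeta^{1-1/(2\gamma)}\leq 1$ absorb constants). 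Composing with $A_x$ (of norm $\leq 1$ by Lemma \ref{contraction}) and post-composing with the $C^2$-small perturbation above, a routine application of the chain rule and of the Hölder-norm estimates for compositions yields (2): $\mathrm{dist}_{C^{1+\beta/2}}(\psi_y^{-1}\circ\psi_x,\mathrm{Id})<\epsilon\eta^3\zeta^3$ on $R_{I^{-1}(r(M))}(0)$.

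To derive (1) from (2), I would invoke the near-identity estimate pointwise. For $u\in R_{I^{-2}(\eta)}(0)$ the bound just obtained gives $|\psi_y^{-1}\circ\psi_x(u)-u|_\infty\leq\epsilon\eta^3\zeta^3$, so
$$|\psi_y^{-1}\circ\psi_x(u)|_\infty\;\leq\;I^{-2}(\eta)+\epsilon\eta^3\zeta^3.$$
Since $\eta=I^{\pm1}(\zeta)$, we have $I^{-2}(\eta)\in\{I^{-1}(\zeta),I^{-3}(\zeta)\}$, and by Lemma \ref{forI}(3) each of these is at most $\zeta e^{-\Gamma\zeta^{1/\gamma}}$. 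Thus it suffices to check
$$\zeta-\zeta e^{-\Gamma\zeta^{1/\gamma}}\;\geq\;\epsilon\eta^3\zeta^3,$$
which follows since the left-hand side is of order $\Gamma\zeta^{1+1/\gamma}$ while the right is at most $\epsilon\zeta^6$, and these are compatible for small $\epsilon$ (using that $\zeta$ itself is bounded, Lemma \ref{omega0}(1)). This gives $\psi_x[R_{I^{-2}(\eta)}(0)]\subset\psi_y[R_\zeta(0)]$; the other inclusion is symmetric.

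I expect the main obstacle to be the bookkeeping of the $C^{1+\beta/2}$ estimate in step (2): one must control the Hölder seminorm of the derivative of a composition of three maps, two of which are close to linear isometries on small domains. The crucial balance is between the $\eta^4\zeta^4$ provided by the overlap condition and the at-worst polynomial growth $\zeta^{-1/(2\gamma)}$ of $\|C_0^{-1}(y)\|$, which is exactly why the definition of $\widetilde{Q}_\epsilon$ and the choice $\gamma>5/\beta$ were arranged the way they were; without this, one could not convert $\eta^4\zeta^4$ into the required $\epsilon\eta^3\zeta^3$.
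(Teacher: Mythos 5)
Your multiplicative decomposition $\psi_y^{-1}\circ\psi_x=A_y^{-1}\circ(\Phi_y^{-1}\circ\Phi_x)\circ A_x$ is genuinely different from the paper's route, which writes $\psi_x^{-1}\circ\psi_y$ as an additive perturbation $Id+C_x^{-1}(C_y-C_x)+C_x^{-1}\circ[\nu_x^{-1}\exp_x^{-1}-\nu_y^{-1}\exp_y^{-1}]\circ\psi_y$ so that the only unbounded factor is $\|C_x^{-1}\|$, which is then controlled via $\|C_x^{-1}\|\,\eta\leq C_{\beta,\epsilon}$. Your version controls $\|A_y^{-1}\|=\|C_0^{-1}(y)\|$ via the $\zeta^{-1/(2\gamma)}$ bound from Lemma \ref{omega0}(2). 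Both devices compensate the blow-up of $\|C^{-1}\|$ by the smallness of the window radius and both lead to the required $\epsilon\eta^3\zeta^3$; the underlying balance (four powers of $\eta\zeta$ from $I$-overlap versus one inverse power of $\|C^{-1}\|$) is the same. So this part of your route is valid and arguably slightly more systematic, though the bookkeeping of the composed $C^{1+\beta/2}$ seminorm has to be carried out with more care than you indicate.

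There are, however, two gaps. First, deriving (1) from (2) is not circular-free as written: the $C^{1+\beta/2}$ distance in (2) is taken on $R_{I^{-1}(r(M))}(0)$, so you must already know that $\psi_x\bigl[R_{I^{-1}(r(M))}(0)\bigr]$ lies in the image of $\psi_y$, otherwise $\psi_y^{-1}\circ\psi_x$ is not defined there. That containment is exactly what a Part-1 type argument (with $r(M)$ replacing $\eta$) proves, and the paper says so explicitly. Your plan silently assumes the well-definedness; you would need to add this preliminary step, at which point you are doing the core computation of Part 1 anyway, so nothing is saved. Second, your citation of Lemma \ref{forI}(3) has the inequality in the wrong direction. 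That lemma gives a \emph{lower} bound $I^{-1}(x)\geq x\,e^{-\Gamma x^{1/\gamma}}$, whereas you use it as an upper bound $I^{-1}(\zeta)\leq\zeta\,e^{-\Gamma\zeta^{1/\gamma}}$; that statement is in fact false (from $I^{-1}(\zeta)=\zeta e^{-\Gamma(I^{-1}(\zeta))^{1/\gamma}}$ and $I^{-1}(\zeta)<\zeta$ one gets the strict opposite inequality). Your final arithmetic can be rescued by noting $I^{-1}(\zeta)\geq\zeta/2$ for small $\zeta$, hence $I^{-1}(\zeta)\leq\zeta e^{-2^{-1/\gamma}\Gamma\zeta^{1/\gamma}}$, which still dominates the error term $\epsilon\eta^3\zeta^3$ since $1+1/\gamma<6$; but as written the step does not follow from the lemma you cite, and the lower-bound estimate the paper actually uses (controlling $I^{-2}(\eta)-I^{-1}(\eta)$ from above by $-\frac{\Gamma}{2}\eta^{1+1/\gamma}$) is the cleaner way to organize it.
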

\begin{proof}


Suppose $\psi_x^\eta$ and $\psi_y^\zeta$ $I$-overlap, and fix some $D\in\mathcal{D}$ which contains $x,y$ such that $d(x,y)+\|\Theta_D\circ C_0(x)-\Theta_D\circ C_0(y)\|<\eta^4\zeta^4$. For $z=x,y$, write $C_{z}=\Theta_D\circ C_0(z)$, and $\psi_{z}=\exp_{z}\circ \nu_{z}\circ C_{z}$. By the definition of Pesin charts, $\eta\leq Q_\epsilon(x)< C_{\beta,\epsilon}\|C_0^{-1}(x)\|^{-2\gamma}$, $\zeta\leq Q_\epsilon(y)<C_{\beta,\epsilon}\|C_0^{-1}(y)\|^{-2\gamma}$. In particular, $\eta,\zeta\leq C_{\beta,\epsilon}$. 
Our first constraint on $\epsilon$ is that it would be less than $1$, and so small that $$C_{\beta,\epsilon}<\epsilon<\frac{\min\{1,r(M),\rho(M)\}}{5(L_1+L_2+L_3+L_4)^3},$$ 
where $r(M),\rho(M)$ are defined on \textsection \ref{forrho}, and: \begin{enumerate}
\item $L_1$ is a uniform Lipschitz constant for the maps $(x,v)\mapsto(\exp_x\circ\nu_x)(v)$ on $D\times B_{r(M)}(0)$ ($D\in\mathcal{D}$).
\item  $L_2$ is a uniform Lipschitz constant for the maps $x\mapsto\nu_x^{-1}\exp_x^{-1}$ from $D$ into $C^2(D,\mathbb{R}^2)$ ($D\in\mathcal{D}$).
\item $L_3$ is a uniform Lipschitz constant for $\exp_x^{-1}:B_{\rho(M)}(x)\rightarrow T_xM$ ($x\in M$).
\item $L_4$ is a uniform Lipschitz constant for $\exp_x:B_{r(M)}(0)\rightarrow M$ ($x\in M$).
\end{enumerate}
We assume w.l.o.g that these constants are all larger than one. 

\textit{Part 1:} $\psi_x[R_{I^{-2}(\eta)}(0)]\subset\psi_y[R_\zeta(0)]$

Proof: Suppose $v\in R_{I^{-2}(\eta)}(0)$. Since $C_0(x)$ is a contraction: $|C_xv|=|C_0(x)v|\leq|v|$, and $(x,C_xv),(y,C_xv)\in D\times B_{r(M)}(0)$. Since $d(x,y)<\eta^4\zeta^4$:
$$d(\exp_y\circ\nu_y[C_xv],\exp_x\circ\nu_x[C_xv])<L_1\eta^4\zeta^4.$$
It follows that $\psi_x(v)\in B_{L_1\eta^4\zeta^4}(\exp_y\circ\nu_y(C_xv))$. Call this ball $B$. The radius of $B$ is less than $\rho(M)$ because of our assumptions on $\epsilon$. Therefore $\exp_y^{-1}$ is well defined and Lipschitz on $B$, and its Lipschitz constant is at most $L_3$. Writing $B=\exp_y[\exp_y^{-1}[B]]$ we deduce that
$$\psi_x(v)\in B\subset\exp_y[B^y_{L_3L_1\eta^4\zeta^4}(\nu_y(C_xv))]=:\psi_y[E],$$
where $E:=C_0^{-1}(y)[B_{L_3L_1\eta^4\zeta^4}^{y}(\nu_y(C_xv))]$.

Notice that $I^{-2}(\eta)-I^{-1}(\eta)<-\frac{\Gamma}{2}\cdot \eta^{1+\frac{1}{\gamma}} $ for small enough $\epsilon>0$.\footnote{Set $t:=I^{-2}(\eta)$, then $I^{-2}(\eta)-I^{-1}(\eta)=t-I(t)=y(1-e^{\Gamma t^\frac{1}{\gamma}})\leq -t\Gamma t^\frac{1}{\gamma}=-\Gamma\cdot (I^{-2}(\eta))^{1+\frac{1}{\gamma}}\leq -\frac{\Gamma}{2}\cdot \eta^{1+\frac{1}{\gamma}} $, for $\eta>0$ small enough w.r.t $\gamma$.} We claim that $E\subset R_\zeta(0)$. First note that $E\subset B_{\|C_0^{-1}(y)\|L_3L_1\eta^4\zeta^4}(C_y^{-1}C_xv)$, therefore if $w\in E$ then:
\begin{align*}
|w|_\infty\leq&|C_y^{-1}C_xv|_\infty+\|C_0^{-1}(y)\|L_3L_1\eta^4\zeta^4\\
\leq&|(C_y^{-1}C_x-Id)v|_\infty+|v|_\infty+\|C_0^{-1}(y)\|L_3L_1\eta^4\zeta^4\\
\leq&|v|_\infty+\sqrt{d}\|C_y^{-1}\|\cdot\|C_x-C_y\|\cdot|v|_\infty+\|C_0^{-1}(y)\|L_3L_1\eta^4\zeta^4\\
\leq& I^{-2}(\eta)+\|C_0^{-1}(y)\|(\eta^4\zeta^4\sqrt{d}\cdot I^{-2}(\eta)+L_3L_1\eta^4\zeta^4)\text{  }(\because\|C_x-C_y\|\leq\eta^4\zeta^4)\\
\leq &I^{-1}(\eta)+I^{-2}(\eta)-I^{-1}(\eta)+\eta^4 \leq I^{-1}(\eta)-\frac{\Gamma}{2}\eta^{1+\frac{1}{\gamma}}+\eta^4\\ \leq & I^{-1}(\eta)-\frac{\Gamma}{2}\eta^{2}+\eta^4 < I^{-1}(\eta) \leq \zeta \text{ }(\because \gamma\geq\frac{2}{\beta}\geq1,\eta<\epsilon\text{ small w.r.t }\Gamma).
\end{align*}
It follows that $E\subset R_\zeta(0)$. Thus $\psi_x(v)\in\psi_y[R_\zeta(0)]$. Part 1 follows.

\textit{Part 2:} $\mathrm{dist}_{C^{1+\beta/2}}(\psi_x^{-1}\circ\psi_y,Id)<\epsilon\eta^3\zeta^3$ on $R_{I^{-1}(r(M))}(0)$

Proof: One can show exactly as in the proof of part 1 that $\psi_x[R_{I^{-1}(r(M))}(0)]\subset\psi_y[R_{r(M)}(0)]$, therefore $\psi_x^{-1}\circ\psi_y$ is well defined on $R_{I^{-1}(r(M))}(0)$. We calculate the distance of this map from the identity:
\begin{align}\label{C2reg}\psi_x^{-1}\circ\psi_y=&C_x^{-1}\circ\nu_x^{-1}\circ\exp_x^{-1}\circ\exp_y\circ\nu_y\circ C_y\\\nonumber
=&C_x^{-1}\circ[\nu_x^{-1}\circ\exp_x^{-1}+\nu_y^{-1}\circ\exp_y^{-1}-\nu_y^{-1}\circ\exp_y^{-1}]\circ\exp_y\circ\nu_y\circ C_y\\\nonumber
=&C_x^{-1}C_y+C_x^{-1}\circ[\nu_x^{-1}\circ\exp_x^{-1}-\nu_y^{-1}\circ\exp_y^{-1}]\circ\psi_y\\\nonumber
=&Id+C_x^{-1}(C_y-C_x)+C_x^{-1}\circ[\nu_x^{-1}\circ\exp_x^{-1}-\nu_y^{-1}\circ\exp_y^{-1}]\circ\psi_y.\nonumber
\end{align}
The $C^{1+\beta/2}$ norm of the second summand is less than $\|C_x^{-1}\|\eta^4\zeta^4$. The $C^{1+\beta/2}$ norm of the third summand is less than $\|C_x^{-1}\|L_2d(x,y)L_4^{1+\beta/2}$. This is less than $\|C_x^{-1}\|L_2L_4^2\eta^4\zeta^4$. It follows that $\mathrm{dist}_{C^{1+\beta/2}}(\psi_x^{-1}\circ\psi_y,Id)<\|C_x^{-1}\|(1+L_2L_4^2)\eta^4\zeta^4$. This is (much) smaller than $\epsilon\eta^3\zeta^3$.

\end{proof}

\noindent\textbf{Remarks}:
\begin{enumerate}
	\item Notice that \eqref{C2reg} in fact gives a bound for the $C^2$-distance when $\epsilon$ is small enough. 
	\item By item (2) in the proposition above, the greater the distortion of $\psi_x$ or $\psi_y$ the closer they are one to another. This ``distortion compensating bound" will be used in what follows to argue that $\psi_{f(x)}^{-1}\circ f\circ\psi_x$ remains close to a linear hyperbolic map if we replace $\psi_{f(x)}$ by an overlapping map $\psi_y$  (Proposition \ref{3.4inomris} below).

\end{enumerate}

\begin{lemma}\label{overlap} Suppose $\psi_{x_1}^{\eta_1},\psi_{x_2}^{\eta_2}$ $I$-overlap, and write $C_1=\Theta_D\circ C_0(x_1), C_2=\Theta_D\circ C_0(x_2)$ for the $D$ s.t $D\ni x_1,x_2$. Then \begin{enumerate}
\item 
$\|C_1^{-1}-C_2^{-1}\|<2\epsilon \eta_1 \eta_2$,
\item $\frac{\|C_1^{-1}\|}{\|C_2^{-1}\|}=e^{\pm \eta_1\eta_2}.$
\end{enumerate}
\end{lemma}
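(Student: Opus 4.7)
The plan is to reduce both statements to a direct computation built from two inputs: the $I$-overlap bound $\|C_1-C_2\|<\eta_1^4\eta_2^4$ and the Pesin-chart bound $\|C_i^{-1}\|\leq (C_{\beta,\epsilon}/\eta_i)^{1/(2\gamma)}$ (obtained from $\eta_i\leq Q_\epsilon(x_i)\leq \widetilde Q_\epsilon(x_i)=C_{\beta,\epsilon}\|C_0^{-1}(x_i)\|^{-2\gamma}$, using that $\Theta_D|_{T_{x_i}M}$ is an isometry so $\|C_i^{-1}\|=\|C_0^{-1}(x_i)\|$).

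For part (1), I would start from the identity
\[
C_1^{-1}-C_2^{-1}=C_1^{-1}(C_2-C_1)C_2^{-1},
\]
take norms, and apply the two bounds above to get
\[
\|C_1^{-1}-C_2^{-1}\|\leq \|C_1^{-1}\|\|C_2^{-1}\|\cdot\eta_1^4\eta_2^4
\leq C_{\beta,\epsilon}^{1/\gamma}\,\eta_1^{4-\frac{1}{2\gamma}}\eta_2^{4-\frac{1}{2\gamma}}.
\]
To upgrade this to $<2\epsilon\,\eta_1\eta_2$ it suffices to show
\[
C_{\beta,\epsilon}^{1/\gamma}\,\eta_1^{3-\frac{1}{2\gamma}}\eta_2^{3-\frac{1}{2\gamma}}<2\epsilon.
\]
Since $\gamma>5/\beta>1$ the exponents $3-\tfrac{1}{2\gamma}$ are positive, and $\eta_i\leq Q_\epsilon(x_i)\leq C_{\beta,\epsilon}$ by Lemma \ref{omega0}(1); hence the left-hand side is at most a fixed positive power of $C_{\beta,\epsilon}=3^{-6/\beta}\epsilon^{90/\beta}$, which is smaller than $\epsilon$ once $\epsilon$ is chosen small enough depending on $\beta$ and $\gamma$.

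For part (2), since $C_0(x_i)$ is a contraction (Lemma \ref{contraction}) and $\Theta_D$ is an isometry on each tangent space, we have $\|C_i^{-1}\|\geq 1$. The triangle inequality and part (1) then yield
\[
\frac{\|C_1^{-1}\|}{\|C_2^{-1}\|}\leq 1+\frac{\|C_1^{-1}-C_2^{-1}\|}{\|C_2^{-1}\|}\leq 1+2\epsilon\,\eta_1\eta_2\leq 1+\eta_1\eta_2\leq e^{\eta_1\eta_2}
\]
provided $\epsilon<\tfrac{1}{2}$, and the symmetric bound $\|C_2^{-1}\|/\|C_1^{-1}\|\leq e^{\eta_1\eta_2}$ follows by interchanging the roles of $x_1$ and $x_2$.

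There is no real obstacle here; the only ``interesting'' step is the bookkeeping that converts the favorable quartic factor $\eta_1^4\eta_2^4$ from the $I$-overlap condition into the linear bound $\eta_1\eta_2$ after paying the cost of $\|C_1^{-1}\|\|C_2^{-1}\|$, and this is exactly what the smallness of the windows' scalar $C_{\beta,\epsilon}=3^{-6/\beta}\epsilon^{90/\beta}$ was tuned to absorb—so the lemma is essentially a check that the exponent $90/\beta$ in Definition \ref{calibrationParams} is large enough relative to $1/\gamma$.
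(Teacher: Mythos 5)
Your proof is correct, and it takes a genuinely different route from the one the paper indicates. The paper points to Sarig's Proposition~3.2 (and the commented-out version confirms this lineage): the argument there examines the derivative at the origin of the transition map $\psi_{x_2}^{-1}\circ\psi_{x_1}$, decomposes it as $C_2^{-1}C_1$ plus a correction coming from the difference of exponentials, controls $\|C_2^{-1}C_1-\mathrm{Id}\|$ using both the $C^1$-closeness to the identity from Proposition~\ref{chartsofboxes} and the Lipschitz bound on $x\mapsto\nu_x^{-1}\exp_x^{-1}$, and finally extracts $C_1^{-1}-C_2^{-1}=-(C_2^{-1}C_1-\mathrm{Id})C_1^{-1}$. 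You instead go straight from the resolvent identity $C_1^{-1}-C_2^{-1}=C_1^{-1}(C_2-C_1)C_2^{-1}$ and the $\|C_1-C_2\|<\eta_1^4\eta_2^4$ clause in the definition of $I$-overlap, which eliminates the exponential map and Proposition~\ref{chartsofboxes} from the argument entirely. Both routes then face the same bookkeeping problem of absorbing $\|C_1^{-1}\|\|C_2^{-1}\|$, and both resolve it the same way via $\eta_i\leq Q_\epsilon(x_i)\leq C_{\beta,\epsilon}\|C_0^{-1}(x_i)\|^{-2\gamma}$ and the smallness of $C_{\beta,\epsilon}$. The net effect is that your proof is more elementary and self-contained; what the paper's route ``buys'' is that it reuses the transition-map machinery that is needed elsewhere (e.g.\ Proposition~\ref{3.4inomris}) anyway, so within the paper's ecosystem it is not more expensive. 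One small point worth making explicit in your write-up: the step $1+\|C_1^{-1}-C_2^{-1}\|/\|C_2^{-1}\|\leq 1+2\epsilon\eta_1\eta_2$ silently uses $\|C_2^{-1}\|\geq 1$, which you do justify (contraction plus isometry), but it is a separate fact from $\epsilon<\tfrac12$ and should be cited where it is used.
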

The proof is similar to \cite[Proposition~3.2]{Sarig}.
\subsubsection{The form of $f$ in overlapping charts}

We introduce a notation: for two vectors $u\in\mathbb{R}^{d_1}$ and $v\in\mathbb{R}^{d_2}$, $(-u-,-v-)\in\mathbb{R}^{d_1+d_2}$ means the new vector whose coordinates are the coordinates of these two, put in the same order as written.
\begin{prop}\label{3.4inomris} The following holds for all $\epsilon$ small enough. Suppose $x,y\in 0$-$\mathrm{summ}$, $s(x)=s(y)$ and $\psi_{f(x)}^\eta$ $I$-overlaps $\psi_y^{\eta'}$, then $f_{xy}:=\psi_y^{-1}\circ f\circ\psi_x$ is a well defined injective map from $R_{Q_\epsilon(x)}(0)$ to $\mathbb{R}^d$, and there are matrices $D_s:\mathbb{R}^{s(x)}\rightarrow\mathbb{R}^{s(x)}, D_u:\mathbb{R}^{d-s(x)}\rightarrow\mathbb{R}^{d-s(x)}$ and differentiable maps $h_s:R_{Q_\epsilon(x)}(0)\rightarrow \mathbb{R}^{s(x)},h_u:R_{Q_\epsilon(x)}(0)\rightarrow \mathbb{R}^{u(x)}$, s.t $f_{xy}$ can be put in the form
$$f_{xy}(-v_s-,-v_u-)=(D_s v_s+h_s(v_s,v_u),D_u v_u+h_u(v_s,v_u)),$$ where $v_{s/u}$ are the ``stable"/"unstable" components of the input vector $u$ for $\psi_x$; and $\kappa^{-1}\leq\|D_s^{-1}\|^{-1},\|D_s\|\leq e^{-\frac{1}{S^2(x)}}$, $e^\frac{1}{U^2(x)}\leq\|D_u^{-1}\|^{-1},\|D_u\|\leq\kappa$, $|h_{s/u}(0)|<\epsilon\eta^3$, $\|\frac{\partial(h_s,h_u)}{\partial(v_s,v_u)}\|<\sqrt{\epsilon}\eta^{\beta/2}$ on $R_\eta(0)$ (in particular $\|\frac{\partial(h_s,h_u)}{\partial(v_s,v_u)}\rvert_0\|<\sqrt{\epsilon}\eta^{\beta/2}$), and $\|\frac{\partial(h_s,h_u)}{\partial(v_s,v_u)}\rvert_{v_1}-\frac{\partial(h_s,h_u)}{\partial(v_s,v_u)}\rvert_{v_2}\|\leq\sqrt{\epsilon}|v_1-v_2|^{\beta/2}$ on $R_{Q_\epsilon(x)}(0)$. A similar statement holds for $f_{xy}^{-1}$, assuming that $\psi_{f^{-1}(y)}^{\eta'}$ $I$-overlaps $\psi_x^\eta$.
\end{prop}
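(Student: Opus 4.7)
The plan is to factor $f_{xy} = \Psi \circ f_x$, where $f_x := \psi_{f(x)}^{-1} \circ f \circ \psi_x$ and $\Psi := \psi_y^{-1} \circ \psi_{f(x)}$, so that Theorem \ref{linearization} controls $f_x$ and Proposition \ref{chartsofboxes} controls $\Psi$. By Theorem \ref{linearization}, $f_x$ is a well-defined injection on $R_{Q_\epsilon(x)}(0)$ of the form $f_x = D_0(x) + r_x$, with $r_x(0) = 0$, $dr_x|_0 = 0$, and $\|r_x\|_{C^{1+3\beta/4}(R_{|v|}(0))} \leq \epsilon^2 |v|^{3\beta/4}$ for every $v \in R_{Q_\epsilon(x)}(0)$. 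The $I$-overlap hypothesis, via Proposition \ref{chartsofboxes}, yields $\mathrm{dist}_{C^{1+\beta/2}}(\Psi,\mathrm{Id}) < \epsilon \eta^3 (\eta')^3$ on $R_{I^{-1}(r(M))}(0)$ together with the inclusion $\psi_{f(x)}[R_{I^{-2}(\eta)}(0)] \subseteq \psi_y[R_{\eta'}(0)]$. For $\epsilon$ small enough (using Lemma \ref{omega0} to compare $Q_\epsilon(f(x))$ with $Q_\epsilon(x)$, and using that $D_0(x)$ has norm at most $\kappa$ while $r_x$ is bounded by $\epsilon^2$), the image $f_x(R_{Q_\epsilon(x)}(0))$ lies inside $R_{I^{-2}(\eta)}(0)$, so $f_{xy} = \Psi \circ f_x$ is well defined and injective.

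Setting $\phi := \Psi - \mathrm{Id}$,
\begin{equation*}
f_{xy}(v) = D_0(x) v + h(v), \qquad h(v) := r_x(v) + \phi(f_x(v)).
\end{equation*}
Theorem \ref{pesinreduction} supplies the block-diagonal decomposition $D_0(x) = \mathrm{diag}(D_s(x), D_u(x))$ with exactly the stated bounds on $\|D_s\|, \|D_s^{-1}\|^{-1}, \|D_u\|, \|D_u^{-1}\|^{-1}$; splitting $h = (h_s, h_u)$ along $\mathbb{R}^{s(x)} \oplus \mathbb{R}^{u(x)}$ yields the announced normal form. The value at $0$ is immediate: $r_x(0) = 0$ gives $|h(0)| = |\phi(0)| \leq \epsilon \eta^3 (\eta')^3 < \epsilon \eta^3$. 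For the derivative,
\begin{equation*}
dh|_v = dr_x|_v + d\phi|_{f_x(v)} \circ df_x|_v.
\end{equation*}
The first summand is bounded by $\epsilon^2 |v|^{3\beta/2}$, combining $dr_x|_0 = 0$ with the $3\beta/4$-Hölder bound of Theorem \ref{linearization}; the second summand is bounded by $\epsilon \eta^3 (\eta')^3 (\kappa + \epsilon^2)$ from Proposition \ref{chartsofboxes}. Both are below $\tfrac12 \sqrt{\epsilon}\,\eta^{\beta/2}$ once $\epsilon$ is small (using $\epsilon^2 \leq \sqrt{\epsilon}$, $\eta^{3\beta/4} \leq \eta^{\beta/2}$ since $\eta \leq 1$, and the comparability $|v| \lesssim \eta$ on $R_\eta(0)$).

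For the $\beta/2$-Hölder bound on $dh$ over $R_{Q_\epsilon(x)}(0)$, the $dr_x$ contribution is controlled directly by Theorem \ref{linearization} (its $3\beta/4$-Hölder seminorm on $R_{Q_\epsilon(x)}(0)$ is at most $\epsilon^2 Q_\epsilon(x)^{3\beta/4}$, which collapses into the $\sqrt{\epsilon}$ budget after absorbing the $Q_\epsilon(x)$-factors). The contribution from $d\phi \circ df_x$ is handled by the standard composition estimate for Hölder seminorms, applied with the $C^{1+\beta/2}$-bound on $\phi$ from Proposition \ref{chartsofboxes} and the uniform control on $\|df_x\|$ and on the $\beta/2$-Hölder seminorm of $df_x$ coming from Theorem \ref{linearization}. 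The symmetric statement for $f_{xy}^{-1}$ is obtained by running the identical argument on $f^{-1}$ with the pair $\psi_x^\eta, \psi_{f^{-1}(y)}^{\eta'}$, using the analogous block form for $D_0(x)^{-1}$ and the $f \leftrightarrow f^{-1}$ symmetry in Theorem \ref{linearization}. The principal bookkeeping obstacle is tracking Hölder constants under composition; the saving grace is that $\phi$ has $C^{1+\beta/2}$-size of order $\eta^3 (\eta')^3$, which dwarfs the $\kappa$-sized norm of $df_x$ and leaves ample room below the $\sqrt{\epsilon}\,\eta^{\beta/2}$ threshold.
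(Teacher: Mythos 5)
Your decomposition $f_{xy}=\Psi\circ f_x$, with $\Psi:=\psi_y^{-1}\circ\psi_{f(x)}$ controlled by Proposition \ref{chartsofboxes} and $f_x$ controlled by Theorem \ref{linearization}, is exactly the paper's decomposition, and your chain-rule bookkeeping for $h$, $dh$, and its H\"older seminorm matches the paper's proof in structure. There is, however, a concrete error in the well-definedness step. You assert that $f_x(R_{Q_\epsilon(x)}(0))\subset R_{I^{-2}(\eta)}(0)$ and invoke the inclusion $\psi_{f(x)}[R_{I^{-2}(\eta)}(0)]\subset\psi_y[R_{\eta'}(0)]$ from Proposition \ref{chartsofboxes}(1). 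This cannot hold in general: $\eta$ is constrained only by $\eta\leq Q_\epsilon(f(x))$ and may be much smaller than $Q_\epsilon(f(x))$, hence much smaller than $Q_\epsilon(x)$ (Lemma \ref{omega0}), whereas $f_x(R_{Q_\epsilon(x)}(0))$ has diameter on the order of $\kappa\, Q_\epsilon(x)$, which can dwarf $I^{-2}(\eta)$. The correct statement — which is what the paper proves — is that $f_x[R_{Q_\epsilon(x)}(0)]\subset B_{3\kappa Q_\epsilon(x)}(0)\subset R_{I^{-1}(r(M))}(0)$ once $\epsilon$ is small (since $Q_\epsilon(x)<C_{\beta,\epsilon}\ll1$), and $R_{I^{-1}(r(M))}(0)$ is the domain on which Proposition \ref{chartsofboxes}(2) guarantees $\Psi$ is well defined and $C^{1+\beta/2}$-close to $\mathrm{Id}$. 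The point is that the domain of $\Psi$ is governed by the fixed geometric constant $r(M)$, not by the chart sizes $\eta,\eta'$; the $\eta$-dependent inclusion you cited is about the \emph{overlap} of the two charts, not about where $\Psi$ makes sense as a map. Once this is corrected, the remainder of your argument — the block form from $D_0(x)$, the bound $|h(0)|\leq\epsilon\eta^3(\eta')^3$, the identity $dh|_v=dr_x|_v+d\phi|_{f_x(v)}\circ df_x|_v$, and the composition estimate for the $\beta/2$-H\"older seminorm — is sound and agrees with the paper.
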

\begin{proof} We write $f_{xy}=(\psi_y^{-1}\circ\psi_{f(x)})\circ f_x$, and treat $f_{xy}$ as a perturbation of $f_x$. Recall Theorem \ref{linearization}. For small enough $\epsilon$, 
 $f_x$ has the following properties:
\begin{itemize}
\item It is well-defined, differentiable, and injective on $R_{Q_\epsilon(x)}(0)$.
\item $f_x(0)=0, d_0f_x=D_0(x)$.
\item For all $v_1,v_2\in R_{Q_\epsilon(x)}(0)$: $\|d_{v_1}f_x-d_{v_2}f_x\|\leq2\epsilon|v_1-v_2|^{\beta/2}$.

(Because the $C^{1+\beta/2}$ distance between $f_x$ and $d_0f_x$ on $R_{Q_\epsilon(x)}(0)$ is less than $Q_\epsilon(x)$)
\item For every $0<\eta<Q_\epsilon(x)$ and $v\in R_\eta(0):\|d_vf_x\|<3\kappa$, provided $\epsilon$ is small enough (because $\|d_vf_x\|\leq\|d_0f_x\|+\epsilon\eta^{\beta/2}<2\kappa+\epsilon$).
\end{itemize}
The second and fourth points imply that $f_x[R_{Q_\epsilon(x)}(0)]\subset B_{3\kappa Q_\epsilon(x)}(0)$. Now since $Q_\epsilon(x)<\epsilon^{3/\beta}$: $f_x[R_{Q_\epsilon(x)}(0)]\subset B_{3\kappa\epsilon^{3/\beta}}(0)$. If $\epsilon$ is so small that $3\kappa\epsilon^{3/\beta}<I^{-1}(r)$, then $f_x[R_{Q_\epsilon(x)}(0)]\subset B_{I^{-1}(r)}(0)\subset R_{I^{-1}(r)}(0)$, which is the domain of $\psi_y^{-1}\circ\psi_{f(x)}$ according to Proposition \ref{chartsofboxes}. Therefore $f_{xy}$ is well-defined, differentiable and injective on $R_{Q_\epsilon(x)}(0)$.

$h_s$ is defined by taking the first $s(x)$ coordinates of $f_{xy}(v_s,v_u)$, and subtracting $D_sv_s$: basically the equation in the statement defines $h_s,h_u$. It is left to check the properties:

$$(-h_s(0)-,-h_u(0)-)=f_{xy}(0)=\psi_y^{-1}(f(x))=(\psi_y^{-1}\circ\psi_{f(x)})(0)\text{ therefore }$$ $$|(-h_s(0)-,-h_u(0)-)|\leq d_{C^0}(\psi_y^{-1}\circ\psi_{f(x)},Id)<(\because\text{Proposition \ref{chartsofboxes}})<\epsilon\eta^3(\eta')^3<\epsilon\eta^3.$$ 
Now, by rearranging the derivative of $(\psi_y^{-1}\circ\psi_{f(x)})\circ f_x$:
$$d_vf_{xy}=[d_{f_x(v)}(\psi_y^{-1}\circ\psi_{f(x)})-Id]d_vf_x+[d_vf_x-d_0f_x]+d_0f_x.$$
The norm of the first summand is bounded by $3\kappa\epsilon\eta^3$. The norm of the second summand is less than $\epsilon|v|^{\beta/2}<\sqrt{d}\epsilon\eta^{\beta/2}$. The third term is $D_0(x)=d_0f_x$. Thus
\begin{align}\label{alsoForC2reg}\|\frac{\partial(h_s,h_u)}{\partial(v_s,v_u)}\|=\|d_vf_{xy}-D_0(x)\|<\epsilon\eta^{\beta/2}[3\kappa+\sqrt{d}]
\leq \sqrt{\epsilon}\eta^\frac{\beta}{2}\text{ for small enough }\epsilon>0.\end{align}
 In particular: $\|\frac{\partial(h_s,h_u)}{\partial(v_s,v_u)}\rvert_0\|<\sqrt{\epsilon}\eta^{\beta/2}$. From the expression for $d_vf_{xy}$ we get that for every $v_1,v_2\in R_{Q_\epsilon(x)}(0)$:
$$\|d_{v_1}f_{xy}-d_{v_2}f_{xy}\|\leq\|d_{f_x(v_1)}(\psi_y^{-1}\circ\psi_{f(x)})-d_{f_x(v_2)}(\psi_y^{-1}\circ\psi_{f(x)})\|\cdot\|d_{v_1}f_x\|+\|d_{v_1}f_x-d_{v_2}f_x\|\cdot(\|d_{f_x(v_1)}(\psi_y^{-1}\circ\psi_{f(x)})\|+1)$$
Using that $d_{C^{1+\beta/2}}(\psi_y^{-1}\circ\psi_{f(x)},Id)<\epsilon\eta^3$ (Proposition \ref{chartsofboxes}), we see that
\begin{align*}
    \|d_{v_1}f_{xy}-d_{v_2}f_{xy}\|\leq & \epsilon\eta^3|f_x(v_1)-f_x(v_2)|^{\beta/2}\cdot3\kappa+2\epsilon|u-v|^{\beta/2}(\epsilon\eta+2)\\ \leq & \epsilon\eta^3\sup\limits_{w\in R_{Q_\epsilon(x)}(0)}\|d_wf_x\|^{\beta/2}\cdot\|v_1-v_2\|^{\beta/2}\cdot3\kappa+5\epsilon|v_1-v_2|^{\beta/2}\\ \leq & \epsilon((3\kappa)^{1+\beta/2}\eta+5)|v_1-v_2|^{\beta/2}\leq6\epsilon|v_1-v_2|^{\beta/2}\leq \sqrt{\epsilon}|v_1-v_2|^{\beta/2}\text{, provided }\epsilon\text{ is small enough.}
\end{align*}
\end{proof}

\subsubsection{Coarse graining}
Recall the definition of $s(x)$ for a point $x\in 0$-$\mathrm{summ}$ in Definition  \ref{0-summ}
.
\begin{prop}\label{discreteness} The following holds for all $\epsilon>0$ small enough: There exists a countable collection $\mathcal{A}$ of Pesin charts with the following properties:
\begin{enumerate}
\item {\em Discreteness}:  $\{\psi\in\mathcal{A}:\psi=\psi_x^\eta,\eta>t\}$ is finite for every $t>0$.
\item {\em Sufficiency}: For every $x\in \RST$ and for every sequence of positive numbers $0<\eta_n\leq I^\frac{-1}{4}(Q_\epsilon(f^n(x)))$ in $\mathcal{I}$ s.t $\eta_n=I^{\pm1}(\eta_{n+1})$ there exists a sequence $\{\psi_{x_n}^{\eta_n}\}_{n\in\mathbb{Z}}$ of elements of $\mathcal{A}$ s.t for every $n$:
\begin{enumerate}
\item $\psi_{x_n}^{\eta_n}$ $I$-overlaps $\psi_{f^n(x)}^{\eta_n}$, $Q_\epsilon(f^n(x))=I^\frac{\pm1}{4}(Q_\epsilon(x_n))$ and $s(x_n)=s(f^n(x))$ \normalfont($=s(x)$, since Lyapunov exponents and dimensions are $f$-invariant);
\item $\psi_{f(x_n)}^{\eta_{n+1}}$ $I$-overlaps $\psi_{x_{n+1}}^{\eta_{n+1}}$;
\item $\psi_{f^{-1}(x_n)}^{\eta_{n-1}}$ $I$-overlaps $\psi_{x_{n-1}}^{\eta_{n-1}}$;
\item $\psi_{x_n}^{\eta_n'}\in\mathcal{A}$ for all $\eta_n'\in \mathcal{I}$ s.t $\eta_n\leq\eta_n'\leq\min\{Q_\epsilon(x_n),I(\eta_n)\}$.
\end{enumerate}
\end{enumerate}
\end{prop}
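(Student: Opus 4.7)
The strategy is a coarse-graining / $\delta_k$-net construction following Sarig's \cite[Prop.~3.5]{Sarig}, adapted to the ladder $\mathcal{I}$ and to the $0$-summable setting. A chart $\psi_y^\eta$ is determined (up to the orthogonal ambiguity in $C_0$, which does not affect any of the overlap notions) by the tuple $(y, C_0(y), s(y), \eta)$, but in order to arrange the chain overlaps (b) and (c) of the statement the net must also approximate the adjacent-orbit linearizations $C_0(f^{\pm1}(y))$. I therefore fix, for each integer $k\geq 1$ and each $s\in\{1,\ldots,d-1\}$, the set $\mathcal{B}_{k,s}$ of quadruples
\[
\bigl(x,\; C_0(f^{-1}(x)),\; C_0(x),\; C_0(f(x))\bigr),\qquad x\in 0\text{-}\mathrm{summ},\; s(x)=s,\; \max_{i\in\{-1,0,1\}}\|C_0^{-1}(f^i(x))\|\leq k.
\]
This sits inside the continuous fibered set of such tuples with the prescribed norm bounds, which is precompact in the natural metric (base distance $d(\cdot,\cdot)$ plus fiberwise norm differences via the local trivializations $\Theta_D$, $D\in\mathcal{D}$), since $M$ is compact and each fiber is bounded. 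A maximal $\delta_k$-separated subset $\mathcal{Y}_{k,s}\subset\mathcal{B}_{k,s}$ is therefore finite, for a sequence $\delta_k\downarrow 0$ specified below, and I set
\[
\mathcal{A}:=\bigcup_{k,s}\bigl\{\psi_y^\eta : y\text{ is the base point of some element of }\mathcal{Y}_{k,s},\; \eta\in\mathcal{I},\; \eta\leq Q_\epsilon(y)\bigr\},
\]
which is countable.

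For discreteness, if $\eta>t$ then $\eta\leq Q_\epsilon(y)\leq C_{\beta,\epsilon}\|C_0^{-1}(y)\|^{-2\gamma}$ bounds $\|C_0^{-1}(y)\|$ in terms of $t$, so only finitely many levels $k$ contribute; each $\mathcal{Y}_{k,s}$ is finite, and only finitely many $\eta\in\mathcal{I}$ exceed $t$. For sufficiency, fix $x\in\RST$ and the sequence $\{\eta_n\}$. The hypothesis $\eta_n\leq I^{-1/4}(Q_\epsilon(f^n(x)))$ bounds $\|C_0^{-1}(f^n(x))\|$ in terms of $\eta_n$ alone, and by Theorem~\ref{SandU} it also bounds $\|C_0^{-1}(f^{n\pm 1}(x))\|$ up to a factor $F_0$; hence the data of $f^n(x)$ lies in $\mathcal{B}_{k_n, s(x)}$ for some $k_n$ depending only on $\eta_n$. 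By maximality I pick $x_n$ whose $\mathcal{Y}_{k_n,s(x)}$-data is within $\delta_{k_n}$ of the data of $f^n(x)$. Taking $\delta_k$ so small that $\delta_k<\tfrac{1}{2}\eta^8$ for every $\eta\in\mathcal{I}$ compatible with $\|C_0^{-1}\|\leq F_0 k$, all three overlaps (a)--(c) follow from the triangle inequality: (a) is the middle-coordinate approximation directly; for (b) one splits $\|\Theta_D C_0(f(x_n))-\Theta_D C_0(x_{n+1})\|$ into $\|\Theta_D C_0(f(x_n))-\Theta_D C_0(f^{n+1}(x))\|+\|\Theta_D C_0(f^{n+1}(x))-\Theta_D C_0(x_{n+1})\|$, each summand being bounded by $\delta_{k_n}$ or $\delta_{k_{n+1}}$ via the enlarged-data approximations at indices $n$ and $n+1$; and (c) is symmetric. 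Lemma~\ref{overlap} then gives $\|C_0^{-1}(x_n)\|=e^{\pm\eta_n^2}\|C_0^{-1}(f^n(x))\|$, so after possibly shrinking $\delta_k$ further the discretizations agree up to one step of $\mathcal{I}$, i.e.\ $Q_\epsilon(f^n(x))=I^{\pm 1/4}(Q_\epsilon(x_n))$. The matching $s(x_n)=s(f^n(x))$ is built into the index $s$ of $\mathcal{B}_{k,s}$, and condition (d) is automatic from the definition of $\mathcal{A}$.

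The main obstacle, sidestepped by working in the enlarged data space, is that a naive net on $(y,C_0(y))$ alone would not control $C_0(f(y))$: the map $x\mapsto C_0(f(x))$ factors through the purely measurable cocycle $D_0(x)=C_0^{-1}(f(x))\,d_xf\,C_0(x)$ and is therefore not continuous in $x$. By promoting the data at $y$ to the triple $(C_0(f^{-1}(y)), C_0(y), C_0(f(y)))$ and using Theorem~\ref{SandU} to keep the three norms uniformly bounded at each level $k$, the chain-overlap conditions become consequences of a single triangle-inequality argument at each index, and the construction closes up. The only remaining quantitative check is that a finite $\delta_k$-net in $\mathcal{B}_{k,s}$ exists for the prescribed $\delta_k$, which is automatic from precompactness of the ambient bundle.
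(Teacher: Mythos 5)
Your skeleton is the right one: it is exactly the Sarig coarse-graining that the paper's proof invokes, with the two adaptations the paper names (partitioning by $s(x)=s$, and working at the finer $I$-overlap scale $\eta_1^4\eta_2^4$), and enlarging the data to the quadruple $(x,C_0(f^{-1}x),C_0(x),C_0(fx))$ so that the chain overlaps (b),(c) follow from a triangle inequality is correct. The gap is in how you grade the nets. You grade only by the norm level $k$ and then admit into $\mathcal{A}$ every $\eta\in\mathcal{I}$ with $\eta\le Q_\epsilon(y)$, and this cannot satisfy (1) and (2) simultaneously. For sufficiency, the overlaps in (2)(a)--(c) must hold at scale $\eta_n^8$, while the hypothesis only gives $\eta_n\le I^{-1/4}(Q_\epsilon(f^n(x)))$: the $\eta_n$ may lie arbitrarily far below $Q_\epsilon(f^n(x))$, so no fineness $\delta_k$ determined by the norm level alone works. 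Your prescription ``$\delta_k<\tfrac12\eta^8$ for every $\eta\in\mathcal{I}$ compatible with $\|C_0^{-1}\|\le F_0k$'' is unsatisfiable as written, since arbitrarily small $\eta\in\mathcal{I}$ are compatible with any norm bound; and if you instead tie $\delta_k$ to the largest admissible $\eta$ at level $k$, the net is too coarse to give the required $\eta_n^8$-closeness when $\eta_n\ll Q_\epsilon(f^n(x))$. Independently, discreteness fails: your $\mathcal{B}_{k,s}$ are nested, so for every $k$ the maximal $\delta_k$-separated set $\mathcal{Y}_{k,s}$ is $\delta_k$-dense in any fixed infinite low-norm region $\{\max_{|i|\le1}\|C_0^{-1}(f^i\cdot)\|\le K_0\}$ (infinite in any situation where the theory has content, e.g.\ when $f$ has a hyperbolic horseshoe); as $\delta_k\downarrow0$ the number of net points meeting that region tends to infinity, those base points have $\|C_0^{-1}\|\le K_0+1$ once $\delta_k$ is small, hence $Q_\epsilon\ge t_0(K_0)>0$, and since you admit all $\eta\le Q_\epsilon(y)$, the union $\bigcup_k\mathcal{Y}_{k,s}$ contains infinitely many charts $\psi_y^\eta$ with $\eta>t_0$. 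Your one-line claim that ``only finitely many levels $k$ contribute'' is exactly what the nesting destroys.

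The fix --- and this is what Sarig's proof, to which the paper defers, actually does --- is to grade doubly: by disjoint shells of $\max_{|i|\le1}\|C_0^{-1}(f^i\cdot)\|$ (equivalently by the discretized $Q_\epsilon$-value) and by the ladder value $\eta\in\mathcal{I}$, choosing for each pair a finite net of quadruples at scale a small constant times $\eta^8$ (the constant absorbing $F_0$, $\mathrm{Lip}(f)$ and one ladder step, so the closeness also beats $\eta_{n\pm1}^8$), and admitting into $\mathcal{A}$ from that net only the charts $\psi_y^{\eta'}$ with $\eta\le\eta'\le\min\{Q_\epsilon(y),I(\eta)\}$. This restricted membership is precisely why conclusion (2)(d) is stated only for $\eta_n\le\eta_n'\le\min\{Q_\epsilon(x_n),I(\eta_n)\}$ rather than for all $\eta'\le Q_\epsilon(x_n)$ --- a tell-tale sign that ``all $\eta\le Q_\epsilon(y)$'' is not the intended definition. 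With the double grading, a chart with $\eta>t$ can only come from the finitely many pairs (shell, ladder value) with ladder value $>t$ and $\|C_0^{-1}\|\le(C_{\beta,\epsilon}/t)^{1/2\gamma}$, each contributing a finite set, so discreteness holds; and in the sufficiency step you choose $x_n$ from the net indexed by the shell of $f^n(x)$ and by $\eta_n$ itself, which is $\eta_n^8$-fine by construction, after which your triangle-inequality verification of (a)--(c) and the $I^{\pm1/4}$-comparison of the $Q_\epsilon$'s via Theorem \ref{SandU} and Lemma \ref{overlap} go through as you wrote them.
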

\begin{proof} The proof is the same as the proof of \cite[Proposition~3.5]{Sarig} except for two difference: One, in the higher-dimensional case $s(x)\in\{1,...,d-1\}$ has $d-1$ possible values instead of just one. To deal with this we apply the discretization of \cite{Sarig} to $\RST\cap [s(x)=s]$ for each $s=1,..., d-1$. Two, we use the finer approximation of $I$-overlapping charts rather than Sarig's $\epsilon$-overlapping charts, which is still possible by exactly the same arguments of pre-compactness by Sarig. 

\end{proof}
\subsection{$I$-chains and infinite-to-one Markov extension of $f$}\label{epsilonchains}
\subsubsection{Double charts and $I$-chains}

Recall that $\psi_x^\eta$ ($0<\eta\leq Q_\epsilon(x)$) stands for the Pesin chart $\psi_x:R_\eta(0)\rightarrow M$. An {\em $I$-double Pesin chart} (or just \enquote{double chart}) is a pair $\psi_x^{p^s,p^u}:=(\psi_x^{p^s},\psi_x^{p^u})$ where $0<p^u,p^s\leq Q_\epsilon(x)$.
\begin{definition}\label{edges} $\psi_{x}^{p^s,p^u}\rightarrow\psi_{y}^{q^s,q^u}$ means :
\begin{itemize}
\item $\psi_{x}^{p^s\wedge p^u}$ and $\psi_{f^{-1}(y)}^{p^s\wedge p^u}$ $I$-overlap;
\item $\psi_{f(x)}^{q^s\wedge q^u}$ and $\psi_{y}^{q^s\wedge q^u}$ $I$-overlap;
\item $q^u=\min\{I (p^u),Q_\epsilon(y)\}$ and $p^s=\min\{I(q^s),Q_\epsilon(x)\}$;
\item $s(x)=s(y)$.
\end{itemize}
\end{definition}
\begin{definition}\label{defepsilonchains} (see \cite{Sarig}, Definition 4.2)
$\{\psi_{x_i}^{p^s_i,p^u_i}\}_{i\in\mathbb{Z}}$ (resp. 
$\{\psi_{x_i}^{p^s_i,p^u_i}\}_{i\geq0}$ and 
$\{\psi_{x_i}^{p^s_i,p^u_i}\}_{i\leq0}$) is called an {\em $I$-chain} (resp. positive , negative $I$-chain) if $\psi_{x_i}^{p^s_i,p^u_i}\rightarrow\psi_{x_{i+1}}^{p^s_{i+1},p^u_{i+1}}$ for all $i$. We abuse terminology and drop the $I$ in ``$I$-chains".
\end{definition}

Let $\mathcal{A}$ denote the countable set of Pesin charts we have constructed in \textsection1.2.3 and recall that $\mathcal{I}=\{I^\frac{-\ell}{4}(1):\ell\geq0\}$.
\begin{definition}\label{graphosaurus} $\mathcal{G}$ is the directed graph with vertices $\mathcal{V}$ and $\mathcal{E}$ where:
\begin{itemize}
\item $\mathcal{V}:=\{\psi_{x}^{p^s,p^u}:\psi_{x}^{p^s\wedge p^u}\in \mathcal{A},p^s,p^u\in \mathcal{I}, p^s,p^u\leq Q_\epsilon(x)\}$.
\item $\mathcal{E}:=\{(\psi_{x}^{p^s,p^u},\psi_{y}^{q^s,q^u})\in \mathcal{V}\times\mathcal{V}:\psi_{x}^{p^s,p^u}\rightarrow\psi_{y}^{q^s,q^u}\}$.
\end{itemize}
\end{definition}
This is a countable directed graph. Every vertex has a finite degree, because of the following lemma and the discretization of $\mathcal{A}$ (Proposition \ref{discreteness}). Let 
$$\Sigma(\mathcal{G}):=\{x\in\mathcal{V}^\mathbb{Z}:(x_i,x_{i+1})\in\mathcal{E} ,\forall i\in\mathbb{Z}\}.$$
$\Sigma(\mathcal{G})$ is the Markov shift associated with the directed graph $\mathcal{G}$. It comes equipped with the left-shift $\sigma$, and the standard metric.

\begin{lemma}\label{lemma131} If $\psi_x^{p^s,p^u}\rightarrow\psi_y^{q^s,q^u}$ then $q^u\wedge q^s=I^{\pm1}(p^u\wedge p^s)$. Therefore for every $\psi_x^{p^s,p^u}\in\mathcal{V}$ there are only finitely many $\psi_y^{q^s,q^u}\in\mathcal{V}$ s.t $\psi_x^{p^s,p^u}\rightarrow\psi_y^{q^s,q^u}$ or $\psi_y^{q^s,q^u}\rightarrow\psi_x^{p^s,p^u}$.
\end{lemma}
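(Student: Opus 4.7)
The plan is to reduce the bracket $q^u\wedge q^s = I^{\pm 1}(p^u\wedge p^s)$ to a short case analysis based directly on the edge relations in Definition \ref{edges}, and then deduce the finite-degree statement from this bracket together with the discreteness of $\mathcal{A}$. From Definition \ref{edges} I first extract the two one-sided inequalities $q^u\leq I(p^u)$ (since $q^u=\min\{I(p^u),Q_\epsilon(y)\}$) and $q^s\geq I^{-1}(p^s)$ (since $p^s=\min\{I(q^s),Q_\epsilon(x)\}\leq I(q^s)$), together with the vertex constraints $p^s,p^u\leq Q_\epsilon(x)$ and $q^s,q^u\leq Q_\epsilon(y)$.

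For the upper bound $q^u\wedge q^s\leq I(p^u\wedge p^s)$ I will split on $p^u\leq p^s$ versus $p^s\leq p^u$. In the first case $q^u\wedge q^s\leq q^u\leq I(p^u)=I(p^u\wedge p^s)$. In the second case, if $p^s<Q_\epsilon(x)$ the second edge relation forces $p^s=I(q^s)$, so $q^s=I^{-1}(p^s)<I(p^s)$ and hence $q^u\wedge q^s\leq q^s\leq I(p^u\wedge p^s)$; while if $p^s=Q_\epsilon(x)$, the chain $p^u\leq Q_\epsilon(x)=p^s\leq p^u$ collapses to $p^u=p^s$, sending us back to the first case.

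For the lower bound $q^u\wedge q^s\geq I^{-1}(p^u\wedge p^s)$ I observe first that $q^s\geq I^{-1}(p^s)\geq I^{-1}(p^u\wedge p^s)$. For $q^u$: if the minimum in $q^u=\min\{I(p^u),Q_\epsilon(y)\}$ is attained at $I(p^u)$, then $q^u\geq p^u\geq I^{-1}(p^u\wedge p^s)$; if it is attained at $Q_\epsilon(y)$, I use the vertex constraint $q^s\leq Q_\epsilon(y)$ backwards to get $Q_\epsilon(y)\geq q^s\geq I^{-1}(p^s)\geq I^{-1}(p^u\wedge p^s)$. This last move is the key observation keeping the argument elementary: no comparison of $Q_\epsilon(x)$ with $Q_\epsilon(y)$ via Lemmas \ref{omega0} and \ref{overlap} is needed, because the $q^s$ coordinate of the target vertex itself delivers the required lower bound on $Q_\epsilon(y)$.

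For the finiteness of the degree, the bracket just proved forces $q^u\wedge q^s\in[I^{-1}(p^u\wedge p^s),\,I(p^u\wedge p^s)]$, an interval meeting $\mathcal{I}$ in a finite set; so only finitely many scale pairs $(q^s,q^u)\in\mathcal{I}^2$ are admissible. For each such scale Proposition \ref{discreteness}(1) leaves only finitely many centers $y$ with $\psi_y^{q^u\wedge q^s}\in\mathcal{A}$, and the remaining edge conditions then cut this finite set down further. The count for incoming edges is symmetric after swapping $(x,p)\leftrightarrow(y,q)$ in the same bracket. I expect no serious obstacle beyond keeping track of the corner case $p^s=Q_\epsilon(x)$ in the upper bound, which is harmless once one notices it collapses to $p^u=p^s$.
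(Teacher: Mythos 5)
Your proof is correct, and it uses essentially the same ingredients as the paper's: the two edge relations $q^u=\min\{I(p^u),Q_\epsilon(y)\}$, $p^s=\min\{I(q^s),Q_\epsilon(x)\}$, the vertex constraints $p^u\leq Q_\epsilon(x)$, $q^s\leq Q_\epsilon(y)$, and the monotonicity of $I$. The paper's proof compresses the upper bound into a single chain of $\min$ identities, namely $q^u\wedge q^s=\min\{I(p^u),q^s\}\leq\min\{I(p^u),I^2(q^s)\}=I(\min\{p^u,Q_\epsilon(x),I(q^s)\})=I(p^u\wedge p^s)$, then disposes of the lower bound by noting that it follows ``similarly'' — exploiting the symmetry of the edge conditions under exchanging the roles of $(p^s,p^u,Q_\epsilon(x))$ and $(q^u,q^s,Q_\epsilon(y))$. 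You instead unwind the same facts into an explicit case analysis on where the various minima are attained, and handle the two one-sided inequalities separately. The trade-off is clarity of the corner cases (your treatment of $p^s=Q_\epsilon(x)$ is made explicit, where the paper's $\min$-chain absorbs it silently) versus compactness; the observation you highlight — using $q^s\leq Q_\epsilon(y)$ to bound $Q_\epsilon(y)$ from below without any overlap estimate — is also present in the paper's first step, so it is not a new shortcut, just a more visible one in your presentation. The finiteness argument is identical in substance.
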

\begin{proof}
Since $\psi_x^{p^s,p^u}\rightarrow\psi_y^{q^s,q^u}$: $q^u=\min\{I(p^u),Q_\epsilon(y)\}$ and $p^s=\min\{I(q^s),Q_\epsilon(x)\}$. Also recall $q^s\leq Q_\epsilon(y),p^u\leq Q_\epsilon(x)$; and that $I$ is a strictly increasing continuous function. It follows that
\begin{align*}q^u\wedge q^s=&\min\{I(p^u),Q_\epsilon(y),q^s\}=\min\{I(p^u),q^s\}\leq \min\{I(p^u),I^2(q^s)\}=I(\min \{p^u,I(q^s)\})\\
=& I(\min \{p^u,Q_\epsilon(x),I(q^s)\})= I(p^u\wedge\min\{Q_\epsilon(x),I(q^s)\})= I(p^u\wedge p^s).
\end{align*}
Similarly, $p^s\wedge p^u\leq I(q^u\wedge q^s)$.

\end{proof}

\begin{definition} Let $(Q_k)_{k\in\mathbb{Z}}$ be a sequence in $\mathcal{I}=\{I^\frac{-\ell}{4}(1)\}_{l\in\mathbb{N}}$. A sequence of pairs $\{(p^s_k,p^u_k)\}_{k\in\mathbb{Z}}$ is called {\em $I$-strongly subordinated} to $(Q_k)_{k\in\mathbb{Z}}$ if for every $k\in\mathbb{Z}$: \begin{itemize}
\item $0<p^s_k,p^u_k\leq Q_k$
\item $p^s_k,p^u_k\in \mathcal{I}$
\item $p_{k+1}^u=\min\{I( p_k^u),Q_{k+1}\}$ and $p_{k-1}^s=\min\{I (p_k^s),Q_{k-1}\}$
\end{itemize}
\end{definition}
For example, if $\{\psi_{x_k}^{p^s_k,p^u_k}\}_{k\in\mathbb{Z}}$ is a chain, then $\{(p^s_k,p^u_k)\}_{k\in\mathbb{Z}}$ is $I$-strongly subordinated to $\{Q_\epsilon(x_k)\}_{k\in\mathbb{Z}}$.

\begin{lemma}\label{subordinatedchain} (Compare with \cite[Lemma~4.6]{Sarig})
Let $(Q_k)_{k\in\mathbb{Z}}$ be a sequence in $\mathcal{I}$, and suppose $q_k\in \mathcal{I}$ satisfy $0<q_k\leq Q_k$ and $q_k=I^{\pm1}(q_{k+1})$ for all $k\in\mathbb{Z}$. Then there exists a sequence $\{(p^s_k,p^u_k)\}_{k\in\mathbb{Z}}$ which is $I$-strongly subordinated to $(Q_k)_{k\in\mathbb{Z}}$ and $p^s_k\wedge p^u_k\geq q_k$ for all $k\in\mathbb{Z}$.
\end{lemma}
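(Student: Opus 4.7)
The plan is to construct $p^u_k$ and $p^s_k$ explicitly as infima of iterated images of $Q_j$, mirroring Sarig's treatment in \cite[Lemma~4.6]{Sarig}, but taking care of the arithmetic in $\mathcal{I}=\{I^{-\ell/4}(1)\}_{\ell\geq 0}$. Concretely, I would define
\[
p^u_k := \inf\bigl\{I^{k-j}(Q_j):j\leq k\bigr\}
\qquad\text{and}\qquad
p^s_k := \inf\bigl\{I^{j-k}(Q_j):j\geq k\bigr\}.
\]
All claims in the lemma then reduce to checking four things for $p^u_k$ (the argument for $p^s_k$ is symmetric): the infimum is attained and lies in $\mathcal I$; the forward recurrence $p^u_{k+1}=\min\{I(p^u_k),Q_{k+1}\}$ holds; $p^u_k\leq Q_k$; and $p^u_k\geq q_k$.

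First I would observe that, since $q_{j+1}=I^{\pm 1}(q_j)$ forces $q_{j+1}\leq I(q_j)$, iteration gives $q_k\leq I^{k-j}(q_j)\leq I^{k-j}(Q_j)$ for every $j\leq k$, hence the infimum is bounded below by $q_k>0$. Next I would use the structure of $\mathcal I$: it is a sequence accumulating only at $0$, so $\mathcal I\cap [q_k,1]$ is finite. Any index $j$ with $I^{k-j}(Q_j)>1$ certainly does not achieve the infimum (which is already at most $Q_k\leq 1$ by taking $j=k$), so we may restrict the infimum to indices $j$ with $I^{k-j}(Q_j)\leq 1$; for those indices $Q_j=I^{-\ell_j/4}(1)$ with $\ell_j\geq 4(k-j)$, and $I^{k-j}(Q_j)=I^{-(\ell_j-4(k-j))/4}(1)\in\mathcal I$. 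Thus the infimum ranges over a finite subset of $\mathcal I\cap[q_k,1]$ and is attained, so $p^u_k\in\mathcal I$ and $p^u_k\geq q_k$.

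The recurrence is then purely algebraic: since $I$ is strictly increasing, it commutes with taking the infimum of a bounded-below set, so
\[
I(p^u_k)=\inf_{j\leq k}I^{k+1-j}(Q_j),
\]
and
\[
\min\{I(p^u_k),Q_{k+1}\}=\inf\bigl\{Q_{k+1}\bigr\}\cup\bigl\{I^{k+1-j}(Q_j):j\leq k\bigr\}=\inf_{j\leq k+1}I^{(k+1)-j}(Q_j)=p^u_{k+1}.
\]
The inequality $p^u_k\leq Q_k$ is immediate by taking $j=k$. The analogous construction and verification for $p^s_k$ uses the symmetric observation $q_k\leq I^{j-k}(Q_j)$ for $j\geq k$ (which follows from the same recursion applied backwards: $q_j\geq I^{-(j-k)}(q_k)$ need not hold, but the needed inequality $q_k\leq I^{j-k}(q_j)$ does, by the same chain as above); and gives $p^s_{k-1}=\min\{I(p^s_k),Q_{k-1}\}$ by the same monotonicity calculation. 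Finally, $p^s_k\wedge p^u_k\geq q_k$ is immediate.

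The main conceptual subtlety, and where I expect most of the care to go, is arguing that the infima actually lie in $\mathcal I$ rather than merely in $(0,\infty)$: this is the only place the discrete structure of $\mathcal I$ really enters, and it is what lets us work at the level of ``ladder scales'' instead of an arbitrary continuous scale. Everything else reduces to the monotonicity of $I$ and bookkeeping.
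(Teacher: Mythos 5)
Your proposal is correct and takes essentially the same approach as the paper: the paper defines $p^u_k:=\max\{t\in\mathcal I: I^{-n}(t)\leq Q_{k-n}\ \forall n\geq0\}$ (and symmetrically for $p^s_k$), which coincides with your $\inf_{n\geq0}I^n(Q_{k-n})$ once one observes, as you do, that this infimum is attained and lies in $\mathcal I$. One small slip worth flagging: your parenthetical ``$q_j\geq I^{-(j-k)}(q_k)$ need not hold'' is mistaken — that inequality does hold (it is just a rearrangement of $q_k\leq I^{j-k}(q_j)$) — but since you never rely on its failure, this does not affect the argument.
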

\begin{proof}
By the assumptions on $q_k$: $Q_{k-n},Q_{k+n}\geq I^{-n}(q_k)$ for all $n\geq0$, therefore the following are well defined:
$$p_k^u:=\max\{t\in \mathcal{I}: I^{-n}(t)\leq Q_{k-n}, \forall n\geq0\},$$
$$p_k^s:=\max\{t\in \mathcal{I}: I^{-n}(t)\leq Q_{k+n}, \forall n\geq0\}.$$
The sequence $\{(p^s_k,p^u_k)\}_{k\in\mathbb{Z}}$ is $I$-strongly subordinated to $(Q_k)_{k\in\mathbb{Z}}$.
\end{proof}

\begin{lemma}\label{forrecurrenceinnextone}
 Suppose $\{(p^s_k,p^u_k)\}_{k\in\mathbb{Z}}$ is $I$-strongly subordinated to a sequence $(Q_k)_{k\in\mathbb{Z}}\subset \mathcal{I}$. If $\limsup_{n\rightarrow\infty}(p^s_n\wedge p^u_n)>0$ and $\limsup_{n\rightarrow-\infty}(p^s_n\wedge p^u_n)>0$, then $p_n^u$ (resp. $p_n^s$) is equal to $Q_n$ for infinitely many $n>0$, and for infinitely many $n<0$.
\end{lemma}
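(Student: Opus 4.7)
The plan is to reduce to the two statements about $p_n^u$ and then deduce the two statements about $p_n^s$ from symmetry. The recursion $p_{k-1}^s = \min\{I(p_k^s), Q_{k-1}\}$ is literally the recursion $p_{k+1}^u = \min\{I(p_k^u), Q_{k+1}\}$ read under the substitution $k \mapsto -k$, and the two $\limsup$ hypotheses at $\pm\infty$ are swapped into each other by the same substitution. So any proof of ``$p_n^u = Q_n$ infinitely often in $n>0$ and in $n<0$'' yields, by reflecting indices, the corresponding statement for $p_n^s$.

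The key observation for $p_n^u$ is a min-to-iteration reduction: if $p_n^u < Q_n$, then the minimum in $p_n^u = \min\{I(p_{n-1}^u), Q_n\}$ cannot be $Q_n$, so $p_n^u = I(p_{n-1}^u)$, equivalently $p_{n-1}^u = I^{-1}(p_n^u)$. Consequently, assuming $p_n^u < Q_n$ on an entire tail converts the restricted sequence into a deterministic one-sided orbit of $I$ (forward) or of $I^{-1}$ (backward).

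For the positive tail I would argue by contradiction: if $p_n^u < Q_n$ for every $n \geq N$, then $p_n^u = I^{n-N}(p_N^u)$ for all such $n$. Because $I(x) > x$ on $(0,\infty)$ and $I$ has no positive fixed point, the orbit $I^k(p_N^u)$ is strictly increasing and cannot converge, hence tends to $\infty$; this contradicts $p_n^u \le Q_n \le 1$. For the negative tail the same algebraic step gives $p_{-N-k}^u = I^{-k}(p_{-N}^u)$ for all $k \geq 0$; by Lemma~\ref{forI}(2), $I^{-k}(p_{-N}^u) \to 0$, so $p_n^u \to 0$ as $n \to -\infty$, contradicting the hypothesis $\limsup_{n\to -\infty}(p_n^s \wedge p_n^u) > 0$.

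I do not foresee a serious obstacle; the proof is essentially a two-line ergodic argument about the $I$-iteration. The only bookkeeping to keep straight is the asymmetry of which hypothesis rules out which escape scenario: the upper bound $\mathcal I \subseteq (0,1]$ kills the forward-$p^u$ (and, after reflection, the backward-$p^s$) escape to $\infty$, while the two $\limsup$ hypotheses kill the backward-$p^u$ and (after reflection) forward-$p^s$ collapse to $0$. Both $\limsup$ hypotheses in the statement are therefore used, each for its matching ``hard'' direction.
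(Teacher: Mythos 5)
Your proof is correct, but it takes a somewhat different route from the paper's. The paper fixes $m:=\tfrac12\min\{\limsup_{n\to\infty}p_n,\limsup_{n\to\infty}p_{-n}\}$ (with $p_n=p_n^s\wedge p_n^u$) and the minimal $N$ with $I^{-N}(1)<m$, and then argues a \emph{windowed} statement: at every time $n$ (positive or negative) with $p_n>m$, some $k\in[n,n+N]$ must satisfy $p_k^u=Q_k$, since otherwise strong subordination forces $p^u_{n+N}=I^N(p^u_n)\geq I^N(m)>1$, impossible because $\mathcal I\subseteq(0,1]$; the $p^s$ case is declared symmetric. You instead argue by contradiction on a whole tail, using the same min-to-iteration observation ($p^u_k<Q_k\Rightarrow p^u_k=I(p^u_{k-1})$): a hit-free forward tail of $p^u$ is a forward $I$-orbit which escapes past $1$, while a hit-free backward tail is a backward orbit which tends to $0$ by Lemma~\ref{forI}(2), contradicting $\limsup_{n\to-\infty}p_n>0$; then you dispose of $p^s$ by the index reflection $k\mapsto -k$, which the paper leaves implicit. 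One payoff of your version is that it isolates exactly which hypothesis each direction needs — in particular the forward-$p^u$ (and backward-$p^s$) claims use only $p^u_n\leq Q_n\leq 1$ and positivity, not the $\limsup$ hypotheses, which is slightly sharper information than the paper's proof exposes; the paper's version, in exchange, gives a uniform bound $N$ on the gap between a ``large'' time and the next hit, which is occasionally the more useful quantitative form. The only spot where you should add a line is the assertion that the increasing orbit $I^k(p_N^u)$ eventually exceeds $1$: either note that each step multiplies by at least $e^{\Gamma (p_N^u)^{1/\gamma}}>1$, or invoke Lemma~\ref{forI}(2) to pick $N'$ with $I^{-N'}(1)<p_N^u$, whence $I^{N'}(p_N^u)>1$ (this is exactly how the paper phrases it); as written, ``cannot converge, hence tends to $\infty$'' glosses over the possible limit at the endpoint $1$, though the fix is immediate.
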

\begin{proof}
We prove the statement for $p_n^u$ (the proof for $p_n^s$ is similar). 
Let $p_n:=p_n^u\wedge p_n^s$ and define $m:=\frac{1}{2}\min\{\limsup_{n\rightarrow\infty}p_n,\limsup_{n\rightarrow\infty}p_{-n}\}$ and $N:=\min\{N'\in\mathbb{N}:I^{-N'}(1)< m\}$ (recall $I^{-n}(1)\downarrow0$ as $n\uparrow\infty$, see Lemma \ref{forI}). There exist infinitely many positive (resp. negative) $n$ s.t $p_n>m$. We claim that for every such $n$, there must exist some $k\in[n,n+N]$ s.t $p_k^u=Q_n$. Otherwise, by $I$-strong subordination:
$$p^u_{n+N}=\min\{Q_{n+N},I(p^u_{n+N-1})\}=I( p^u_{n+N-1})=...=I^{N}(p^u_n)\geq I^{N}(m)> 1.$$
A contradiction.
\end{proof}

\begin{prop}\label{prop131}
For every $x\in \RST$ there is a chain $\{\psi_{x_k}^{p_k^s,p_k^u}\}_{k\in\mathbb{Z}}\subset\Sigma(\mathcal{G})$ s.t $\psi_{x_k}^{p_k^s\wedge p_k^u}$ $I$-overlaps $\psi_{f^k(x)}^{p_k^s\wedge p_k^u}$ for every $k\in\mathbb{Z}$.
\end{prop}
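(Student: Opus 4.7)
The plan is to use the strongly tempered kernel granted by $x\in\RST$ to lay down a sequence of discretized scales along the orbit, apply Proposition \ref{discreteness} to lift those scales to a sequence of Pesin charts in $\mathcal{A}$, invoke Lemma \ref{subordinatedchain} to inflate those scales into admissible pairs $(p^s,p^u)$, and finally verify the four clauses of Definition \ref{edges}.

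Let $q$ be a strongly tempered kernel for $x$ and write $q_n:=q(f^n(x))$. Then $q_n\in\mathcal{I}$, $q_n=I^{\pm1}(q_{n+1})$ and $q_n\le Q_\epsilon(f^n(x))$. Put $\eta_n:=I^{-1/4}(q_n)\in\mathcal{I}$; this still satisfies $\eta_n=I^{\pm1}(\eta_{n+1})$ and in addition $\eta_n\le I^{-1/4}(Q_\epsilon(f^n(x)))$. Apply Proposition \ref{discreteness} to $(\eta_n)$ to obtain $\{\psi_{x_n}^{\eta_n}\}_{n\in\mathbb{Z}}\subset\mathcal{A}$ with the properties: (i) $\psi_{x_n}^{\eta_n}$ $I$-overlaps $\psi_{f^n(x)}^{\eta_n}$, (ii) $Q_\epsilon(f^n(x))=I^{\pm1/4}(Q_\epsilon(x_n))$, (iii) $s(x_n)=s(f^n(x))=s(x)$, (iv) the one-step $I$-overlaps $\psi_{f^{\pm1}(x_n)}^{\eta_{n\pm 1}}$ with $\psi_{x_{n\pm 1}}^{\eta_{n\pm 1}}$, and (v) every $\psi_{x_n}^{\eta}$ with $\eta\in\mathcal{I}$ and $\eta_n\le\eta\le\min\{Q_\epsilon(x_n),I(\eta_n)\}$ also lies in $\mathcal{A}$.

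By (ii) we have $Q_\epsilon(x_n)\ge I^{-1/4}(Q_\epsilon(f^n(x)))\ge \eta_n$, so Lemma \ref{subordinatedchain}, applied to $Q_k:=Q_\epsilon(x_k)\in\mathcal{I}$ with base sequence $\eta_k$, yields $\{(p_k^s,p_k^u)\}_{k\in\mathbb{Z}}$ that is $I$-strongly subordinated to $(Q_\epsilon(x_k))_{k\in\mathbb{Z}}$ and satisfies $p_k^s\wedge p_k^u\ge\eta_k$. I claim $\{\psi_{x_k}^{p_k^s,p_k^u}\}_{k\in\mathbb{Z}}$ is the required chain. The third bullet of Definition \ref{edges} is literally the conclusion of Lemma \ref{subordinatedchain}; the fourth bullet is (iii). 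The first two bullets, the $I$-overlap conditions at the larger scales, follow by combining the one-step overlaps at scales $\eta_n$ and $\eta_{n+1}$ furnished by (iv) with the monotonicity remark following Definition \ref{isometries}, which lets one raise the scale up to the common ambient $Q_\epsilon$-bound on the two centers, while property (v) keeps all intermediate charts inside $\mathcal{A}$. The concluding overlap statement $\psi_{x_k}^{p_k^s\wedge p_k^u}$ $I$-overlaps $\psi_{f^k(x)}^{p_k^s\wedge p_k^u}$ is the same monotonicity upgrade applied to (i).

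The main technical obstacle lies exactly in those monotonicity upgrades: to raise the base overlap from scale $\eta_n$ to $p_n^s\wedge p_n^u$ one needs the larger scale to remain below $Q_\epsilon$ at \emph{both} centers. For $x_n$ this is built into $I$-subordination, but for $f^{-1}(x_{n+1})$ (respectively $f(x_{n-1})$) the bound is not supplied directly by Lemma \ref{subordinatedchain}. Establishing it requires combining Proposition \ref{discreteness}(a), which ties $Q_\epsilon(x_n)$ to $Q_\epsilon(f^n(x))$ within one ladder step, with Lemma \ref{omega0}(4), which controls $Q_\epsilon\circ f/Q_\epsilon$ along orbits, and with care about how the discretization $\mathcal{A}$ is performed in Proposition \ref{discreteness} (so that the discretized $Q_\epsilon$-values at $f$-related Pesin centers of $\mathcal{A}$ are comparable in $\mathcal{I}$). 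This compatibility bookkeeping is the one genuinely delicate step; everything else is a direct concatenation of the inputs from \textsection \ref{epsilonchains}.
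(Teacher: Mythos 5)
There is a genuine gap, and it is not the one you flag. To get $\psi_{x_k}^{p_k^s,p_k^u}\in\mathcal{V}$ you must verify $\psi_{x_k}^{p_k^s\wedge p_k^u}\in\mathcal{A}$, and the only tool available for this is Proposition \ref{discreteness}(d), which places $\psi_{x_k}^{\eta'}$ in $\mathcal{A}$ only for $\eta'\in\mathcal{I}$ with $\eta_k\le\eta'\le\min\{Q_\epsilon(x_k),\,I(\eta_k)\}$. The Ledrappier formula in Lemma \ref{subordinatedchain} gives you the lower bound $p_k^s\wedge p_k^u\ge\eta_k$ and the ceiling $p_k^s, p_k^u\le Q_\epsilon(x_k)$, but nothing forces the \emph{upper} bound $p_k^s\wedge p_k^u\le I(\eta_k)$. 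Without that, property (v)/(d) does not apply, and you never establish that your charts are in $\mathcal{A}$ — hence not in $\mathcal{V}$, so $\{\psi_{x_k}^{p_k^s,p_k^u}\}$ need not be a chain in $\Sigma(\mathcal{G})$. This upper bound is exactly where the $\RST$ hypothesis (as opposed to mere $\ST$) enters, and its absence from your argument is the warning sign: your proof as written would go through for any $x\in\ST$, which the statement does not claim.

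The paper closes this gap as follows. It first applies Lemma \ref{subordinatedchain} to $\{I^{-1/4}(Q_\epsilon(f^n(x)))\}$ to produce an $I$-strongly subordinated sequence $(q_n^s,q_n^u)$ dominating the temperability kernel, and sets $\eta_n:=q_n^s\wedge q_n^u$ (this $\eta_n$ is in general larger than your $I^{-1/4}(q_n)$, and the subordination structure is essential). Recurrence in $\RST$ gives $\limsup_{n\to\pm\infty}q_n>0$, so Lemma \ref{forrecurrenceinnextone} produces infinitely many $n<0$ with $q_n^u=I^{-1/4}(Q_\epsilon(f^n(x)))$; combined with Proposition \ref{discreteness}(a), which ties $Q_\epsilon(x_n)$ to $Q_\epsilon(f^n(x))$ within a quarter ladder step, this yields $p_n^u\le I(q_n^u)$ at those times. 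A forward induction using $I$-strong subordination and $I\circ I^{-1}=I^{-1}\circ I$ then propagates $p_n^u\le I(q_n^u)$ to all $n$, and the symmetric argument on positive $n$ handles $p_n^s$. Your proposal omits the construction of $(q_n^s,q_n^u)$, never invokes Lemma \ref{forrecurrenceinnextone}, and so has no access to this induction. The ``compatibility bookkeeping'' you identify — raising the base overlap past the $Q_\epsilon$-bound at $f^{\pm1}(x_{n\mp1})$ — is a secondary matter that the $I$-subordination and Lemma \ref{omega0}(4) handle routinely; the genuinely delicate step is the one you missed.
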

\begin{proof} This follows from Proposition \ref{discreteness} as in \cite[Proposition~4.5]{Sarig}. We give the details below to account for the differences in the setups.

Suppose $x\in  \RST$ and let $\{q(f^n(x))\}_{n\in\mathbb{Z}}$ be given by its strong temperability. Choose $q_n\in \mathcal{I}\cap [I^{-\frac{1}{4}}(q(f^n(x))),I^{\frac{1}{4}}(q(f^n(x)))]$. The sequence $\{q_n\}_{n\in\mathbb{Z}}$ satisfies the assumptions of Lemma \ref{subordinatedchain}, therefore there exists a sequence $\{(q_n^s,q_n^u)\}_{n\in\mathbb{Z}}$ that is $I$-strongly subordinated to $\{I^\frac{-1}{4}(Q_\epsilon(f^n(x)))\}_{n\in\mathbb{Z}}$ and that satisfies $q_k^u\wedge q_k^s\geq q_k$. Let $\eta_n:=q_n^u\wedge q_n^s$. As the proof of Lemma \ref{lemma131} shows: $\eta_{n+1}=I^{\pm1}(\eta_n)$, so we may use Proposition \ref{discreteness} to construct an infinite sequence $\psi_{x_n}^{\eta_n}\in\mathcal{A}$ such that \begin{enumerate}
\item $\psi_{x_n}^{\eta_n}$ $I$-overlaps $\psi_{f^n(x)}^{\eta_n}$, $Q_\epsilon(f^n(x))=I^{\pm\frac{1}{4}}(Q_\epsilon(x_n))$ and $s(x)=s(f^n(x))=s(x_n)$
\item $\psi_{f(x_n)}^{\eta_{n+1}}$ $I$-overlaps $\psi_{x_{n+1}}^{\eta_{n+1}}$
\item $\psi_{f^{-1}(x_n)}^{\eta_{n-1}}$ $I$-overlaps $\psi_{x_{n-1}}^{\eta_{n-1}}$
\item $\psi_{x_n}^{\eta_n'}\in\mathcal{A}$ for all $\eta_n'\in \mathcal{I}$ s.t $\eta_n\leq\eta_n'\leq\min\{Q_\epsilon(x_n),I(\eta_n)\}$
\end{enumerate}
Using Lemma \ref{subordinatedchain}, we construct a sequence $\{(p_n^s,p_n^u)\}_{n\in\mathbb{Z}}$ which is $I$-strongly subordinated to $\{Q_\epsilon(x_n)\}_{n\in\mathbb{Z}}$ and which satisfies $p_n^s\wedge p_n^u\geq \eta_n$.

Claim 1: $\psi_{x_n}^{p_n^s,p_n^u}\in\mathcal{V}$ for all $n$.

Proof: It's sufficient to show that $q_n^u\wedge q_n^s\leq p_n^u\wedge p_n^s\leq I(q_n^u\wedge q_n^s)\text{ }(n\in\mathbb{Z})$, because property (4) with $\eta_n':=p_n^s\wedge p_n^u$ says that in this case $\psi_{x_n}^{p_n^u,p_n^s}\in\mathcal{A}$, whence $\psi_{x_n}^{p_n^u,p_n^s}\in\mathcal{V}$. We start by showing that there are infinitely many $n<0$ such that $p_n^u\leq I( q_n^u)$. Since $x\in \RST$, $\limsup_{n\rightarrow\infty}q_n,\limsup_{n\rightarrow-\infty}q_n>0$. Therefore by Lemma \ref{forrecurrenceinnextone} there are infinitely many $n<0$ for which $q_n^u=I^\frac{-1}{4}(Q_\epsilon(f^n(x)))$. Property (1) guarantees that for such $n$, $q_n^u\geq I^\frac{-2}{4}(Q_\epsilon(x_n))\geq I^\frac{-2}{4}(p_n^u)$, whence $p_n^u<I(q_n^u)$.

If $p_n^u\leq I( q_n^u)$, then $p_{n+1}^u\leq I( q_{n+1}^u)$ because
\begin{align*}
p_{n+1}^u=&\min\{I(p^u_n),Q_\epsilon(x_{n+1})\}=I(\min\{p_n^u,I^{-1}(Q_\epsilon(x_{n+1}))\})\\
\leq &I(\min\{I( q_n^u),I^\frac{-3}{4}(Q_\epsilon(f^{n+1}(x)))\})\leq I(\min\{I( q_n^u),I^\frac{-1}{4}(Q_\epsilon(f^{n+1}(x)))\})\equiv I( q_{n+1}^u).	
\end{align*}

It follows that $p_n^u\leq I(q_n^u)$ for all $n\in\mathbb{Z}$. Working with positive $n$ one can show in the same manner that $p_n^s\leq I(q_n^s)$ for all $n\in\mathbb{Z}$. Combining the two results we see that $p_n^u\wedge p_n^s\leq I(q_n^s)\wedge I( q_n^u)=I(q_n^u\wedge q_n^s)$ for all $n\in\mathbb{Z}$. Since by construction $p_n^s\wedge p_n^u\geq\eta_n=q_n^s\wedge q_n^u$ we obtain $q_n^u\wedge q_n^s\leq p_n^u\wedge p_n^s\leq I(q_n^u\wedge q_n^s)$ as needed.

Claim 2: For every $k\in\mathbb{Z}$: $\psi_{x_k}^{p_k^s,p_k^u}\rightarrow\psi_{x_{k+1}}^{p_{k+1}^s,p_{k+1}^u}$ and $\psi_{x_k}^{p_k^s\wedge p_k^u}$ $I$-overlaps $\psi_{f^k(x)}^{p_k^s\wedge p_k^u}$ 

Proof:  This follows from properties (1),(2) and (3) above, the inequality $p_n^s\wedge p_n^u\geq\eta_n$ and the monotonicity property of the ovrelap condition.
\end{proof}

\section{``Shadowing lemma"- admissible manifolds and the Graph Transform}\label{GTrans}
In this section we prove the refined shadowing lemma: for every $I$-chain $\{\psi_{x_i}^{p_i^s,p_i^u}\}_{i\in\mathbb{Z}}$ there is some $x\in M$ s.t $f^i(x)\in \psi_{x_i}[R_{p^s_i\wedge p^u_i}(0)]$ for all $i$. We say that the chain shadows the orbit of $x$.

The construction of $x$, and the properties of the shadowing operation, are established using a Graph Transform argument.

We present a suitable space of admissible manifolds, which is preserved under the action of the linear differential in charts, and prove the convergence of the Graph Transform in the absence of uniform estimates in charts.

The following definition is motivated by \cite{KM,Sarig}.
\begin{definition}\label{def135} Recall $s(x)=\dim H^s(x)$.
Let $x\in 0$-$\mathrm{summ}$, a {\em $u$-manifold} in $\psi_x$ is a manifold $V^u\subset M$ of the form
$$V^u=\psi_x[\{(F_1^u(t_{s(x)+1},...,t_d),...,F_{s(x)}^u(t_{s(x)+1},...,t_d),t_{s(x)+1},...t_d) : |t_i|\leq q\}],$$
where $0<q\leq Q_\epsilon(x)$, and $\overrightarrow{F}^u$ is a $C^{1+\beta/2}$ function s.t $\max\limits_{\overline{R_q(0)}}|\overrightarrow{F}^u|_\infty\leq Q_\epsilon(x)$.

Similarly we define an {\em $s$-manifold} in $\psi_x$:
$$V^s=\psi_x[\{(t_1,...,t_{s(x)},F_{s(x)+1}^s(t_1,...,t_{s(x)}),...,F_d^s(t_1,...,t_{s(x)})): |t_i|\leq q\}],$$
with the same requirements for $\overrightarrow{F}^s$ and $q$. We will use the superscript ``$u/s$" in statements which apply to both the $s$ case and the $u$ case. The function $\overrightarrow{F}=\overrightarrow{F}^{u/s}$ is called the {\em representing function} of $V^{u/s}$ at $\psi_x$. The parameters of a $u/s$ manifold in $\psi_x$ are: 
\begin{itemize}
\item $\sigma$-parameter: $\sigma(V^{u/s}):=\|d_{\cdot}\overrightarrow{F}\|_{\beta/2}:=\max\limits_{\overline{R_q(0)}}\|d_{\cdot}\overrightarrow{F}\|+\text{H\"ol}_{\beta/2}(d_{\cdot}\overrightarrow{F})$,

where $\text{H\"ol}_{\beta/2}(d_{\cdot}\overrightarrow{F}):=\max\limits_{\vec{t_1},\vec{t_2}\in\overline{R_q(0)}}\{\frac{\|d_{\overrightarrow{t_1}}\overrightarrow{F}-d_{\overrightarrow{t_2}}\overrightarrow{F}\|}{|\overrightarrow{t_1}-\overrightarrow{t_2}|^{\beta/2}}\}$ and $\|A\|:=\sup\limits_{v\neq0}\frac{|Av|_\infty}{|v|_\infty}$.
\item $\iota$-parameter: $\iota(V^{u/s}):=\|d_0\overrightarrow{F}\|$.
\item $\varphi$-parameter: $\varphi(V^{u/s}):=|\overrightarrow{F}(0)|_\infty$.
\item $q$-parameter: $q(V^{u/s}):=q$.
\end{itemize}

A {\em $(u/s,\sigma,\iota,\varphi,q
)$-manifold} in $\psi_x$ is a $u/s$-manifold $V^{u/s}$ in $\psi_x$ whose parameters satisfy $\sigma(V^{u/s})\leq\sigma,\iota(V^{u/s})\leq\iota,\varphi(V^{u/s})\leq\varphi,q(V^{u/s})\leq q
$.
\end{definition}
\begin{definition}\label{admissible} Suppose $\psi_x^{p^u,p^s}$ is a double chart. A $u/s$-admissible manifold in $\psi_x^{p^u,p^s}$ is a {\em $(u/s,\sigma,\iota,\varphi,q
)$-manifold} in $\psi_x$ s.t
$$\sigma\leq\frac{1}{2},\iota\leq\frac{1}{2}(p^u\wedge p^s)^{\beta/2},\varphi\leq10^{-3}(p^u\wedge p^s)^2,  \text{ and } q = \begin{cases} p^u & u\text{-manifolds} \\ 
p^s & s\text{-manifolds}. \end{cases}
$$
\end{definition}
\noindent\textbf{Remarks}: 
\begin{enumerate}
	\item 
This definition is 
slightly different than its parallels in \cite{Sarig,SBO}, since we require the $\varphi$-parameter to be bounded by $10^{-3}(p^s\wedge p^u)^2$ rather than $10^{-3}(p^s\wedge p^u)$. This stronger bound is needed for some of our estimates in the Graph Transform (see the bound for $\iota$ in Theorem \ref{graphtransform}(1)). The tradeoff is that we need to show that the stronger bound also carries respectively to the transformed manifold, as we do there in claim 2.
	\item If $\epsilon<1$ (as we always assume), then these conditions together with $p^u,p^s<Q_\epsilon(x)$ force $$\mathrm{Lip}(\overrightarrow{F})=\max\limits_{\vec{t_1},\vec{t_2}\in\overline{R_q(0)}}\frac{|\overrightarrow{F}(\overrightarrow{t_1})-\overrightarrow{F}(\overrightarrow{t_2})|_\infty}{|\overrightarrow{t}_1-\overrightarrow{t}_2|_\infty} \leq (p^{u/s})^\frac{\beta}{2} <\epsilon,$$ because for every $\overrightarrow{t}_{u/s}$ in the domain of $\overrightarrow{F}$, $|\overrightarrow{t}_{u/s}|_\infty\leq p^{u/s}\leq Q_\epsilon(x)<\epsilon^{3/\beta}$ and $\|d_{\overrightarrow{t}_{u/s}}F\|\leq\|d_0F\|+\text{H\"ol}_{\beta/2}(d_{\cdot}\overrightarrow{F})\cdot|\overrightarrow{t}_{u/s}|_\infty^{\beta/2}\leq\frac{1}{2}(p^u\wedge p^s)^{\beta/2}+\frac{1}{2}(p^{u/s})^{\beta/2}\leq(p^{u/s})^{\beta/2}<\epsilon$ and by Lagrange's mean-value theorem applied to the restriction of $\vec{F}$ to the interval connecting each $\vec{t}_1$  and $\vec{t}_2$, we are done.
\item If $\epsilon$ is small enough then $\max\limits_{\overline{R_q(0)}}|\overrightarrow{F}|_\infty<10^{-2}Q_\epsilon(x)$, since $\max\limits_{\overline{R_q(0)}}|\overrightarrow{F}|_\infty\leq|\overrightarrow{F}(0)|_\infty+\mathrm{Lip}(\overrightarrow{F})\cdot p^{u/s}<\varphi+\epsilon p^{u/s}<(10^{-3}+\epsilon)p^{u/s}<10^{-2}p^{u/s}$.
\end{enumerate}
\begin{definition}\label{def137}$\text{ }$\\\begin{enumerate}
    \item Let $V_1,V_2$ be two $u$-manifolds (resp. $s$-manifolds) in $\psi_x$ s.t $q(V_1)=q(V_2)$, then:
$$\mathrm{dist}(V_1,V_2):=\max|F_1-F_2|_\infty,\mathrm{dist}_{C^1}(V_1,V_2):=\max|F_1-F_2|_\infty+\max\|d_{\cdot}F_1-d_{\cdot}F_2\|$$
\item For a map $E:\mathrm{Dom}\rightarrow M_{r_1}(\mathbb{R})$, where $M_{r_1}(\mathbb{R})$ is the space of real matrices of dimensions $r_1\times r_1$, and $\mathrm{Dom}$ is the closure of some open and bounded subset of $\mathbb{R}^{r_2}$ ($r_1,r_2\in\mathbb{N}$), the {\em $\alpha$-norm} of $E(\cdot)$ is
$$\|E(\cdot)\|_{\alpha}:=\|E(\cdot)\|_\infty+\text{H\"ol}_{\alpha}(E(\cdot))\text{, where, }$$

$\|E(\cdot)\|_\infty=\sup\limits_{s\in\mathrm{Dom}}\|E(s)\|=\sup\limits_{s\in\mathrm{Dom}}\sup\limits_{v\neq 0}\frac{|E(s)v|_\infty}{|v|_\infty}$, and $\text{H\"ol}_{\alpha}(E(\cdot))=\sup\limits_{s_1,s_2 \in\mathrm{Dom}}\{\frac{\|E(s_1)-E(s_2)\|}{|s_1-s_2|_\infty^\alpha}\}$.
\end{enumerate}
\end{definition}
\noindent\textbf{Remarks}: \begin{enumerate}
    \item The second part of this definition is a generalization of the specific $\frac{\beta}{2}$-norm of admissible manifolds, as represented by the $\sigma$ parameter in Definition \ref{def135}.
    \item  Notice that the difference between $\|\cdot\|$ and $\|\cdot\|_\infty$ in our notation, is that $\|\cdot\|$ is the operator norm, while $\|\cdot\|_\infty$ is the supremum norm for operator valued maps.
    \item $\|\cdot\|_\alpha$ is submultiplicative: for any two  functions $\varphi$ and $\psi$ as in Definition \ref{def137}(2),
$$\|\psi\cdot\varphi\|_\alpha\leq\|\varphi\|_\alpha\cdot\|\psi\|_\alpha.
\footnote{To see that, it is enough to show that $\text{H\"ol}_\alpha(\varphi\cdot\psi)\leq \text{H\"ol}_\alpha(\psi)\cdot\|\varphi\|_\infty+\text{H\"ol}_\alpha(\varphi)\cdot\|\psi\|_\infty$. This holds since,
$$\sup\limits_{x,y\in\mathrm{Dom}}\frac{|\varphi(x)\psi(x)-\varphi(y)\psi(y)|}{|x-y|^\alpha}=\sup\limits_{x,y\in\mathrm{Dom}}\frac{|\varphi(x)\psi(x)-\varphi(x)\psi(y)+\varphi(x)\psi(y)-\varphi(y)\psi(y)|}{|x-y|^\alpha}\leq \mathrm{\mathrm{RHS}}.$$} $$
\end{enumerate}
\begin{lemma}\label{banachalgebras}
Let $E:\mathrm{dom}(E)\rightarrow M_r(\mathbb{R})$ be a map as in Definition \ref{def137}. Then for any $0<\alpha<1$, if $\|E(\cdot)\|_\alpha<1$, then  $$\|(Id+E(\cdot))^{-1}\|_\alpha\leq\frac{1}{1-\|E(\cdot)\|_\alpha}$$
\end{lemma}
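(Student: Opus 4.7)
The natural approach is a Neumann-series argument exploiting the submultiplicativity of the $\alpha$-norm noted in the Remark following Definition \ref{def137}. The plan is to define
\[
F(\cdot) := \sum_{k=0}^\infty (-E(\cdot))^k,
\]
show that this series converges in $\|\cdot\|_\alpha$, and identify $F(\cdot)$ with $(Id+E(\cdot))^{-1}$ pointwise.

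First I would observe that the space of matrix-valued maps on $\mathrm{Dom}$ endowed with $\|\cdot\|_\alpha$ is a Banach algebra: $\|Id\|_\alpha = 1$, it is complete (standard, since Cauchy in $\|\cdot\|_\alpha$ gives uniform Cauchy on a compact set together with equicontinuity of the difference quotients), and submultiplicativity is exactly what is recorded in the Remark. By induction on $k$, $\|E(\cdot)^k\|_\alpha \leq \|E(\cdot)\|_\alpha^k$, so the partial sums $S_N(\cdot) := \sum_{k=0}^N (-E(\cdot))^k$ form a Cauchy sequence in $\|\cdot\|_\alpha$ because $\sum_{k\geq 0}\|E(\cdot)\|_\alpha^k$ is a convergent geometric series. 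Completeness gives a limit $F(\cdot)$ satisfying
\[
\|F(\cdot)\|_\alpha \leq \sum_{k=0}^\infty \|E(\cdot)\|_\alpha^k = \frac{1}{1-\|E(\cdot)\|_\alpha}.
\]

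Next I would verify the pointwise identification $F(s) = (Id + E(s))^{-1}$ for each $s \in \mathrm{Dom}$. Since $\|E(s)\| \leq \|E(\cdot)\|_\infty \leq \|E(\cdot)\|_\alpha < 1$, the scalar Neumann series $\sum_{k=0}^\infty (-E(s))^k$ converges in operator norm to $(Id + E(s))^{-1}$; on the other hand $\alpha$-norm convergence dominates pointwise convergence, so $F(s)$ equals this sum. Therefore $(Id + E(\cdot))^{-1} = F(\cdot)$ as maps from $\mathrm{Dom}$ to $M_{r_1}(\mathbb R)$, and the bound above is exactly the conclusion of the lemma.

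There is no real obstacle here; the only slightly delicate point is being explicit that $\|\cdot\|_\alpha$ is a complete submultiplicative norm with $\|Id\|_\alpha=1$, so that the Neumann-series computation makes sense in this function space rather than merely pointwise. Once that framework is in place, the estimate reduces to summing a geometric series.
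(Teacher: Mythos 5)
Your proposal is correct and is essentially the paper's argument: the paper also notes that the $\alpha$-norm makes the space of matrix-valued maps a Banach algebra (completeness plus the submultiplicativity from the Remark) and then invokes the standard Neumann-series lemma, which is exactly the computation you spell out. You have simply written out the details that the paper cites as "well known."
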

\begin{proof}

In addition to the third item of the remark above, it is also easy to check that $BH:=\{\varphi:dom(E)\rightarrow M_r(\mathbb{R}): \|\varphi\|_\alpha<\infty\}$ is a complete Banach space, and hence a Banach algebra. By a well known lemma in Banach algebras, if $\|E\|_\alpha<1$ then $\|(Id+E(\cdot))^{-1}\|_\alpha\leq\frac{1}{1-\|E(\cdot)\|_\alpha}$.

\end{proof}

\begin{prop}\label{firstbefore} The following holds for all $\epsilon$ small enough. Let $V^u$ be a $u$-admissible manifold in $\psi_x^{p^s,p^u}$, and $V^s$ an $s$-admissible manifold in $\psi_x^{p^s,p^u}$, then:
\begin{enumerate}
\item $V^u$ and $V^s$ intersect at a unique point $P$,
\item $P=\psi_x(\overrightarrow{t})$ with $|\overrightarrow{t}|_\infty\leq10^{-2}(p^s\wedge p^u)^2$,
\item $P$ is a Lipschitz function of $(V^s,V^u)$, with Lipschitz constant less than 3 (w.r.t the distance $\mathrm{dist}(\cdot,\cdot)$ in Definition \ref{def137}).
\end{enumerate}
\end{prop}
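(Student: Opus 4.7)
The approach is to translate the intersection into a Banach fixed-point problem in $\psi_x$-coordinates. Writing $V^{u/s} = \psi_x[\mathrm{graph}(\overrightarrow{F}^{u/s})]$ via the representing functions, a point in $V^u\cap V^s$ corresponds exactly to a fixed point of
$$T(\overrightarrow{t}_s,\overrightarrow{t}_u) := \bigl(\overrightarrow{F}^u(\overrightarrow{t}_u),\ \overrightarrow{F}^s(\overrightarrow{t}_s)\bigr)$$
on $\mathbb{R}^{s(x)}\oplus\mathbb{R}^{u(x)}$. Admissibility (the remarks following Definition~\ref{admissible}) supplies the two inputs the fixed-point scheme needs: $\mathrm{Lip}(\overrightarrow{F}^{u/s})\le (p^{u/s})^{\beta/2}<\epsilon$ and $|\overrightarrow{F}^{u/s}(0)|_\infty\le 10^{-3}(p^s\wedge p^u)^2$.

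For existence and uniqueness (item~(1)) I will work on the closed product $K := \overline{R_{(p^s\wedge p^u)/2}(0)}\times\overline{R_{(p^s\wedge p^u)/2}(0)}$. Choosing this common scale $p^s\wedge p^u$ sidesteps the mismatch between the natural domains $\overline{R_{p^s}(0)}$ and $\overline{R_{p^u}(0)}$ of $\overrightarrow{F}^s$ and $\overrightarrow{F}^u$. For $\epsilon$ small, the centering and Lipschitz estimates force $T(K)\subset K$ and make $T$ an $\epsilon$-contraction in the max-norm, producing a unique fixed point $\overrightarrow{t}^*\in K$. To upgrade uniqueness from $K$ to the full domain $\overline{R_{p^s}(0)}\times\overline{R_{p^u}(0)}$ \emph{and} to extract the quadratic bound (2), I plug any fixed point back into its own equation: the estimate $|\overrightarrow{t}^*_s|_\infty\le|\overrightarrow{F}^u(0)|_\infty+\epsilon|\overrightarrow{F}^s(\overrightarrow{t}^*_s)|_\infty$, opened one more step, rearranges to $|\overrightarrow{t}^*_s|_\infty\le\tfrac{10^{-3}}{1-\epsilon}(p^s\wedge p^u)^2 < 10^{-2}(p^s\wedge p^u)^2$, and symmetrically for $\overrightarrow{t}^*_u$; thus every fixed point on the full domain automatically lies in $K$. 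Injectivity of $\psi_x$ on $R_{Q_\epsilon(x)}(0)$ (Theorem~\ref{linearization}) then yields the unique intersection point $P:=\psi_x(\overrightarrow{t}^*)$ satisfying (1) and (2).

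For the Lipschitz dependence in (3), I consider two pairs $(V^s_i,V^u_i)$ sharing the chart $\psi_x^{p^s,p^u}$, with fixed points $\overrightarrow{t}_i$, and split each difference $\overrightarrow{F}^u_1(\overrightarrow{t}_{u,1})-\overrightarrow{F}^u_2(\overrightarrow{t}_{u,2})$ into a $\mathrm{Lip}$ piece and a $\mathrm{dist}$ piece to obtain the coupled inequalities
\begin{align*}
|\overrightarrow{t}_{s,1}-\overrightarrow{t}_{s,2}|_\infty &\le \epsilon|\overrightarrow{t}_{u,1}-\overrightarrow{t}_{u,2}|_\infty + \mathrm{dist}(V^u_1,V^u_2),\\
|\overrightarrow{t}_{u,1}-\overrightarrow{t}_{u,2}|_\infty &\le \epsilon|\overrightarrow{t}_{s,1}-\overrightarrow{t}_{s,2}|_\infty + \mathrm{dist}(V^s_1,V^s_2).
\end{align*}
Eliminating one variable gives $|\overrightarrow{t}_1-\overrightarrow{t}_2|_\infty\le (1-\epsilon)^{-1}\bigl(\mathrm{dist}(V^s_1,V^s_2)+\mathrm{dist}(V^u_1,V^u_2)\bigr)$; composing with $\|d\psi_x\|\le 2$ (Theorem~\ref{linearization}) and taking $\epsilon$ small produces a Lipschitz constant bounded by $3$. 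The one non-routine point in this whole argument is the first step: finding a common invariant domain for $T$ despite the possible asymmetry $p^s\neq p^u$. Once $K$ is in place at the scale $p^s\wedge p^u$, everything else is standard contraction-mapping bookkeeping.
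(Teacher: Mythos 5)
Your proposal is correct and follows essentially the same route as the paper: a Banach fixed-point argument in $\psi_x$-coordinates driven by $\mathrm{Lip}(\overrightarrow{F}^{u/s})<\epsilon$ and $|\overrightarrow{F}^{u/s}(0)|_\infty\le 10^{-3}(p^s\wedge p^u)^2$, then the same coupled Lipschitz bookkeeping together with $\|d_\cdot\psi_x\|\le 2$ to get the constant $3$ in item (3). The only divergence is the uniqueness step beyond the small cube: the paper extends the representing functions to $\overline{R_{p^s\vee p^u}(0)}$ via McShane's Lipschitz extension and reruns the contraction argument there, whereas you deduce the same conclusion from the a priori self-bounding estimate that any fixed point on the full domain already satisfies $|\overrightarrow{t}^*|_\infty<10^{-2}(p^s\wedge p^u)^2$ — both are valid, and yours is a touch more economical.
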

\begin{proof} Assume $\epsilon\in(0,\frac{1}{2})$. Remark: We will omit the $\overrightarrow{\cdot}$ notation when it is clear that the object under inspection is a vector in $\mathbb{R}^t$ ($t=s(x),u(x)$). Write $V^u=\psi_x[\{(-F(t_u)-,-t_u-): |t_u|_\infty\leq p^u\}]$ and $V^s=\psi_x[\{(-t_s-,-G(t_s)-): |t_s|_\infty\leq p^s\}]$.

Let $\eta:=p^s\wedge p^u$. Note that $\eta<\epsilon$ and that $|F(0)|_\infty,|G(0)|_\infty\leq10^{-3}\eta^2$ and $\mathrm{Lip}(F),\mathrm{Lip}(G)<\epsilon$ by the remark following Definition \ref{admissible}. Hence the maps $F$ and $G$ are contractions, and they map the closed cubes $\overline{R_{10^{-2}\eta^2}(0)}$ in the respective dimensions into themselves: for every $(H,t)\in\{(F,t_u),(G,t_s)\}$:
$$|H(t)|_\infty\leq|H(0)|_\infty+\mathrm{Lip}(H)|t|_\infty<10^{-3}\eta^2+\epsilon10^{-2}\eta^2=(10^{-1}+\epsilon)10^{-2}\eta^2<10^{-2}\eta^2.$$
It follows that $G\circ F$ is a $\epsilon^2$ contraction of $\overline{R_{10^{-2}\eta^2}(0)}$ into itself. By the Banach fixed point theorem it has a unique fixed point: $G\circ F(w)=w$. Denote $v=F(w)$. We claim $V^s,V^u$ intersect at $$P=\psi_x(-F(w)-,-w-)=\psi_x(-v-,-w-)=\psi_x(-v-,-G(v)-),$$
\begin{itemize}
\item $P\in V^u$ since $v=F(w)$ and $|w|_\infty\leq10^{-2}\eta^2\ll p^u$.
\item $P\in V^s$ since $w=G\circ F(w)=G(v)$ and $|v|_\infty<|F(0)|_\infty+\mathrm{Lip}(F)|w|_\infty\leq10^{-3}\eta^2+\epsilon10^{-2}\eta^2<10^{-2}\eta^2\ll p^s$.
\end{itemize}
We also see that $|v|_\infty,|w|_\infty\leq10^{-2}\eta^2$.

We claim that $P$ is the unique intersection point of $V^s,V^u$. Denote $\xi=p^u \vee p^s$ and extend $F,G$ arbitrarily to $\epsilon$-Lipschitz functions $\widetilde{F},\widetilde{G}:\overline{R_\xi(0)}\rightarrow\overline{R_{Q_\epsilon(x)}(0)}$ 
using McShane's extension formula \cite{McShane}. \footnote{$\widetilde{F}_i(s)=\sup\limits_{u\in dom(F_i)}\{F_i(u)-\mathrm{Lip}(F_i)\cdot |s-u|_\infty\},\widetilde{G}_i(s)=\sup\limits_{u\in dom(G_i)}\{G_i(u)-\mathrm{Lip}(G_i)\cdot |s-u|_\infty\}$ for each coordinate of $F,G$, which in turn induces $\epsilon$-Lipschitz extensions to $F,G$ since the relevant norm is $|\cdot|_\infty$.\label{lipextensions}}
Let $\widetilde{V}^s,\widetilde{V}^u$
denote the $u/s$-sets represented by $\widetilde{F},\widetilde{G}$. Any intersection point of $V^s,V^u$ is an intersection point of $\widetilde{F},\widetilde{G}$, which takes the form $\widetilde{P}=\psi_x(-\widetilde{v}-,-\widetilde{w}-)$ where $\widetilde{v}=\widetilde{F}(\widetilde{w})$ and $\widetilde{w}=\widetilde{G}(\widetilde{v})$. Notice that $\widetilde{w}$ is a fixed point of $\widetilde{G}\circ\widetilde{F}$. The same calculations as before show that $\widetilde{G}\circ\widetilde{F}$ contracts $\overline{R_\xi(0)}$ into itself. Such a map has a unique fixed point--- hence $\widetilde{w}=w$, whence $\widetilde{P}=P$. This concludes parts 1,2.

Next we will show that $P$ is a Lipschitz function of $(V^s,V^u)$. Suppose $V_i^u,V_i^s$ ($i=1,2$) are represented by $F_i$ and $G_i$ respectively. Let $P_i$ denote the intersection points of $V_i^s\cap V_i^u$. We saw above that $P
_i=\psi_x(-v_i-,-w_i-)$, where $w_i$ is a fixed point of $f_i:=G_i\circ F_i:\overline{R_{10^{-2}\eta^2}(0)}\rightarrow\overline{R_{10^{-2}\eta^2}(0)}$. Then $f_i$ are $\epsilon^2$ contractions of $\overline{R_{10^{-2}\eta}(0)}$ into itself. Therefore:
\begin{align*}
|w_1-w_2|_\infty=&|f_1^n(w_1)-f_2^n(w_2)|_\infty\leq|f_1(f_1^{n-1}(w_1))-f_2(f_1^{n-1}(w_1))|_\infty+|f_2(f_1^{n-1}(w_1))-f_2(f_2^{n-1}(w_2))|_\infty\\
\leq&\max\limits_{\overline{R_{10^{-2}\eta^2}(0)}}|f_1-f_2|_\infty+\epsilon^2|f_1^{n-1}(w_1)-f_2^{n-1}(w_2)|_\infty\\
\leq&\cdots\leq\max\limits_{\overline{R_{10^{-2}\eta^2}(0)}}|f_1-f_2|_\infty(1+\epsilon^2+...+\epsilon^{2(n-1)})+\epsilon^{2n}|w_1-w_2|_\infty.
\end{align*}
Passing to the limit $n\rightarrow\infty$: 
\begin{align*}
|w_1-w_2|_\infty\leq(1-\epsilon^2)^{-1}\max\limits_{\overline{R_{10^{-2}\eta^2}(0)}}|f_1-f_2|_\infty.
\end{align*}
Similarly $v_i$ is a fixed point of $g_i=F_i\circ G_i$, a contraction from $\overline{R_{10^{-2}\eta^2}(0)}$ to itself. The same arguments give $|v_1-v_2|_\infty\leq(1-\epsilon^2)^{-1}\max\limits_{\overline{R_{10^{-2}\eta^2}(0)}}|g_1-g_2|_\infty$.
Since $\psi_x$ is 2-Lipschitz,
$$d(P_1,P_2)<\frac{2}{1-\epsilon^2}(\max\limits_{\overline{R_{10^{-2}\eta^2}(0)}}|g_1-g_2|_\infty+\max\limits_{\overline{R_{10^{-2}\eta^2}(0)}}|f_1-f_2|_\infty).$$
Now,
\begin{align*}
\max\limits_{\overline{R_{10^{-2}\eta^2}(0)}}|F_1\circ G_1-F_2\circ G_2|_\infty\leq&\max\limits_{\overline{R_{10^{-2}\eta^2}(0)}}|F_1\circ G_1-F_1\circ G_2|_\infty+\max\limits_{\overline{R_{10^{-2}\eta^2}(0)}}|F_1\circ G_2-F_2\circ G_2|_\infty\\
\leq &\mathrm{Lip}(F_1)\max\limits_{\overline{R_{10^{-2}\eta^2}(0)}}|G_1-G_2|_\infty+\max\limits_{\overline{R_{10^{-2}\eta^2}(0)}}|F_1-F_2|_\infty.
\end{align*}
Similarly,
$$\max\limits_{\overline{R_{10^{-2}\eta^2}(0)}}|G_1\circ F_1-G_2\circ F_2|_\infty\leq \mathrm{Lip}(G_1)\max\limits_{\overline{R_{10^{-2}\eta^2}(0)}}|F_1-F_2|_\infty+\max\limits_{\overline{R_{10^{-2}\eta^2}(0)}}|G_1-G_2|_\infty.$$
So since $\mathrm{Lip}(G),\mathrm{Lip}(F)\leq\epsilon$: $d(P_1,P_2)<\frac{2(1+\epsilon)}{1-\epsilon^2}[\mathrm{dist}(V_1^s,V_2^s)+\mathrm{dist}(V_1^u,V_2^u)]$.

It remains to choose $\epsilon$ small enough so that $\frac{2(1+\epsilon)}{1-\epsilon^2}<3$.

\end{proof}

The next theorem analyzes the action of $f$ on admissible manifolds. Similar Graph Transform Lemmas were used to show Pesin's stable manifold theorem (\cite{BP} chapter 7, \cite{Pesin}), see also \cite{perron1,Perron2}. Our analysis shows that the Graph Transform preserves admissibility. The general idea is similar to what Sarig does in \cite[\textsection~4.2]{Sarig} for the two dimensional case. Katok and Mendoza treat the general dimensional case in \cite{KM}. Our proof complements theirs by establishing additional analytic properties.

Recall the notation $\psi_x^{p^s,p^u}\rightarrow\psi_y^{q^s,q^u}$ from Definition \ref{edges}.
\begin{theorem}\label{graphtransform}
(Graph Transform) The following holds for all $\epsilon$ small enough: suppose $\psi_x^{p^s,p^u}\rightarrow\psi_y^{q^s,q^u}$, and $V^u$ is a $u$-admissible manifold in $\psi_x^{p^s,p^u}$, then:
\begin{enumerate}
\item $f[V^u]$ contains a $u$-manifold $\widehat{V}^u$ in $\psi_y^{q^s,q^u}$ with parameters:
\begin{itemize}
\item $\sigma(\widehat{V}^u)\leq e^{-\frac{1}{S^2(x)}-\frac{1}{U^2(x)}+8\sqrt{\epsilon}(p^u)^\frac{\beta}{2}}\cdot[\sigma+2\kappa\sqrt{\epsilon}(p^u)^\frac{\beta}{2}] $,
\item $\iota(\widehat{V}^u)\leq e^{-\frac{1}{U^2(x)}-\frac{1}{S^2(x)}+\sqrt{\epsilon}(p^u)^\frac{\beta}{2}}[\iota+\epsilon \eta^\beta]$,
\item $\varphi(\widehat{V}^u)\leq e^{-\frac{1}{S^2(x)}}[\varphi+\kappa\sqrt{\epsilon}\eta^{2+\frac{\beta}{2}}] $
.
\end{itemize}
\item $f[V^u]$ intersects any $s$-admissible manifold in $\psi_y^{q^s,q^u}$ at a unique point.
\item $\widehat{V}^u$ restricts to a $u$-manifold in $\psi_y^{q^s,q^u}$. This is the unique $u$-admissible manifold in $\psi_y^{q^s,q^u}$ inside $f[V^u]$. We call it $\mathcal{F}_u(V^u)$.
\item Suppose $V^u$ is represented by the function $F$. If $p:=(F(0),0)$ then $f(p)\in\mathcal{F}_u(V^u)$.
\end{enumerate}
Similar statements hold for the $f^{-1}$-image of an $s$-admissible manifold in $\psi_y^{q^s,q^u}$.
\end{theorem}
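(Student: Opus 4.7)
The plan is to work in the local chart $\psi_y$ and use the block form of $f_{xy}:=\psi_y^{-1}\circ f\circ\psi_x$ supplied by Proposition \ref{3.4inomris}. Writing $V^u$ as the graph of its representing function $F:R_{p^u}(0)\to\mathbb{R}^{s(x)}$, the image $f[V^u]$ is parametrized in $\psi_y$-coordinates by
$$t_u\longmapsto \bigl(D_s F(t_u)+h_s(F(t_u),t_u),\ D_u t_u+h_u(F(t_u),t_u)\bigr).$$
To display this set as a graph over the unstable direction in $\psi_y$, I will define $\phi(t_u):=D_u t_u+h_u(F(t_u),t_u)$ and show that $\phi$ is a $C^{1+\beta/2}$ diffeomorphism from $R_{p^u}(0)$ onto an open set containing $R_{q^u}(0)$. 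This uses $\|D_u^{-1}\|\le e^{-1/U^2(x)}$ and the smallness bounds $\|\partial h_u\|_{\beta/2}\le\sqrt{\epsilon}(p^u)^{\beta/2}$ and $\mathrm{Lip}(F)<\epsilon$, so that $d\phi=D_u+O(\sqrt{\epsilon}(p^u)^{\beta/2})$; that the image covers $R_{q^u}(0)$ follows from $q^u=\min\{I(p^u),Q_\epsilon(y)\}$ combined with the $C^{1+\beta/2}$-closeness of $\psi_y$ and $\psi_{f(x)}$ given by Proposition \ref{chartsofboxes}.

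The representing function of the candidate $\widehat{V}^u$ is then
$$\widehat{F}(s_u):=D_s F(\phi^{-1}(s_u))+h_s\bigl(F(\phi^{-1}(s_u)),\phi^{-1}(s_u)\bigr).$$
I will compute its parameters one at a time. For $\varphi(\widehat{V}^u)$, I would evaluate $\widehat{F}(0)$, note that $|\phi^{-1}(0)|\le \|D_u^{-1}\|\cdot|h_u(0)|\le e^{-1/U^2(x)}\epsilon\eta^3$, and then apply $\|D_s\|\le e^{-1/S^2(x)}$ to the leading term, absorbing the $h_s$-contribution (of order $\sqrt\epsilon\eta^{2+\beta/2}$) into the error. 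For $\iota(\widehat{V}^u)$, I differentiate $\widehat{F}$ at $0$ and use $d\phi^{-1}=(d\phi)^{-1}$; Lemma \ref{banachalgebras} applied to $(D_u+\text{small})^{-1}=D_u^{-1}(Id+D_u^{-1}\cdot\text{small})^{-1}$ gives the $e^{-1/U^2(x)+\sqrt{\epsilon}(p^u)^{\beta/2}}$ factor, and composition with $D_s$ contributes the $e^{-1/S^2(x)}$. The $\sigma$-bound is the most delicate: writing out $d_{s_u}\widehat{F}$ by the chain rule and bounding its $\beta/2$-norm by submultiplicativity of $\|\cdot\|_{\beta/2}$, the two exponentials combine to $e^{-1/S^2(x)-1/U^2(x)}$ while the Hölder control of $\partial h_{s/u}$ and of $dF$ contribute the additive $2\kappa\sqrt{\epsilon}(p^u)^{\beta/2}$ and the multiplicative $e^{8\sqrt{\epsilon}(p^u)^{\beta/2}}$ fudge factors.

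The main obstacle, beyond bookkeeping, is that the bounds must be sharp enough for $\widehat{V}^u$ (a priori only a $u$-manifold in $\psi_{f(x)}$, conceptually) to actually be admissible in $\psi_y^{q^s,q^u}$ after the chart change. The ratio $q^u/p^u\in[e^{-\Gamma},e^\Gamma]$ is close to $1$ while the improvement from hyperbolicity is only $e^{-1/S^2(x)-1/U^2(x)}$, which can be arbitrarily close to $1$ in the $0$-summable setting. The key is that the \emph{error} terms picked up are controlled not by $\epsilon$ alone but by $\sqrt{\epsilon}(p^u)^{\beta/2}$, so the bounds $\sigma\le\frac12$ and $\iota\le\frac12(q^u\wedge q^s)^{\beta/2}$ are preserved because one verifies $e^{-1/S^2-1/U^2}[\cdot]+\text{(error)}\le\frac12(q^u\wedge q^s)^{\beta/2}$ using $p^u\le I(q^u\wedge q^s)$ and the choice $\gamma>5/\beta$. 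The distortion-compensating bound $\mathrm{dist}_{C^{1+\beta/2}}(\psi_y^{-1}\circ\psi_{f(x)},Id)<\epsilon\eta^3$ from Proposition \ref{chartsofboxes} absorbs all chart-change errors.

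Parts (2)--(4) then follow quickly. Part (2) is Proposition \ref{firstbefore} applied in $\psi_y^{q^s,q^u}$ to $\widehat{V}^u$ and any $s$-admissible manifold there. Part (3) is the restriction of $\widehat{F}$ to $R_{q^u}(0)$; its uniqueness inside $f[V^u]$ holds since $\phi$ is injective. Part (4) follows by tracking $t_u=0$: if $p=\psi_x(F(0),0)$ then $f(p)=\psi_y(\widehat{F}(\phi(0)),\phi(0))$ lies on the graph of $\widehat{F}$ by construction, and $\phi(0)=h_u(F(0),0)\in R_{q^u}(0)$ by the smallness estimates. The symmetric statement for $f^{-1}$ on $s$-admissible manifolds is proved by the same argument applied to $f^{-1}_{xy}$ with the roles of $D_s,D_u$ swapped.
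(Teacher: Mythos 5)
Your approach is structurally identical to the paper's: change to $\psi_y$-coordinates via the block form of $f_{xy}$ from Proposition \ref{3.4inomris}, invert the $u$-component $\phi(t)=D_ut+h_u(F(t),t)$ (the paper's $\tau$), substitute into the $s$-component to get the new representing function $\widehat{F}$ (the paper's $H$), and verify admissibility by balancing the contraction $e^{-1/S^2-1/U^2}$ against the chart-change errors (controlled by $\sqrt{\epsilon}(p^u)^{\beta/2}$) using the edge relation $p^u\wedge p^s=I^{\pm1}(q^u\wedge q^s)$. The parameter bookkeeping you sketch is the right idea, with one small slip: $\phi(0)=h_u(F(0),0)$, not $h_u(0,0)$, so your bound $|\phi^{-1}(0)|\le\|D_u^{-1}\|\,|h_u(0)|\le e^{-1/U^2(x)}\epsilon\eta^3$ is too optimistic — the correct estimate, which the paper carries out, is $|h_u(F(0),0)|\le|h_u(0,0)|+\max\|d_\cdot h_u\|\cdot|F(0)|_\infty\le 2\sqrt{\epsilon}\eta^{2+\beta/2}$, which is what drives the $\kappa\sqrt{\epsilon}\eta^{2+\beta/2}$ term in the $\varphi$-bound.

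The genuine gap is in part (2). In $\psi_y$-coordinates, $f[V^u]$ is the graph of $\widehat{F}$ over all of $\mathrm{Im}(\phi)$, which is strictly larger than $R_{q^u}(0)$; so $f[V^u]$ is \emph{not} a $u$-admissible manifold in $\psi_y^{q^s,q^u}$, and Proposition \ref{firstbefore} requires \emph{both} inputs to be admissible manifolds in the same double chart. Applying it to the restriction $\mathcal{F}_u(V^u)$ and $W^s$ gives \emph{existence} of an intersection point of $f[V^u]$ and $W^s$; it does not rule out further intersections whose $u$-coordinate lies in $\mathrm{Im}(\phi)\setminus R_{q^u}(0)$. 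One cannot simply argue that every point of $W^s$ has small $u$-coordinate, because the $\|\cdot\|_\infty$-bound on the representing function of an $s$-admissible manifold is $10^{-2}Q_\epsilon(y)$, which need not be $\le q^u$. The paper (Claim 7) resolves this by re-running the contraction-mapping argument from Proposition \ref{firstbefore} on the larger domain: it extends the representing function of $W^s$ by McShane's formula to an $\epsilon$-Lipschitz function on a domain large enough for the composition with $\widehat{F}$ to be defined on $\mathrm{Im}(\phi)$, then checks that the composite still maps a compact convex set (containing $\mathrm{Im}(\phi)$) into itself, using $\mathrm{diam}(\mathrm{Im}(\phi))\le q^u(1+\kappa)$ and the smallness of $|\phi(0)|$. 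You should either include this extension argument or supply an alternative reason why any intersection of $f[V^u]$ with $W^s$ must already lie in $\mathcal{F}_u(V^u)$.
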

\begin{proof}
Let $V^u=\psi_x[\{(F_1^u(t_{s(x)+1},...,t_d),...,F_{s(x)}^u(t_{s(x)+1},...,t_d),t_{s(x)+1},...t_d) : |t_i|\leq p^u\}]$ be a $u$-admissible manifold in $\psi_x^{p^s,p^u}$. We omit the $^u$ super-script of $F$ to ease notations, and denote the parameters of $V^u$ by $\sigma,\iota,\varphi$ and $q$. Let $\eta:=p^s\wedge p^u$. By the admissibility of $V^u$, $\sigma\leq\frac{1}{2},\iota\leq\frac{1}{2}\eta^{\beta/2},\varphi\leq10^{-3}\eta, q=p^u$, and $\mathrm{Lip}(F)<\epsilon$. We analyze $\Gamma_y^u=\psi_y^{-1}[f[V^u]]\equiv f_{xy}[\mathrm{Graph} (F)]$, where $f_{xy}=\psi_y^{-1}\circ f\circ\psi_x$ and $graph(F):=\{(-F(t)-,-t-): |t|_\infty\leq q\}$. Since $V^u$ is admissible, $graph(F)\subset R_{Q_\epsilon(x)}(0)$. On this domain 
(Proposition \ref{3.4inomris}),
$$f_{xy}(v_s,v_u)=(D_sv_s+h_s(v_s,v_u),D_uv_u+h_u(v_s,v_u)),$$
where $\kappa^{-1}\leq\|D_s^{-1}\|^{-1},\|D_s\|\leq e^{-\frac{1}{S^2(x)}}$ and $e^\frac{1}{U^2(x)}\leq\|D_u^{-1}\|^{-1},\|D_u\|\leq\kappa$, 
and $\|\frac{\partial(h_s,h_u)}{\partial(v_s,v_u)}\rvert_{v_1}-\frac{\partial(h_s,h_u)}{\partial(v_s,v_u)}\rvert_{v_2}\|\leq\sqrt{\epsilon}|v_1-v_2|^{\beta/2}$.
By Proposition \ref{3.4inomris}, 
\begin{equation}\label{remarkproof} \|\frac{\partial(h_s,h_u)}{\partial(v_s,v_u)}\|<\sqrt{\epsilon} (p^u)^{\beta/2}\text{ on }R_{p^u}(0)\text{ and }|h_{s/u}(0)|<\epsilon \eta^3
.
\end{equation} Using the equation for $f_{xy}$ we can put $\Gamma_y^u$ in the following form:
\begin{equation}\label{astrix}
    \Gamma_y^u=\{(D_sF(t)+h_s(F(t),t),D_ut+ h_u(F(t),t): |t|_\infty\leq q\}, \text{ } \Big( t=(t_{s(x)+1},...,t_d)\Big).
\end{equation}
The idea (as in \cite{KM} and \cite{Sarig}) is to call the ``$u$" part of the coordinates $\tau$, solve for $t=t(\tau)$, and then substitute the result in the ``$s$" coordinates.\\

\textit{Claim 1}: The following holds for all $\epsilon$ small enough: $D_ut+h_u(F(t),t)=\tau$ has a unique solution $t=t(\tau)$ for all $\tau\in R_{e^{\frac{1}{U^2(x)}-\frac{1}{2}(p^u)^\frac{\beta}{2}}q}(0)$, and
\begin{enumerate}[label=(\alph*)]
\item $\mathrm{Lip}(t)<e^{-\frac{1}{U^2(x)}+(p^u)^\frac{\beta}{2}}$,
\item $|t(0)|_{\infty}<2\sqrt{\epsilon}\eta^{2+\frac{\beta}{2}}$,
\item The $\frac{\beta}{2}$-norm of $d_{\cdot}t$ is smaller than $e^{-\frac{1}{U^2(x)}}e^{8\sqrt{\epsilon}(p^u)^\frac{\beta}{2}}$.
\end{enumerate}
\textit{Proof}: Let $\tau(t):=D_ut+h_u(F(t),t)$. For every $|t|_\infty\leq q$ and a unit vector $v$:
\begin{align*}
|d_t\tau v|\geq & |D_u v|-\max\|\frac{\partial(h_s,h_u)}{\partial(v_s,v_u)}\|\cdot(\|d_tF\|+u(x))\geq|D_u v|-\max\|\frac{\partial(h_s,h_u)}{\partial(v_s,v_u)}\|\cdot(\|d_tF\|+d)\\
\geq & |D_u v|-\sqrt{\epsilon} (p^u)^\frac{\beta}{2}(\epsilon+d)\text{ }(\because\text{ }\eqref{remarkproof}\text{ })\\
\geq & |D_u v|(1-\sqrt{\epsilon} (p^u)^\frac{\beta}{2}(\epsilon+d))(\because |D_u v|\geq\|D_u^{-1}\|^{-1} \geq e^\frac{1}{U^2(x)}>1)\\
>&e^{-(p^u)^\frac{\beta}{2}}\|D_u^{-1} \|^{-1}\text{(provided }\epsilon\text{ is small enough)}.
\end{align*}
Since $v$ was arbitrary, it follows that $\tau$ is expanding by a factor of at least $e^{-(p^u)^\frac{\beta}{2}}\|D_u^{-1}\|^{-1}\geq e^{\frac{1}{U^2(x)}-Q_\epsilon^\frac{\beta}{2}(x)}>1$ ($\because \gamma\geq \frac{2}{\beta}$), whence one-to-one. Since $\tau$ is one-to-one, $\tau^{-1}$ is well-defined on $\tau[\overline{R_{q}(0)}]$. We estimate this set: Since $\tau$ is continuous and $e^{-(p^u)^\frac{\beta}{2}}\|D_u^{-1}\|^{-1}$ expanding (recall, these calculations are done w.r.t the supremum norm, as defined in Definition \ref{def135}): $\tau[\overline{R_{q}(0)}]\supset \overline{R_{e^{-(p^u)^\frac{\beta}{2}}\|D_u^{-1}\|^{-1}q}(\tau(0))}$. The center of the box can be estimated as follows:
\begin{align}\label{againAlsoForC2}|\tau(0)|_\infty=&|h_u(F(0),0)|_\infty\leq |h_u(0,0)|_\infty+\max\|\frac{\partial(h_s,h_u)}{\partial(v_s,v_u)}\||F(0)|_\infty\nonumber\\
\leq&\epsilon\eta^3+\sqrt{\epsilon}\eta^\frac{\beta}{2}\cdot 10^{-3} \eta^2\leq2\sqrt{\epsilon}\eta^{2+\frac{\beta}{2}}.\end{align}
Recall that $\eta\leq q$, and therefore $|\tau(0)|_\infty\leq2\sqrt{\epsilon} q^{2+\frac{\beta}{2}}\ll 2\sqrt{\epsilon} q^{\frac{\beta}{2}} $, and hence 

$$\tau[\overline{R_{q}(0)}]\supset \overline{R_{(e^{-(p^u)^\frac{\beta}{2}}\|D_u^{-1}\|^{-1}+2\sqrt{\epsilon} (p^u) ^{\frac{\beta}{2}})q}(0)}\supset \overline{R_{\|D_u^{-1}\|^{-1}(e^{-(p^u)^\frac{\beta}{2}}+2\epsilon (p^u)^\frac{\beta}{2})q}(0)}\supset \overline{R_{e^\frac{1}{U^2(x)} e^{-\frac{1}{2} (p^u)^\frac{\beta}{2}}q}(0)}$$ for $\epsilon$ small enough. So $\tau^{-1}$ is well-defined on this domain.

Since $t(\cdot)$ is the inverse of a $\|D_u^{-1}\|^{-1}e^{-(p^u)^\frac{\beta}{2}}$-expanding map, $\mathrm{Lip}(t)\leq e^{(p^u) ^\frac{\beta}{2}}\|D_u^{-1}\|<e^{-\frac{1}{U^2(x)}+ (p^u) ^\frac{\beta}{2}}$.

\medskip
We saw above that $|\tau(0)|_\infty<2\sqrt{\epsilon} \eta^{2+\frac{\beta}{2}}$. For all $\epsilon$ small enough, this is significantly smaller than $e^{\frac{1}{U^2(x)}-\frac{1}{2}(p^u)^\frac{\beta}{2}}q$, therefore $\tau(0)$ belongs to the domain of $t$. It follows that:
\begin{equation}\label{secondastrix}
    |t(0)|_\infty=|t(0)-t(\tau(0))|_\infty\leq \mathrm{Lip}(t)|\tau(0)|_\infty<
    2\sqrt{\epsilon}\eta^{2+\frac{\beta}{2}} e^{-\frac{1}{U^2(x)}+ (p^u)^\frac{\beta}{2}}.
\end{equation}
For all $\epsilon$ small enough this is less than $2\sqrt{\epsilon}\eta^{2+\frac{\beta}{2}}$, proving (b).

\medskip
Next we will calculate the $\frac{\beta}{2}$-norm of $d_\vartheta t$. We use the following notation: Let

$G(\vartheta):=(F(t(\vartheta)),t(\vartheta))$, then $d_{G(\vartheta)}h_u$ is a $u(x)\times d$ matrix, where $u(x):=d-s(x)$. The matrix $d_{t(\vartheta)}F$ is a $u(x)\times s(x)$ matrix, then let $A'(\vartheta)$:=\begin{tabular}{| l |}
\hline $d_{t(\vartheta)}F $  \\ \hline
$I_{u(x)\times u(x)}$   \\ \hline
\end{tabular}, and $A(\vartheta):=d_{G(\vartheta)}h_u \cdot A'(\vartheta)$; where the notation in the definition of $A'$ means that it is the matrix created by stacking the two matrices $d_{t(\vartheta)}F$ (represented in the standard bases, as implied by the notation) and $I_{u(x)\times u(x)}$.
Using this and the identity $$\vartheta=\tau(t(\vartheta))=D_ut(\vartheta)+h_u(F(t(\vartheta)),t(\vartheta))$$  we get: \begin{equation}\label{banana}
    d_\vartheta t=(D_u+A(\vartheta))^{-1}=(Id+D_u^{-1}A(\vartheta))^{-1}D_u^{-1}.
    \end{equation}
    From Lemma \ref{banachalgebras} we see that in order to bound the $\frac{\beta}{2}$-norm of $d_\cdot t$, it is enough to show that the $\frac{\beta}{2}$-norm of $D_u^{-1}A(\cdot)$ is less than 1. The lemma is applicable since $d_\vartheta t$ is a square matrix, as both $t$ and $\vartheta$ in this setup are vectors in $\mathbb{R}^{u(x)}$ (recall, $u(x)=d-s(x)$).
    
Here is the bound of the $\frac{\beta}{2}$-norm of $D_u^{-1}A$:

\begin{align*}
\|A(\vartheta_1)-A(\vartheta_2)\|=&\|d_{G(\vartheta_1)}h_uA'(\vartheta_1)-d_{G(\vartheta_2)}h_uA'(\vartheta_2)\|\\
=&\|d_{G(\vartheta_1)}h_uA'(\vartheta_1)-d_{G(\vartheta_1)}h_uA'(\vartheta_2)+d_{G(\vartheta_1)}h_uA'(\vartheta_2)-d_{G(\vartheta_2)}h_uA'(\vartheta_2)\|\\ \leq&\|d_{G(\vartheta_1)}h_u[A'(\vartheta_1)-A'(\vartheta_2)]\|+\|[d_{G(\vartheta_1)}h_u-d_{G(\vartheta_2)}h_u]A'(\vartheta_2)\|\\
\leq&\Big(\mathrm{Lip}(t)^\frac{\beta}{2}\cdot\|d_{G(\vartheta_1)}h_u\|\cdot \text{H\"ol}_{\beta/2}(d_\cdot F)+\text{H\"ol}_{\beta/2}(d_\cdot h_u)\cdot \mathrm{Lip}(G)^\frac{\beta}{2}\cdot\|d_{t(\vartheta_2)}F\|\Big)|\vartheta_1-\vartheta_2|_\infty^{\beta/2}.
\end{align*}
Recall:
\begin{itemize}
\item $\mathrm{Lip}(F)<(p^u)^\frac{\beta}{2}$
\item $\mathrm{Lip}(t)\leq e^{-\frac{1}{U^2(x)}+(p^u)^\frac{\beta}{2}}$
\item $\text{H\"ol}_{\beta/2}(d_\cdot h_u)\leq \sqrt{\epsilon}$
\item $d_0h_u\leq\sqrt{\epsilon}\eta^{\beta/2}$
\end{itemize}
From the identity $G(\vartheta)=(F(t(\vartheta)),t(\vartheta))$ we get that Lip($G)\leq\mathrm{Lip}(F)\cdot\mathrm{Lip}(t)+\mathrm{Lip}(t)$; and from that and the first two items above we deduce that $\mathrm{Lip}(G)\leq 1$, whence $\mathrm{Lip}(G)^\frac{\beta}{2}\leq 1$. We also know $\|d_{t(r)}F\|\leq \mathrm{Lip}(F)\leq (p^u)^\frac{\beta}{2}
$  (by the remark after Definition \ref{admissible}). From the last two items we deduce $\|d_\cdot h_u\|_\infty\leq(\sqrt{\epsilon}p^u)^{\beta/2}+\sqrt{\epsilon}(p^u)^{\beta/2}$. From the admissibility of the manifold represented by $F$, we get H\"ol$_{\frac{\beta}{2}}(d_\cdot F)\leq \frac{1}{2}$.

So it is clear that for a small enough $\epsilon$, $\text{H\"ol}_{\beta/2}(A(\cdot))<2\epsilon^2(p^u)^\frac{\beta}{2}$, and hence $\text{H\"ol}_{\beta/2}(D_u^{-1}A(\cdot))<2\epsilon^2(p^u)^\frac{\beta}{2} $. Also, $\|D_u^{-1}A(0)\|\leq e^{-\frac{1}{U^2(x)}}\|A(0)\|=e^{-\frac{1}{U^2(x)}}\|d_{(F(0),t(0))}h_uA'(0)\|$, and by Proposition \ref{3.4inomris} $\|d_{G(\vartheta)}h_u\|\leq\sqrt{\epsilon}(p^u)^\frac{\beta}{2}$. Since $\|d_\cdot F\|\leq(p^u)^\frac{\beta}{2}\leq \epsilon$,
for all small enough $\epsilon$, $\|A
(0)\|\leq \sqrt{\epsilon}(p^u)^\frac{\beta}{2}(1+\epsilon)<2 \sqrt{\epsilon}(p^u)^\frac{\beta}{2} $, and $\|D_u^{-1}A(0)\|\leq e^{-\frac{1}{U^2(x)}}2 \sqrt{\epsilon}(p^u)^\frac{\beta}{2} $. Hence $\|A(\vartheta)\|\leq\|A(0)\|+\text{H\"ol}_\frac{\beta}{2}(A(\cdot))\cdot(e^\frac{1}{U^2(x)} q)^\frac{\beta}{2}\leq3\sqrt{\epsilon} (p^u)^\frac{\beta}{2}$.

In total, $\|A(\cdot)\|_\frac{\beta}{2}\leq 3\sqrt{\epsilon} (p^u)^\frac{\beta}{2} +2\epsilon^2(p^u)^\frac{\beta}{2}<4\sqrt{\epsilon}(p^u)^\frac{\beta}{2}$, and so $\|D_u^{-1}A(\cdot)\|_{\beta/2}<e^{-\frac{1}{U^2(x)}} 4\sqrt{\epsilon}(p^u)^\frac{\beta}{2} <1$. Thus we can use Lemma \ref{banachalgebras} and \eqref{banana} to get
$$\| d_\cdot t\|_{\beta/2}\leq \frac{1}{1-4\sqrt{\epsilon}(p^u)^\frac{\beta}{2}}\cdot\|D_u^{-1}\|\leq e^{8\sqrt{\epsilon}(p^u)^\frac{\beta}{2}}\cdot e^{-\frac{1}{U^2(x)}}
,$$
for small enough $\epsilon$. Here we used that $D_u$ is at least $e^\frac{1}{U^2(x)}$-expanding. This completes claim 1. 

\medskip We now substitute $t=t(\vartheta)$ in \eqref{astrix}, and find that $$\Gamma_y^u\supset\{(H(\vartheta),\vartheta):|\vartheta|_\infty<e^{\frac{1}{U^2(x)}-\frac{1}{2}(p^u)^\frac{\beta}{2}}q\},$$ where $\vartheta\in \mathbb{R}^{u(x)}$ (as it is the right set of coordinates of $(H(\vartheta),\vartheta)$), and $H(\vartheta):=D_sF(t(\vartheta))+h_s(F(t(\vartheta)),t(\vartheta))$. Claim 1 guarantees that $H$ is well-defined and $C^{1+\beta/2}$ on $R_{e^{\frac{1}{U^2(x)}-\frac{1}{2}(p^u)^\frac{\beta}{2}}q}(0)$. We find the parameters of $H$:

\textit{Claim 2}: For all $\epsilon$ small enough, $|H(0)|_\infty
\leq e^{-\frac{1}{S^2(x)}}[\varphi+\kappa\sqrt{\epsilon}\eta^{2+\frac{\beta}{2}}] $, and $|H(0)|_\infty<10^{-3}(q^u\wedge q^s)^2$.

\textit{Proof}: 
By \eqref{secondastrix}, $|t(0)|_\infty\leq 2\sqrt{\epsilon}\eta^{2+\frac{\beta}{2}} e^{-\frac{1}{U^2(x)}+(p^u)^\frac{\beta}{2}} $. Since $\mathrm{Lip}(F)<(p^u)^\frac{\beta}{2},|F(0)|_\infty<\varphi$ and $\varphi\leq10^{-3}\eta^2$: $|F(t(0))|_\infty<\varphi+\epsilon\eta^{2+\frac{\beta}{2}}
$ provided $\epsilon$ is small enough. Thus
\begin{align*}
|H(0)|_\infty\leq&\|D_s\||F(t(0))|_\infty+|h_s(F(t(0)),t(0))|_\infty\\ \leq&\|D_s\| (\varphi+\epsilon\eta^{2+\frac{\beta}{2}})
+[|h_s(0)|_\infty+\max\|d_\cdot h_s\||(F(t(0)),t(0))|_\infty]\\
\leq&\|D_s\|(\varphi+\epsilon\eta^{2+\frac{\beta}{2}})
+[\epsilon\eta^3+\sqrt{\epsilon}\eta^\frac{\beta}{2}
\max\{|F(t(0))|_\infty,|t(0)|_\infty\}]\\
\leq&\|D_s\|(\varphi+\epsilon\eta^{2+\frac{\beta}{2}})+
3\epsilon\eta^{2+\frac{\beta}{2}}(\because \mathrm{Lip}(F)<1,|t(0)|_\infty<2\sqrt{\epsilon}\eta^{2+\frac{
\beta}{2}})\\
\leq&\|D_s\|[\varphi+\kappa\sqrt{\epsilon}\eta^{2+\frac{\beta}{2}}]= 10^{-3}(q^s\wedge q^u)^2\cdot \|D_s\|\cdot[1+ 10^3\kappa\sqrt{\epsilon}\eta^{\frac{\beta}{2}}]\cdot (\frac{p^s\wedge p^u}{q^s\wedge q^u})^2.
\end{align*}
Recalling that $\|D_s\|\leq e^{-\frac{1}{S^2(x)}}$,  $\eta\equiv p^u\wedge p^s\leq I(q^u\wedge q^s)$ and $\varphi\leq10^{-3}(p^u\wedge p^s)^2$:
\begin{align}
|H(0)|_\infty\leq & 10^{-3}(q^s\wedge q^u)^2 \cdot e^{-\frac{1}{S^2(x)}+\kappa 10^3 \sqrt{\epsilon}\eta^\frac{\beta}{2}+2\Gamma (q^s\wedge q^u)^\frac{1}{\gamma}}\\
\leq &10^{-3}(q^s\wedge q^u)^2 \cdot e^{-\frac{1}{S^2(x)}+\kappa 10^3 \sqrt{\epsilon}\eta^\frac{\beta}{2}+2\Gamma e^{\epsilon}\eta^\frac{1}{\gamma}}
\leq 10^{-3}(q^s\wedge q^u)^2 \cdot e^{-\frac{1}{S^2(x)}+3\Gamma \eta^\frac{1}{\gamma}},\nonumber
\end{align} and for all $\epsilon$ sufficiently small this is less than $10^{-3}(q^u\wedge q^s)^2$. The claim follows.

\medskip
\textit{Claim 3}: For all $\epsilon$ small enough, $\|d_0H\|<e^{-\frac{1}{U^2(x)}-\frac{1}{S^2(x)}+\sqrt{\epsilon}(p^u)^\frac{\beta}{2}}[\iota+\epsilon \eta^{\beta}]
$, and $\|d_0H\|<\frac{1}{2}(q^u\wedge q^s)^{\beta/2}$.

\textit{Proof}: $\|d_0H\|\leq\|d_0t\|[\|D_s\|\cdot\|d_{t(0)}F\|+\|d_{(F(t(0)),t(0))}h_s\|\cdot\max\|A'(0)\|]$ and
\begin{itemize}
\item $\|d_0t\|\leq \mathrm{Lip}(t)<e^{-\frac{1}{U^2(x)}+(p^u)^\frac{\beta}{2}}$ (claim 1).

\item By 
\eqref{alsoForC2reg} we can get the following fine bound:
\begin{align*}
\|d_{(F(t(0)),t(0))}h_s\|\leq & 2\epsilon | (F(t(0)),t(0)) |_\infty^\frac{\beta}{2}=2\epsilon |t(0)|^\frac{\beta}{2}\leq \epsilon^2 \eta^\beta
.
\end{align*}

\item $\|d_{t(0)}F\|
\leq\|d_0F\|+\text{H\"ol}_{\beta/2}(d_\cdot F)|t(0)|_\infty^{\beta/2}\leq\iota+\sigma\cdot(2\sqrt{\epsilon}\eta^{2+\frac{\beta}{2}})^{\beta/2}\leq\iota+ \epsilon^2\eta^{\beta}
$ for small enough $\epsilon>0$.

\item $\|A'(0)\|\leq 1+\epsilon<2$.
\end{itemize}

These estimates together give us that:
\begin{align*}
\|d_0H\|<&e^{-\frac{1}{U^2(x)}+\sqrt{\epsilon}(p^u)^\frac{\beta}{2}}\|D_s\|[\iota+\epsilon^2\eta^{\beta}+\|D_s\|^{-1} \epsilon^2\eta^\beta\cdot2]\leq e^{-\frac{1}{U^2(x)}-\frac{1}{S^2(x)}+\sqrt{\epsilon}(p^u)^\frac{\beta}{2}}[\iota+ \eta^{\beta}(\epsilon^2+ 2\kappa\epsilon^2)]\\
\leq &e^{-\frac{1}{U^2(x)}-\frac{1}{S^2(x)}+\sqrt{\epsilon} (p^u)^\frac{\beta}{2}}[\iota+(q^u\wedge q^s)^{\beta/2}\cdot \eta^\frac{\beta}{2}\epsilon^\frac{3}{2}(\frac{I(q^s\wedge q^u)}{q^s\wedge q^u})^\frac{\beta}{2}]\\
\leq &e^{-\frac{1}{U^2(x)}-\frac{1}{S^2(x)}+\sqrt{\epsilon} (p^u)^\frac{\beta}{2}}[\iota+(q^u\wedge q^s)^{\beta/2}\cdot \eta^\frac{\beta}{2}\epsilon^\frac{5}{4}]\text{ for all sufficiently small }\epsilon>0,\\
=& \frac{1}{2} (q^u\wedge q^s)^{\beta/2} \cdot e^{-\frac{1}{U^2(x)}-\frac{1}{S^2(x)}+\sqrt{\epsilon} (p^u)^\frac{\beta}{2}}[\frac{\iota}{\frac{1}{2}(q^u\wedge q^s)^{\beta/2}}+\epsilon\eta^\frac{\beta}{2}].
\end{align*}
The first line implies that for all $\epsilon$ small enough $\|d_0H\|<e^{-\frac{1}{U^2(x)}-\frac{1}{S^2(x)}+\sqrt{\epsilon}(p^u)^\frac{\beta}{2}}[\iota+\epsilon^\frac{3}{2}\eta^{\beta}]$, which is stronger than the estimate in the claim.

Since $\iota\leq\frac{1}{2}(p^u\wedge p^s)^{\beta/2}$ and $(p^u\wedge p^s)\leq I(q^u\wedge q^s)$  (by Lemma \ref{lemma131}), we also get that for all $\epsilon$ small enough: $\|d_0H\|\leq e^{-\frac{1}{U^2(x)}-\frac{1}{S^2(x)}+\sqrt{\epsilon}(p^u)^\frac{\beta}{2}}e^{\beta\Gamma\eta^\frac{1}{\gamma}}\cdot \frac{1}{2}(q^s\wedge q^u)^\frac{\beta}{2}\leq\frac{1}{2}(q^u\wedge q^s)^{\beta/2}$, as required. 

\medskip
\textit{Claim 4}: For all $\epsilon$ small enough $\|d_\cdot H\|_{\beta/2}\leq e^{-\frac{1}{S^2(x)}-\frac{1}{U^2(x)}+8\sqrt{\epsilon}(p^u)^\frac{\beta}{2}}\cdot[\sigma+2\kappa\sqrt{\epsilon}(p^u)^\frac{\beta}{2}] $, and $\|d_\cdot H\|_{\beta/2}\leq\frac{1}{2}$.

\textit{Proof}: By claim 1 and its proof: \begin{itemize}
\item $\|d_\cdot t\|_{\beta/2}\leq \|D_u^{-1}\|e^{8\sqrt{\epsilon}(p^u)^\frac{\beta}{2}} $.
\item $t$ is a contraction and $\|d_{t(\cdot)} F\|_{\beta/2}\leq \sigma$.
\item $\|d_{(F(t),t)} h_s\|_{\beta/2}<\frac{3}{2}\sqrt{\epsilon}(p^u)^\frac{\beta}{2}$.
\end{itemize}
We need the following fact: Suppose
$\psi,\varphi:\mathrm{Dom}\rightarrow M_{r_1}(\mathbb{R})$, where Dom is an open and bounded subset of $\mathbb{R}^{r_2}$ ($r_1,r_2\in\mathbb{N}$)- then  $\|\varphi\cdot\psi\|_\alpha\leq\|\varphi\|_\alpha\cdot\|\psi\|_\alpha$ (see item (3) in the remark after Definition \ref{def137}).

Recall that $H(\vartheta)=D_sF(t(\vartheta))+h_s(F(t(\vartheta)),t(\vartheta))$, hence $$d_\vartheta H=D_sd_{(t(\vartheta))}Fd_\vartheta t+ d_{G(\vartheta)}h_s \cdot A'(\vartheta)\cdot d_{\vartheta}t.$$
 $\|A'(\cdot)\|_\frac{\beta}{2}\leq1+ \|d_{t(\cdot)}F\|_\frac{\beta}{2} $ (recall $A'(\vartheta)$:=\begin{tabular}{| l |}
\hline $d_{t(\vartheta)}F $  \\ \hline
$I_{u(x)\times u(x)}$   \\ \hline
\end{tabular}
), we get $\|d_\cdot H\|_{\beta/2}\leq \|D_u^{-1}\|e^{8\sqrt{\epsilon}(p^u)^\frac{\beta}{2}}[\|D_s\|\cdot\sigma+\frac{3}{2}\sqrt{\epsilon}(p^u)^\frac{\beta}{2}\cdot(1+\epsilon)]$. This and $\|D_s\|\leq e^{-\frac{1}{S^2(x)}}$ gives: \begin{align*}\|d_\cdot H\|_{\beta/2}\leq &e^{-\frac{1}{S^2(x)}-\frac{1}{U^2(x)}+8\sqrt{\epsilon}(p^u)^\frac{\beta}{2}}\cdot[\sigma+2\kappa\sqrt{\epsilon}(p^u)^\frac{\beta}{2}]\\
=& \sigma+ (e^{-\frac{1}{S^2(x)}-\frac{1}{U^2(x)}+8\sqrt{\epsilon}(p^u)^\frac{\beta}{2}}-1)\sigma+2\kappa\sqrt{\epsilon}(p^u)^\frac{\beta}{2} e^{-\frac{1}{S^2(x)}-\frac{1}{U^2(x)}+8\sqrt{\epsilon}(p^u)^\frac{\beta}{2}}\\
\leq&\sigma -\frac{1}{4}(\frac{1}{S^2(x)}+\frac{1}{U^2(x)}-8\sqrt{\epsilon}(p^u)^\frac{\beta}{2})+\frac{1}{2}\epsilon^\frac{1}{3}(p^u)^\frac{\beta}{2}\leq \sigma -\frac{1}{4}(\frac{1}{S^2(x)}+\frac{1}{U^2(x)})+\epsilon^\frac{1}{3}(p^u)^\frac{\beta}{2}.\end{align*}

If $\epsilon$ is small enough, then since $\sigma\leq\frac{1}{2}$, $\|d_\cdot H\|_{\beta/2}\leq 
\frac{1}{2}$.

\medskip
\textit{Claim 5}: For all $\epsilon$ small enough $\widehat{V}^u:=\{\psi_y((H(\vartheta),\vartheta)):|\vartheta|_\infty\leq\min\{e^{\frac{1}{U^2(x)}-\frac{1}{2}(p^u)^\frac{\beta}{2}}q,Q_\epsilon(y)\}\}$ is a $u$-manifold in $\psi_y$. The parameters of $\widehat{V}^u$ are as in part 1 of the statement of the theorem, and $\widehat{V}^u$ contains a $u$-admissible manifold in $\psi_y^{q^u,q^s}$.

\textit{Proof}: To see that $\widehat{V}^u$ is a $u$-manifold in $\psi_y$ we have to check that $H$ is $C^{1+\beta/2}$ and $\|H\|_\infty\leq Q_\epsilon(y)
$. Claim 1 shows that $H$ is $C^{1+\beta/2}$
. To see $\|H\|_\infty\leq Q_\epsilon(y)$, we first observe that for all $\epsilon$ small enough $\mathrm{Lip}(H)<\epsilon$ because $\|d_\vartheta H\|\leq\|d_0 H\|+\text{H\"ol}_{\beta/2}(d_\cdot H)Q_\epsilon(y)^{\beta/2}\leq\frac{1}{2}\epsilon+\frac{1}{2}\epsilon=\epsilon$
.

It follows that $\|H\|_\infty\leq|H(0)|_\infty+\epsilon Q_\epsilon(y)<(10^{-3}+\epsilon)Q_\epsilon(y)<Q_\epsilon(y)$.

Next we claim that $\widehat{V}^u$ contains a $u$-admissible manifold in $\psi_y^{q^s,q^u}$. Since $\psi_x^{p^s,p^u}\rightarrow\psi_y^{q^s,q^u},q^u=\min\{I(p^u),Q_\epsilon(y)\}$. Consequently, for every $\epsilon$ small enough $$e^{\frac{1}{U^2(x)}-\frac{1}{2}(p^u)^\frac{\beta}{2}}q\equiv e^{\frac{1}{U^2(x)}-\frac{1}{2} (p^u)^\frac{\beta}{2}}p^u>e^{\Gamma (p^u)^\frac{1}{\gamma}} p^u =I(p^u)\geq q^u.$$ So $\widehat{V}^u$ restricts to a $u$-manifold with a $q$-parameter equal to $q^u$. Claims 2-4 guarantee that the parameter bounds for $\widehat{V}^u$ satisfy $u$-admissibility in $\psi_y^{q^s,q^u}$ (see Definition \ref{admissible}), and that part 1 of Theorem \ref{graphtransform} holds.

\medskip
\textit{Claim 6}: $f[V^u]$ contains exactly one $u$-admissible manifold in $\psi_y^{q^s,q^u}$. This manifold contains $f(p)$ where $p=(F(0),0)$.

\textit{Proof}: The previous claim shows existence. We prove uniqueness. Using the identity $\Gamma_y^u=\{(D_sF(t)+h_s(F(t),t),D_ut+h_u(F(t),t)):|t|_\infty\leq q\}$, any $u$-admissible manifold in $\psi_y^{q^s,q^u}$ which is contained in $f[V^u]$ is a subset of $$\psi_y[\{(D_sF(t)+h_s(F(t),t),D_ut+h_u(F(t),t)):|t|_\infty\leq q, |D_ut+h_u(F(t),t)|_\infty\leq q^u\}]$$ We just showed at the end of claim 5 that for all $\epsilon$ small enough, $q^u<e^{\frac{1}{U^2(x)}-\frac{1}{2}(p^u)^\frac{\beta}{2}}q$. By claim 1 the equation $\tau=D_ut+h_u(F(t),t)$ has a unique solution $t=t(\tau)\in R_q(0)$ for all $\tau\in R_{q^u}(0)$. Our manifold must therefore be equal to $$\psi_y[\{(D_sF(t(\tau))+h_s(F(t(\tau)),t(\tau)),\tau):|\tau|_\infty\leq q^u\}]$$ This is exactly the $u$-admissible manifold that we have constructed above.

Let $\mathcal{F}[V^u]$ denote the unique $u$-admissible manifold in $\psi_y^{q^s,q^u}$ contained in $f[V^u]$. We claim that $f(p)\in\mathcal{F}[V^u]$. By the previous paragraph it is enough to show that the second set  of coordinates (the $u$-part) of $\psi_y^{-1}[\{f(p)\}]$ has infinity norm of less than $q^u$. Call the $u$-part $\tau$, then:\begin{align*}
    |\tau|_\infty=&\text{second set of coordinates of }f_{xy}(F(0),0)=|h_u(F(0),0)|_\infty\\
\leq &|h_u(0,0)|_\infty+\max\|d_\cdot h_u\|\cdot|F(0)|_\infty<\epsilon\eta^3+\sqrt{\epsilon}\eta^\frac{\beta}{2}\cdot 10^{-3}\eta<\sqrt{\epsilon}\eta^{1+\frac{\beta}{2}}\ll(q^s\wedge q^u)\leq q^u.
\end{align*}
This concludes the proof of the claim.

\medskip
\textit{Claim 7}: $f[V^u]$ intersects every $s$-admissible manifold in $\psi_y^{q^s,q^u}$ at a unique point.

\textit{Proof}: 
 In this part we use the fact that $\psi_x^{p^s,p^u}\rightarrow\psi_y^{q^s,q^u}$ implies $s(x)=s(y)$. Let $W^s$ be an $s$-admissible manifold in $\psi_y^{q^s,q^u}$. We saw in the previous claim that $f[V^u]$ and $W^s$ intersect at least at one point. Now we wish to show that the intersection point is unique: Recall that we can put $f[V^u]$ in the form $$f[V^u]=\psi_y[\{(D_sF(t)+h_s(F(t),t),D_ut+h_u(F(t),t)):|t|_\infty\leq q\}].$$ We saw in the proof of claim 1 that the second coordinate $\tau(t):=D_ut+h_u(F(t),t)$ is a one-to-one continuous map whose image contains $R_{q^u}(0)$. We also saw that $\|d_\cdot t^{-1}\|^{-1}>e^{-(p^u)^\frac{\beta}{2}}\|D_u^{-1}\|^{-1}\geq e^{\frac{1}{U^2(x)}-(p^u)^\frac{\beta}{2}}>1$. Consequently the inverse function $t:Im(\tau)\rightarrow \overline{R_q(0)}$ satisfies $\|d_\cdot t\|<1$, and so $$f[V^u]=\psi_y[\{(H(\vartheta),\vartheta):\vartheta\in Im(\tau)\}],\text{ where }\mathrm{Lip}(H)\leq\epsilon.$$ Let $I:\overline{R_{q^u}(0)}\rightarrow\mathbb{R}^{u(x)}$ denote the function which represents $W^s$ in $\psi_y$, then $\mathrm{Lip}(I)\leq\epsilon$. Extend it to an $\epsilon$-Lipschitz function on $Im(\tau)$ (by McShane's  extension formula \cite{McShane}, similarly to footnote \footref{lipextensions}). The extension represents a Lipschitz manifold $\widetilde{W}^s\supset W^s$. We wish to use the same arguments we used in Proposition \ref{firstbefore} to show the uniqueness of the intersection point (this time of $f[V^u]$ and $\widetilde{W}^s$). We need the following observations:
\begin{enumerate}
 \item $\overline{R_{q^u}(0)}\subset \overline{R_{e^{\frac{1}{U^2(x)}-\frac{1}{2}(p^u)^\frac{\beta}{2}}q}(0)}\subset Im(\tau)$, as seen in the estimation of $Im(\tau)$ in the beginning of claim 1.
 \item $Im(\tau)$ is compact, as a continuous image of a compact set.
 \item $\tau(t)=D_ut+h_u(F(t),t)$, hence $\|d_\cdot \tau\|\leq \kappa+\epsilon$. Hence $\mathrm{diam}(Im(\tau))\leq q(1+\kappa)$. So for $\epsilon<\frac{1}{\kappa+1}\wedge\frac{1}{3}$, we get that for $I\circ H$ (an $\epsilon^2$-Lipschitz function from $Im(\tau)$ to itself), its image is contained in $\overline{R_{\epsilon^2\|d_\cdot \tau\|q+|I\circ H(\tau(0))|}(0)}\subset \overline{R_q(0)}\subset Im(\tau)$ (the last inclusion is due to the bound we have seen $|\tau(0)|\leq2\sqrt{\epsilon}\eta^{2+\frac{\beta}{2}}$)
\end{enumerate}
As in the proof of Proposition \ref{firstbefore}, we can use the Banach fixed point theorem
. 
This gives us that $f[V^u]$ and $W^s$ intersect in at most one point.

This concludes the proof of claim 7, and thus the proof of the theorem for $u$-manifolds. 

The case of $s$-manifolds follows from the symmetry between $s$ and $u$-manifolds:

\begin{enumerate}
\item $V$ is a $u$-admissible manifold w.r.t $f$ iff $V$ is an $s$-admissible manifold w.r.t to $f^{-1}$, and the parameters are the same.
\item $\psi_x^{p^s,p^u}\rightarrow\psi_y^{q^s,q^u}$ w.r.t $f$ iff $\psi_y^{q^s,q^u}\rightarrow\psi_x^{p^s,p^u}$ w.r.t $f^{-1}$. 
\end{enumerate}
\end{proof}

\begin{definition}
Suppose $\psi_x^{p^s,p^u}\rightarrow\psi_y^{q^s,q^u}$.
\begin{itemize}
    \item  The {\em Graph Transform} (of an $s$-admissible manifold) $\mathcal{F}_{s}$ maps an $s$-admissible manifold $V^{s}$ in $\psi_y^{q^s,q^u}$ to the unique $s$-admissible manifold in $\psi_x^{p^s,p^u}$ contained in $f^{-1}[V^{s}]$.
    \item The {\em Graph Transform} (of a $u$-admissible manifold) $\mathcal{F}_{u}$ maps a $u$-admissible manifold $V^{u}$ in $\psi_x^{p^s,p^u}$ to the unique $u$-admissible manifold in $\psi_y^{q^s,q^u}$ contained in $f[V^{u}]$.
\end{itemize}
\end{definition}

The operators $\mathcal{F}_{s},\mathcal{F}_{u}$ depend on the edge $\psi_x^{p^s,p^u}\rightarrow\psi_y^{q^s,q^u}$, even though the notation does not specify it.

\begin{prop}\label{prop133}
If $\epsilon$ is small enough then the following holds: For any $s/u$-admissible manifold $V_1^{s/u},V_2^{s/u}$ in $\psi_x^{p^s,p^u}$
$$\mathrm{dist}(\mathcal{F}_{s/u}(V_1^{s/u}),\mathcal{F}_{s/u}(V_2^{s/u}))\leq e^{-\frac{1-\epsilon}{U^2(x)/S^2(x)}}\mathrm{dist}(V_1^{s/u},V_2^{s/u}),$$
$$\mathrm{dist}_{C^1}(\mathcal{F}_{s/u}(V_1^{s/u}),\mathcal{F}_{s/u}(V_2^{s/u}))\leq e^{-(\frac{1-\epsilon}{U^2(x)}+\frac{1-\epsilon}{S^2(x)})}(\|d_\cdot F_1-d_\cdot F_2\|_\infty+ (\sqrt{\epsilon}+\epsilon^\beta)\|F_1-F_2\|_\infty^{\beta/2}).$$
\end{prop}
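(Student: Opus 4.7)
By the $s$/$u$ symmetry noted at the end of Theorem \ref{graphtransform}, it suffices to treat the $u$-case, the $s$-case following by applying the argument to $f^{-1}$ with the roles of $S(x)$ and $U(x)$ swapped. Fix an edge $\psi_x^{p^s,p^u}\to\psi_y^{q^s,q^u}$ and write $V_i^u=\psi_x[\mathrm{Graph}(F_i)]$, $\mathcal{F}_u(V_i^u)=\psi_y[\mathrm{Graph}(H_i)]$ via the implicit construction of Theorem \ref{graphtransform}: $H_i(\tau)=D_sF_i(t_i(\tau))+h_s(F_i(t_i(\tau)),t_i(\tau))$, where $t_i(\tau)$ is the unique solution to $\tau=D_ut+h_u(F_i(t),t)$.

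The preparatory step is a pointwise estimate for $|t_1-t_2|$. Subtracting the defining equations of $t_1$ and $t_2$ gives
\[
D_u(t_1(\tau)-t_2(\tau))=h_u(F_2(t_2),t_2)-h_u(F_1(t_1),t_1),
\]
and combining $\|D_u^{-1}\|^{-1}\geq e^{1/U^2(x)}$ with $\|d_\cdot h_u\|\leq\sqrt{\epsilon}(p^u)^{\beta/2}$ from Proposition \ref{3.4inomris} and $\mathrm{Lip}(F_i)<\epsilon$ yields $|t_1(\tau)-t_2(\tau)|\leq C\sqrt{\epsilon}(p^u)^{\beta/2}e^{-1/U^2(x)}\|F_1-F_2\|_\infty$ with $C$ absolute. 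The $C^0$-bound then follows by decomposing $H_1-H_2=D_s(F_1\circ t_1-F_2\circ t_2)+h_s(G_1)-h_s(G_2)$ with $G_i=(F_i\circ t_i,t_i)$, using $\|D_s\|\leq e^{-1/S^2(x)}$ on the main summand and $\|d_\cdot h_s\|\leq\sqrt{\epsilon}(p^u)^{\beta/2}$ together with the preparatory bound on the correction; the additive $\sqrt{\epsilon}(p^u)^{\beta/2}$ factor is absorbed into the exponent as $e^{\epsilon/S^2(x)}$ via the calibration constraints.

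For the $C^1$-bound the strategy is to differentiate to obtain
\[
d_\tau H_i=\bigl(D_s\, d_{t_i(\tau)}F_i+d_{G_i(\tau)}h_s\cdot A_i'(\tau)\bigr)\,d_\tau t_i,
\]
with $A_i'$ the stacked matrix from Claim 1 of Theorem \ref{graphtransform} and $d_\tau t_i=(D_u+d_{G_i}h_u\cdot A_i')^{-1}$ obtained by differentiating the implicit equation. One then telescopes $d_\tau H_1-d_\tau H_2$ piece by piece, bounding each term using the submultiplicativity of $\|\cdot\|_{\beta/2}$ (item (3) in the remark following Definition \ref{def137}), the $\beta/2$-Hölder regularity of $d_\cdot F_i$ (from admissibility) and of $d_\cdot h_{s/u}$ (Proposition \ref{3.4inomris}), and the identity
\[
d_\tau t_1-d_\tau t_2=d_\tau t_1\cdot\bigl(d_{G_2}h_u\, A_2'-d_{G_1}h_u\, A_1'\bigr)\cdot d_\tau t_2
\]
combined with the preparatory $|t_1-t_2|$ bound. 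The main hyperbolic contribution $\|D_s\|\cdot\|d_\tau t\|\leq e^{-1/S^2(x)-1/U^2(x)}$ multiplies $\|d_\cdot F_1-d_\cdot F_2\|_\infty$; every other summand carries a coefficient of order $\sqrt{\epsilon}(p^u)^{\beta/2}$ or $\epsilon^\beta$ multiplied by $\|F_1-F_2\|_\infty^{\beta/2}$, the $\beta/2$-power arising because $|t_1(\tau)-t_2(\tau)|=O(\|F_1-F_2\|_\infty)$ is paired with a $\beta/2$-Hölder norm.

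The main obstacle is purely bookkeeping: because there is no uniform lower bound on $1/S^2(x)+1/U^2(x)$, each multiplicative perturbation $1+\sqrt{\epsilon}(p^u)^{\beta/2}$ must be absorbed into $e^{\epsilon/S^2(x)+\epsilon/U^2(x)}$. This is exactly the role of the calibration $\gamma>5/\beta$ and $C_{\beta,\epsilon}=\epsilon^{90/\beta}/3^{6/\beta}$: using $Q_\epsilon(x)\leq C_{\beta,\epsilon}/\|C_0^{-1}(x)\|^{2\gamma}$ together with $S^2(x),U^2(x)\leq\|C_0^{-1}(x)\|^2$, the exponent inequality $(p^u)^{\beta/2}\cdot\|C_0^{-1}(x)\|^2\ll\epsilon$ holds uniformly, making the absorption legal for all sufficiently small $\epsilon$. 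Finally, the $C^0$ component of $\mathrm{dist}_{C^1}$ is absorbed into the $(\sqrt{\epsilon}+\epsilon^\beta)\|F_1-F_2\|_\infty^{\beta/2}$ term on the right using that $\|F_1-F_2\|_\infty\leq 2Q_\epsilon(x)\leq 2C_{\beta,\epsilon}$ is uniformly small relative to the remaining coefficients.
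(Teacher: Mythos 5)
Your proposal follows essentially the same route as the paper's proof: a preparatory estimate $|t_1-t_2|\lesssim\sqrt{\epsilon}(p^u)^{\beta/2}e^{-1/U^2(x)}\|F_1-F_2\|_\infty$ from the implicit equations (Part 1), the decomposition $H_1-H_2=D_s(F_1\circ t_1-F_2\circ t_2)+h_s(G_1)-h_s(G_2)$ for the $C^0$-bound (Part 2), the Neumann-type identity $d_\tau t_1-d_\tau t_2=d_\tau t_1(A_2-A_1)d_\tau t_2$ with $A_i=d_{G_i}h_u\cdot A_i'$ (Part 3), and a telescoping for $d_\tau H_1-d_\tau H_2$ (Part 4); the absorption of $1+O(\sqrt{\epsilon}(p^u)^{\beta/2})$ into $e^{\epsilon(1/S^2+1/U^2)}$ via the calibration $\gamma>2/\beta$ is exactly the paper's device. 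The only small deviation is that you invoke submultiplicativity of $\|\cdot\|_{\beta/2}$ where the paper's Part 4 only needs supremum-norm bounds together with the $\beta/2$-H\"older regularity of the individual factors, but this is a harmless strengthening rather than a gap.
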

\begin{proof} (For the $u$ case) Suppose $\psi_x^{p^s,p^u}\rightarrow\psi_y^{q^s,q^u}$, set $\eta:=p^s\wedge p^u$, and let $V_i^u$ be two $u$-admissible manifolds in $\psi_x^{p^s,p^u}$. We take $\epsilon$ to be small enough for the arguments in the proof of Theorem \ref{graphtransform} to work. These arguments give us  that if $V_i=\psi_x[\{(F_i(t),t):|t|_\infty\leq p^u\}]$, then $\mathcal{F}_u[V_i]=\psi_y[\{(H_i(\tau),\tau):|\tau|_\infty\leq q^u\}]$, where $t_i$ and $\tau$ are of length $u(x)$ (as a right set of components) and
\begin{itemize}
\item $H_i(\tau)=D_sF_i(t_i(\tau))+h_s(F_i(t_i(\tau)),t_i(\tau))$,
\item $t_i(\tau)$ is defined implicitly by $D_ut_i(\tau)+h_u(F_i(t_i(\tau)),t_i(\tau))=\tau$ and $|d_\cdot t_i|_\infty<1$,
\item $\|D_s^{-1}\|\leq\kappa,\|D_s\|\leq e^{-\frac{1}{S^2(x)}},\|D_u^{-1}\|\leq e^{-\frac{1}{U^2(x)}},\|D_u\|\leq \kappa$,
\item $|h_{s/u}(0)|_\infty<\epsilon\eta^3, \text{H\"ol}_{\beta/2}(d_\cdot h_{s/u})\leq\sqrt{\epsilon},\max\|d_\cdot h_{s/u}\|<\frac{3}{2}\sqrt{\epsilon} (p^u)^\frac{\beta}{2}$.
\end{itemize}
In order to prove the proposition we need to estimate $\|H_1-H_2\|_\infty,\|d_\cdot H_1-d_\cdot H_2\|_\infty$ in terms of $\|F_1-F_2\|_\infty,\|d_\cdot F_1-d_\cdot F_2\|_\infty$.

\medskip
\textit{Part 1:} For all $\epsilon>0$ small enough $\|t_1-t_2\|_\infty\leq\epsilon^2\|F_1-F_2\|_\infty$

\textit{Proof}: By definition, $D_ut_i(\tau)+h_u(F_i(t_i(\tau)),t_i(\tau))=\tau$. By substracting both equations we get
\begin{align*}
|D_u(t_1-t_2)|_\infty\leq&|h_u(F_1(t_1),t_1)-h_u(F_2(t_2),t_2)|_\infty\leq\max\|d_\cdot h_u\|\cdot\max\{|F_1(t_1)-F_2(t_2)|_\infty,|t_1-t_2|_\infty\}\\
\leq&\max\|d_\cdot h_u\|\cdot\max\{|F_1(t_1)-F_1(t_2)|_\infty+|F_1(t_2)-F_2(t_2)|_\infty,|t_1-t_2|_\infty\}\\
\leq&\frac{3}{2}\sqrt{\epsilon}(p^u)^\frac{\beta}{2}\max\{\|F_1-F_2\|_\infty+\mathrm{Lip}(F_1)|t_1-t_2|_\infty,|t_1-t_2|_\infty\}\\
\leq &\frac{3}{2}\sqrt{\epsilon}(p^u) ^\frac{\beta}{2}\|F_1-F_2\|_\infty+ \frac{3}{2}\sqrt{\epsilon}(p^u)^\frac{\beta}{2}(\epsilon+1)|t_1-t_2|_\infty.
\end{align*}

Rearranging terms and recalling $\|D_u^{-1}\|\leq e^{-\frac{1}{U^2(x)}}$, for all $\epsilon>0$ small enough: $$\|t_1-t_2\|_\infty\leq\frac{\frac{3}{2}\sqrt{\epsilon}(p^u)^\frac{\beta}{2}\|F_1-F_2\|_\infty}{e^{\frac{1}{U^2(x)}}-2\sqrt{\epsilon}(p^u) ^\frac{\beta}{2}}\leq\frac{\frac{3}{2}\sqrt{\epsilon}(p^u)^\frac{\beta}{2}\|F_1-F_2\|_\infty}{\frac{1}{U^2(x)}-2\sqrt{\epsilon}(p^u)^\frac{\beta}{2}}\leq\frac{\frac{3}{2}\sqrt{\epsilon}(p^u) ^\frac{\beta}{2}\|F_1-F_2\|_\infty}{(1-\epsilon)\frac{1}{U^2(x)}}\leq \epsilon ^2\|F_1-F_2\|_\infty.$$ The claim follows.

\medskip
\textit{Part 2}: For all $\epsilon$ small enough $\|H_1-H_2\|_\infty<e^{-\frac{1-\epsilon}{S^2(x)}}\|F_1-F_2\|_\infty$, whence proving the first claim of the proposition.

\textit{Proof}: 
\begin{align*}
|H_1-H_2|_\infty\leq&\|D_s\|\cdot|F_1(t_1)-F_2(t_2)|_\infty+|h_s(F_1(t_1),t_1)-h_s(F_2(t_2),t_2)|_\infty\\
\leq&\|D_s\|\cdot|F_1(t_1)-F_2(t_2)|_\infty+\|d_\cdot h_s\|_\infty\max\{|F_1(t_1)-F_2(t_2)|_\infty,|t_1-t_2|_\infty\}\\
\leq&\|D_s\|\cdot(|F_1(t_1)-F_2(t_1)|_\infty+|F_2(t_1)-F_2(t_2)|_\infty)+\\
+&\|d_\cdot h_s\|_\infty\cdot\max\{|F_1(t_1)-F_2(t_1)|_\infty+|F_2(t_1)-F_2(t_2)|_\infty,|t_1-t_2|_\infty\}\\
\leq&\|D_s\|(\|F_1-F_2\|_\infty+\mathrm{Lip}(F_2)|t_1-t_2|)_\infty+\|d_\cdot h_s\|_\infty|t_1-t_2|_\infty\\
\leq&\Big(\|D_s\|(1+\mathrm{Lip}(F_2)\frac{|t_1-t_2|}{\|F_1-F_2\|})+\|d_\cdot h_s\|_\infty\frac{|t_1-t_2|_\infty}{\|F_1-F_2\|}\Big)\|F_1-F_2\|\\
\leq& \Big(e^{-\frac{1}{S^2(x)}}(1+\epsilon^2\cdot(p^u)^\frac{\beta}{2})+\epsilon^2\cdot\frac{3}{2}\sqrt{\epsilon}(p^u)^\frac{\beta}{2}\Big)\|F_1-F_2\|(\because\text{ part 1})\\
\leq& e^{-\frac{1}{S^2(x)}+2\epsilon^2\cdot(p^u)^\frac{\beta}{2}}\|F_1-F_2\|.
\end{align*}

Whenever $\epsilon
>0$ is sufficiently small the factor on the RHS is smaller than $e^{-\frac{1-\epsilon}{S^2(x)}}$.

\medskip
\textit{Part 3}: For all $\epsilon$ small enough: $$\|d_\cdot t_1-d_\cdot t_2\|_\infty<6\sqrt{\epsilon}(p^u)^\frac{\beta}{2}(\|d_\cdot F_1-d_\cdot F_2\|_\infty+\|F_1-F_2\|_\infty^{\beta/2}).$$

\textit{Proof}: Define as in claim 1 of Theorem \ref{graphtransform}, $A'_i(\vartheta)$:=\begin{tabular}{| l |}
\hline $d_{t_i(\vartheta)}F_i $  \\ \hline
$I_{u(x)\times u(x)}$   \\ \hline
\end{tabular} and $A_i(\vartheta):=d_{G_i(\vartheta)}h_u \cdot A_i'(\vartheta)$, where  $G_i(\vartheta)=(F_i(t_i(\vartheta)),t_i(\vartheta))$.

We have seen at the beginning of the Graph Transform that

$Id=(D_u+A_i(\vartheta))d_\vartheta t_i$. By taking differences we obtain:

$(D_u+A_1(\vartheta))(d_\vartheta t_1-d_\vartheta t_2)=(D_u+A_1(\vartheta))d_\vartheta t_1-(D_u+A_2)d_\vartheta t_2+(A_2-A_1)d_\vartheta t_2=Id-Id+(A_2-A_1)d_\vartheta t_2=$ 

$=(A_2-A_1)d_\vartheta t_2=(d_{G_1}h_u\cdot A'_1-d_{G_2}h_uA'_2)\cdot d_\vartheta t_2$

Define $J_i(t):=$\begin{tabular}{| r |}
\hline $d_{t}F_i $  \\ \hline
$I_{u(x)\times u(x)}$   \\ \hline
\end{tabular}, which is the same as $A'_i(\vartheta)$, with a parameter change ($t=t_i(\vartheta)$). So now we see that:
\begin{align*}
(D_u+A_1)(d_\vartheta t_1-d_\vartheta t_2)=&(d_{G_1}h_u\cdot J_1(t_1)-d_{G_2}h_u\cdot J_2(t_2))d_\vartheta t_2\\
=&(d_{G_1}h_u\cdot J_1(t_1)-d_{G_2}h_u\cdot J_1(t_1)+d_{G_2}h_u\cdot J_1(t_1)-d_{G_2}h_u\cdot J_2(t_2))d_\vartheta t_2
\\=&\Big([d_{G_1}h_u\cdot J_1(t_1)-d_{G_2}h_u\cdot J_1(t_1)]+[d_{G_2}h_u\cdot J_1(t_1)-d_{G_2}h_u\cdot J_2(t_1)]\\
+&[d_{G_2}h_u\cdot J_2(t_1)-d_{G_2}h_u\cdot J_2(t_2)]\Big)d_\vartheta t_2:=I+II+III.
\end{align*}
Since $\|D_u^{-1}\|^{-1}\geq e^\frac{1}{U^2(x)},\|d_\cdot F_1\|_\infty<1$ and $\|d_\cdot h_u\|_\infty<\frac{3}{2}\sqrt{\epsilon}(p^u)^\frac{\beta}{2}$, we get $\|(D_u+A_1)^{-1}\|^{-1}\geq e^\frac{1}{U^2(x)}-\frac{3}{2}\sqrt{\epsilon}(p^u)^\frac{\beta}{2} $, and hence (for $\epsilon>0$ small enough),
$$\|d_\cdot t_1-d_\cdot t_2\|_\infty\leq\frac{1}{e^\frac{1}{U^2(x)}-\frac{3}{2}\sqrt{\epsilon}(p^u)^\frac{\beta}{2}}\|I+II+III\|_\infty\leq e^{-\frac{1-\epsilon}{U^2(x)}}\left(\|I\|_\infty+\|II\|_\infty+\|III\|_\infty\right).$$

We begin analyzing: recall $\text{H\"ol}_{\beta/2}(d_\cdot h_{s/u})\leq\sqrt{\epsilon}$:
\begin{itemize}
\item $\|d_{(F_1(t_1),t_1)}h_u-d_{(F_2(t_1),t_1)}h_u\|\leq\sqrt{\epsilon}\|F_1-F_2\|_\infty^{\beta/2}$
\item $\|d_{(F_2(t_1),t_1)}h_u-d_{(F_2(t_2),t_1)}h_u\|\leq\sqrt{\epsilon}\|t_1-t_2\|_\infty^{\beta/2}(\because \mathrm{Lip}(F_2)<1)$ 
\item $\|d_{(F_2(t_2),t_1)} h_u-d_{(F_2(t_2),t_2)} h_u\|_\infty<\sqrt{\epsilon}|t_1-t_2|_\infty^{\beta/2}$
\end{itemize}
By part 1 $\|t_1-t_2\|_\infty\leq\epsilon^2\|F_1-F_2\|_\infty$, it follows that:
$$\|d_{G_1}h_u-d_{G_2}h_u\|<3\sqrt{\epsilon}\|F_1-F_2\|_\infty^{\beta/2},$$ and hence $$\|I\|_\infty\leq3\sqrt{\epsilon}\|F_1-F_2\|_\infty^{\beta/2}.$$
Using the facts that $\|d_\cdot t_{1/2}\|_\infty<1,\|d_\cdot h_u\|_\infty<\frac{3}{2}\sqrt{\epsilon}(p^u)^\frac{\beta}{2}
$ and $\text{H\"ol}_{\beta/2}(d_\cdot F_2)<1$ (from the definition of admissible manifolds and the proof of the Graph Transform) we get that:

\hspace{16pt}$\|II\|_\infty\leq\frac{3}{2}\sqrt{\epsilon}(p^u)^\frac{\beta}{2}\|d_\cdot F_1-d_\cdot F_2\|_\infty$,

\hspace{16pt}$\|III\|_\infty\leq\frac{3}{2}\sqrt{\epsilon}(p^u)^\frac{\beta}{2}\|t_1-t_2\|_\infty^\frac{\beta}{2}\leq \sqrt{\epsilon}(p^u)^\frac{\beta}{2}\|F_1-F_2\|_\infty^\frac{\beta}{2}
$.

So for all $\epsilon$ sufficiently small $\|d_\cdot t_1-d_\cdot t_2\|_\infty<6\sqrt{\epsilon}(p^u)^\frac{\beta}{2}(\|d_\cdot F_1-d_\cdot F_2\|_\infty+\|F_1-F_2\|_\infty^{\beta/2})$.

\medskip
\textit{Part 4}: $\|d_\cdot H_1-d_\cdot H_2\|_\infty<e^{-(1-\epsilon)(\frac{1}{U^2(x)}+\frac{1}{S^2(x)})}(\|d_\cdot F_1-d_\cdot F_2\|_\infty+ (\sqrt{\epsilon}+\epsilon^\beta)\|F_1-F_2\|_\infty^{\beta/2})$.

\textit{Proof}: By the definition of $H_i$: $d_\vartheta H_i=D_s\cdot d_{t_i} F_i\cdot d_\vartheta t_i+d_{G_i}h_u\cdot J_i(t_i)\cdot d_\vartheta t_i$

Taking differences we see that: $$\|d_\vartheta H_1-d_\vartheta H_2\|\leq\|d_\vartheta t_1-d_\vartheta t_2\|\cdot\|D_s d_{t_1}F_1+d_{G_1}h_u\|+\|d_\vartheta t_2\|\cdot\|D_s\|\cdot(\|d_{t_1}F_1-d_{t_1}F_2\|+\|d_{t_1}F_2-d_{t_2}F_2\|)$$
$$+\|d_\vartheta t_2\|\cdot\|d_{G_1}h_u-d_{G_2}h_u\|\cdot\|d_{t_1}F_1\|+\|d_\vartheta t_2\|\cdot\|d_{G_2}h_u\|\cdot\|d_{t_1}F_1-d_{t_2}F_2\|:=I'+II'+III'+IV'$$
Using the same arguments as in part 3 we can show that:
\begin{align*}
I'\leq&\|d_\cdot t_1-d_\cdot t_2\|(e^{-\frac{1}{S^2(x)}}(p^u)^\frac{\beta}{2}+\frac{3}{2}\sqrt{\epsilon}(p^u)^\frac{\beta}{2})\leq 12\epsilon(p^u)^{\beta}(\|d_\cdot F_1-d_\cdot F_2\|_\infty+\|F_1-F_2\|_\infty^{\beta/2}),\\
II'\leq& e^{-\frac{1}{S^2(x)}-\frac{1}{U^2(x)}+(p^u)^\frac{\beta}{2}}(\|d_\cdot F_1-d_\cdot F_2\|_\infty+\|t_1-t_2\|_\infty^{\beta/2})\\
\leq& e^{-\frac{1}{S^2(x)}-\frac{1}{U^2(x)}+(p^u)^\frac{\beta}{2}}(\|d_\cdot F_1-d_\cdot F_2\|_\infty+\epsilon^\beta\|F_1-F_2\|_\infty^{\beta/2})\text{ by part 1},\\
III'\leq& e^{-\frac{1}{U^2(x)}+(p^u)^\frac{\beta}{2}}(p^u)^\frac{\beta}{2}\cdot 3\sqrt{\epsilon}\|F_1-F_2\|_\infty^{\beta/2}\text{ (by the estimate of I in part 3)},\\
IV'\leq&\frac{3}{2}\sqrt{\epsilon}(p^u)^\frac{\beta}{2}e^{-\frac{1}{U^2(x)}+(p^u)^\frac{\beta}{2}}(\|d_\cdot F_1-d_\cdot F_2\|_\infty+\|F_1-F_2\|_\infty^{\beta/2}).
\end{align*}
It follows that $$\|d_\cdot H_1-d_\cdot H_2\|_\infty<(e^{-\frac{1}{U^2(x)}-\frac{1}{S^2(x)}+(p^u)^\frac{\beta}{2}}+O((p^u)^\frac{\beta}{2}))(\|d_\cdot F_1-d_\cdot F_2\|_\infty+(\sqrt{\epsilon}+\epsilon^\beta)\|F_1-F_2\|_\infty^{\beta/2}).$$
If $\epsilon$ is small enough then $$\|d_\cdot H_1-d_\cdot H_2\|_\infty<e^{-(1-\epsilon)(\frac{1}{U^2(x)}+\frac{1}{S^2(x)})}(\|d_\cdot F_1-d_\cdot F_2\|_\infty+ (\sqrt{\epsilon}+\epsilon^\beta)\|F_1-F_2\|_\infty^{\beta/2}).$$
\end{proof}

\subsubsection{A Markov extension}
Recall the definitions from the begining of \textsection \ref{defepsilonchains}:

$$\mathcal{V}=\{\psi_x^{p^s,p^u}:\psi_x^{p^s\wedge p^u}\in\mathcal{A},p^s,p^u\in \mathcal{I},p^s,p^u\leq Q_\epsilon(x)\}$$
$$\mathcal{E}=\{(\psi_x^{p^s,p^u},\psi_y^{q^s,q^u})\in\mathcal{V}\times\mathcal{V}:\psi_x^{p^s,p^u}\rightarrow\psi_y^{q^s,q^u}\}$$
$$\mathcal{G}\text{ is the directed graph with vertices }\mathcal{V}\text{ and edges }\mathcal{E}$$
\begin{definition}
    $\Sigma(\mathcal{G}):=\{v=(v_i)_{i\in\mathbb{Z}}:v_i\in\mathcal{V},v_i\rightarrow v_{i+1}\forall i\}$, equipped with the left-shift $\sigma$, and the metric $d(v,w)=e^{-\min\{|i|:v_i\neq w_i\}}$.
\end{definition}
\begin{definition} Suppose $(v_i)_{i\in\mathbb{Z}},v_i=\psi_{x_i}^{p_i^s,p_i^u}$ is a chain. Let $V_{-n}^u$ be a $u$-admissible manifold in $v_{-n}$. 
$\mathcal{F}_u^n(v_{-n})$ is the $u$-admissible manifold in $v_0$ which is a result of the application of the graph transform $n$ times along $v_{-n},...,v_{-1}$ (each application is the transform described in section 3 of Theorem \ref{graphtransform}). Similarly any $s$-admissible manifold in $v_n$ is mapped by $n$ applications of the graph transform to an $s$-admissible manifold in $v_0$: $\mathcal{F}_s^n(v_n)$. These two manifolds depend on $v_{-n},...,v_n$.
\end{definition}

\begin{definition}[Leaf cores]\label{tildeLeaves}
	Let $\psi_{x}^{p^s,p^u}$ be a double chart, and let $V^u$ and $V^s$ be a $u$-admissible manifold and an $s$-admissible manifold in $\psi_x^{p^s,p^u}$, respectively. The associated {\em leaf cores} are $\widecheck{V}(\underline{u}):=\psi_{x}\circ F^u [R_{\eta^2}(0)], \widecheck{V}^s(\underline{u}):= \psi_{x}\circ F^s [R_{\eta^2}(0)] $ where $\eta=p^s\wedge p^u$, and $F^u$ and $F^s$ are the representing functions of the $u$-admissible and $s$-admissble manifolds, respectively.
\end{definition}
\noindent\textbf{Remark}: $\widecheck{V}^u(\underline{u})\subseteq V^u(\underline{u}), \widecheck{V}^s(\underline{u})\subseteq V^s(\underline{u})$ and by Proposition \ref{firstbefore} the unique intersection point of $V^u(\underline{u})$ and $V^s(\underline{u})$ is also the unique intersection point of $\widecheck{V}^u(\underline{u})$ and $\widecheck{V}^s(\underline{u})$.

\begin{cor}\label{WideCheckprop133}
If $\epsilon$ is small enough then the following holds: For any $s/u$-admissible manifold $V_1^{s/u},V_2^{s/u}$,
$$\mathrm{dist}(\widecheck{\mathcal{F}}_{s/u}(V_1^{s/u}),\widecheck{\mathcal{F}}_{s/u}(V_2^{s/u}))\leq e^{-\frac{1-\epsilon}{U^2(x)/S^2(x)}}\mathrm{dist}(\widecheck{V}_1^{s/u},\widecheck{V}_2^{s/u}),$$
$$\mathrm{dist}_{C^1}(\widecheck{\mathcal{F}}_{s/u}(V_1^{s/u}),\widecheck{\mathcal{F}}_{s/u}(V_2^{s/u}))\leq e^{-(\frac{1-\epsilon}{U^2(x)}+\frac{1-\epsilon}{S^2(x)})}(\|d_\cdot F_1-d_\cdot F_2\|_{\infty,R_{(p^s\wedge p^u)^2}(0)}+ (\sqrt{\epsilon}+\epsilon^\beta)\|F_1-F_2\|_{\infty, R_{(p^s\wedge p^u)^2}(0) }^{\beta/2}).$$
\end{cor}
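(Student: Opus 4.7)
The statement is the restriction of Proposition~\ref{prop133} to the leaf-core domains $R_{(p^s\wedge p^u)^2}(0)$ on the source side and $R_{(q^s\wedge q^u)^2}(0)$ on the target side. The plan is to rerun the proof of Proposition~\ref{prop133}, tracking each step on the smaller domains.

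Consider the $u$-case and fix $u$-admissible manifolds $V_i^u$ in $\psi_x^{p^s,p^u}$ represented by $F_i$, with $\mathcal{F}_u(V_i^u)$ represented by $H_i$. First I would establish the key localization statement: for every $\tau \in R_{(q^s\wedge q^u)^2}(0)$ the implicit preimages $t_i(\tau)$ defined in Claim~1 of Theorem~\ref{graphtransform} already lie in $R_{(p^s\wedge p^u)^2}(0)$. Combining $|t_i(0)|_\infty < 2\sqrt{\epsilon}\eta^{2+\beta/2}$, $\mathrm{Lip}(t_i) \leq e^{-1/U^2(x)+(p^u)^{\beta/2}}$ and the Lemma~\ref{lemma131} relation $(q^s\wedge q^u)^2 \leq e^{2\Gamma\eta^{1/\gamma}}(p^s\wedge p^u)^2$ yields
$$|t_i(\tau)|_\infty \leq 2\sqrt{\epsilon}\eta^{2+\beta/2} + e^{-1/U^2(x) + (p^u)^{\beta/2} + 2\Gamma\eta^{1/\gamma}}(p^s\wedge p^u)^2.$$
Because $\eta \leq \widetilde{Q}_\epsilon(x) = C_{\beta,\epsilon}\|C_0^{-1}(x)\|^{-2\gamma}$ and $U^2(x) \leq \|C_0^{-1}(x)\|^2$, one gets $\eta^{1/\gamma} U^2(x) \leq C_{\beta,\epsilon}^{1/\gamma}$; taking $\epsilon$ small enough so that $\Gamma C_{\beta,\epsilon}^{1/\gamma} \leq 1/4$ makes the exponent strictly negative (say $\leq -1/(2U^2(x))$), and the resulting factor swallows the additive $2\sqrt{\epsilon}\eta^{2+\beta/2}$.

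Next I would replay Parts~1--4 of the proof of Proposition~\ref{prop133} verbatim. All those estimates depend on $F_1 - F_2$ and $d_\cdot F_1 - d_\cdot F_2$ only through their values at the points $t_i(\tau)$ for $\tau$ in the domain of interest; by the localization statement above, these evaluations take place in $R_{(p^s\wedge p^u)^2}(0)$. Consequently, one may replace every occurrence of $\|F_1 - F_2\|_\infty$ and $\|d_\cdot F_1 - d_\cdot F_2\|_\infty$ in that proof by its restriction to $R_{(p^s\wedge p^u)^2}(0)$ without altering a single pointwise inequality, giving exactly the bounds claimed in the corollary. The $s$-admissible case then follows from the time-reversal symmetry invoked at the end of the proof of Theorem~\ref{graphtransform}.

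The only genuine obstacle is the first step: the contraction $e^{-1/U^2(x)}$ in $\|D_u^{-1}\|$ is very weak when $U^2(x)$ is large, and a priori it could be overwhelmed by the growth factor $e^{2\Gamma\eta^{1/\gamma}}$ coming from $q^s\wedge q^u = I^{\pm1}(p^s\wedge p^u)$. The resolution is that the Pesin scale $Q_\epsilon(x)$ is tied to $\|C_0^{-1}(x)\|$ in exactly the way required to keep $\eta^{1/\gamma} U^2(x)$ bounded by a small multiple of $C_{\beta,\epsilon}^{1/\gamma}$; making $\epsilon$ small completes the balance and produces a genuine strict contraction on the preimages.
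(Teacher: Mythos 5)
Your proof is correct and takes the same approach as the paper: the paper's proof of this corollary is a one-line remark observing that $(p^s\wedge p^u)^2 = (I^{\pm1}(q^s\wedge q^u))^2$ and hence $f[\widecheck{V}^u]\supseteq \widecheck{\mathcal{F}}(V^u)$, which is exactly your localization step that the implicit preimages $t_i(\tau)$ land in $R_{(p^s\wedge p^u)^2}(0)$; after that both arguments simply rerun the estimates of Proposition~\ref{prop133} verbatim on the restricted domains. The only difference is that you verify the localization explicitly with a balance-of-exponents calculation (using $\eta^{1/\gamma}U^2(x)\le C_{\beta,\epsilon}^{1/\gamma}$ and $(p^u)^{\beta/2}U^2(x)\le C_{\beta,\epsilon}^{\beta/2}$) that the paper leaves implicit, which is a sound and useful check.
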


The proof is similar to the proof of Proposition \ref{prop133}, since if $\psi_y^{p^s,p^u}\rightarrow \psi_z^{q^s,q^u}$, then $(p^s\wedge p^u)^2=(I^{\pm1}(q^s\wedge q^u))^2$ (and so $f^{1/-1}[\widecheck{V}^{u/s}]\supseteq \widecheck{\mathcal{F}}(V^{u/s})$).

\begin{definition} Let $\psi_x^{p^s,p^u}$ be some double chart. Assume $V_n$ is a sequence of $s/u$-manifolds in $\psi_x$. We say that $V_n$ converges uniformly to $V$, an $s/u$-manifold in $\psi_x$, if the representing functions of $V_n$ converge uniformly to the representing function of $V$. We say that $\widecheck{V}_n$ converges uniformly to $\widecheck{V}$ if the representing functions of $V_n$ converge uniformly to the representing function of $V$ when restricted to $R_{(p^s\wedge p^u)^2}(0)$.
\end{definition}
Notice, $V_n\xrightarrow[uniformly]{} V$ implies $\widecheck{V}_n\xrightarrow [uniformly]{} \widecheck{V}$.

\medskip
The following proposition is a very important step in our proof, especially the fifth item, for the following reason. A symbolic coding is often a tool which is used in order to study measures of interests, and in particular equilibrium states. One of the most sought-after equilibrium states are SRB measures- equilibrium states of the geometric potential. In order to take advantage of the symbolic coding for this purpose, one needs to be able to lift the geometric potential to the symbolic space with the (morally minimal) property of {\em summable variations} (i.e the total variation of chains with the same $n$ first symbols is a summable quantity in $n$). When coding a set of points with a uniform bound on the Lyapunov exponents, one usually gets that the coding map is H\"older continuous (in $C^1$ distance for stable/unstable leaves as well), and so the geometric potential lifts to a H\"older continuous potential as well (some arguments are needed, see \cite{Sarig,SBO}). The H\"older continuity of the geometric potential implies immediately the (exponentially fast) summable variations. In our setup, there is no uniform Lyapunov exponent for the orbits we code, and in fact these orbits may exhibit $0$ Lyapunov exponents. This makes the continuity property of the factor map much weaker, and not H\"older continuous. Thus proving the summable variations property becomes a much more subtle question. This goal motivated the definition of leaf cores, which is used in the proof below, where we are able to show that indeed the factor map has summable variation, w.r.t $C^1$ distance for leaf cores.

\begin{prop}\label{firstofchapter}
Suppose $(v_i=\psi_{x_i}^{p_i^s,p_i^u})_{i\in\mathbb{Z}}$ is a chain of double charts, and choose arbitrary $u$-admissible manifolds $V^u_{-n}$ in $v_{-n}$, and arbitrary $s$-admissible manifolds $V_n^s$ in $v_n$. Then:
\begin{enumerate}
\item The limits $V^u((v_i)_{i\leq0}):=\lim\limits_{n\rightarrow\infty}\mathcal{F}_u^n(V^u_{-n}),V^s((v_i)_{i\geq0}):=\lim\limits_{n\rightarrow\infty}\mathcal{F}_s^n(V^s_n)$ exist, and are independent of the choice of $V_{-n}^u$ and $V_n^s$.
\item $V^{u}((v_i)_{i\leq0})/V^{s}((v_i)_{i\geq0})$ is an $s/u$-admissible manifold in $v_0$.
\item $f[V^s(v_i)_{i\geq0}]\subset V^s(v_{i+1})_{i\geq0}$ and $f^{-1}[V^u(v_i)_{i\leq0}]\subset V^u(v_{i-1})_{i\leq0}$
\item \begin{align*}
V&^s((v_i)_{i\geq0})=\{p\in\psi_{x_0}[R_{p_0^s}(0)]:\forall k\geq0, f^k(p)\in\psi_{x_k}[R_{10Q_\epsilon(x_k)}(0)]\}\\
V&^u((v_i)_{i\leq0})=\{p\in\psi_{x_0}[R_{p_0^u}(0)]:\forall k\geq0, f^{-k}(p)\in\psi_{x_{-k}}[R_{10Q_\epsilon(x_{-k})}(0)]\}
\end{align*}
\item The map $(v_i)_{i\in\mathbb{Z}}\mapsto \widecheck{V} ^u((v_i)_{i\leq0})$ has summable variations: $\exists C_1,\tau>0$ depending on the calibration parameters (but not on $\epsilon$) s.t $$\mathrm{dist}_{C^1}(\widecheck{V}^u((v_i)_{i\leq0}),\widecheck{V}^u((w_i)_{i\leq0}))\leq C_1[I^{-N}(p^u_0\wedge p^s_0)]^{\frac{1}{\gamma}+\tau}.$$

A similar statement holds for $\widecheck{V}^s(\cdot)$.
\end{enumerate}
\end{prop}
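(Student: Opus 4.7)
The plan is to dispatch parts (1)--(4) by a variable-rate Graph Transform contraction argument, and to reserve most of the technical work for part (5). The scaling identity that drives everything is
\[
\frac{1}{U^2(x_i)} \;\ge\; c\, \eta_i^{1/\gamma}, \qquad \eta_i := p_i^u \wedge p_i^s,
\]
with $c = C_{\beta,\epsilon}^{-1/\gamma}$. It follows from $U^2(x) \le \|C_0^{-1}(x)\|^2$ (read off from the definitions in Theorem \ref{pesinreduction}) combined with $\eta_i \le Q_\epsilon(x_i) \le C_{\beta,\epsilon}/\|C_0^{-1}(x_i)\|^{2\gamma}$; the symmetric inequality holds for $S^2$.

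For part (1), iterating Proposition \ref{prop133} between two arbitrary starting choices $V_{-n}^u, \widetilde{V}_{-n}^u$ yields
\[
\mathrm{dist}\bigl(\mathcal{F}_u^n(V_{-n}^u), \mathcal{F}_u^n(\widetilde{V}_{-n}^u)\bigr) \;\le\; \prod_{i=-n}^{-1} e^{-(1-\epsilon)/U^2(x_i)} \cdot \mathrm{dist}\bigl(V_{-n}^u, \widetilde{V}_{-n}^u\bigr).
\]
The product tends to $0$ because the chain relation $\eta_{i+1} = I^{\pm1}(\eta_i)$ from Lemma \ref{lemma131} forces $\eta_{-i} \ge I^{-i}(\eta_0)$, so $\sum 1/U^2(x_i) \ge c \sum I^{-i}(\eta_0)^{1/\gamma} = \infty$ by Lemma \ref{forI}(4). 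This gives Cauchyness and independence of starting choice. For (2), the parameter bounds $\sigma \leq 1/2, \iota \leq \eta^{\beta/2}/2, \varphi \leq 10^{-3}\eta^2$ are closed under $C^1$ limits (Theorem \ref{graphtransform}(1)), and H\"older regularity of $d_\cdot F$ passes to the limit by Arzel\`a--Ascoli. Part (3) is direct from commuting $f$ with the defining limit and shifting indices. Part (4): the inclusion of $V^{s/u}$ in the trapping set comes from iterating (3); for the reverse, a point whose orbit stays in the enlarged charts lies in every approximating admissible manifold, hence in the limit.

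Part (5) is the main obstacle. If $(v_i)_{i\le0}$ and $(w_i)_{i\le0}$ agree on indices $-N,\dots,0$, I would use the independence in (1) to view $V^u((v_i))$ and $V^u((w_i))$ as the images, under the \emph{same} $N$ shared graph transforms, of two admissible manifolds $B_v, B_w$ in the common chart $v_{-N} = w_{-N}$ obtained as limits of the tail dynamics on the disagreeing half-chains. Iterating the $C^1$ estimate in Corollary \ref{WideCheckprop133} yields
\[
\mathrm{dist}_{C^1}\bigl(\widecheck{V}^u((v_i)), \widecheck{V}^u((w_i))\bigr) \;\le\; \Bigl(\prod_{i=-N}^{-1} e^{-(1-\epsilon)/U^2(x_i)}\Bigr) \cdot \mathrm{dist}_{C^1,\mathrm{core}}(B_v, B_w) + R_N,
\]
where $R_N$ packages the inhomogeneous cross terms $(\sqrt{\epsilon}+\epsilon^\beta)\|F_1-F_2\|_\infty^{\beta/2}$ arising at each step. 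The initial core distance is bounded uniformly by the admissibility constraints on $\iota, \sigma, \varphi$. Using the asymptotics $J^{-i}(z) \asymp 1/i$ from the proof of Lemma \ref{FaveForNow} together with $\eta_{-i} \ge I^{-i}(\eta_0)$, the contraction product is controlled by
\[
\prod_{i=-N}^{-1} e^{-(1-\epsilon)/U^2(x_i)} \;\le\; \exp\Bigl(-c(1-\epsilon)\sum_{i=1}^{N} I^{-i}(\eta_0)^{1/\gamma}\Bigr) \;\le\; N^{-c''},
\]
where $c''$ is made as large as desired by calibrating $\epsilon$ small (so $c = C_{\beta,\epsilon}^{-1/\gamma}$ is large). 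Matching this with the target $[I^{-N}(\eta_0)]^{1/\gamma+\tau} \sim N^{-1-\gamma\tau}$ then reduces to $c'' \ge 1+\gamma\tau$, which is arranged for small $\tau$ once $\epsilon$ is calibrated. The hard part will be the rigorous telescoping of the cross terms $R_N$: the weaker $C^1$ contraction must be balanced against the stronger $C^0$ contraction from the first inequality of Proposition \ref{prop133}, so that the $(\sqrt{\epsilon}+\epsilon^\beta)\|\cdot\|_\infty^{\beta/2}$ contributions at each stage compound into a bound dominated by the same polynomial-in-$N$ rate, rather than overwhelming it.
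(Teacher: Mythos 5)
Parts (1)--(4) follow the paper's approach and are correct in outline. (Small notational slip: for $\mathcal{F}_u$, Proposition~\ref{prop133} gives contraction by $e^{-(1-\epsilon)/S^2(x)}$, not $e^{-(1-\epsilon)/U^2(x)}$ — the symmetry holds, but your scaling inequality should be stated for $S^2$ in this case.) Also, for the $\supseteq$ direction of part (4) your sketch is too thin: the paper needs a genuine two-sided competition estimate between the $s$-coordinate growth $a_k \geq e^{(1-2\epsilon)/S^2(x_{-k})}a_k$ and the $u$-coordinate, together with the divergence of $\sum (I^{-j}(\eta_0))^{1/\gamma}$, to force $a_0 = 0$ — ``lies in every approximating admissible manifold'' is not a proof.

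For part (5), you have correctly identified the strategy (two half-chains passed through $N$ shared graph transforms from the common chart $v_{-N}$, then iterating Corollary~\ref{WideCheckprop133}), but you explicitly punt on the one step that carries the entire weight of the proposition: the cross-term telescoping. This is not a packaging exercise; three ingredients you do not supply are essential. First, the reason for introducing leaf cores at all is that they give a \emph{squared} $C^0$ bound $c_k \lesssim \bigl(e^{-\Gamma\sum_j (\dots)^{1/\gamma}}(p^u_{-k}\wedge p^s_{-k})\bigr)^2$ (equation~\eqref{link}); raising to the power $\beta/2$ then produces $[\,\cdot\,]^\beta$, and it is precisely this exponent that makes the subsequent algebra close. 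Using the uncored $C^0$ distance would yield only $[\,\cdot\,]^{\beta/2}$ and the argument degenerates. Your proposal never explains this and treats the core restriction as incidental. Second, the telescoped sum $\sum_{k=1}^N c_k^{\beta/2}e^{-\sum_{j<k} r_j}$ is controlled by the fact that $r_j = \frac{1-\epsilon}{S^2(x_{-j})}+\frac{1-\epsilon}{U^2(x_{-j})}$ dominates $\Gamma\sum(p^u_{-j}\wedge p^s_{-j})^{1/\gamma}$ after the calibration $\frac{1}{2\Gamma}\bigl((1-\epsilon)C_{\beta,\epsilon}^{-1/\gamma}-\Gamma\bigr)\geq 1$ — each summand is then $\leq [I^{-N}(\eta_0)]^\beta$, giving $d_0 \lesssim N\cdot[I^{-N}(\eta_0)]^\beta$; you assert this compounding ``will be'' balanced without doing the bookkeeping. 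Third, converting $N[I^{-N}(\eta_0)]^\beta$ into the stated $C_1[I^{-N}(\eta_0)]^{1/\gamma+\tau}$ requires the specific choice $\tau := \tfrac{1}{2}(\beta - \tfrac{2}{\gamma}) > 0$, so that $\beta - 2(\tfrac{1}{\gamma}+\tau) = 0$, letting one factor $[I^{-N}(\eta_0)]^{1/\gamma+\tau}$ be split off and the remaining $N\cdot[I^{-N}(1)]^{1/\gamma+\tau}$ be bounded via Lemma~\ref{FaveForNow}. None of this appears in your proposal. As written, you have reconstructed the scaffolding of the argument and named what remains to be proved, but the proposition is not established.
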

\begin{proof} Parts (1)--(4) are inspired by Pesin's Stable Manifold Theorem \cite{Pesin}
. 


We treat the case of $u$ manifolds, the stable case is similar.

\textit{Part 1}: By the Graph Transform theorem $\mathcal{F}_u^n(V_{-n}^u)$ is a $u$-admissible manifold in $v_0$. By 
Proposition \ref{prop133} for any other choice of $u$-admissible manifolds $W_{-n}^u$ in $v_{-n}$:
\begin{align}\label{emoji}\mathrm{dist}(\mathcal{F}_u^n(V_{-n}^u),\mathcal{F}_u^n(W_{-n}^u))\leq& e^{-\sum_{k=0}^{n-1}\frac{1-\epsilon}{S^2(x_{-k})}}\mathrm{dist}(V_{-n}^u,W_{-n}^u)\leq 2e^{-(1-\epsilon)C_{\beta,\epsilon}^{-\frac{1}{\gamma}}\sum_{k=0}^{n-1}(p^u_{-k}
)^\frac{1}{\gamma}}p_{-n}^u\nonumber\\
=&	2e^{-(1-\epsilon)C_ {\beta,\epsilon} ^{-\frac{1}{\gamma}}\sum_{k=0}^{n-1}(p^u_{-k}
)^\frac{1}{\gamma}}(p_{-n}^u\wedge p_{-n}^s)^2\frac{p_{-n}^u}{(p_{-n}^u\wedge p_{-n}^s)^2}\\
\leq& 2e^{-\left((1-\epsilon)C_ {\beta,\epsilon} ^{-\frac{1}{\gamma}}-\Gamma\right)\sum_{k=0}^{n-1}(p^u_{-k}\wedge p^s_{-k})^\frac{1}{\gamma}} (p_{0}^u\wedge p_{0}^s)^2 \frac{p_{-n}^u}{(p_{-n}^u\wedge p_{-n}^s)^2}\nonumber\\
\leq& 2e^{-2\cdot\frac{1}{2\Gamma}\left((1-\epsilon)C_ {\beta,\epsilon} ^{-\frac{1}{\gamma}}-\Gamma\right)\cdot \Gamma\sum_{k=0}^{n-1}(I^{-k}(p_0^u\wedge p_0^s))^\frac{1}{\gamma}}(p_{0}^u\wedge p_0^s)^2 \frac{p_{-n}^u}{(p_{-n}^u\wedge p_{-n}^s)^2}.\nonumber
\end{align}

In the first line we needed the fact that for any admissible manifold with representing function $F$, $\|F\|_{
\infty}\leq 
p^u
$ by definition. Notice that by choosing $\epsilon>0$ arbitrarily small (and thus making $\frac{1}{C_{\beta,\epsilon}}$ arbitrarily large), we can make the exponent on the RHS very large. Let $\epsilon>0$ small enough so ${\frac{1}{2\Gamma}\left((1-\epsilon)C_ {\beta,\epsilon} ^{-\frac{1}{\gamma}}-\Gamma\right) 
} \geq 1$, then by \eqref{emoji}, with the following estimates:

\begin{align}\label{foreq13self}
\mathrm{dist}(\mathcal{F}_u^n(V_{-n}^u),\mathcal{F}_u^n(W_{-n}^u))\leq& 2
e^{-\Gamma\sum_{k=0}^{n-1}(I^{-k}(p_0^u
))^\frac{1}{\gamma}}p_{-n}^u (\because p^u_{-k+1}\leq I(p^u_{-k}))\nonumber\\ 
\leq &2
e^{-\Gamma\sum_{k=0}^{n-1}(I^{-k}(p_0^u
))^\frac{1}{\gamma}}p^u_0\frac{p_{-n}^u}{p^u_0}\leq \frac{1}{p^u_0} I^{-n}(p_0^u).
\end{align}
When restricting to the leaf cores, $\mathrm{dist}(\widecheck{V}^u_{-n},\widecheck{V}^u_{-n})\leq 2\cdot 10^{-2}(p^u_{-n}\wedge p^s_{-n})^2$. In this case, with Corollary \ref{WideCheckprop133} replacing Proposition \ref{prop133}, the estimates which follow \eqref{emoji} give in addition

\begin{align}\label{link}
\mathrm{dist}(\widecheck{\mathcal{F}}_u^n(V_{-n}^u),\widecheck{\mathcal{F}}_u^n(W_{-n}^u))\leq& \left[
e^{-\Gamma\sum_{k=0}^{n-1}(I^{-k}(p_0^u\wedge p_0^s))^\frac{1}{\gamma}}(p_{0}^u\wedge p^s_{0})\right]^2
 \leq \left(
I^{-n}(
p_0^u\wedge p_0^s
)\right)^2
.
\end{align}

Equation \eqref{foreq13self} implies that if the limit exists then it is independent of $V_{-n}^u$. By Theorem \ref{graphtransform}, for every $n\geq m$, $\mathcal{F}_u^{n-m}(V^u_{-n})$ is a $u$-admissible manifold in $v_{-m}$, and so
\begin{align*}
 	\mathrm{dist}(\mathcal{F}_u^n(V^u_{-n}),\mathcal{F}_u^m(V^u_{-m}))= \mathrm{dist}(\mathcal{F}_u^m(\mathcal{F}_u^{n-m}(V^u_{-n})),\mathcal{F}_u^m(V^u_{-m}))\leq  \frac{1}{p^u_0}I^{-m}(p_0^u).
\end{align*}
Hence, $\mathcal{F}_u^n(V_{-n}^u)$ is a Cauchy sequence in a complete space, and therefore converges.

\medskip
\textit{Part 2}: Admissibility of the limit: Write $v_0=\psi_x^{p^s,p^u}$ and let $F_n$ denote the functions which represent $\mathcal{F}_u^n(V^u_{-n})$ in $v_0$. Since $\mathcal{F}_u^n(V^u_{-n})$ are $u$-admissible in $v_0$, for every $n$:

\begin{itemize}
\item $\|d_\cdot F_n\|_{\beta/2}\leq\frac{1}{2}$,
\item $\|d_0F_n\|\leq\frac{1}{2}(p^s\wedge p^u)^{\beta/2}$,
\item $|F_n(0)|_\infty\leq10^{-3}(p^s\wedge p^u)^2$,
\item $\sup|F_n|\leq p^u$.
\end{itemize}
Since $\mathcal{F}_u^n(V_{-n}^u)\xrightarrow[n\rightarrow\infty]{} V^u((v_i)_{i\leq0})$, $F_n\xrightarrow[n\rightarrow\infty]{}F$ uniformly, where $F$ represents $V^u((v_i)_{i\leq0})$. By the Arzela-Ascoli theorem, $\exists n_k\uparrow\infty$ s.t $d_\cdot F_{n_k}\xrightarrow[k\rightarrow\infty]{}L$ uniformly where $\|L\|_{\beta/2}\leq\frac{1}{2}$. For each term in the sequences, the following identity holds:$$\forall t\in R_{p^u}(0):\text{ }\vec{F}_{n_k}(t)=\vec{F}_{n_k}(0)+\int\limits_{0}^{1}d_{\lambda t}\vec{F}_{n_k}\cdot t\text{ }d\lambda.$$

Since $d_\cdot F_{n_k}$ converge uniformly, we get:
$$\forall t\in R_{p^u}(0):\text{ }\vec{F}(t)=\vec{F}(0)+\int\limits_{0}^{1}L(\lambda t)\cdot t\text{ }d\lambda.$$
The same calculations give us that $\forall t_0\in R_{p^u}(0):$
$$\forall t\in R_{p^u}(0):\text{ }\vec{F}(t)=\vec{F}(t_0)+\int\limits_{0}^{1}L(\lambda (t-t_0)+t_0)\cdot (t-t_0)\text{ }d\lambda=$$
$$=\vec{F}(t_0)+\int\limits_{0}^{1}[L(\lambda (t-t_0)+t_0)-L(t_0)]\cdot (t-t_0)\text{ }d\lambda+L(t_0)\cdot(t-t_0).$$
Since $L$ is $\frac{\beta}{2}$-H\"older on a compact set, and in particular uniformly continuous, the second and third summands are $o(|t-t_0|)$, and hence $F$ is differentiable and $ d_tF=L(t)$. We also see that $\{d_\cdot F_n\}$ can only have one limit point. Consequently $d_\cdot F_n\xrightarrow[n\rightarrow\infty]{uniformly}d_\cdot F$ . So $\|d_\cdot F\|_{\beta/2}\leq\frac{1}{2}$, $\|d_0F\|\leq\frac{1}{2}(p^s\wedge p^u)^{\beta/2}$, $|F(0)|_\infty\leq10^{-3}(p^s\wedge p^u)^2$, and $\sup|F|\leq p^u$, whence the $u$-admissibility of $V^u((v_i)_{i\leq0})$.

\medskip
\textit{Part 3:} Let $V^u:=V^u((v_i)_{i\leq0})=\lim\mathcal{F}_u^n(V^u_{-n})$, and $W^u:=V^u((v_{i-1})_{i\leq0})=\lim\mathcal{F}_u^n(V^u_{-n-1})$. $\forall n\geq0$,
\begin{align*}\mathrm{dist}(V^u,\mathcal{F}_u(W^u))\leq& \mathrm{dist}(V^u,\mathcal{F}_u^n(V_{-n}^u))+\mathrm{dist}(\mathcal{F}_u^n(V_{-n}^u),\mathcal{F}_u^{n+1}(V_{-n-1}^u))+\mathrm{dist}(\mathcal{F}_u^{n+1}(V_{-n-1}^u),\mathcal{F}_u(W^u))\\
\leq &\mathrm{dist}(V^u,\mathcal{F}_u^n(V_{-n}^u))+2I^{-n}(1)+e^{-\frac{1-\epsilon}{S^2(x_{-1})}}\mathrm{dist}(\mathcal{F}_u^{n}(V_{-n-1}^u),W^u).\end{align*}
The first and third summands tend to zero by definition, and the second goes to zero since by Lemma \ref{ladderFunc}. So $V^u=\mathcal{F}_u(W^u)\subset f[W^u]$.

\medskip
\textit{Part 4}: The inclusion $\subset$ is simple: Every $u$-admissible manifold $W_i^u$ in $\psi_{x_i}^{p_i^s,p_i^u}$ is contained in $\psi_{x_i}[R_{p_i^u}(0)]$, because if $W_i^u$ is represented by the function $F$ then any $p=\psi_{x_i}(v,w)$ in $W_i^u$ satisfies $|w|_\infty\leq p_i^u$ and $$|v|_\infty=|F(w)|_\infty\leq|F(0)|_\infty+\max\|d_\cdot F\|\cdot|w|_\infty\leq \varphi+\epsilon\cdot|w|_\infty\leq(10^{-3}+\epsilon)p_i^u<p_i^u$$ Applying this to $V^u:=V^u[(v_i)_{i\leq0}]$, we see that for every $p\in V^u$, $p\in \psi_{x_0}[R_{p_0^u}(0)]$. Hence by part 3 for every $k\geq0$:$$f^{-k}(p)\in f^{-k}[V^u]\subset V^u[(v_{i-k})_{i\leq 0}]\subset\psi_{x_{-k}}[R_{p_{-k}^u}(0)]\subset\psi_{x_{-k}}[R_{10Q_\epsilon(x_{-k})}(0)].$$

So now to prove $\supset$: Suppose $z\in\psi_{x_0}[R_{p_0^u}(0)]$ and $f^{-k}(z)\in\psi_{x_{-k}}[R_{10Q_\epsilon(x_{-k})}(0)]$ for all $k\geq0$. Write $z=\psi_{x_0}(v_0,w_0)$. We show that $z\in V^u$ by proving that $v_0=F(w_0)$ where $F$ is the representing function for $V^u$. Set $z':=\psi_{x_0}(F(w_0),w_0)=:\psi_{x_0}(v',w')$. For $k\geq0$, $f^{-k}(z),f^{-k}(z')\in\psi_{x_{-k}}[R_{10Q_\epsilon(x_{-k})}(0)]$ (the first point by assumption, and the second since $f^{-k}(z')\in f^{-k}[V^u]\subset V^u((v_{i-k})_{i\leq0}$). It is therefore possible to write: $$f^{-k}(z)=\psi_{x_{-k}}(v_{-k},w_{-k})\text{ and }f^{-k}(z')=\psi_{x_{-k}}(v'_{-k},w'_{-k})\text{  }(k\geq0)$$ where $|v_{-k}|_\infty,|w_{-k}|_\infty,|v'_{-k}|_\infty,|w'_{-k}|_\infty\leq10Q_\epsilon(x_{-k})$ for all $k\geq0$.

By Proposition \ref{3.4inomris} for $f^{-1}$, $\forall k\geq0$, $f^{-1}_{x_{-k-1}x_{-k}}=\psi_{x_{-k-1}}^{-1}\circ f^{-1}\circ\psi_{x_{-k}}$ can be written as $$f_{x_{-k-1}x_{-k}}^{-1}(v,w)=(D_{s,k}^{-1}v+h_{s,k}(v,w),D_{u,k}^{-1}w+h_{u,k}(v,w)),$$ where $\|D_{s,k}\|^{-1}\geq e^{\frac{1}{S^2(x_{-k})}},\|D_{u,k}^{-1}\|\leq e^{-\frac{1}{U^2(x_{-k})}}$, and $\max\limits_{R_{10Q_\epsilon(x_{-k})}(0)}\|d_\cdot h_{s/u,k}\|\leq \sqrt{\epsilon}(10 Q_\epsilon(x_{-k}))^\frac{\beta}{2}$ (provided $\epsilon$ is small enough). Define $\Delta v_{-k}:=v_{-k}-v'_{-k}$, and $\Delta w_{-k}:=w_{-k}-w'_{-k}$. Since for every $k\leq 0$, $(v_{-k-1},w_{-k-1})=f_{x_{-k-1}x_{-k}}^{-1}(v_{-k},w_{-k})$ and $(v'_{-k-1},w'_{-k-1})=f_{x_{-k-1}x_{-k}}^{-1}(v'_{-k},w'_{-k})$ we get the two following bounds:
$$|\Delta v_{-k-1}|_\infty\geq\|D_{s,k}\|^{-1}\cdot|\Delta v_{-k}|_\infty-\max\|d_\cdot h_{s,k}\|\cdot(|\Delta v_{-k}|_\infty+|\Delta w_{-k}|_\infty)\geq e^{\frac{1-\epsilon}{S^2(x_{-k})}}|v_{-k}|_\infty-\epsilon^\frac{1}{3}Q_\epsilon^\frac{\beta}{2}(x_{-k})|\Delta w_{-k}|_\infty,$$
$$|\Delta w_{-k-1}|_\infty\leq\|D_{u,k}^{-1}\|\cdot|\Delta w_{-k}|_\infty+\max\|d_\cdot h_{u,k}\|\cdot(|\Delta w_{-k}|_\infty+|\Delta v_{-k}|_\infty)\leq e^{-\frac{1-\epsilon}{U^2(x_{-k})}} |w_{-k}|_\infty+ \epsilon^\frac{1}{3}Q_\epsilon^\frac{\beta}{2}(x_{-k}) |\Delta v_{-k}|_\infty.$$
Denote $a_k:=|\Delta v_{-k}|_\infty,b_k:=|\Delta w_{-k}|_\infty$
. Then we get $$a_{k+1}\geq e^{\frac{1-\epsilon}{S^2(x_{-k})}}a_k-\epsilon^\frac{1}{3}Q_\epsilon^\frac{\beta}{2}(x_{-k}) b_k,b_{k+1}\leq e^{-\frac{1-\epsilon}{U^2(x_{-k})}}b_k+ \epsilon^\frac{1}{3}Q_\epsilon^\frac{\beta}{2}(x_{-k}) a_k.$$

We claim that $a_k\leq a_{k+1},b_k\leq a_k \forall k\geq0$: 
Proof by induction: This is true for $k=0$ since $b_0=0$. Assume by induction that $a_k\leq a_{k+1},b_k\leq a_k$
, then:
$$b_{k+1}\leq e^{-\frac{1-\epsilon}{U^2(x_{-k})}} b_k+ \epsilon^\frac{1}{3}Q_\epsilon^\frac{\beta}{2}(x_{-k}) a_k\leq e^{-\frac{1-2\epsilon}{U^2(x_{-k})}}a_k\leq a_k\leq a_{k+1},$$
$$\Rightarrow a_{k+2}\geq e^{\frac{1-\epsilon}{S^2(x_{-k-1})}}a_{k+1}-\epsilon^\frac{1}{3}Q_\epsilon^\frac{\beta}{2}(x_{-k-1}) b_{k+1}\geq e^{\frac{1-2\epsilon}{S^2(x_{-k-1})}}a_{k+1}\geq a_{k+1}.$$

We also see that $a_{k+1}\geq e^{\frac{1-2\epsilon}{S^2(x_{-k})}} a_k$ for all $k\geq0$, hence $a_k\geq e^{\sum_{j=0}^{k-1}\frac{1-2\epsilon}{S^2(x_{-j})}} a_0$. Notice, for all $\epsilon>0$ small enough $$\sum_{j=0}^{k-1}\frac{1-2\epsilon}{S^2(x_{-j})} \gg \sum_{j=0}^{k-1}(p^u_{-j}\wedge p^s_{-j})^\frac{1}{\gamma}\geq \sum_{j=0}^{k-1}(I^{-j}(p^u_{0}\wedge p^s_{0}))^\frac{1}{\gamma}.$$ Thus, by Lemma \ref{forI}, either $a_k\rightarrow\infty$ or $a_0=0$. But $a_k=|v_{-k}-v'_{-k}|_\infty\leq20Q_\epsilon(x_{-k})<20\epsilon$, so $a_0=0$, and therefore $v_0=v'_0$, whence $F(w'_0)=F(w_0)$. So

$$z=\psi_x(F(w_0),w_0)\in V^u,$$
which concludes the proof of part 4.

\medskip
\textit{Part 5}: If two chains $v=(v_i)_{i\in\mathbb{Z}},w=(w_i)_{i\in\mathbb{Z}}$ satisfy $v_i=w_i$ for $|i|\leq N$ then $\exists C_1,\tau>0$ depending on the calibration parameters s.t $\mathrm{dist}_{C^1}(\widecheck{V}^u((v_i)_{i\leq0}),\widecheck{V}^u((w_i)_{i\leq0}))\leq C_1[I^{-N}(p^u_0\wedge p^s_0)]^{\frac{1}{\gamma}+\tau}$: Let $V^u_k=V^u((v_{i})_{i \leq -k})$ be a $u$-admissible manifold in $v_{-N+k}$, and let $W^u_k=V^u((w_{i})_{i \leq -k}) $ be a $u$-admissible manifold in $w_{-k} (=v_{-k})$ for all $0\leq k\leq N$. Then $V^u((v_i)_{i \leq 0})=\mathcal{F}_u^{k}(V^u_k)$ and $V^u((w_i)_{i  \leq 0})=\mathcal{F}_u^{k}(W^u_k)$ are $u$-admissible manifolds in $w_{0}(=v_{0})$. Then by \eqref{link}, $\mathrm{dist}(\widecheck{V}^u_k,\widecheck{W}^u_k)\leq \left(I^{-N}(p^u_0\wedge p^s_0)\right)^2$. 
Represent $V_{-k}^u$ and $W_{-k}^u$ by $F_k$ and $G_k$ respectively. Let $\eta:=p^u_0\wedge p^s_0$. By Corollary \ref{WideCheckprop133},
\begin{align}\label{groudon2}
\|F_{k-1}-G_{k-1}\|_{R_\eta(0),\infty}\leq& e^{-\frac{1-\epsilon}{S^2(x_{-k})}}\|F_k-G_k\|_ {R_\eta(0),\infty},\\
\|d_\cdot F_{k-1}-d_\cdot G_{k-1}\|_ {R_\eta(0),\infty}\leq& e^{-\frac{1-\epsilon}{S^2(x_{-k})} -\frac{1-\epsilon}{U^2(x_{-k})}}(\|d_\cdot F_k-d_\cdot G_k\|_ {R_\eta(0),\infty} +(\sqrt\epsilon+\epsilon^\beta)\|F_k-G_k\|_ {R_\eta(0),\infty} ^{\beta/2}).\nonumber
\end{align}
Set $d_k:= \|d_\cdot F_{k}-d_\cdot G_{k}\|_ {R_\eta(0),\infty} $, $c_k:=\|F_k-G_k\|_ {R_\eta(0),\infty} $, and $r_k:=\frac{1-\epsilon}{S^2(x_{-k})}+ \frac{1-\epsilon}{U^2(x_{-k})} $, then iterating \eqref{groudon2} $N$ times we get
\begin{align*}
d_0\leq& e^{-r_{1}}d_{1}+(\sqrt\epsilon+\epsilon^\beta)e^{-r_{1}}c_{1}^\frac{\beta}{2}\leq \cdots\leq d_{N}e^{-\sum_{k=1}^Nr_{j}}+ (\sqrt\epsilon+\epsilon^\beta)\sum_{k=1}^{N}c_k^\frac{\beta}{2}e^{-\sum_{j=1}^kr_j}\\
\leq& 2(p_0^s\wedge p_0^u) e^{\Gamma\sum_{k=0}^{N-1}(p_{-k}^s\wedge p_{-k}^u)^\frac{1}{\gamma}}\cdot e^{-\sum_{k=0}^{N-1}r_{j}}+ (\sqrt\epsilon+\epsilon^\beta)\sum_{k=1}^{N}c_k^\frac{\beta}{2}e^{-\sum_{j=0}^{k-1}r_j}(\because d_N\leq 2(p_{-N}^s\wedge p_{-N}^u))\\
\leq& 2I^{-N}(p_0^u\wedge p_0^s) +(\sqrt\epsilon+\epsilon^\beta)\sum_{k=1}^N [e^{-\Gamma\sum_{j=0}^{N-k-1}I^{-j}(p_{-k}^u\wedge p^s_{-k})^\frac{1}{\gamma}}\cdot(p_{-k}^u\wedge p_{-k}^s)]^\beta
e^{-\sum_{j=0}^{k-1}r_j} (\because \eqref{link})\\
\leq& 2I^{-N}(p_0^u\wedge p_0^s)+(\sqrt\epsilon+\epsilon^\beta)\sum_{k=1}^N [e^{-\Gamma\sum_{j=0}^{N-k-1}I^{-j}(p_{-k}^u\wedge p^s_{-k})^\frac{1}{\gamma}}\cdot(p_{-k}^u\wedge p_{-k}^s)]^\beta
\cdot e^{-\sum_{j=0}^{k-1} r_j}\\
\leq & 2I^{-N}(p_0^u\wedge p_0^s)+(\sqrt\epsilon+\epsilon^\beta)\sum_{k=1}^N [e^{-\Gamma\sum_{j=0}^{N-k-1}I^{-j}(p_{-k}^u\wedge p^s_{-k})^\frac{1}{\gamma}}\cdot(p_{0}^u\wedge p_{0}^s)e^{\Gamma\sum_{j=0}^{k-1}(p_{-j}^u\wedge p^s_{-j})^\frac{1}{\gamma}}\cdot e^{-\frac{
1}{\beta}\sum_{j=0}^{k-1} r_j}]^\beta
\\
\leq& 2I^{-N}(p_0^u\wedge p_0^s)+(\sqrt\epsilon+\epsilon^\beta)\sum_{k=1}^N [e^{-\Gamma\sum_{j=0}^{N-1}I^{-j}(p_{-k}^u\wedge p^s_{-k})^\frac{1}{\gamma}}\cdot(p_{0}^u\wedge p_{0}^s)]^\beta
\\
=& 2I^{-N}(p_0^u\wedge p_0^s)+(\sqrt\epsilon+\epsilon^\beta)\cdot N\cdot [I^{-N}(p_{0}^u\wedge p_{0}^s)]^\beta
\leq 3\cdot N\cdot [I^{-N}(p_0^u\wedge p^s_0)]^\beta.
\end{align*}

By Definition \ref{calibrationParams} $\gamma>\frac{2}{\beta}$, and so for 
$\tau:=\frac{1}{2}(\beta- \frac{2}{\gamma})>0$, $\beta-2(\frac{1}{\gamma}+\tau) =0$. Then 
\begin{align}\label{formyPesinAC}
d_0\leq 3\left(N\cdot \left(I^{-N}(1)\right)^{\frac{1}{\gamma}+\tau}\right)\cdot\left(I^{-N}(p^u_0\wedge p^s_0)\right)^{\frac{1}{\gamma}+\tau}
.
\end{align}
By \eqref{forSummVar} in Lemma \ref{FaveForNow}, $\left(I^{-N}(1)\right)^{\frac{1}{\gamma}+\tau}=O(\frac{1}{N^{1+\delta}})$ for some $\delta>0$, and so $N\cdot\left(I^{-N}(1)\right)^{\frac{1}{\gamma}+\tau}$ is bounded. Set $C_1:=3\cdot\sup_{N\geq1}\{ N\cdot\left(I^{-N}(1)\right)^{\frac{1}{\gamma}+\tau}\}$, then $d_0\leq C_1\cdot [I^{-N}(p^u_0\wedge p^s_0)]^{\frac{1}{\gamma}+\tau}$, which is summable by Lemma \ref{FaveForNow}.

\end{proof}
\begin{theorem}\label{DefOfPi}
Given a chain of double charts $(v_i)_{i\in\mathbb{Z}}$, let $\pi((\underline v)):=$unique intersection point of $V^u((v_i)_{i\leq0})$ and $V^s((v_i)_{i\geq0})$. Then \begin{enumerate}
\item $\pi$ is well-defined and $\pi\circ\sigma=f\circ\pi$.
\item $\pi:\Sigma\rightarrow M$ is uniformly continuous.
\item $\pi[\Sigma]\supset\pi[\Sigma^\#]\supset \RST$
.
\end{enumerate}
\end{theorem}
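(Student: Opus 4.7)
The plan is to prove the three items in order, drawing on Proposition~\ref{firstbefore}, Proposition~\ref{firstofchapter}, Theorem~\ref{graphtransform}, and Proposition~\ref{prop131}.

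For Part~(1), well-definedness is immediate: $V^u((v_i)_{i\leq 0})$ and $V^s((v_i)_{i\geq 0})$ are admissible in the same double chart $v_0$ by Proposition~\ref{firstofchapter}(1)-(2), and Proposition~\ref{firstbefore}(1) then provides a unique intersection. To get $\pi\circ\sigma=f\circ\pi$, I will show $f(\pi(\underline v))$ lies in both $V^u((\sigma\underline v)_i)_{i\leq 0}$ and $V^s((\sigma\underline v)_i)_{i\geq 0}$ and invoke uniqueness again. The $s$-inclusion is immediate from Proposition~\ref{firstofchapter}(3). For the $u$-inclusion, Proposition~\ref{firstbefore}(2) gives $\pi(\underline v)=\psi_{x_0}(\vec t)$ with $|\vec t|_\infty\leq 10^{-2}(p_0^s\wedge p_0^u)^2$; pushing $\vec t$ through $f_{x_0 x_1}$ in the normal form of Proposition~\ref{3.4inomris} yields a point whose $u$-coordinate in $\psi_{x_1}$ has magnitude $\lesssim\kappa(p_0^s\wedge p_0^u)^2\ll p_1^u$, hence lies in the restricted window of $v_1$. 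Since $f(\pi(\underline v))\in f[V^u((v_i)_{i\leq 0})]$, the uniqueness clause of Theorem~\ref{graphtransform}(3) forces $f(\pi(\underline v))\in\mathcal{F}_u(V^u((v_i)_{i\leq 0}))=V^u((\sigma\underline v)_i)_{i\leq 0}$.

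For Part~(2), the central observation is that $\pi(\underline v)$ sits inside both leaf cores (its chart coordinates satisfy $|\vec t|_\infty\leq 10^{-2}\eta^2\leq\eta^2$). A reading of Proposition~\ref{firstbefore}(3) shows the Lipschitz bound depends only on the representing functions restricted to $R_{10^{-2}\eta^2}(0)\subset R_{\eta^2}(0)$, so it passes to leaf-core distances:
\[
d\bigl(\pi(\underline v),\pi(\underline w)\bigr)\leq 3\bigl[\mathrm{dist}(\widecheck V^u(\underline v_{\leq 0}),\widecheck V^u(\underline w_{\leq 0}))+\mathrm{dist}(\widecheck V^s(\underline v_{\geq 0}),\widecheck V^s(\underline w_{\geq 0}))\bigr].
\]
When $v_i=w_i$ for $|i|\leq N$, Proposition~\ref{firstofchapter}(5) bounds each summand by $C_1[I^{-N}(1)]^{1/\gamma+\tau}$, which tends to $0$ as $N\to\infty$ by Lemma~\ref{FaveForNow}. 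The resulting modulus is independent of $\underline v,\underline w$, giving uniform continuity.

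For Part~(3), the inclusion $\pi[\Sigma]\supset\pi[\Sigma^\#]$ is trivial. Given $x\in\RST$, Proposition~\ref{prop131} supplies a chain $\underline v=(\psi_{x_k}^{p_k^s,p_k^u})_{k\in\mathbb{Z}}\subset\Sigma(\mathcal G)$ for which $\psi_{x_k}^{p_k^s\wedge p_k^u}$ $I$-overlaps $\psi_{f^k(x)}^{p_k^s\wedge p_k^u}$. Proposition~\ref{chartsofboxes}(1) then places $f^k(x)\in\psi_{x_k}[R_{p_k^s\wedge p_k^u}(0)]\subset\psi_{x_k}[R_{10Q_\epsilon(x_k)}(0)]$ for every $k$, and the characterization in Proposition~\ref{firstofchapter}(4) gives $x\in V^u((v_i)_{i\leq 0})\cap V^s((v_i)_{i\geq 0})=\{\pi(\underline v)\}$. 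To see that $\underline v\in\Sigma^\#$: the recurrent strong temperability of $x$ combined with the construction of Proposition~\ref{prop131} yields $\limsup_{|n|\to\infty}(p_n^s\wedge p_n^u)>0$; Lemma~\ref{forrecurrenceinnextone} then extracts $n_k\to+\infty$ along which $p_{n_k}^u=Q_\epsilon(x_{n_k})$ with $p_{n_k}^s$ bounded below (and symmetrically for $n_k\to-\infty$). Proposition~\ref{discreteness}(1) bounds the number of admissible charts $\psi_x$ above any positive scale and $\mathcal I$ is discrete above such a scale, so the triples $(\psi_{x_{n_k}},p_{n_k}^s,p_{n_k}^u)$ take only finitely many values along this subsequence; pigeonhole produces a recurring state in each direction and thus $\underline v\in\Sigma^\#$.

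I expect the main technical obstacle to be the coordinate estimate in Part~(1): verifying that the image of the intersection point actually lands inside the smaller window $R_{p_1^u}(0)$ rather than merely $R_{10Q_\epsilon(x_1)}(0)$. This is exactly where the strengthened quadratic bound $\varphi\leq 10^{-3}(p^s\wedge p^u)^2$ introduced in Definition~\ref{admissible} enters decisively.
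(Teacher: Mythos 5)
Your proof is correct. Parts (2) and (3) mirror the paper's argument (uniform continuity of the leaf-core maps together with the Lipschitz intersection, then Proposition~\ref{prop131} plus the discreteness of $\mathcal{A}$ and a pigeonhole to land in $\Sigma^\#$), while Part~(1) takes a genuinely different route. The paper establishes equivariance by first showing that $z=\pi(\underline v)$ is \emph{characterized} as the unique point with $f^k(z)\in\psi_{x_k}[R_{Q_\epsilon(x_k)}(0)]$ for all $k\in\mathbb Z$: existence follows from Proposition~\ref{firstofchapter}(3) applied to both forward and backward orbits, and uniqueness from Proposition~\ref{firstofchapter}(4). Since $f(z)$ then satisfies the same condition for $\sigma\underline v$, one gets $\pi\circ\sigma=f\circ\pi$ without any coordinate estimate. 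You instead push the intersection point forward through the normal form of Proposition~\ref{3.4inomris} and show its $u$-coordinate in $\psi_{x_1}$ is $\ll p_1^u$, so that it lands in $\mathcal{F}_u(V^u)$ by the uniqueness in Theorem~\ref{graphtransform}(3); this is sound, and as you note it is precisely where the quadratic bound $\varphi\leq 10^{-3}(p^s\wedge p^u)^2$ matters, but it requires spelling out the small step (a replay of Claim~6 in the Graph Transform proof) that a point of $f[V^u]$ whose $u$-coordinate satisfies $|\tau|_\infty\leq q^u$ must lie in the unique $u$-admissible submanifold. The paper's characterization argument avoids that entirely, at the cost of first proving Proposition~\ref{firstofchapter}(4). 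One small clean-up in Part~(3): Lemma~\ref{forrecurrenceinnextone} gives $p_{n_k}^u=Q_\epsilon(x_{n_k})$ along a subsequence, but it does not by itself bound $p_{n_k}^s$ from below; what you actually use (and what the paper uses) is simply $\limsup(p_n^s\wedge p_n^u)>0$ together with discreteness of $\mathcal{I}\cap[t,1]$ and of $\mathcal{A}$ above scale $t$, so the invocation of Lemma~\ref{forrecurrenceinnextone} is unnecessary and the phrase ``with $p_{n_k}^s$ bounded below'' should be dropped.
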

\begin{proof}
\textit{Part 1}: $\pi$ is well-defined thanks to Proposition \ref{firstbefore}. Now, write $v_i=\psi_{x_i}^{p_i^s,p_i^u}$ and $z=\pi((\underline v))$. We claim that $f^k(z)\in\psi_{x_k}[R_{Q_\epsilon(x_k)}(0)]\text{  }(k\in\mathbb{Z})$. For $k=0$ this is true because $z\in V^s((v_i)_{i\geq0})$, and this is an $s$-admissible manifold in $\psi_{x_0}^{p_0^s,p_0^u}$. For $k>0$ we use Proposition \ref{firstofchapter} (part 3) to see that $$f^k(z)\in f^k[V^s((v_i)_{i\geq0})]\subset V^s((v_{i+k})_{i\geq0}).$$
Since $V^s((v_{i+k})_{i\geq0})$ is an $s$-admissible manifold in $\psi_{x_k}^{p_k^s,p_k^u}$,  $f^k(z)\in\psi_{x_k}[R_{Q_\epsilon(x_k)}(0)]$. The case $k<0$ can be handled the same way, using $V^u((v_i)_{i\leq0})$. Thus $z$ satisfies: $$f^k(z)\in\psi_{x_k}[R_{Q_\epsilon(x_k)}(0)]\text{  }\text{ for all }k\in\mathbb{Z}.$$
Any point which satisfies this must be $z$, since by Proposition \ref{firstofchapter} (part 4) it must lie in $V^u((v_i)_{i\leq0})\cap V^s((v_i)_{i\geq0})$ (and in fact even in $\widecheck V^u((v_i)_{i\leq0})\cap \widecheck V^s((v_i)_{i\geq0})$), and this equation characterizes $\pi(\underline v)=z$. Hence: $$f^k(f(\pi(\underline v)))=f^{k+1}(\pi(\underline v))\in\psi_{x_{k+1}}^{p_{k+1}^s,p_{k+1}^u}\text{  }\forall k\in\mathbb{Z}$$ and this is the condition that characterizes $\pi(\sigma\underline v)$.

\medskip
\textit{Part 2}: We saw that $\underline v\mapsto \widecheck{V}^u((v_i)_{i\leq0})$ and $\underline v\mapsto \widecheck{V}^s((v_i)_{i\geq0})$ are uniformly continuous (Proposition \ref{firstofchapter}). Since the intersection of an $s$-admissible manifold and a $u$-admissible manifold is a Lipschitz function of those manifolds (Proposition \ref{firstbefore}), $\pi$ is also uniformly continuous.

\medskip
\textit{Part 3}: We prove $\pi[\Sigma]\supset \RST$: Suppose $x\in \RST$, then by Proposition \ref{prop131} there exist $\psi_{x_k}^{p_k^s,p_k^u}\in\mathcal{V}$ s.t $\psi_{x_k}^{p_k^s,p_k^u}\rightarrow\psi_{x_{k+1}}^{p_{k+1}^s,p_{k+1}^u}$ for all $k$, and s.t $\psi_{x_k}^{p_k^s\wedge p_k^u}$ $I$-overlaps $\psi_{f^k(x)}^{p_k^s\wedge p_k^u}$ for all $k\in\mathbb{Z}$. This implies $$f^k(x)=\psi_{f^k(x)}(0)\in\psi_{f^k(x)}[R_{p_k^s\wedge p_k^u}(0)]\subset\psi_{f^k(x)}[R_{Q_\epsilon(x_k)}(0)].$$
Then $x$ satisfies $f^k(x)\in\psi_{x_k}[R_{Q_\epsilon(x_k)}(0)]\text{  }(k\in\mathbb{Z})$. It follows that $x=\pi(\underline v)$ with $\underline v=(\psi_{x_i}^{p_i^s,p_i^u})_{i\in\mathbb{Z}}$. Let $\{q(f^n(x))\}_{n\in\mathbb{Z}}$ be given by the strong temperability of $x$, then by Lemma \ref{subordinatedchain} applied to $\{Q_\epsilon(f^n(x))\}_{n\in\mathbb{Z}}$ and $\{q(f^n(x))\}_{n\in\mathbb{Z}}$, the sequence $\{(p^s_i,p^u_i)\}_{i\in\mathbb{Z}}$ is $I$-strongly subordinated to $\{Q_\epsilon(f^i(x))\}_{i\in\mathbb{Z}}$ and satisfies $p^s_i\wedge p^u_i\geq q(f^i(x))$ for all $i\in\mathbb{Z}$
.
By the definition of $\RST$ there exist subsequences $i_k,j_k\uparrow\infty$ for which $p_{i_k}^s\wedge p_{i_k}^u\geq q(f^{i_k}(x))$ and $p_{-j_k}^s\wedge p_{-j_k}^u\geq q(f^{-j_k}(x))$ are bounded away from zero (see Proposition \ref{subordinatedchain}). By the discreteness property of $\mathcal{A}$ (Proposition \ref{discreteness}), $\psi_{x_i}^{p_i^s,p_i^u}$ must repeat some symbol infinitely often in the past, and some symbol (possibly a different one) infinitely often in the future. Thus the above actually proves that $$\pi[\Sigma^\#]\supset \RST,$$
where $\Sigma^\#=\{\underline v\in\Sigma: \exists v,w\in\mathcal{V},n_k,m_k\uparrow\infty\text{ s.t }v_{n_k}=v,v_{-m_k}=w\}$.
\end{proof}
\noindent\textbf{Remark:} After we prove the Inverse Problem properties in \textsection \ref{chapter333}, we in fact identify the image of 
 $\Sigma^\#$, and not just a subset of it. 

\noindent\textbf{Summary up to here}
\begin{itemize}
    \item We defined $I$-chains (Definition \ref{epsilonchains}).
    \item We proved the shadowing lemma: for every chain $v=(\psi^{p_i^s,p_i^u}_{x_i})_{i\in\mathbb{Z}}$ there exists $x\in M$ s.t $f^i(x)\in \psi_{x_i}[R_{p^s_i\wedge p^u_i}(0)]$ $(i\in\mathbb{Z})$ [proof of Theorem \ref{DefOfPi}].
    \item We found a countable set of charts $\mathcal{A}$ s.t the countable Markov shift $$\Sigma=\{\underline v\in \mathcal{A}^\mathbb{Z}: \underline v\text{ is an }I\text{-chain}\}$$ is a Markov extension of a set containing $\RST$ (Theorem \ref{DefOfPi});
    But the coding $\pi:\Sigma\rightarrow M$ is not finite-to-one.
\end{itemize}
Along the way we saw that if $x$ is shadowed by $v=(v_i)_{i\in\mathbb{Z}}$ then $V^s((v_i)_{i\geq0}),V^u((v_i)_{i\leq0})$ are local (perhaps weakly) stable/unstable manifolds of $x$. (Proposition \ref{firstofchapter}).

\section{
Chains which shadow the same orbit are close}\label{chapter333}
\subsection{The inverse problem
}
The aim of this part is to show that the map $\pi:\Sigma
^\#\rightarrow \RST$ from Theorem \ref{DefOfPi}, is ``almost invertible": if $\pi((\psi_{x_i}^{p_i^s,p_i^u})_{i\in\mathbb{Z}})=\pi((\psi_{y_i}^{q_i^s,q_i^u})_{i\in\mathbb{Z}})$, then $\forall i$: $x_i\approx y_i$, $p^{s/u}_i\approx q^{s/u}_i$ and $C_0(x_i)\approx C_0(y_i)$, where $\approx$ means that the respective parameters belong to the same compact sets. The compact sets can be as small as we wish by choosing small enough $\epsilon$. The discretization we constructed in \textsection 1.2.3 in fact allows these compact sets to contain only a finite number of such possible parameters for our charts. This does not mean that $\pi$ is finite-to-one, but it will allow us to construct a finite-to-one coding later. 
\subsubsection{Comparing orbits of tangent vectors}
\begin{lemma}\label{firstchapter2}
Let $(\psi_{x_i}^{p_i^s,p_i^u})_{i\in\mathbb{Z}},(\psi_{y_i}^{q_i^s,q_i^u})_{i\in\mathbb{Z}}$ be two chains s.t $\pi((\psi_{x_i}^{p_i^s,p_i^u})_{i\in\mathbb{Z}})=\pi((\psi_{y_i}^{q_i^s,q_i^u})_{i\in\mathbb{Z}})=p$, then $d(x_i,y_i)<\frac{\sqrt{d}}{50}\max\{p_i^s\wedge p_i^u,q_i^s\wedge q_i^u\}^2$.
\end{lemma}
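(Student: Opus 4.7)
By shift-invariance of the coding map ($\pi\circ\sigma = f\circ\pi$ by Theorem \ref{DefOfPi}(1)), it suffices to prove the bound for $i=0$. Set $\eta_v := p_0^s\wedge p_0^u$ and $\eta_w := q_0^s\wedge q_0^u$, and write $p = \pi((\psi_{x_i}^{p_i^s,p_i^u})_{i\in\mathbb{Z}}) = \pi((\psi_{y_i}^{q_i^s,q_i^u})_{i\in\mathbb{Z}})$. By the definition of $\pi$ in Theorem \ref{DefOfPi}, $p$ is the unique intersection of a $u$-admissible and an $s$-admissible manifold in $\psi_{x_0}^{p_0^s,p_0^u}$, and also of a $u$-admissible and an $s$-admissible manifold in $\psi_{y_0}^{q_0^s,q_0^u}$.

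Applying Proposition \ref{firstbefore}(2) to the pair of admissible manifolds in $\psi_{x_0}^{p_0^s,p_0^u}$ yields $p = \psi_{x_0}(\vec{t})$ with $|\vec{t}|_\infty \leq 10^{-2}\eta_v^2$. In fact, inspecting the last inequality in the proof of that proposition, for all $\epsilon$ sufficiently small one obtains the sharper bound $|\vec{t}|_\infty \leq 10^{-3}(1+10\epsilon)\eta_v^2$. Since $\psi_{x_0}$ is $2$-Lipschitz (Theorem \ref{linearization}(1)) and the Euclidean and supremum norms on $\mathbb{R}^d$ satisfy $|\vec{t}|\leq\sqrt{d}|\vec{t}|_\infty$, this gives
\[
d(x_0,p) \;=\; d(\psi_{x_0}(0),\psi_{x_0}(\vec{t})) \;\leq\; 2\sqrt{d}\,|\vec{t}|_\infty \;\leq\; \tfrac{(1+10\epsilon)\sqrt{d}}{500}\,\eta_v^2.
\]
The same argument applied to the chart $\psi_{y_0}^{q_0^s,q_0^u}$ gives $d(y_0,p)\leq\tfrac{(1+10\epsilon)\sqrt{d}}{500}\,\eta_w^2$.

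Combining these two estimates with the triangle inequality,
\[
d(x_0,y_0) \;\leq\; d(x_0,p) + d(p,y_0) \;\leq\; \tfrac{(1+10\epsilon)\sqrt{d}}{500}\bigl(\eta_v^2+\eta_w^2\bigr) \;\leq\; \tfrac{(1+10\epsilon)\sqrt{d}}{250}\,\max\{\eta_v,\eta_w\}^2,
\]
which is strictly less than $\tfrac{\sqrt{d}}{50}\max\{\eta_v,\eta_w\}^2$ provided $\epsilon$ is small enough (e.g.\ $\epsilon<\tfrac{2}{5}$). There is no real obstacle here: the only subtle point is extracting the sharper constant from the proof of Proposition \ref{firstbefore} (rather than the looser $10^{-2}$ written in its statement), which is needed to absorb the factor of $2$ coming from the triangle inequality.
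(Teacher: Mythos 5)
Your proof is correct, and it follows the same overall strategy as the paper: locate $p$ inside both charts via Proposition \ref{firstbefore}, bound $d(x_i,p)$ and $d(y_i,p)$, and combine via the triangle inequality. The only real difference is in how you convert the chart-coordinate bound on $\vec{t}$ to a bound on $d(x_i,p)$. You invoke the coarse $2$-Lipschitz bound $\|d_u\psi_x\|\leq 2$ from Theorem \ref{linearization}(1), which leaves you a factor of $2$ too much; you then recover the slack by re-examining the proof of Proposition \ref{firstbefore} and extracting the sharper constant $10^{-3}(1+10\epsilon)\eta^2$ (valid, since $|v|_\infty,|w|_\infty\leq|F(0)|_\infty\vee|G(0)|_\infty+\epsilon\cdot 10^{-2}\eta^2$), at the cost of requiring $\epsilon<2/5$. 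The paper avoids the factor of $2$ entirely by exploiting that the exponential map is an exact radial isometry, $d(\exp_{x_i}(v),x_i)=|v|$, combined with $\|C_0(x_i)\|\leq 1$ (Lemma \ref{contraction}): this gives directly $d(x_i,p)=|C_0(x_i)\vec{t}|_2\leq|\vec{t}|_2\leq\sqrt{d}\,|\vec{t}|_\infty\leq\frac{\sqrt{d}}{100}(p_i^s\wedge p_i^u)^2$, so the stated $10^{-2}$ from Proposition \ref{firstbefore} suffices as-is with no extra constraint on $\epsilon$. Your route works but is unnecessarily delicate; for distances to the chart's \emph{center}, the $1$-Lipschitz composition $\exp_{x_i}\circ C_0(x_i)$ is the sharper and more natural bound. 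Also, the reduction to $i=0$ via $\pi\circ\sigma=f\circ\pi$ is harmless but not needed, since the argument you give applies verbatim to any $i$.
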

\begin{proof}$f^i(p)$ is the intersection of a $u$-admissible manifold and an $s$-admissible manifold in $\psi_{x_i}^{p_i^s,p_i^u}$, therefore (by Proposition \ref{firstbefore}) $f^i(p)=\psi_{x_i}(v_i,w_i)$ where $|v_i|_\infty,|w_i|_\infty<10^{-2}(p_i^s\wedge p_i^u)^2$. $d(f^i(p),x_i)=d(\exp_{x_i}(C_0(x_i)(v_i,w_i)),x_i)=|C_0(x_i)(v_i,w_i)|_2\leq \sqrt{d}|(v_i,w_i)|_\infty\leq\frac{\sqrt{d}}{100}(p_i^s\wedge p_i^u)^2$. Similarly $d(f^i(x),y_i)<\frac{\sqrt{d}}{100}(q_i^s\wedge q_i^u)^2$. It follows that $d(x_i,y_i)<\frac{\sqrt{d}}{50}\max\{q_i^s\wedge q_i^u,p_i^s\wedge p_i^u\}^2$.
\end{proof}
The following definitions are taken from \cite{Sarig}.
\begin{definition}
Let $V^u$ be a $u$-admissible manifold in the double chart $\psi_x^{p^s,p^u}$. We say that $V^u$ {\em stays in windows} if there exists a negative chain $(\psi_{x_i}^{p_i^s,p_i^u})_{i\leq0}$ with $\psi_{x_0}^{p_0^s,p_0^u}=\psi_x^{p^s,p^u}$ and $u$-admissible manifolds $W_i^u$ in $\psi_{x_i}^{p_i^s,p_i^u}$ s.t $f^{-|i|}[V^u]\subset W_i^u$ for all $i\leq0$.
\end{definition}
\begin{definition}
Let $V^s$ be an $s$-admissible manifold in the double chart $\psi_x^{p^s,p^u}$. We say that $V^s$ {\em stays in windows} if there exists a positive chain $(\psi_{x_i}^{p_i^s,p_i^u})_{i\geq0}$ with $\psi_{x_0}^{p_0^s,p_0^u}=\psi_x^{p^s,p^u}$ and $s$-admissible manifolds $W_i^s$ in $\psi_{x_i}^{p_i^s,p_i^u}$ s.t $f^i[V_i^u]\subset W_i^s$ for all $i\geq0$.
\end{definition}

\noindent\textbf{Remark:} If $\underline v$ is a chain, then $V_i^u:=V^u((v_k)_{k\leq i})$ and $V_i^s:=V^s((v_k)_{k\geq i})$ stay in windows, since $f^{-k}[V_i^u]\subset V_{i-k}^u$ and $f^{k}[V_i^s]\subset V_{i+k}^s$ for all $k\geq0$ (by Proposition \ref{firstofchapter}).
\begin{prop}\label{Lambda} The following holds for all $\epsilon$ small enough: Let $V^s$ be an admissible $s$-manifold in $\psi_x^{p^s,p^u}$, and suppose $V^s$ stays in windows.
\begin{enumerate}
\item For every $y,z\in V^s$: $d(f^k(y),f^k(z))<4 I^{-k}(p^s)$ for all $k\geq0$.
\item For every $y\in V^s$ and $u\in T_yV^s(1)$: $|d_yf^ku|\leq4\cdot I^{-k}(p_0^s)\cdot\frac{\|C_0(x)^{-1}\|}{p^s}$ for all $k\geq0$.

\noindent Recall: $T_yV^s(1)$ is the unit ball in $T_yV^s$, w.r.t the Riemannian norm.
\item For $y\in V^s$, let $\omega_s(y)$ be a normalized volume form of $T_yV^s$. For all $y,z\in V^s$
,  $\forall k\geq0$:
$$|\log|d_yf^k\omega_s(y)|-\log|d_zf^k \omega_s(z)||<\epsilon^2 (\frac{d(y,z)}{(p^s)^3})^{\beta/4},$$
\end{enumerate}
 
\noindent where $|d_yf^k\omega_s(y)|=\bigl|\det d_yf^k|_{T_yV^s}\bigr|,|d_zf^k\omega_s(z)|=\bigl|\det d_zf^k|_{T_zV^s}\bigr|$. The symmetric statement holds for $u$-admissible manifolds which stay in windows, for ``$s$" replaced by ``$u$" and $f$ by $f^{-1}$.
\end{prop}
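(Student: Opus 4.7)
Since $V^s$ stays in windows, I work with the positive chain $(\psi_{x_k}^{p_k^s,p_k^u})_{k\geq 0}$ and admissible $s$-manifolds $W_k^s \supset f^k[V^s]$ represented by functions $G_k$ on $R_{p_k^s}(0)$ with $\mathrm{Lip}(G_k) \leq \epsilon$ (Definition \ref{admissible} and the remark after). For $y \in V^s$, write $\psi_{x_k}^{-1}(f^k(y)) = (t_k^y, G_k(t_k^y))$. The transition $t_k^y \mapsto t_{k+1}^y$ is controlled by the stable block of $f_{x_k x_{k+1}}$ from Proposition \ref{3.4inomris}, whose linear part $D_s$ satisfies $\|D_s\| \leq e^{-1/S^2(x_k)}$ and whose nonlinear part $h_s$ has derivative bounded by $\sqrt{\epsilon}(p_k^s)^{\beta/2}$.

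\textbf{Parts (1) and (2).} Setting $a_k := |t_k^y - t_k^z|_\infty$, combining the contraction of $D_s$ with the Lipschitz bounds on $h_s$ and $G_k$ yields $a_{k+1} \leq e^{-(1-\epsilon)/S^2(x_k)} a_k$. Iterating,
\begin{equation*}
a_k \leq a_0 \exp\!\Bigl(-(1-\epsilon)\sum_{j<k} S^{-2}(x_j)\Bigr).
\end{equation*}
Using the admissibility bound $S^{-2}(x_j) \geq C_{\beta,\epsilon}^{-1/\gamma}(p_j^s)^{1/\gamma}$ together with the $I$-chain relation $p_{j+1}^s \leq I(p_j^s)$ and Lemma \ref{forI}, the exponent dominates $\Gamma\sum_{j<k}(I^{-j}(p^s))^{1/\gamma} \geq \log(p^s/I^{-k}(p^s))$, provided $\epsilon$ is small enough that $(1-\epsilon)C_{\beta,\epsilon}^{-1/\gamma} > \Gamma$. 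Hence $a_k \leq (a_0/p^s) I^{-k}(p^s)$. Transferring back to $M$ through $\|d\psi_{x_k}\| \leq 2$ and the $\epsilon$-Lipschitz graph function $G_k$ (and using $a_0 \leq p^s$) gives part (1) with the explicit constant $4$. Part (2) is the linearized version: a unit tangent $u\in T_yV^s(1)$ pulls back via $d\psi_x^{-1}$ to a vector of chart-norm at most $2\|C_0(x)^{-1}\|$ whose $s$-component dominates (since the admissible slope of $G_0$ is $\leq \epsilon$), and the same product estimate against $\prod D_s$ gives the factor $I^{-k}(p^s)\|C_0(x)^{-1}\|/p^s$ after pushing back by $\|d\psi_{x_k}\| \leq 2$.

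\textbf{Part (3).} I would decompose
\begin{equation*}
\log|d_y f^k \omega_s(y)| - \log|d_z f^k\omega_s(z)| = \sum_{j=0}^{k-1}\bigl(\log J_j(y) - \log J_j(z)\bigr),
\end{equation*}
with $J_j(w) := \bigl|\det d_{f^j(w)} f|_{T_{f^j(w)}W_j^s}\bigr|$. Reading this in $\psi_{x_j}$, the map $\log J_j$ is $(\beta/2)$-H\"older on $W_j^s$: $df$ is $\beta$-H\"older by hypothesis, and the tangent plane $T_\cdot W_j^s$ is $(\beta/2)$-H\"older in the base point by admissibility. The H\"older constants of these pieces, once pushed through the change of coordinates $\psi_{x_j}$, are absorbed by the $\sqrt{\epsilon}$-factors of Proposition \ref{3.4inomris}, exactly as in the Graph Transform estimates of Theorem \ref{graphtransform}. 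Combining with part (1) gives $|\log J_j(y) - \log J_j(z)| \lesssim \sqrt{\epsilon}\,(I^{-j}(p^s))^{\beta/2}$ for $j \geq 1$, and $\lesssim \sqrt{\epsilon}\, d(y,z)^{\beta/2}$ for $j = 0$. The tail sum $\sum_{j\geq 1}(I^{-j}(p^s))^{\beta/2}$ is finite by Lemma \ref{FaveForNow}, since $\beta/2 > (1+\tau)/\gamma$ for some $\tau>0$ by the calibration $\gamma > 5/\beta$.

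\textbf{Main obstacle.} The delicate point is producing the precise right-hand side $\epsilon^2 (d(y,z)/(p^s)^3)^{\beta/4}$ in part (3) rather than a looser $d(y,z)^{\beta/2} + O((p^s)^{\beta/2 - 1/\gamma})$. The tail of the telescoping sum does not a priori depend on $d(y,z)$, so to force it into the product form I would exploit $d(y,z) \leq 4p^s$ and split the H\"older exponent $\beta/2 = \beta/4 + \beta/4$: use one factor $(d(y,z))^{\beta/4}$ against one $(p^s)^{-\beta/4}$ coming from the gap between $\beta/2$ and $1/\gamma$. The extra power $(p^s)^{-3\beta/4}$ in the denominator and the prefactor $\epsilon^2$ are then absorbed by the exceptionally small choice $C_{\beta,\epsilon} = 3^{-6/\beta}\epsilon^{90/\beta}$ of the window scalar (Definition \ref{calibrationParams}). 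This bookkeeping is the whole reason the calibration parameters are chosen as they are, and mirrors the argument used to bound $\sigma$, $\iota$, $\varphi$ in the Graph Transform Theorem \ref{graphtransform}.
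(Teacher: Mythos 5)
Your Parts (1) and (2) follow essentially the paper's route: iterate the stable-block contraction $|y_{k+1}-z_{k+1}|_\infty\leq e^{-(1-\epsilon^2)/S^2(x_k)}|y_k-z_k|_\infty$ in charts, convert the exponent to $\sum_j(I^{-j}(p^s))^{1/\gamma}$ via $1/S^2(x_j)\geq (Q_\epsilon(x_j)/C_{\beta,\epsilon})^{1/\gamma}$ and $p_j^s\geq I^{-j}(p^s)$, and choose $\epsilon$ small so that $(1-\epsilon^2)C_{\beta,\epsilon}^{-1/\gamma}/\Gamma\geq 2$; Part (2) is the linearized analogue with $|\xi_0|\leq 2\|C_0^{-1}(x)\|$. (A small slip: the chain relation is $p_j^s\leq I(p_{j+1}^s)$, not $p_{j+1}^s\leq I(p_j^s)$; you use the correct consequence $p_j^s\geq I^{-j}(p^s)$ anyway.)

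Part (3) has a genuine gap, and your ``main obstacle'' paragraph does not close it. Your term-by-term bound for $j\geq 1$, $|\log J_j(y)-\log J_j(z)|\lesssim\sqrt{\epsilon}\,(I^{-j}(p^s))^{\beta/2}$, carries no $d(y,z)$-dependence, so the summed tail is a fixed constant which does not tend to $0$ as $d(y,z)\to 0$ --- it cannot be dominated by $\epsilon^2(d(y,z)/(p^s)^3)^{\beta/4}$. The proposed fix via $d(y,z)\leq 4p^s$ and ``splitting'' the H\"older exponent $\beta/2=\beta/4+\beta/4$ goes in the wrong direction: $d(y,z)\leq 4p^s$ gives an upper, not a lower, bound on $(d(y,z))^{\beta/4}/(p^s)^{\beta/4}$, so you cannot factor $(d(y,z))^{\beta/4}$ out of a constant.

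What is actually needed (and what the paper does in its Part 1) is to carry the $d(y,z)$-dependence through the iteration rather than throw it away. The contraction $a_k\leq(I^{-k}(p^s)/p^s)\,a_0$, combined with $a_0\leq 2\|C_0^{-1}(x)\|d(y,z)$, gives $d(f^k(y),f^k(z))\lesssim C_{\beta,\epsilon}^{1/\gamma}(I^{-k}(p^s))^{1-1/(2\gamma)}\,d(y,z)/p^s$; and because $(1-\epsilon^2)C_{\beta,\epsilon}^{-1/\gamma}/\Gamma\geq 2$ one even gets $|y'_k-z'_k|_\infty\lesssim\|C_0^{-1}(x)\|(I^{-k}(p^s))^2\,d(y,z)/(p^s)^2$. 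Feeding these refined bounds into the $\beta$-H\"older estimate on $d_\cdot f$ and the $\beta/2$-H\"older estimate on $T_\cdot W_j^s$ yields a $j$-th term proportional to $(I^{-j}(p^s))^{\beta/2}\cdot(d(y,z)/(p^s)^3)^{\beta/4}$, which sums by Lemma \ref{FaveForNow}. A second point where the plan is too optimistic: the $\|C_0^{-1}(x_j)\|$-factors arising from $d\psi_{x_j}$ in the volume-form comparison are not absorbed by the $\sqrt{\epsilon}$ of Proposition \ref{3.4inomris} (that controls $h_{s/u}$, not the chart distortion); they must be paired against $(p_j^s)^{\beta/4}$ using $\gamma>2/\beta$, and the leftover absolute constant is killed by the window scalar $C_{\beta,\epsilon}^{\min\{\beta/4,1/\gamma\}}$.
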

\begin{proof}

Suppose $V^s$ is an $s$-admissible manifold in $\psi_{x}^{p^s,p^u}$, which stays in windows, then there is a positive chain $(\psi_{x_i}^{p_i^s,p_i^u})_{i\geq0}$ s.t $\psi_{x_0}^{p_0^s,p_0^u}=\psi_x^{p^s,p^u}$, and there are $s$-admissible manifolds $W_i^s$ in $\psi_{x_i}^{p_i^s,p_i^u}$ s.t $f^i[V^s]\subset W_i^s$ for all $i\geq0$. We write: \begin{itemize}
\item $V^s=\{(t,F_0(t)): |t|_\infty\leq p^s\}$
\item $W_i^s=\psi_{x_i}[\{(t,F_i(t)):|t|_\infty\leq p_i^s\}]$
\item $\eta_i:=p_i^s\wedge p_i^s$
\end{itemize} Admissibility means that $\|d_\cdot F_i\|_{\beta/2}\leq\frac{1}{2},\|d_0F_i\|\leq\frac{1}{2}(p_i^s\wedge p_i^u)^{\beta/2}$, $|F_i(0)|_\infty\leq10^{-3}(p_i^s\wedge p_i^u)^2$ and $\mathrm{Lip}(F_i)<\epsilon$ ($\because$ Definition \ref{admissible}). 

\textit{Part 1}: 
For every $y,z\in V^s$: $d(f^k(y),f^k(z))\leq4 I^{-k}(p_0^s)$ for all $k\geq0$.

Since $V^s$ stays in windows: $f^k[V^s]\subset\psi_{x_k}[R_{Q_\epsilon(x_k)}(0)]$ for all $k\geq0$. Therefore $\forall y,z\in V^s$ one can write $f^k(y)=\psi_{x_k}(y_k')$ and $f^k(z)=\psi_{x_k}(z_k')$, where $y_k':=(y_k,F_k(y_k)),z_k':=(z_k,F_k(z_k))$ belong to $R_{Q_\epsilon(x_k)}(0)$. For every $k$: $y_{k+1}'=f_{x_kx_{k+1}}(y_k')$ and $z_{k+1}'=f_{x_kx_{k+1}}(z_k')$. (Reminder: $f_{x_kx_{k+1}}=\psi_{x_{k+1}}^{-1}\circ f\circ\psi_{x_k}$).
By Proposition \ref{3.4inomris},

\begin{align}\label{forSummability}
	|y_{k+1}-z_{k+1}|_\infty\leq &\|D_{s,k}\|\cdot|y_k-z_k|_\infty+\sqrt\epsilon\eta_k^\frac{\beta}{2}(|y_k-z_k|_\infty+\mathrm{Lip}(F_k)|y_k-z_k|_\infty)\nonumber\\
\leq&(e^{-\frac{1}{S^2(x_k)}}+2\sqrt\epsilon\eta_k^\frac{\beta}{2})|y_k-z_k|_\infty<e^{-\frac{1-\epsilon^2}{S^2(x_k)}}|y_k-z_k|_\infty\leq\ldots\leq e^{-\sum_{j=0}^{k}\frac{1-\epsilon^2}{S^2(x_j)}}|y_0-z_0|_\infty\nonumber\\
\leq& e^{-(1-\epsilon^2)C_{\beta,\epsilon}^{-\frac{1}{\gamma}}\sum_{j=0}^{k}(p^s_{j}
)^\frac{1}{\gamma}}|y_0-z_0|_\infty=e^{-\frac{(1-\epsilon^2)C_{\beta,\epsilon}^{-\frac{1}{\gamma}}}{\Gamma}\cdot\Gamma\sum_{j=0}^{k}(I^{-j}(p^s_{0}
))^\frac{1}{\gamma}}|y_0-z_0|_\infty\nonumber\\
=& e^{-\frac{(1-\epsilon^2)C_{\beta,\epsilon}^{-\frac{1}{\gamma}}}{\Gamma}\cdot\Gamma\sum_{j=0}^{k}(I^{-j}(p^s_{0}
))^\frac{1}{\gamma}}p_0^s\frac{|y_0-z_0|_\infty}{p_0^s}\nonumber\\
\leq &e^{-\Gamma\sum_{j=0}^{k}(I^{-j}(p^s_{0}
))^\frac{1}{\gamma}}p_0^s\frac{|y_0-z_0|_\infty}{p_0^s}= I^{-k}(p_0^s)\cdot \frac{|y_0-z_0|_\infty}{p_0^s},
\end{align}

where the last inequality is true since $\epsilon$ is so small that $\frac{(1-\epsilon^2)C_{\beta,\epsilon}^{-\frac{1}{\gamma}}}{\Gamma}>2\geq1$ (see \eqref{foreq13self}). Since $y_0',z_0'$ are on the graph of an $s$-admissible manifold in $\psi_{x_0}^{p_0^u,p_0^s}$, their stable components are in $R_{p_0^s}(0)$, so $|y_0-z_0|\leq2p_0^s$. So we get, 
\begin{equation}\label{fornewpart3inverse}
|y_k-z_k|_\infty\leq2 I^{-k}(p_0^s).
\end{equation}
 In fact, we even got
\begin{equation}\label{forpart3here}
	|y_k-z_k|_\infty\leq 2 (I^{-k}(p_0^s))^2\frac{1}{p_0^s}.
\end{equation}

 Now since $y_k'=(y_k,F_k(y_k)),z_k'=(z_k,F_k(z_k))$ and $\mathrm{Lip}(F_k)<\epsilon$: $|y_k'-z_k'|_\infty\leq2I^{-k}(p_0^s)$. Pesin charts have Lipschitz constant less than $2$
, so $$d(f^k(y),f^k(z))<4I^{-k}(p_0^s).$$ 

For every $k\geq0$, $y_{k+1}'=f_{x_kx_{k+1}}(y_k')$ and $z_{k+1}'=f_{x_kx_{k+1}}(z_k')$. (Reminder: $f_{x_kx_{k+1}}=\psi_{x_{k+1}}^{-1}\circ f\circ\psi_{x_k}$). Then,

\begin{align}\label{calcin6.3.1}  
|y_k'-z_k'|_\infty\leq& I^{-k}(p_0^s)\frac{|y_0'-z_0'|_\infty}{p_0^s}=I^{-k}(p_0^s)\frac{|\psi_x^{-1}(y)-\psi_x^{-1}(z)|_\infty}{p_0^s}\\
\leq&2\|C_0^{-1}(x)\|\cdot I^{-k}(p_0^s)\cdot \frac{d(y,z)}{p_0^s}
<2\|C_0^{-1}(x)\|\cdot (I^{-k}(p_0^s))^{1-\frac{1}{2\gamma}}(p_0^s)^\frac{1}{2\gamma}\cdot \frac{d(y,z)}{p_0^s}\nonumber\\
\leq& C_{\beta,\epsilon}^\frac{1}{\gamma}\cdot 2(I^{-k}(p_0^s))^{1-\frac{1}{2\gamma}}\cdot \frac{d(y,z)}{p_0^s}, \text{ }\forall k\geq0. \nonumber
\end{align}

Similarly, by \eqref{forpart3here}, we get 
\begin{equation}\label{extendforpart3here}
|y_k'-z_k'|_\infty\leq 2\|C_0^{-1}(x)\|\cdot (I^{-k}(p_0^s))^2\cdot \frac{d(y,z)}{(p_0^s)^2}.
\end{equation}

In addition $f^k(y)=\psi_{x_k}(y_k'),f^k(z)=\psi_{x_k}(z_k')$, whence \begin{equation}\label{chromecastsonic}d(f^k(y),f^k(z))\leq \mathrm{Lip}(\psi_{x_k})|y_k'-z_k'|\leq C_{\beta,\epsilon}^\frac{1}{\gamma}\cdot 4 (I^{-k}(p_0^s))^{1-\frac{1}{2\gamma}}\cdot \frac{d(y,z)}{p_0^s}.\end{equation}

\textit{Part 2:} 
Let $y\in V^s$ and let $u\in T_yV^s(1)$ some unit vector tangent to $V^s$. $f^k(y)\in W_k^s\subset\psi_{x_k}[R_{Q_\epsilon(x_k)}(0)]$, so $d_yf^ku=d_{y_k'}\psi_{x_k}\Big(\begin{array}{c}
\xi_k\\
\zeta_k\\
\end{array}\Big)$ where $\Big(\begin{array}{c}
\xi_k\\
\zeta_k\\
\end{array}\Big)$ is tangent to the graph of $F_k$. Since $\mathrm{Lip}(F_k)<\epsilon$, we get that $|\zeta_k|_\infty\leq\epsilon|\xi_k|_\infty$ for all $k$. The defining relation in the sequence is $$\Big(\begin{array}{c}
\xi_{k+1}\\
\zeta_{k+1}\\
\end{array}\Big)=\Big[\begin{pmatrix}D_{s,k}  &   \\  & D_{u,k}
\end{pmatrix}+\begin{pmatrix}d_{y_k'}h_s^{(k)} \\ d_{y_k'}h_u^{(k)}
\end{pmatrix}\Big]\Big(\begin{array}{c}
\xi_k\\
\zeta_k\\
\end{array}\Big)$$
where $D_{s,k}, D_{u,k}$ are block matrices. So we get
$$|\xi_{k+1}|_\infty\leq(e^{\frac{-1}{S^2(x_k)}}+\sqrt\epsilon\eta_k^\frac{\beta}{2})|\xi_k|_\infty\leq e^{-\frac{1-\epsilon}{S^2(x_k)}}|\xi_k|_\infty\leq e^{-\sum_{j=0}^k\frac{1-\epsilon}{S^2(x_j)}}|\xi_0|_\infty.$$
Recalling the definition $d_yf^ku=d_{y_k'}\psi_{x_k}\Big(\begin{array}{c}
\xi_k\\
\zeta_k\\
\end{array}\Big)$, and recalling that $\|d_\cdot\psi_{x_k}\|\leq2$  we see that, 
%
$|d_yf^ku|_\infty\leq4\cdot I^{-k}(p_0^s)\cdot\frac{\|C_0(x_0)^{-1}\|}{p_0^s}$ as wished.

\medskip

\textit{Part 3:} Fix $n\in\mathbb{N}$, and denote $A:=\Big|\log|d_yf^n\omega_s(y)|-\log|d_zf^n\omega_s(z)|\Big|$. For every $p\in V^s
$: $$d_pf^n\omega_s(p)=d_{f(p)}f^{n-1}d_pf(\omega_s(p))=|d_pf\omega_s(p)|\cdot d_{f(p)}f^{n-1}\omega_s(f(p))=\cdots=\prod_{k=1}^{n}|d_{f^k(p)}f\omega_s(f^k(p))|\cdot \omega_s(f^{n}(p)),$$ where $\omega_s(f^k(p)):=\frac{d_pf^k\omega_s(p)}{|d_pf^k\omega_s(p)|}$ is a normalized volume form of $T_{f^k(p)}f^k[V^s]$. Thus: $$A:=\Big|\log\frac{|d_yf^n\omega_s(y)|}{|d_zf^n\omega_s(z)|}\Big|\leq\sum_{k=1}^{n}\Big|\log|d_{f^k(y)}f\omega_s(f^k(y))|-\log|d_{f^k(z)}f\omega_s(f^k(z))|\Big|.$$

In section 1.2 we have covered $M$ by a finite collection $\mathcal{D}$ of open sets $D$, equipped with a smooth map $\theta_D:TD\rightarrow\mathbb{R}^d$ s.t $\theta_D|_{T_xM}:T_xM\rightarrow\mathbb{R}^d$ is an isometry and $\nu_x:=\theta_D^{-1}:\mathbb{R}^d\rightarrow TM$ has the property that $(x,v)\mapsto\nu_x(v)$ is Lipschitz on $D\times B_1(0)$. Since $f$ is $C^{1+\beta}$ and $M$ is compact, $d_pfv$ depends in a $\beta$-H\"older way on $p$, and in a Lipschitz way on $v$. It follows that there exists a constant $H_0>1$ s.t for every $D,D'\in\mathcal{D}; y',z'\in D,\text{ }f(y'),f(z')\in D'; u,v\in\mathbb{R}^d(1)$: $\Big|\Theta_{D'}d_{y'}f\nu_{y'}(u)-\Theta_{D'}d_{z'}f\nu_{z'}(v)\Big|<H_0(d(y',z')^\beta+|u-v|_2)$. Choose $D_k\in\mathcal{D}$ s.t $D_k\ni f^k(y),f^k(z)$. Such sets exist provided $\epsilon$ is much smaller than the Lebesgue number of $\mathcal{D}$, since by part 1 $d(f^k(y),f^k(z))<4\epsilon$. By writing $Id=\Theta_{D_k}\circ\nu_{f^k(y)}$ and $Id=\Theta_{D_k}\circ\nu_{f^k(z)}$, and recalling that $\Theta_{k}$ are isometries, we see that:
\begin{align}\label{NewA}
A\leq&\sum_{k=1}^n\Big|\log|\Theta_{D_{k+1}}d_{f^k(y)}f\nu_{f^k(y)}\Theta_{D_k}\omega_s(f^k(y))|-\log|\Theta_{D_{k+1}}d_{f^k(z)}f\nu_{f^k(z)}\Theta_{D_k}\omega_s(f^k(z))|\Big|\\
\leq& \sum_{k=1}^nM_f^d\Big|\Theta_{D_{k+1}}d_{f^k(y)}f\nu_{f^k(y)}\Theta_{D_k}\omega_s(f^k(y))-\Theta_{D_{k+1}}d_{f^k(z)}f\nu_{f^k(z)}\Theta_{D_k}\omega_s(f^k(z))\Big|,\nonumber
\end{align}
by the inequalities $|\log a-\log b|\leq\frac{|a-b|}{\min\{a,b\}}$ and the following estimate:
\begin{align*}\inf_{p\in\{y,z\},k\geq0}|\Theta_{D_{k+1}}d_{f^k(p)}f\nu_{f^k(p)}\Theta_{D_k}\omega_s(f^k(p))|=&\inf_{p\in\{y,z\}}|\mathrm{Jac}(d_pf|_{T_pV^s})|\\
=&\inf_{p\in\{y,z\}}\mathrm{Jac}(\sqrt{(d_pf|_{T_pV^s})^td_pf|_{T_pV^s}})\geq M_f^{-d}.
\end{align*}
Let $\omega_1$ be the standard volume form for $\mathbb{R}^{s(x)}$, and consider the map $t\overset{G_k}{\mapsto}(t,F_k(t))$, $G_k:R_{p^s_k}(0)(\subset \mathbb{R}^{s(x)})\rightarrow \mathbb{R}^d$. Then $$\omega_s(f^k(y))=\pm\frac{d_{y_k}(\psi_{x_k}\circ G_k)\omega_1}{|d_{y_k}(\psi_{x_k}\circ G_k)\omega_1|},\text{ }\omega_s(f^k(z))=\pm\frac{d_{z_k}(\psi_{x_k}\circ G_k)\omega_1}{|d_{z_k}(\psi_{x_k}\circ G_k)\omega_1|},$$
where the $\pm$ notation means the equality holds for one of the signs. Since we are only interested in the absolute value of the volume forms, we can assume w.l.o.g that the sign is identical for both of the volume forms. $\forall u\in \mathbb{R}^d(1)$, $|d_{y_k/z_k}(\psi_{x_k}\circ G_k)u|\geq\frac{1}{2\cdot\|C_0^{-1}(x_k)\|}$, then $|d_{y_k/z_k}(\psi_{x_k}\circ G_k)\omega_1|\geq\frac{1}{(2\|C_0^{-1}(x_k)\|)^{s(x)}}$. 
Recalling that $\Theta_{D_k}$ are isometries, we get
\begin{align}\label{shezifhadash}
    |\Theta_{D_k}\omega_s(f^k(y))-\Theta_{D_k}\omega_s(f^k(z))|&=\Big|\frac{\Theta_{D_k}d_{y_k}(\psi_{x_k}\circ G_k)\omega_1}{|\Theta_{D_k}d_{y_k}(\psi_{x_k}\circ G_k)\omega_1|}-\frac{\Theta_{D_k}d_{z_k}(\psi_{x_k}\circ G_k)\omega_1}{|\Theta_{D_k}d_{z_k}(\psi_{x_k}\circ G_k)\omega_1|}\Big|\\
    &\leq(2\|C_0^{-1}(x_k)\|)^{s(x)}\Big|\Theta_{D_k}d_{y_k}(\psi_{x_k}\circ G_k)\omega_1-\Theta_{D_k}d_{z_k}(\psi_{x_k}\circ G_k)\omega_1\Big|.\nonumber
\end{align}
Substituting this back in \eqref{NewA} yields $\Big($recall Hadamard's inequality, which asserts that if a volume form $\widetilde{\omega}$ is the wedge product $\widetilde{\omega}=u_1\wedge...\wedge u_l$, then $|\widetilde{\omega}|\leq\prod_{i=1}^l|u_i|_2 \Big)$,
\begin{align}\label{wubbalubba}
    A\leq & M_f^d\sum_{k=1}^n\Big|\Theta_{D_{k+1}}d_{f^k(y)}f\nu_{f^k(y)}\Theta_{D_k}\omega_s(f^k(y))-\Theta_{D_{k+1}}d_{f^k(z)}f\nu_{f^k(z)}\Theta_{D_k}\omega_s(f^k(y))\nonumber\\
    + & \Theta_{D_{k+1}}d_{f^k(z)}f\nu_{f^k(z)}\Theta_{D_k}\omega_s(f^k(y))-\Theta_{D_{k+1}}d_{f^k(z)}f\nu_{f^k(z)}\Theta_{D_k}\omega_s(f^k(z))\Big|\nonumber\\
    \leq & M_f^d\sum_{k=1}^n\|d_{f^k(z)}f\|^{s(x)}\cdot|\Theta_{D_k}\omega_s(f^k(y))-\Theta_{D_k}\omega_s(f^k(z))|+\|\Theta_{D_{k+1}}d_{f^k(y)}f\nu_{f^k(y)}-\Theta_{D_{k+1}}d_{f^k(z)}f\nu_{f^k(z)}\|\nonumber\\
    \leq & M_f^d\cdot \sum_{k=1}^n\Big( M_f^d\cdot (2\|C_0^{-1}(x_k)\|)^{s(x)}\cdot\Big|\Theta_{D_k}d_{y_k}(\psi_{x_k}\circ G_k)\omega_1-\Theta_{D_k}d_{z_k}(\psi_{x_k}\circ G_k)\omega_1\Big|+H_0d(f^k(y),f^k(z))^\beta\Big)\nonumber\\
    \leq& M_f^d\cdot \sum_{k=1}^n\Big( M_f^d\cdot (2\|C_0^{-1}(x_k)\|)^{s(x)}\cdot[\text{H\"ol}_{\frac{\beta}{2}}(d_\cdot (\psi_{x_k}\circ G_k))|y_k-z_k|^\frac{\beta}{2}]^{s(x)}+H_0d(f^k(y),f^k(z))^\beta\Big)\nonumber\\
    \leq& M_f^d\cdot \sum_{k=1}^n\Big( M_f^d\cdot(2\cdot\frac{1}{2})^{s(x)}(2\|C_0^{-1}(x_k)\||y_k-z_k|^\frac{\beta}{2})^{s(x)}+H_0d(f^k(y),f^k(z))^\beta\Big)\nonumber\\
    \leq& M_f^d\cdot \sum_{k=1}^n\Big( M_f^d(2C_{\beta,\epsilon}^\frac{\beta}{4}|y_k-z_k|^\frac{\beta}{4})^{s(x)}+H_0d(f^k(y),f^k(z))^\beta\Big)\nonumber\\
    \leq& M_f^d\cdot \sum_{k=1}^n\Big( M_f^d\cdot( 2C_{\beta,\epsilon}^\frac{\beta}{4}(2\|C_0^{-1}(x)\|\cdot (I^{-k}(p_0^s))^2\cdot \frac{d(y,z)}{(p_0^s)^2})^\frac{\beta}{4})+H_0(C_{\beta,\epsilon}^\frac{1}{\gamma}\cdot2(I^{-k}(p_0^s))^{1-\frac{1}{2\gamma}}\cdot \frac{d(y,z)}{p_0^s})^\beta\Big)\nonumber\\
    \leq& M_f^d\cdot \sum_{k=1}^n\Big( M_f^d\cdot( 2C_{\beta,\epsilon}^\frac{\beta}{4}((I^{-k}(p_0^s))^2\cdot \frac{d(y,z)}{(p_0^s)^3})^\frac{\beta}{4})+H_0(C_{\beta,\epsilon}^\frac{1}{\gamma}\cdot2(I^{-k}(p_0^s))^{1-\frac{1}{2\gamma}}\cdot \frac{d(y,z)}{p_0^s})^\beta\Big)\nonumber\\
   \leq& 3M_f^{2d}H_0\cdot C_{\beta,\epsilon}^{\min\{\frac{\beta}{4},\frac{1}{\gamma}\}}\cdot\sum_{k=1}^n (I^{-k}(p_0^s))^\frac{\beta}{2}\cdot (\frac{d(y,z)}{(p_0^s)^3})^\frac{\beta}{4}\leq\epsilon ^2 \cdot (\frac{d(y,z)}{(p_0^s)^3})^\frac{\beta}{4},
\end{align}
where H\"ol$_\frac{\beta}{2}(d_\cdot G_k)\leq\frac{1}{2}$, and Lip($d_\cdot\psi_{x_k})\leq2$ since $\|C_0(x_k)\|\leq1$. The seventh inequality in \eqref{wubbalubba} uses the fact that $M_f>1$, and the penultimate inequality uses the fact that $|y_k-z_k|<1$ to replace $s(x)$ with $1$, and uses \eqref{extendforpart3here}. 

By the choice of $Q_\epsilon(\cdot)$ (Definition \ref{balls}) and the inequality $p^s_k\leq Q_\epsilon(x_k)$, and since $\gamma>\frac{2}{\beta}$, we have $\|C_0^{-1}(x_k)\|\cdot (p_k^s)^\frac{\beta}{4}<C_{\beta,\epsilon}^\frac{\beta}{4}$ for all $k\geq0$. In addition, Lemma \ref{FaveForNow} tells us that $\sum_{k\geq0}(I^{-k}(p_0^s))^\frac{\beta}{2}\leq \sum_{k\geq0}(I^{-k}(1))^\frac{\beta}{2}<\infty$. So for small enough $\epsilon$, \eqref{wubbalubba} holds. 
This estimate is uniform in $n$, and Part (3) is proven.

\end{proof}

\begin{cor}\label{RegularOnVs}
Let $V^s$ be an admissible $s$-manifold in $\psi_x^{p^s,p^u}$ which stays in windows. Then $\forall y\in V^s$,
$$S^2(y)\leq 2\cdot4\frac{\|C_0^{-1}(x)\|}{p^s}\sum_{m\geq0}(I^{-m}(1))^2<\infty.$$	
A similar bound holds for $U^2(\cdot)$ on $u$-manifolds which stay in windows.
\end{cor}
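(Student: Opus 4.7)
The proof will be a direct consequence of Proposition \ref{Lambda}(2) together with the summability from Lemma \ref{FaveForNow}. My plan is as follows.

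First, pick any $y \in V^s$ and any unit vector $\xi \in T_y V^s$ (with respect to the Riemannian norm). Because $V^s$ stays in windows by hypothesis, Proposition \ref{Lambda}(2) applies verbatim and gives
\[
|d_y f^m \xi| \;\leq\; 4\,I^{-m}(p^s)\cdot \frac{\|C_0^{-1}(x)\|}{p^s}
\qquad \text{for all } m\geq 0.
\]
Squaring this pointwise estimate and summing over $m\geq 0$, the definition of $S^2$ in Definition \ref{scalingfuncs} yields
\[
S^2(y,\xi) \;=\; 2\sum_{m\geq 0} |d_y f^m \xi|^2
\;\leq\; 2\cdot 16\,\frac{\|C_0^{-1}(x)\|^2}{(p^s)^2}\sum_{m\geq 0} (I^{-m}(p^s))^2.
\]
Since $I^{-m}$ is monotone increasing and $p^s\leq 1$, one has $I^{-m}(p^s)\leq I^{-m}(1)$, so the inner factor is bounded above by $\sum_{m\geq 0}(I^{-m}(1))^2$ (this is the shape of the bound appearing in the statement, up to the obvious constants).

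Next, I need to check that $\sum_{m\geq 0}(I^{-m}(1))^2 < \infty$. This is exactly the content of Lemma \ref{FaveForNow} applied with exponent $2$: since $\gamma > 5/\beta > 1/2$, I can choose $\tau>0$ with $(1+\tau)/\gamma = 2$ (namely $\tau = 2\gamma - 1$), and Lemma \ref{FaveForNow} immediately gives the finiteness. Taking the supremum over unit vectors $\xi \in T_y V^s$ (which is a compact subset of a finite-dimensional subspace) gives the desired bound on $S^2(y)$.

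The statement for $u$-admissible manifolds which stay in windows follows by the symmetry between $s$ and $u$ manifolds (replacing $f$ by $f^{-1}$), since Proposition \ref{Lambda} already records the symmetric estimate. There is no genuine obstacle here: the only subtle point is that one must verify that the ladder-function sum $\sum (I^{-m}(1))^2$ converges, and this is precisely the role of Lemma \ref{FaveForNow} — everything else is just squaring and summing the $C^0$-contraction bound provided by Proposition \ref{Lambda}(2).
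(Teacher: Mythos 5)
Your proposal is correct and is exactly the (implicit) argument the paper intends: Corollary \ref{RegularOnVs} is stated without proof precisely because it is the pointwise bound of Proposition \ref{Lambda}(2), squared and summed, with the convergence of $\sum_{m\geq0}(I^{-m}(1))^2$ supplied by Lemma \ref{FaveForNow} (your choice $\tau=2\gamma-1>0$ is legitimate, and $I^{-m}(p^s)\leq I^{-m}(1)$ holds since $I^{-m}$ is increasing and $p^s<1$). One remark: squaring genuinely produces the prefactor $2\cdot 16\,\|C_0^{-1}(x)\|^2/(p^s)^2$, not the first power $2\cdot 4\,\|C_0^{-1}(x)\|/p^s$ printed in the corollary, so what you call "obvious constants" is in fact a discrepancy with the literal statement (which appears to be a slip in the paper, since $\|C_0^{-1}(x)\|/p^s\geq 1$ makes the printed bound stronger than anything Proposition \ref{Lambda}(2) yields); this is harmless, because every later use of the corollary (e.g.\ in Lemma \ref{chainbounds} and Corollary \ref{newLifeOfPi}) only requires a finite bound depending on the chart data $x,p^s$, which is exactly what your estimate provides.
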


\noindent\underline{Notice}: In a setup where we are given two double charts $\psi_x^{p^s,p^u}\rightarrow \psi_y^{q^s,q^u}$, if $V^s$ is an $s$-admissible manifold in $\psi_y^{q^s,q^u}$ which stays in windows, then also $\mathcal{F}(V^s)$ is an $s$-admissible manifold in $\psi_x^{p^s,p^u}$ which stays in windows.
For the next lemma, recall Definition \ref{scalingfuncs} for $S(\cdot,\cdot)$.

\begin{lemma}\label{boundimprove} Let $\psi_x^{p^s,p^u}\rightarrow \psi_y^{q^s,q^u}$. Let $V^s$ be an $s$-admissible manifold in $\psi_y^{q^s,q^u}$ which stays in windows. Let $\mathcal{F}(V^s)$ be the Graph Transform of $V^s$ w.r.t to the edge $\psi_x^{p^s,p^u}\rightarrow \psi_y^{q^s,q^u}$. Let $q\in f[\mathcal{F}(V^s)]\subset V^s$.
Then
$\exists\pi_x:T_{f^{-1}(q)}\mathcal{F}(V^s)\rightarrow H^s(x),\pi_y:T_qV^s\rightarrow H^s(y)$ invertible linear transformations s.t $\forall \rho\geq e^{\Gamma Q_\epsilon^{\frac{\beta}{4}-\frac{1}{4\gamma}}(x)}$: if $\frac{S(q,\lambda)}{S(y,\pi_y \lambda)}\in [\rho^{-1},\rho]$ then
$$\frac{S(f^{-1}(q),\xi)}{S(x,\pi_x\xi)}\in [e^{\Gamma Q_\epsilon(x)^{\frac{\beta}{4}+\frac{3}{4\gamma}}}\rho^{-1},e^{-\Gamma Q_\epsilon(x)^{\frac{\beta}{4}+\frac{3}{4\gamma}}}\rho],$$
where $\lambda=d_{f^{-1}(q)}f\xi$ and $\xi\in T_{f^{-1}(q)}\mathcal{F}(V^s)$ is arbitrary.
A similar statement holds for the unstable case. We will also see that $\frac{|\pi_x\xi|}{|\xi|}=e^{\pm Q_\epsilon(x)^{\frac{\beta}{2}-\frac{1}{2\gamma}}}$ for all $\xi\in T_{f^{-1}(q)}\mathcal{F}(V^s)$ (similarly for $\pi_y$).
\end{lemma}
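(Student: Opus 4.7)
The plan is to use the self-similar recursion $S^2(p,v) = 2|v|^2 + S^2(f(p), d_pf\, v)$ (an immediate consequence of Definition \ref{scalingfuncs}) to split both $S^2(f^{-1}(q),\xi)$ and $S^2(x,\pi_x\xi)$ into a first-iterate term plus a tail, and then reduce everything to a comparison of two $S^2$-values at close, chart-overlapping base points. First I would define $\pi_x$ and $\pi_y$ via the admissible-manifold structure: writing $\mathcal{F}(V^s)$ in $\psi_x^{p^s,p^u}$ as the graph of $F_x^s$, any $\xi\in T_{f^{-1}(q)}\mathcal{F}(V^s)$ decomposes uniquely as $\xi = d_\cdot\psi_x\cdot(v,d_\cdot F_x^s\,v)$ for some $v\in\mathbb{R}^{s(x)}$, and I set $\pi_x\xi := C_0(x)(v,0)\in H^s(x)$, with $\pi_y$ defined analogously from the representing function of $V^s$ in $\psi_y^{q^s,q^u}$. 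The length bound $|\pi_x\xi|/|\xi| = e^{\pm Q_\epsilon(x)^{\beta/2-1/(2\gamma)}}$ then follows from the admissibility estimate $\|d_\cdot F_x^s\|\le (p^s)^{\beta/2}$, the $C^{1+\beta}$ regularity of $\exp_x$ at the origin (giving $d_\cdot\psi_x=C_0(x)+O(Q_\epsilon(x))$ in operator norm), and crucially from the relation $\|C_0^{-1}(x)\|\leq C_{\beta,\epsilon}^{1/(2\gamma)}Q_\epsilon(x)^{-1/(2\gamma)}$, which is a direct consequence of $Q_\epsilon(x)\le C_{\beta,\epsilon}\|C_0^{-1}(x)\|^{-2\gamma}$; multiplying the transverse error of size $O((p^s)^{\beta/2})$ by $\|C_0^{-1}(x)\|$ yields exactly the improved exponent $\beta/2-1/(2\gamma)$.

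With this in hand, the recursion turns the target ratio into
$$\frac{S^2(f^{-1}(q),\xi)}{S^2(x,\pi_x\xi)}=\frac{2|\xi|^2+S^2(q,\lambda)}{2|\pi_x\xi|^2+S^2(f(x),d_xf\,\pi_x\xi)},$$
and, since both $|\pi_x\xi|^2/|\xi|^2$ and (by the analogous computation at $y$) $|\pi_y\lambda|^2/|\lambda|^2$ are $e^{\pm 2Q_\epsilon(x)^{\beta/2-1/(2\gamma)}}$, it remains to compare $S^2(f(x),d_xf\,\pi_x\xi)$ with $S^2(y,\pi_y\lambda)$.

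The hardest step is this last comparison, because the two base points are genuinely different and the two stable vectors live in different tangent spaces. I would choose $D\in\mathcal{D}$ with $f(x),y\in D$ (which exists since $\psi_{f(x)}^{q^s\wedge q^u}$ and $\psi_y^{q^s\wedge q^u}$ $I$-overlap) and identify both stable vectors with elements of $\mathbb{R}^d$ via the isometry $\Theta_D$ of \textsection\ref{isometries}. First I would show that these $\mathbb{R}^d$-images differ by at most $O((q^s\wedge q^u)^4)\cdot|\lambda|$, combining Lemma \ref{overlap} (controlling $\|\Theta_D C_0(f(x))-\Theta_D C_0(y)\|$), Proposition \ref{chartsofboxes} (controlling the chart-change $\psi_y^{-1}\circ\psi_{f(x)}$ in $C^1$), the definition of $I$-overlap, and the admissibility-driven approximation $\xi\approx d\psi_x(v,0)$ up to $O((p^s)^{\beta/2})|v|$. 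Second, I would establish a ``chart-Lipschitz'' principle for $S^2$ restricted to stable vectors: if $u\in H^s(z_1)$ and $w\in H^s(z_2)$ are chart-close in a common $D\in\mathcal{D}$, then $\log\bigl(S^2(z_1,u)/S^2(z_2,w)\bigr)$ is controlled by a small positive power of the relevant scales. This would be obtained by iterating $S^2=2|\cdot|^2+S^2\circ f$ and comparing the resulting partial sums term by term, using that the forward orbits of $f(x)$ and $y$ both shadow $\{f^n(q)\}_{n\ge 0}$ and that the stable contractions are summable by Proposition \ref{Lambda}(2) and Corollary \ref{RegularOnVs}, while the H\"older regularity of $df$ controls the termwise differences through $d(f^n(f(x)),f^n(y))^\beta$. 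Combining the two sub-steps yields a multiplicative correction of $e^{\pm O(Q_\epsilon(x)^{\beta/4+3/(4\gamma)})}$ between $S^2(f(x),d_xf\,\pi_x\xi)$ and $S^2(y,\pi_y\lambda)$; absorbing the lower-order errors from the earlier steps into this worse bound, passing to square roots, and using the hypothesis $\rho\geq e^{\Gamma Q_\epsilon(x)^{\beta/4-1/(4\gamma)}}$ (precisely what is needed to ensure the correction is dominated by $\rho$ itself) delivers the stated improvement from $[\rho^{-1},\rho]$ to $[e^{\Gamma Q_\epsilon(x)^{\beta/4+3/(4\gamma)}}\rho^{-1},e^{-\Gamma Q_\epsilon(x)^{\beta/4+3/(4\gamma)}}\rho]$. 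The $u$-case is then immediate by applying the argument to $f^{-1}$ in place of $f$.
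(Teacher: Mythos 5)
Your setup (the definition of $\pi_x,\pi_y$ via the representing functions, the length-distortion bound, and the one-step recursion $S^2(p,v)=2|v|^2+S^2(f(p),d_pf\,v)$) matches the paper, and your push-forward decomposition at the pair $(f(x),y)$ is a legitimate mirror of the paper's pull-back decomposition at $(x,f^{-1}(y))$. But the step you call the ``chart-Lipschitz principle for $S^2$'' is a genuine gap, and the mechanism you propose for it would fail. The forward orbits of the chart centers $f(x)$ and $y$ do \emph{not} shadow $\{f^n(q)\}_{n\geq0}$: shadowing is a statement about the orbit of $q$ staying in the windows $\psi_{x_n}[R_\cdot(0)]$, not about the orbits of the chart centers, which leave the picture after one step and may separate at rate $M_f^n$; Proposition \ref{Lambda}(2) and Corollary \ref{RegularOnVs} control vectors tangent to admissible manifolds staying in windows along the chain, not the orbits of $f(x)$ or $y$ themselves, so the termwise comparison of the two series defining $S^2$ cannot be controlled by $d(f^n(f(x)),f^n(y))^\beta$. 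More fundamentally, $z\mapsto S^2(z,\cdot)$ (equivalently $z\mapsto C_0^{-1}(z)$) is only measurable; chart-close points can have wildly different $S^2$. What makes $S^2$ at $f(x)$ and $y$ comparable is not proximity but the $I$-overlap of $\psi_{f(x)}$ and $\psi_y$ encoded in the edge: since $d_xf\,\pi_x\xi\in H^s(f(x))$ and $\pi_y\lambda\in H^s(y)$, one has exactly $S(f(x),u)=|C_0^{-1}(f(x))u|$ and $S(y,w)=|C_0^{-1}(y)w|$, and the comparison is a one-step linear-algebra estimate from Lemma \ref{overlap} plus closeness of the $\Theta_D$-images --- no iteration of the recursion and no orbit comparison (this is exactly what the paper does, in mirrored form, in \eqref{eq9}--\eqref{ferligator}). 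Incidentally, your claimed closeness $O((q^s\wedge q^u)^4)|\lambda|$ of the $\Theta_D$-images is inconsistent with your own $O((p^s)^{\beta/2})$ straightening error; the true order is $Q_\epsilon^{\frac{\beta}{2}-\frac{1}{2\gamma}}|\lambda|$.

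Second, you never actually derive the strict improvement, which is the content of the lemma. You phrase the conclusion as the hypothesis $\rho\geq e^{\Gamma Q_\epsilon^{\beta/4-1/(4\gamma)}(x)}$ ensuring that ``the correction is dominated by $\rho$''; that at best keeps the ratio inside $[\rho^{-1},\rho]$, whereas the lemma asserts it moves strictly inside by the factor $e^{\Gamma Q_\epsilon(x)^{\beta/4+3/(4\gamma)}}$. The improvement comes from a convexity argument you omit: in $S^2(x,\pi_x\xi)=2|\pi_x\xi|^2+S^2(f(x),d_xf\,\pi_x\xi)$ the first-iterate term has ratio $\approx 1$ against $2|\xi|^2$, and its relative weight is bounded below by $2/\|C_0^{-1}(x)\|^2\geq 2(Q_\epsilon(x)/C_{\beta,\epsilon})^{1/\gamma}$; multiplying this weight by $\rho^2-1\geq 2\Gamma Q_\epsilon(x)^{\beta/4-1/(4\gamma)}$ is precisely what produces the gain of order $\Gamma Q_\epsilon(x)^{\beta/4+3/(4\gamma)}$ (this is also the only place the exponent $\frac{\beta}{4}+\frac{3}{4\gamma}$ can come from). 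For this gain to survive, the cross-chart correction must be of strictly smaller order --- the paper gets $C_{\beta,\epsilon}^{1/(2\gamma)}Q_\epsilon(x)^{\beta/2-1/\gamma}$, with the small $C_{\beta,\epsilon}$ prefactor doing the absorbing --- whereas your asserted correction $O(Q_\epsilon(x)^{\beta/4+3/(4\gamma)})$ is of the same order as the gain with an unspecified constant, so even granting your two sub-steps the stated conclusion would not follow.
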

\begin{proof}  Recall the disks $\mathcal{D}=\{D\}$ and the isometries $\Theta_D:TD\rightarrow\mathbb{R}^d,\nu_a=\Theta_D^{-1}|_{T_aM}, a\in D\subset M$ from Definition \ref{isometries}. If $\epsilon$ is smaller than the Lebesgue number of $\mathcal{D}$, then $\exists D$ which contains $x, f^{-1}(q), f^{-1}(y)$. Let $\Theta=\Theta_D$, and set $C_1:=\Theta\circ C_0(x),C_2:=\Theta\circ C_0(f^{-1}(y))$.
Let $F,G$ denote the representing functions of $V^s,\mathcal{F}(V^s)$, respectively. 

Let $\xi\in T_{f^{-1}(q)}\mathcal{F}(V^s)$. Write $t=\psi_x^{-1}(f^{-1}(q))=(t_s,G(t_s))$, and
$\xi=d_{t}\psi_x\Big(\begin{array}{c}
v\\
d_{t_s}Gv\\
\end{array}\Big)$ for a vector $v\in \mathbb{R}^{s(x)}$. We define
$\pi_x\xi:=d_{(0,0)}\psi_x\Big(\begin{array}{c}
v\\
0\\
\end{array}\Big)$ (
$\pi_y$ is defined analogously). Denote by $E_0$ the $|\cdot|_\infty$-Lipschitz constant of $(x,\underline{u},\underline{v})\mapsto[\Theta_D\circ d_{\underline{v}}\exp][\nu_x \underline{u}]$.

We begin with bounding the distortion of $\pi_{x}$ (assume w.l.o.g\footnote{Otherwise cancel its size from both the denominator and numerator of the LHS of eq. \eqref{eq9}.} $|\xi|=1$):

\begin{align}\label{step1new}
|\Theta\xi-\Theta\pi_x\xi|=&|\Theta d_{(t_s,G(t_s))}\psi_x\Big(\begin{array}{c}
v\\
d_{t_s}Gv\\
\end{array}\Big)-\Theta d_{(0,0)}\psi_x\Big(\begin{array}{c}
v\\
0\\
\end{array}\Big)|\\
=&|\Theta \Big(d_{C_0(x)(t_s,G(t_s))}\exp_x\Big)\nu_x\circ C_1\Big(\begin{array}{c}
v\\
d_{t_s}Gv\\
\end{array}\Big)-\Theta \Big(d_{C_0(x)(0,0)}\exp_x\Big)\nu_x\circ C_1\Big(\begin{array}{c}
v\\
0\\
\end{array}\Big)|\nonumber\\
\leq& E_0\Big(\Big| C_1\Big( \Big(\begin{array}{c}
v\\
d_{t_s}Gv\\
\end{array}\Big) -\Big(\begin{array}{c}
v\\
0\\
\end{array}\Big)\Big)\Big|+\Big|C_0(x)\Big((t_s,G(t_s))-(0,0)\Big)\Big|\Big)\nonumber\\ \leq&E_0\Big(|v|\cdot\|d_{t_s}G\|+ |\exp_x^{-1}f^{-1}(q)|\Big)= E_0\Big(|v|\cdot\|d_{t_s}G\|+ d(f^{-1}(q),x)\Big)(\because C_1\text{ is a contraction})\nonumber\\
\leq&3E_0 Q_\epsilon(x)^{\beta/2}\|C_0^{-1}(x)\|\leq 3E_0C_{\beta,\epsilon}^\frac{1}{2\gamma}Q_\epsilon(x)^{\frac{\beta}{2}-\frac{1}{2\gamma}} \nonumber\\
\leq &\frac{1}{2}Q_\epsilon(x)^{\frac{\beta}{2}-\frac{1}{2\gamma}}\text{ }(\because |v|\leq 2\|C_0^{-1}(x)\|,\|d_{t_s}G\|\leq Q_\epsilon(x)^{\beta/2}).\nonumber
\end{align} 
Therefore $\frac{|\pi_x\xi|}{|\xi|}=e^{\pm Q_\epsilon(x)^{\frac{\beta}{2}-\frac{1}{2\gamma}}}$  (similarly, $\frac{|\pi_y\xi'|}{|\xi'|}=e^{\pm Q_\epsilon(y) ^{\frac{\beta}{2}-\frac{1}{2\gamma}}}, \xi'\in T_q V^s\setminus\{0\}$). We continue to decompose:
\begin{equation}\label{multipliers}\frac{S(x,\pi_x\xi)}{S(f^{-1}(q),\xi)}=\frac{S(f^{-1}(y),\eta)}{S(f^{-1}(q),\xi)}\cdot\frac{S(x,\pi_x \xi)}{S(f^{-1}(y),\eta)},\end{equation}
where $\eta:= d_y (f^{-1})\pi_y d_{f^{-1}(q)}f\xi\in H^s(f^{-1}(y))$  ($\|\pi_y\|\leq e^{Q_\epsilon^{\frac{\beta}{2}-\frac{1}{2\gamma}}(y)}$, hence $|\eta|\leq2M_f^2$).
We study the second factor. Denote
$$u_1:=C_0^{-1}(x)\xi,u_2:=C_0^{-1}(f^{-1}(y))\eta.$$
By the definitions of $S(\cdot,\cdot)$ and $C_0(\cdot)$ (Definition \ref{scalingfuncs}, Theorem \ref{pesinreduction}), the inverse of the second factor satisfies:
\begin{align}
\Big|\frac{S(f^{-1}(y),\eta)}{S(x,\pi_x\xi)}-1\Big|=&\Big|\frac{|C_0^{-1}(f^{-1}(y))\eta|}{|C_0^{-1}(x)\pi_x\xi|}-1\Big|=\Big|\frac{|C_2^{-1}\Theta\eta|}{|C_1^{-1}\Theta\pi_x\xi|}-1\Big|\label{eq9}\\ \leq&\Big|\frac{|C_2^{-1}\Theta\eta-C_1^{-1}\Theta\eta|+|C_1^{-1}\Theta\eta-C_1^{-1}\Theta\pi_x\xi|+|C_1^{-1}\Theta\pi_x\xi|}{|C_1^{-1}\Theta\pi_x\xi|}-1\Big|\nonumber\\
\leq&\frac{1}{|C_0^{-1}(x)\pi_x\xi|}\Big(\|C_2^{-1}-C_1^{-1}\|\cdot|\eta|+\|C_1^{-1}\|\cdot|\Theta\pi_x\xi-\Theta \eta|\Big)\nonumber\\
\leq&2\Big( 2M_f^2Q_\epsilon^4(x)Q_\epsilon^4(f^{-1}(y))+\|C_0^{-1}(x)\|\cdot|\Theta\pi_x\xi-\Theta\eta|\nonumber\Big)\\
(\because&\|C_0(x)\|\leq1\text{, Lemma \ref{overlap}},|\eta|\leq 2M_f^2,|\pi_x\xi|\geq e^{-2Q_\epsilon^{\frac{\beta}{2}-\frac{1}{2\gamma}}(x)}\geq\frac{1}{2}).\nonumber
\end{align}
By the definition of $\omega_0$ (Lemma \ref{omega0}), $\frac{Q_\epsilon(f^{-1}(y))}{Q_\epsilon(y)}\leq \omega_0$. Substituting this gives:
\begin{align}\label{thirdterm}
\Big|\frac{S(f^{-1}(y),\eta)}{S(x,\pi_x\xi)}-1\Big|\leq&4M_f^2\omega_0Q_\epsilon^4(x)Q_\epsilon^4(y)+2\cdot\|C_0^{-1}(x)\|\cdot|\Theta\pi_x\xi-\Theta\eta|\nonumber\\
\leq&\frac{1}{4}Q_\epsilon^3(x)Q_\epsilon^3(y)+2\|C_0^{-1}(x)\|\cdot|\Theta\pi_x\xi-\Theta\eta|,\end{align}
for small enough $\epsilon$. Next we bound $|\Theta\pi_x\xi-\Theta\eta|$:

\medskip
\textit{Step 1}: $|\Theta\xi-\Theta\pi_x\xi|\leq 3E_0C_{\beta,\epsilon}^\frac{1}{2\gamma}Q_\epsilon(x)^{\frac{\beta}{2}-\frac{1}{2\gamma}}\leq\frac{1}{2}Q_\epsilon(x)^{\frac{\beta}{2}-\frac{1}{2\gamma}}$ by \eqref{step1new}.


\medskip
\textit{Step 2}: Estimation of $|\Theta\xi-\Theta \eta|$. Let $\lambda:=d_{f^{-1}(q)}f\xi$ and $\hat{\lambda}=\frac{\lambda}{|\lambda|}$ \text{ }\footnote{$\hat{\lambda}$ is well defined since $f$ is a diffeomorphism and $|\xi|\neq0$.}.
Then
\begin{align*}
|\Theta\eta-\Theta\xi|\equiv&|\Theta d_y(f^{-1})\pi_yd_{f^{-1}(q)}f\xi
-\Theta\xi|=|\Theta d_y(f^{-1})\pi_yd_{f^{-1}(q)}f\xi-\Theta d_q(f^{-1})d_{f^{-1}(q)}f\xi|\\
=&|\Theta d_y(f^{-1})\pi_y\lambda-\Theta d_q(f^{-1})\lambda|=|\Theta d_y(f^{-1})\nu_y\Theta'\pi_y\lambda-\Theta d_q(f^{-1})\nu_q\Theta'\lambda|\\
\leq&|\Theta d_y(f^{-1})\nu_y\Theta'\pi_y\lambda-\Theta d_q(f^{-1})\nu_q\Theta'\pi_y\lambda|+|\Theta d_q(f^{-1})\nu_q\Theta'\pi_y\lambda-\Theta d_q(f^{-1})\nu_q\Theta'\lambda|\\
\leq &\|\Theta d_y(f^{-1})\nu_y-\Theta d_q(f^{-1})\nu_q\|\cdot|\pi_y\lambda|+M_f|\Theta'\pi_y\lambda-\Theta'\lambda|\\
\leq &H_0\cdot d(q,y)^\beta|\pi_y\lambda|+M_f|\Theta'\pi_y\hat{\lambda}-\Theta'\hat{\lambda}|\cdot|\lambda|,    
\end{align*}
where $\Theta'$ is the linear isometry for the neighborhood containing $y$ and $q$, as in Definition \ref{isometries}, and $H_0$ is the bound for H\"ol$(d_\cdot f^{-1})$ as in the proof of Proposition \ref{Lambda}. 
$|\lambda|\leq M_f$. As in \eqref{step1new}, $|\Theta'\pi_y\hat{\lambda}-\Theta'\hat{\lambda}|\leq 3E_0C_{\beta,\epsilon}^\frac{1}{2\gamma} Q_\epsilon(y)^{\frac{\beta}{2}-\frac{1}{2\gamma}}$, and hence also $|\pi_y\lambda|=|\pi_y\hat{\lambda}|\cdot|\lambda|\leq  (1+Q_\epsilon(y)^{\frac{\beta}{2}-\frac{1}{2\gamma}})\cdot M_f<2M_f$. Therefore,
$$|\Theta \eta
-\Theta\xi|\leq M_f\cdot(2H_0\cdot Q_\epsilon(y)^{\beta}+ M_f\cdot 3E_0C_{\beta,\epsilon}^\frac{1}{2\gamma} Q_\epsilon(y)^{\frac{\beta}{2}-\frac{1}{2\gamma}})<2M_f^2(H_0+3E_0)C_{\beta,\epsilon}^\frac{1}{2\gamma}Q_\epsilon(y)^{\frac{\beta}{2}-\frac{1}{2\gamma}}.$$
Since $\psi_x^{p^s,p^u}\rightarrow \psi_y^{q^s,q^u}$,  $\psi_{f(x)}^{p^s\wedge p^u}$ and $\psi_y^{q^s\wedge q^u}$ $I$-overlap. Hence Lemma \ref{overlap} gives us the bound \begin{align*}\frac{Q_\epsilon(y)}{Q_\epsilon(x)}=&\frac{Q_\epsilon(f(x))}{Q_\epsilon(x)}\cdot\frac{Q_\epsilon(y)}{Q_\epsilon(f(x))}\leq\omega_0 \frac{Q_\epsilon(y)}{Q_\epsilon(f(x))}\leq\omega_0 \Big(\frac{\|C_0^{-1}(y)\|}{\|C_0^{-1}(f(x))\|}\Big)^{-2\gamma}\\
\leq&\omega_0 (e^{Q_\epsilon(f(x))Q_\epsilon(y)})^{2\gamma}\leq\omega_0 e^\epsilon\text{ (for small enough }\epsilon\text{)}.
\end{align*}
Thus
$$|\Theta\eta
-\Theta\xi|\leq2M_f^2(H_0+3E_0)(e^\epsilon\omega_0)^{\frac{\beta}{2}-\frac{1}{2\gamma}}C_{\beta,\epsilon}^\frac{1}{2\gamma}Q_\epsilon(x)^{\frac{\beta}{2}-\frac{1}{2\gamma}}\leq \frac{1}{2}C_{\beta,\epsilon}^\frac{1}{4\gamma}Q_\epsilon(x)^{\frac{\beta}{2}-\frac{1}{2\gamma}}\text{ for all }\epsilon\text{ enough}.$$

\medskip
\textit{Step 3}: Adding the estimates in steps 1 and 2, we obtain
\begin{align*}|\Theta\pi_x\xi-\Theta\eta|\leq&|\Theta\pi_x\xi-\Theta\xi|+|\Theta\xi-\Theta\eta|\\
\leq &Q_\epsilon(x)^{\frac{\beta}{2}-\frac{1}{2\gamma}}(\frac{1}{2}C_{\beta,\epsilon}^\frac{1}{4\gamma}+3E_0C_{\beta,\epsilon}^\frac{1}{2\gamma})\leq C_{\beta,\epsilon}^\frac{1}{4\gamma}Q_\epsilon(x)^{\frac{\beta}{2}-\frac{1}{2\gamma}}.\end{align*}

Substituting this in \eqref{thirdterm} gives (for small enough $\epsilon$):
\begin{equation}\label{ferligator}
\Big|\frac{S(f^{-1}(y),\eta)}{S(x,\pi_x\xi)}-1\Big|\leq\frac{1}{4}Q^3_\epsilon(x)Q^3_\epsilon(y)+2\|C_0^{-1}(x)\|\cdot C_{\beta,\epsilon}^\frac{1}{4\gamma}Q_{\epsilon}^{\frac{\beta}{2}-\frac{1}{2\gamma}}(x)\leq \frac{1}{2}C_{\beta,\epsilon}^\frac{1}{2\gamma}Q_\epsilon(x)^{\frac{\beta}{2}-\frac{1}{\gamma}},
\end{equation}

and hence, $$\frac{S(f^{-1}(y),\eta)}{S(x,\pi_x\xi)}= e^{\pm C_{\beta,\epsilon}^\frac{1}{2\gamma}Q_\epsilon(x)^{\frac{\beta}{2}-\frac{1}{\gamma}}}.$$ This concludes the study of the second factor of the RHS of \eqref{multipliers}.

\medskip
It remains to study the first factor of the RHS of \eqref{multipliers}. We begin with the identity

\begin{align*}S^2(f^{-1}(y),\eta)=&2\sum_{m=0}^{\infty}|d_{f^{-1}(y)}f^m\eta|^2
=2\Big(|\eta|^2+\sum_{m=1}^\infty|d_yf^{m-1}d_{f^{-1}(y)}f\eta|^2\Big)
=2|\eta|^2+S^2(y,d_{f^{-1}(y)}f\eta).\end{align*}
Similarly, and by the assumption of the lemma 
:
\begin{align*}S^2(f^{-1}(q),\xi)=&2|\xi|^2+S^2(q,d_{f^{-1}(q)}f\xi)\leq2|\xi|^2+\rho^2 S^2(y,\pi_yd_{f^{-1}(q)}f\xi).\end{align*}
By the choice of $\eta$, $d_{f^{-1}(y)}f\eta=\pi_yd_{f^{-1}(q)}f\xi$, so
$$S^2(f^{-1}(q),\xi)\leq2|\xi|^2+\rho^2S^2(y,d_{f^{-1}(y)}f\eta)$$
and so,
\begin{align}\frac{S^2(f^{-1}(q),\xi)}{S^2(f^{-1}(y),\eta)}\leq&\frac{2|\xi|^2+\rho^2S^2(y,d_{f^{-1}(y)}f\eta)}{2|\eta|^2+S^2(y,d_{f^{-1}(y)}f\eta)}=
\rho^2-\frac{2(\rho^2-1)|\eta|^2+2(|\eta|^2-|\xi|^2)}{2|\eta|^2+S^2(y,d_{f^{-1}(y)}f\eta)}\nonumber\\=&\rho^2-\frac{2(\rho^2-1)|\eta|^2+2(|\eta|^2-|\xi|^2)}{S^2(f^{-1}(y),\eta)}\label{kofiko}.\end{align}

By the bound achieved in step 2, and since $|\xi|=1$, $|\eta|=e^{\pm C_{\beta,\epsilon}^\frac{1}{4\gamma}Q_\epsilon(x)^{\frac{\beta}{2}-\frac{1}{2\gamma}}}$ (for small enough $\epsilon$). Hence $\Big||\xi|^2-|\eta|^2\Big|\leq2(e^{ C_{\beta,\epsilon}^\frac{1}{4\gamma}Q_\epsilon(x)^{\frac{\beta}{2}-\frac{1}{2\gamma}}}-1)\leq4 C_{\beta,\epsilon}^\frac{1}{4\gamma}Q_\epsilon(x)^{\frac{\beta}{2}-\frac{1}{2\gamma}}$. We get that for $\epsilon$ small enough, since $Q_\epsilon(x)\ll \epsilon$,
 \begin{align*}\rho^2-1&\geq e^{2\Gamma Q_\epsilon(x)^{\frac{\beta}{4}-\frac{1}{4\gamma}}}-1\geq 2\Gamma Q_\epsilon(x)^{\frac{\beta}{4}-\frac{1}{4\gamma}}\\\Rightarrow&
 (\rho^2-1)|\eta|^2+(|\xi|^2-|\eta|^2)
 \geq(\rho^2-1)|\eta|^2-4C_{\beta,\epsilon}^\frac{1}{4\gamma}Q_\epsilon(x)^{\frac{\beta}{2}-\frac{1}{2\gamma}}=(\rho^2-1)[|\eta|^2-\frac{4C_{\beta,\epsilon}^\frac{1}{4\gamma}Q_\epsilon(x)^{\frac{\beta}{2}-\frac{1}{2\gamma}}}{\rho^2-1}]\\
 &\geq(\rho^2-1)[e^{-2C_{\beta,\epsilon}^\frac{1}{4\gamma}Q_\epsilon(x)^{\frac{\beta}{2}-\frac{1}{2\gamma}}}-\frac{4C_{\beta,\epsilon}^\frac{1}{4\gamma}Q_\epsilon(x)^{\frac{\beta}{2}-\frac{1}{2\gamma}}}{2\Gamma Q_\epsilon(x)^{\frac{\beta}{4}-\frac{1}{4\gamma}}}]
\geq (\rho^2-1)e^{-C_{\beta,\epsilon}^\frac{1}{5\gamma}Q_\epsilon(x)^{\frac{\beta}{4}-\frac{1}{4\gamma}}}.
 \end{align*}
Plugging this in \eqref{kofiko} yields
\begin{align*}\frac{S^2(f^{-1}(q),\xi)}{S^2(f^{-1}(y),\eta)}\leq\rho^2-\frac{2(\rho^2-1)e^{-C_{\beta,\epsilon}^\frac{1}{5\gamma}Q_\epsilon(x)^{\frac{\beta}{4}-\frac{1}{4\gamma}}}}{S^2(f^{-1}(y),\eta)}\leq\rho^2-\frac{2(\rho^2-1)e^{-2C_{\beta,\epsilon}^\frac{1}{5\gamma}Q_\epsilon(x)^{\frac{\beta}{4}-\frac{1}{4\gamma}}}}{S^2(x,\pi_x\xi)},
\end{align*}
where the last inequality is because, as we saw above, $\frac{S(f^{-1}(y),\eta)}{S(x,\pi_x\xi)}=e^{\pm C_{\beta,\epsilon}^\frac{1}{2\gamma}} Q_\epsilon(x)^{\frac{\beta}{2}-\frac{1}{\gamma}}$ 
. So
$$\frac{S^2(f^{-1}(q),\xi)}{S^2(f^{-1}(y),\eta)}\leq\rho^2-\frac{2(\rho^2-1)e^{-2C_{\beta,\epsilon}^\frac{1}{5\gamma}Q_\epsilon(x)^{\frac{\beta}{4}-\frac{1}{4\gamma}}}}{\|C_0^{-1}(x)\|^2\cdot|\pi_x\xi|^2}.$$
Substituting $|\xi|=1$, $\frac{|\pi_x\xi|}{|\xi|}= e^{\pm Q_\epsilon(x)^{\frac{\beta}{2}-\frac{1}{2\gamma}}}$ gives:
$$\frac{S^2(f^{-1}(q),\xi)}{S^2(f^{-1}(y),\eta)}\leq\rho^2-\frac{2(\rho^2-1)e^{-2C_{\beta,\epsilon}^\frac{1}{5\gamma}Q_\epsilon(x)^{\frac{\beta}{4}-\frac{1}{4\gamma}}}}{\|C_0^{-1}(x)\|^2\cdot e^{2 Q_\epsilon(x)^{\frac{\beta}{2}-\frac{1}{2\gamma}}}}\leq\rho^2\Big(1-\frac{2(1-\frac{1}{\rho^2})e^{-3C_{\beta,\epsilon}^\frac{1}{5\gamma}Q_\epsilon(x)^{\frac{\beta}{4}-\frac{1}{4\gamma}}}}{\|C_0^{-1}(x)\|^2}\Big)$$
For $\rho\geq e^{\Gamma Q_\epsilon^{\frac{\beta}{4}-\frac{1}{4\gamma}}(x)}$, $1-\frac{1}{\rho^2}\geq1-e^{-2\Gamma Q_\epsilon^{\frac{\beta}{4}-\frac{1}{4\gamma}}(x)}\geq\Gamma Q_\epsilon^{\frac{\beta}{4}-\frac{1}{4\gamma}}(x)$, so:
$$\frac{S^2(f^{-1}(q),\xi)}{S^2(f^{-1}(y),\eta)}\leq\rho^2\Big(1-\frac{2\Gamma Q_\epsilon^{\frac{\beta}{4}-\frac{1}{4\gamma}}(x)e^{-3C_{\beta,\epsilon}^\frac{1}{5\gamma}Q_\epsilon(x)^{\frac{\beta}{4}-\frac{1}{4\gamma}}}}{\|C_0^{-1}(x)\|^2}\Big)
.$$
By definition, $Q_\epsilon(x)\leq\frac{C_{\beta,\epsilon}}{\|C_0^{-1}(x)\|^{2\gamma}}$. Hence,
$$\frac{S^2(f^{-1}(q),\xi)}{S^2(f^{-1}(y),\eta)}\leq\rho^2(1-3\Gamma Q_\epsilon(x)^{\frac{\beta}{4}-\frac{1}{4\gamma}+\frac{1}{\gamma}})\leq\rho^2 e^{-3\Gamma Q_\epsilon(x)^{\frac{\beta}{4}+\frac{3}{4\gamma}}}.$$
Similarly, $\frac{S^2(f^{-1}(q),\xi)}{S^2(f^{-1}(y),\eta)}\geq\rho^{-2}e^{3\Gamma Q_\epsilon(x)^{\frac{\beta}{4}+\frac{3}{4\gamma}}}$.

\medskip
In summary, $\frac{S^2(x,\pi_x\xi)}{S^2(f^{-1}(q),\xi)}$ is the product of two factors (see \eqref{multipliers}). The second factor is bounded by $e^{\pm C_{\beta,\epsilon}^\frac{1}{2\gamma}Q_\epsilon(x)^{\frac{\beta}{2}-\frac{1}{\gamma}}}$, 
and the first factor takes values in $[\rho^{-2}e^{3\Gamma Q_\epsilon(x)^{\frac{\beta}{4}+\frac{3}{4\gamma}}},\rho^2e^{-3\Gamma Q_\epsilon(x)^{\frac{\beta}{4}+\frac{3}{4\gamma}}}]$.
Therefore, 
\begin{align*}
\frac{S^2(f^{-1}(q),\xi)}{S^2(f^{-1}(y),\eta)}\leq&\rho^2 e^{-3\Gamma Q_\epsilon(x)^{\frac{\beta}{4}+\frac{3}{4\gamma}}+C_{\beta,\epsilon}^\frac{1}{5\gamma}Q_\epsilon(x)^{\frac{\beta}{2}-\frac{1}{\gamma}}}\\
\leq&\rho^2 e^{-2\Gamma Q_\epsilon^{\frac{\beta}{4}+\frac{3}{4\gamma}}(x)\left(\frac{3}{2}-\frac{C_{\beta,\epsilon}^\frac{1}{5\gamma}}{2\Gamma}Q_\epsilon^{\frac{\beta}{2}-\frac{1}{\gamma}-\frac{\beta}{4}-\frac{3}{4\gamma}}(x)\right)}\\
= & \rho^2 e^{-2\Gamma Q_\epsilon^{\frac{\beta}{4}+\frac{3}{4\gamma}}(x)\left(\frac{3}{2}-\frac{C_{\beta,\epsilon}^\frac{1}{5\gamma}}{2\Gamma}Q_\epsilon^{\frac{\beta}{4}+\frac{1}{4\gamma}}(x)\right)}
\leq\rho^2\Big(e^{-\Gamma Q_\epsilon(x)^{\frac{\beta}{4}+\frac{3}{4\gamma}}}\Big)^2.
\end{align*}
and $\frac{S^2(f^{-1}(q),\xi)}{S^2(f^{-1}(y),\eta)}\geq\rho^{-2}\Big(e^{\Gamma Q_\epsilon(x)^{\frac{\beta}{4}+\frac{3}{4\gamma}}}\Big)^2$ for small enough $\epsilon$. This concludes the proof.


\end{proof}

\begin{lemma}\label{chainbounds}
The following holds for any $\epsilon$ small enough. For any two 
 chains $(\psi_{x_i}^{p^s_i,p^u_i})_{i\in\mathbb{Z}},(\psi_{y_i}^{q^s_i,q^u_i})_{i\in\mathbb{Z}}$ with each having some symbol repeating infinitely often in the future, if $\pi[(\psi_{x_i}^{p^s_i,p^u_i})_{i\in\mathbb{Z}}]=\pi[(\psi_{y_i}^{q^s_i,q^u_i})_{i\in\mathbb{Z}}]=p$ then $\forall k\in\mathbb{Z}$, and for any $\xi\in T_{f^k(p)}V^s_k$,
$$\frac{S(x_k,\pi_{x,k}\xi)}{S(y_k,\pi_{y,k}\xi)}= e^{\pm 2\Gamma Q_\epsilon^{\min\{\frac{\beta}{2}-\frac{1}{\gamma},
\frac{\beta}{4}-\frac{1}{4\gamma}\}
}(x_k)},$$
 where $\pi_{x,k},\pi_{y,k}$ denote respectively the maps $\pi_{x_k}:T_{f^k(p)}V^s_k\rightarrow H^s(x_k),\pi_{y_k}:T_{f^k(p)}U^s_k\rightarrow H^s(y_k)$ as in Lemma \ref{boundimprove},
and $V^s_k=V^s((\psi_{x_i}^{p_i^s,p_i^u})_{i\geq k}),U^s_k=V^s((\psi_{y_i}^{q_i^s,q_i^u})_{i\geq k})$.
A similar statement holds for the unstable manifold.
\end{lemma}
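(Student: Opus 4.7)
I would reduce the two-chain comparison to two separate one-chain applications of Lemma \ref{boundimprove}, by inserting the common point $f^k(p)$ as an intermediate reference. Writing
\begin{equation*}
\frac{S(x_k,\pi_{x,k}\xi)}{S(y_k,\pi_{y,k}\xi)} \;=\; \frac{S(x_k,\pi_{x,k}\xi)}{S(f^k(p),\xi)}\cdot\frac{S(f^k(p),\xi)}{S(y_k,\pi_{y,k}\xi)},
\end{equation*}
it suffices (by the symmetric bound for the $y$-chain, together with Lemma \ref{firstchapter2} and Lemma \ref{overlap} to compare $Q_\epsilon(x_k)$ and $Q_\epsilon(y_k)$) to bound each factor by $e^{\pm\Gamma Q_\epsilon^\delta(x_k)}$, where $\delta:=\frac{\beta}{4}-\frac{1}{4\gamma}$ (and $\delta$ is the smaller of the two exponents appearing in the statement, since $\gamma\beta>5$). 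I will carry out the argument for the first factor, setting $\xi_j:=d_{f^k(p)}f^{j-k}\xi$ and $\rho_j:=S(x_j,\pi_{x,j}\xi_j)/S(f^j(p),\xi_j)$.

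\textbf{Initialization at a recurrent index.} The hypothesis that $(x_i)$ carries a symbol repeating infinitely often in the future gives indices $n_\ell\uparrow\infty$ with $x_{n_\ell}=x^*$ (fixed), so $\|C_0^{-1}(x^*)\|$ and $p^{s*}\wedge p^{u*}$ are constants of $\ell$. Since the limit manifold $V^s_{n_\ell}$ stays in windows (Proposition \ref{firstofchapter}), Corollary \ref{RegularOnVs} produces an $\ell$-uniform upper bound
\[
S^2(f^{n_\ell}(p),\xi_{n_\ell})\le 8\frac{\|C_0^{-1}(x^*)\|}{p^{s*}}|\xi_{n_\ell}|^2\sum_{m\ge 0}(I^{-m}(1))^2.
\]
On the other hand, $S^2(x^*,\pi_{x,n_\ell}\xi_{n_\ell})\ge 2|\pi_{x,n_\ell}\xi_{n_\ell}|^2$, and the distortion bound on $\pi_x$ in Lemma \ref{boundimprove} gives $|\pi_{x,n_\ell}\xi_{n_\ell}|=e^{\pm Q_\epsilon(x^*)^{\beta/2-1/(2\gamma)}}|\xi_{n_\ell}|$. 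Together these trap $\rho_{n_\ell}$ and its reciprocal inside a fixed compact interval $[1/M^*,M^*]$.

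\textbf{Iterated pull-back.} For each $\ell$ I apply Lemma \ref{boundimprove} successively to the edges $\psi_{x_j}^{p_j^s,p_j^u}\to\psi_{x_{j+1}}^{p_{j+1}^s,p_{j+1}^u}$ as $j$ decreases from $n_\ell-1$ to $k$. Writing $\Delta:=\frac{\beta}{4}+\frac{3}{4\gamma}>\delta$, there are two alternatives at each step: if $\rho_{j+1}\ge e^{\Gamma Q_\epsilon^\delta(x_j)}$ the lemma applies with $\rho=\rho_{j+1}$ and yields the contraction $\rho_j\le\rho_{j+1}e^{-\Gamma Q_\epsilon^\Delta(x_j)}$; otherwise the lemma, applied with the larger value $\rho=e^{\Gamma Q_\epsilon^\delta(x_j)}$, forces $\rho_j\le e^{\Gamma Q_\epsilon^\delta(x_j)-\Gamma Q_\epsilon^\Delta(x_j)}$. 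In particular $\rho_j$ never exceeds $\max\{\rho_{j+1},\,e^{\Gamma Q_\epsilon^\delta(x_j)}\}$. Because the chain revisits the state $x^*$ at every $n_\ell$, the partial sums $\sum_{j=k}^{n_\ell-1}Q_\epsilon^\Delta(x_j)$ grow without bound in $\ell$ (each recurrence contributes a fixed positive amount $Q_\epsilon^\Delta(x^*)$). Consequently, for $\ell$ large enough, the contraction alternative necessarily drops $\rho_{\,\cdot}$ below the threshold at some index $j^\star\in(k,n_\ell]$, and thereafter the second alternative governs the evolution all the way down to $k$. Using Lemma \ref{omega0} to convert $Q_\epsilon(x_{j+1})$ to $Q_\epsilon(x_j)$ across a single step absorbs a bounded $\omega_0$-factor into the constant, yielding $\rho_k\le e^{2\Gamma Q_\epsilon^\delta(x_k)}$. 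Running the same argument with $\rho_j$ replaced by $\rho_j^{-1}$ (and with the lower-bound clause of Lemma \ref{boundimprove}) gives the matching lower bound. Combining with the symmetric estimate for the $y$-factor yields the stated two-sided bound on $S(x_k,\pi_{x,k}\xi)/S(y_k,\pi_{y,k}\xi)$, and the unstable case follows by reversing time.

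\textbf{Main obstacle.} The delicate point is Step 3: the improvement factor $e^{-\Gamma Q_\epsilon^\Delta(x_j)}$ is indexed by $x_j$, whereas the target bound is indexed by $x_k$. Because $Q_\epsilon(x_j)$ oscillates along the chain rather than decaying, one cannot simply telescope; the argument relies on the recurrent visits to $x^*$ to drive the ratio through the threshold and on the slow-variation estimate of Lemma \ref{omega0} to recover the target exponent at index $k$ with only a constant multiplicative loss (absorbed into the factor $2$ in the final exponent).
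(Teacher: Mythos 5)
Your proposal follows the paper's strategy closely: the decomposition of the ratio through $f^k(p)$, the initialization at recurrent indices via Corollary \ref{RegularOnVs}, the iterated pull-back using the two alternatives of Lemma \ref{boundimprove}, and the observation that the fixed amount of improvement $e^{-\Gamma Q_\epsilon^{\Delta}(x^*)}$ at each recurrent visit drives the ratio below threshold after finitely many such visits. That much is a faithful reconstruction.

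However, the ``Main obstacle'' discussion and its proposed resolution are not quite right. First, you claim the ``second alternative governs the evolution all the way down to $k$'' once the ratio crosses below the threshold; but because the thresholds $e^{\Gamma Q_\epsilon^{\delta}(x_j)}$ are indexed by $x_j$ and $Q_\epsilon(x_j)$ may \emph{increase} along the pull-back (by a factor up to $\omega_0 e^\epsilon$ per step), a single application of Lemma \ref{omega0} is not enough to show the ratio stays below the moving threshold: the discrepancy $((\omega_0 e^\epsilon)^\delta-1)Q_\epsilon^\delta(x_j)$ is not dominated by the per-step improvement $Q_\epsilon^\Delta(x_j)$ when $Q_\epsilon(x_j)$ is small, so the loss cannot be ``absorbed into a constant'' step by step. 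The paper avoids this by appealing to the intrinsic two-factor decomposition in the proof of Lemma \ref{boundimprove}: once below threshold, the ratio remains bounded by the tighter estimate $e^{\pm C_{\beta,\epsilon}^{1/(2\gamma)}Q_\epsilon^{\beta/2-1/\gamma}(x_j)}$ coming from \eqref{ferligator}, which has a strictly larger $Q_\epsilon$-exponent and therefore stays compatible with the next threshold even after the $\omega_0 e^\epsilon$ drift. Second, you attribute the factor $2$ in the exponent of the final statement to this $\omega_0$-loss, but it actually comes from multiplying the two one-chain bounds (for the $x$-chain and the $y$-chain), each of which the paper shows to be $e^{\pm\Gamma Q_\epsilon^{\min\{\beta/2-1/\gamma,\,\beta/4-1/(4\gamma)\}}(x_k)}$; indeed your own one-chain estimate $\rho_k\le e^{2\Gamma Q_\epsilon^\delta(x_k)}$ would produce a factor of $4$ after combining the two chains, overshooting the stated bound.
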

\noindent\textbf{Remark}: Notice that $T_{f^k(p)}V^s_k=T_{f^k(p)}U^s_k$, which makes the choice and use of $\xi$ well defined. 
This can be seen by the facts that the orbit $(f^k(p))_{k\in\mathbb{Z}}$ stays on the intersection of stable and unstable manifolds, and Proposition \ref{Lambda}(2) gives lower bounds for the contraction/expansion by the differential on the respective manifolds' tangent spaces. Thus, the 
stable space of $f^k(p)$ (which exists by Proposition \ref{Lambda}(2))
coincides with $T_{f^k(p)}V^s_k$, and $T_{f^k(p)}V^s_k=\text{stable space of }f^k(p)=T_{f^k(p)}U^s_k$.

\begin{proof} Denote $\vec{v}=(\psi_{x_i}^{p^s_i,p^u_i})_{i\in\mathbb{Z}}$ and $\vec{u}=(\psi_{y_i}^{q^s_i,q^u_i})_{i\in\mathbb{Z}}$. $V^s_k$ and $U^s_k$ are $s$-admissible manifolds which stay in windows. We will prove that:
\begin{equation}\label{thatexpression}\forall\text{ } \xi\in T_{f^k(p)}V^s_k=T_{f^k(p)}U^s_k:\text{ }\frac{S(f^k(p),\xi)}{S(x_k,\pi_{x,k}\xi)},\frac{S(f^k(p),\xi)}{S(y_k,\pi_{y,k}\xi)}= e^{\pm \Gamma Q_\epsilon^{\min\{\frac{\beta}{2}-\frac{1}{\gamma},
\frac{\beta}{4}-\frac{1}{4\gamma}\}
}(x_k)}.
\end{equation}
This is sufficient, since 
$$\frac{S(x_k,\pi_{x,k}\xi)}{S(y_k,\pi_{y,k}\xi)}=\frac{S(x_k,\pi_{x,k}\xi)}{S(f^k(p),\xi)}\cdot\frac{S(f^k(p),\xi)}{S(y_k,\pi_{y,k}\xi)}
.$$
We show $\frac{S(f^k(p),\xi)}{S(x_k,\pi_{x,k}\xi)}= e^{\pm \Gamma Q_\epsilon^{\min\{\frac{\beta}{2}-\frac{1}{\gamma},
\frac{\beta}{4}-\frac{1}{4\gamma}\}
}(x_k)} $; the case of $\frac{S(f^k(p),\xi)}{S(y_k,\pi_{y,k}\xi)}$ is identical. By assumption, there exists a double chart $v$ and a sequence $n_k\uparrow \infty$ s.t $v_{n_k}=v$ for all $k$. Write $v=\psi_x^{p^s,p^u}$.

\medskip
For all $k\geq0$, $f^{n_k}(p)\in V^s(\sigma^{n_k}\underline{v})$, and by Corollary \ref{RegularOnVs}, $\exists C_{v}>0$ s.t $S(f^{n_k}(p))\leq C_{v}$. Set $$k_v:=\lceil\frac{\log C_v}{\Gamma Q_\epsilon^{\frac{\beta}{4}+\frac{3}{4\gamma}}(x)}\rceil.$$

Let $\xi\in T_{f^{n_{k_v}}(p)}V^s(\sigma^{n_{k_v}}\underline{v})(1)$. Pulling back $n_{k_v}$ times, Lemma \ref{boundimprove} tells us that for every $j\in\{0,\ldots, n_{k_v}\}$ either $\frac{S(f^j(p),d_{f^{n_{k_v}}}f^{-(n_{k_v}-j)}\xi)}{S(x_j,\pi_{x,j} d_{f^{n_{k_v}}}f^{-(n_{k_v}-j)}\xi)}=e^{\pm\Gamma Q_{\epsilon}^{\frac{\beta}{4}-\frac{1}{4\gamma}}(x_j)}$, or the ratio improves by $e^{-\Gamma Q_\epsilon^{\frac{\beta}{4}+\frac{3}{4\gamma}}(x)}$. However, the ratio starts from a quantity bounded by $C_v$. It follow that there must be $j\in\{0,\ldots,n_{k_v}\}$ s.t 
$$\frac{S(f^j(p),d_{f^{n_{k_v}}}f^{-(n_{k_v}-j)}\xi)}{S(x_j,\pi_{x,j} d_{f^{n_{k_v}}}f^{-(n_{k_v}-j)}\xi)}=e^{\pm\Gamma Q_{\epsilon}^{\frac{\beta}{4}-\frac{1}{4\gamma}}(x_j)}.$$

With every further iteration, the ratio can only improve, or remain bounded by $e^{\pm C_{\beta,\epsilon}^\frac{1}{2\gamma}Q_\epsilon^{\frac{\beta}{2}-\frac{1}{\gamma}}(x_j)}$ (recall \eqref{ferligator}). This holds for every $\xi\in T_{f^{n_{k_v}}(p)}V^s(\sigma^{n_{k_v}}\underline{v})(1)$, and so $\forall \xi'\in T_pV^s(\underline v)(1)$
$$\frac{S(p,\xi')}{S(x_0,\pi_{x,0}\xi')}= e^{\pm\max\{C_{\beta,\epsilon}^\frac{1}{2\gamma}Q_\epsilon^{\frac{\beta}{2}-\frac{1}{\gamma}}(x_0),\Gamma Q_{\epsilon}^{\frac{\beta}{4}-\frac{1}{4\gamma}}(x_0)\}}= e^{\pm\Gamma Q_\epsilon^{\min\{\frac{\beta}{2}-\frac{1}{\gamma},\frac{\beta}{4}-\frac{1}{4\gamma}\}}(x_0)}.$$

This proves the lemma for $k=0$. For other $k$, apply this to $(\psi_{x_{i+k}}^{p^s_{i+k},p^u_{i+k}})_{i\in\mathbb{Z}}$,$(\psi_{y_{i+k}}^{q^s_{i+k},q^u_{i+k}})_{i\in\mathbb{Z}}$.

\end{proof}

\noindent
\textbf{Remark:} The scheme of the proof only required that we have a uniform bound on the $S(\cdot)$ parameter of a stable leaf infinitely often in the future of the chain. The scheme works similarly for chains which may not have a symbol recurring infinitely often, but lie within an irreducible component of $\Sigma$. The principle which allows for it is replacing the role of the recurring symbol with a concatenation arbitrarily far away with a fixed periodic word (which is possible by irreducibility).

\medskip
Our next task is to show that if $\pi((\psi_{x_{i}}^{p^s_{i},p^u_{i}})_{i\in\mathbb{Z}})=\pi((\psi_{y_{i}}^{q^s_{i},q^u_{i}})_{i\in\mathbb{Z}})$ then $C_0(x_i)^{-1}:T_{x_i}M\rightarrow\mathbb{R}^d,C_0(y_i)^{-1}:T_{y_i}M\rightarrow\mathbb{R}^d$ are ``approximately the same." There are two issues in expressing this formally:
\begin{enumerate}[label=(\alph*)]
\item $C_0(x_i)^{-1},C_0(y_i)^{-1}$ are linear maps with different domains.
\item $C_0(\cdot)$ is only determined up to orthogonal self maps of $H^s(\cdot),H^u(\cdot)$. In the two-dimensional case this means choosing a number from $\{\pm1\}$, which commutes with composition of linear maps, but we wish to deal with the higher dimensional case, which is more subtle because of non-commutativity.
\end{enumerate}
We address these issues in the following proposition.
\begin{prop}\label{Xi} Under the assumptions of the previous lemma:
$$\exists\Xi_i:T_{x_i}M\rightarrow T_{y_i}M\text{ s.t }\forall\xi\in T_{x_i}M(1):\frac{|C_0^{-1}(x_i)\xi|}{|C_0^{-1}(y_i)\Xi_i\xi|}=e^{\pm2\Gamma Q_\epsilon^{\min\{\frac{\beta}{2}-\frac{1}{\gamma},
\frac{\beta}{4}-\frac{1}{4\gamma}\}
}(x_i)},$$
where $\Xi_i$ is an invertible linear transformation, and $\|\Xi_i\|,\|\Xi_i^{-1}\|\leq e^{Q_\epsilon^{\frac{\beta}{2}-\frac{1}{\gamma}}(x_i)}$.
\end{prop}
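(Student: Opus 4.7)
The plan is to build $\Xi_i$ blockwise along the invariant splitting and verify the estimate separately on each block. Concretely, let $V^s_i, V^u_i$ denote the stable/unstable admissible manifolds through $f^i(p)$ coming from the chain $(\psi_{x_j}^{p^s_j,p^u_j})_{j\in\mathbb{Z}}$, and let $U^s_i, U^u_i$ be the analogous ones coming from $(\psi_{y_j}^{q^s_j,q^u_j})_{j\in\mathbb{Z}}$. Both $V^{s/u}_i$ and $U^{s/u}_i$ pass through $f^i(p)$ and are tangent to the (unique) stable/unstable space of the orbit of $p$, so
$$T_{f^i(p)}V^{s/u}_i = T_{f^i(p)}U^{s/u}_i =:T^{s/u}_i.$$
Lemma \ref{boundimprove} furnishes invertible linear maps $\pi_{x,i}^{s}:T^s_i\to H^s(x_i)$, $\pi_{y,i}^s:T^s_i\to H^s(y_i)$ (and analogous $\pi_{x,i}^u, \pi_{y,i}^u$ for the unstable side), each satisfying $|\pi/\cdot|=e^{\pm Q_\epsilon^{\frac{\beta}{2}-\frac{1}{2\gamma}}(x_i)}$.

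First I would set $\Xi_i^s:=\pi_{y,i}^s\circ(\pi_{x,i}^s)^{-1}:H^s(x_i)\to H^s(y_i)$ and $\Xi_i^u:=\pi_{y,i}^u\circ(\pi_{x,i}^u)^{-1}:H^u(x_i)\to H^u(y_i)$, and then extend by linearity using the splitting $T_{x_i}M=H^s(x_i)\oplus H^u(x_i)$ to obtain $\Xi_i:T_{x_i}M\to T_{y_i}M$. By construction $\Xi_i$ is invertible, and $\Xi_i$ sends the stable/unstable decomposition at $x_i$ to the stable/unstable decomposition at $y_i$. In particular $(\Xi_i\xi)^{s/u}=\Xi_i^{s/u}\xi^{s/u}$ in $T_{y_i}M=H^s(y_i)\oplus H^u(y_i)$.

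Next, for the main ratio estimate, I would use the key identity from Theorem \ref{pesinreduction}:
$$|C_0^{-1}(z)\eta|^2 = S^2(z,\eta^s) + U^2(z,\eta^u)\qquad\text{for every }z\in 0\text{-}\mathrm{summ},\ \eta\in T_zM,$$
with $\eta=\eta^s+\eta^u\in H^s(z)\oplus H^u(z)$. Applied to $z=x_i,\eta=\xi$ and to $z=y_i,\eta=\Xi_i\xi$, this gives
$$\frac{|C_0^{-1}(x_i)\xi|^2}{|C_0^{-1}(y_i)\Xi_i\xi|^2}
= \frac{S^2(x_i,\xi^s)+U^2(x_i,\xi^u)}{S^2(y_i,\Xi_i^s\xi^s)+U^2(y_i,\Xi_i^u\xi^u)}.$$
By the construction of $\Xi_i^{s/u}$, the pair $(\xi^s,\Xi_i^s\xi^s)$ has the form $(\pi_{x,i}^s\eta,\pi_{y,i}^s\eta)$ for some $\eta\in T^s_i$, so Lemma \ref{chainbounds} gives
$\tfrac{S(x_i,\xi^s)}{S(y_i,\Xi_i^s\xi^s)}=e^{\pm 2\Gamma Q_\epsilon^{\min\{\beta/2-1/\gamma,\beta/4-1/(4\gamma)\}}(x_i)}$, and the unstable analog of that lemma yields the corresponding bound for $U^2$. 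Since the ratio of two sums of positive terms lies between the smallest and largest ratios of the individual terms, the stated estimate for $|C_0^{-1}(x_i)\xi|/|C_0^{-1}(y_i)\Xi_i\xi|$ follows by taking square roots.

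Finally, for the operator bound on $\Xi_i$, I would use that Lemma \ref{boundimprove} already showed $\|\pi_{x,i}^{s/u}\|,\|(\pi_{x,i}^{s/u})^{-1}\|,\|\pi_{y,i}^{s/u}\|,\|(\pi_{y,i}^{s/u})^{-1}\|\leq e^{Q_\epsilon^{\beta/2-1/(2\gamma)}(x_i)}$ (using Lemma \ref{overlap} and Theorem \ref{SandU} to pass between $Q_\epsilon(x_i)$ and $Q_\epsilon(y_i)$). Therefore $\|\Xi_i^{s/u}\|,\|(\Xi_i^{s/u})^{-1}\|\leq e^{2Q_\epsilon^{\beta/2-1/(2\gamma)}(x_i)}$ as operators between the corresponding stable (resp.\ unstable) subspaces. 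The main obstacle I anticipate is passing from these blockwise bounds to a Riemannian operator bound on $\Xi_i:T_{x_i}M\to T_{y_i}M$, because the splitting $H^s\oplus H^u$ is not Riemannian-orthogonal. I would handle this by using the same decomposition trick: $\Xi_i$ being close to the identification $\Theta_D^{-1}\circ\Theta_D$ on each block (by Step 1 of the proof of Lemma \ref{boundimprove}) forces it to be close to the identification on the whole tangent space modulo an error of size $e^{C Q_\epsilon^{\beta/2-1/(2\gamma)}(x_i)}$; since $2Q_\epsilon^{\beta/2-1/(2\gamma)}(x_i)\ll Q_\epsilon^{\beta/2-1/\gamma}(x_i)$ for all sufficiently small $\epsilon$ (because $\beta/2-1/(2\gamma)>\beta/2-1/\gamma$ gives the former exponent a smaller quantity), the loose bound $\|\Xi_i^{\pm1}\|\le e^{Q_\epsilon^{\beta/2-1/\gamma}(x_i)}$ absorbs both the blockwise error and any amplification coming from the angle between $H^s$ and $H^u$.
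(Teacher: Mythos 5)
Your blockwise construction of $\Xi_i$ via $\Xi_i^{s/u}:=\pi_{y,i}^{s/u}\circ(\pi_{x,i}^{s/u})^{-1}$, the use of the identity $|C_0^{-1}(z)\eta|^2=S^2(z,\eta^s)+U^2(z,\eta^u)$, and the two applications of Lemma \ref{chainbounds} (stable and unstable) all match the paper's proof exactly; the main ratio estimate is handled correctly.

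The gap is in the operator norm bound $\|\Xi_i^{\pm1}\|\le e^{Q_\epsilon^{\beta/2-1/\gamma}(x_i)}$. You correctly sense that the non-orthogonality of $H^s\oplus H^u$ is the obstacle and that the resolution is to show $\Xi_i$ is close to a single isometry, but your quantitative accounting is off in a way that matters. Decomposing a unit vector $\xi=\xi_s+\xi_u$ costs a factor $\|C_0^{-1}(x_i)\|$ (the paper's Part~1, via the angle estimate $1/\sin\sphericalangle(H^s,H^u)\le\|C_0^{-1}\|$), and this factor multiplies the blockwise error of size $\sim Q_\epsilon(x_i)^{\beta/2-1/(2\gamma)}$. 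The product is \emph{not} negligible relative to $Q_\epsilon^{\beta/2-1/(2\gamma)}$; rather, using $\|C_0^{-1}(x_i)\|\le C_{\beta,\epsilon}^{1/(2\gamma)}Q_\epsilon(x_i)^{-1/(2\gamma)}$, it is exactly of order $Q_\epsilon(x_i)^{\beta/2-1/\gamma}$ --- that is where the extra loss of $1/(2\gamma)$ in the exponent of the stated bound comes from. Saying ``the loose bound absorbs the amplification'' because $2Q_\epsilon^{\beta/2-1/(2\gamma)}\ll Q_\epsilon^{\beta/2-1/\gamma}$ hides rather than resolves this: the $\|C_0^{-1}\|$-weighted error is \emph{comparable} to the target, not smaller.

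Moreover, you cannot use Lemma \ref{overlap} to pass between $Q_\epsilon(x_i)$ and $Q_\epsilon(y_i)$: $\psi_{x_i}$ and $\psi_{y_i}$ are charts from two distinct chains shadowing the same orbit, not $I$-overlapping charts, so Lemma \ref{overlap} simply does not apply to this pair. The paper instead uses a bootstrap: write $\Xi_i=d_{C_0(x_i)z'_{x_i}}(\exp_{y_i}^{-1}\exp_{x_i})+E_1$ with $\|E_1\|\le 2\|C_0^{-1}(x_i)\|\bigl(Q_\epsilon(x_i)^{\beta/2-1/(2\gamma)}+Q_\epsilon(y_i)^{\beta/2-1/(2\gamma)}\bigr)$, plug the resulting crude bound for $\|\Xi_i\|$ back into the main ratio estimate to deduce $\|C_0^{-1}(x_i)\|\le 5\|C_0^{-1}(y_i)\|$ and its symmetric reverse, and only then conclude $\|E_1\|\le\tfrac12 Q_\epsilon(x_i)^{\beta/2-1/\gamma}$. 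Without this self-improving step you have no a priori control of the ratio $Q_\epsilon(y_i)/Q_\epsilon(x_i)$, and the argument as written does not close.
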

\begin{proof} 

Denote the following operators as defined in Lemma \ref{boundimprove}
: $\pi_{x,k}^s:T_{f^k(p)}V_k^s\rightarrow H^s(x_k)$, $\pi_{x,k}^u:T_{f^k(p)}V^u_k\rightarrow H^u(x_k)$, $\pi_{y,k}^s:T_{f^k(p)}U_k^s\rightarrow H^s(y_k)$, $\pi_{y,k}^u:T_{f^k(p)}U_k^u\rightarrow H^u(x_k)$.
Using the notations from the previous lemma, define $\pi_{x,k}^s:=\pi_{x,k}:T_{f^k(p)}V^s_k\rightarrow H^s(x_k),\pi_{y,k}^s:=\pi_{y,k}:T_{f^k(p)}U^s_k\rightarrow H^s(y_k)$. Let $V^u_k:=V^u((\psi_{x_i}^{p_i^s,p_i^u})_{i\leq k}),U^u_k:=V^u((\psi_{y_i}^{q_i^s,q_i^u})_{i\leq k})$, and define $\pi_{x,k}^u:=\pi_{x,k}:T_{f^k(p)}V^u_k\rightarrow H^u(x_k),\pi_{y,k}^u:=\pi_{y,k}:T_{f^k(p)}U^u_k\rightarrow H^u(y_k)$ as in the unstable case of Lemma \ref{boundimprove}. Define $\Xi_k^s:H^s(x_k)\rightarrow H^s(y_k)$, $\Xi_k^u:H^u(x_k)\rightarrow H^u(y_k)$ the following way:
$$\Xi_k^s:=\pi_{y,k}^s\circ
(\pi_{x,k}^s)^{-1}\text{ , }\Xi_k^u:=\pi_{y,k}^u\circ
(\pi_{x,k}^u)^{-1}.$$
The composition of $\pi_{y,k}^{s/u}\circ(\pi_{x,k}^{s/u})^{-1}$ is well defined by the remark after Lemma \ref{chainbounds}. Define $\Xi_k:T_{x_k}M\rightarrow T_{y_k}M$:
\begin{equation}\label{defxi}\forall \xi\in T_{x_k}M\text{:  }\Xi_k\xi:=\Xi_k^s\xi_s+\Xi_k^u\xi_u\text{, where }\xi=\xi_s+\xi_u, \xi_{s/u}\in H^{s/u}(x_k).\end{equation}
This is well defined since $T_{x_k}M=H^s(x_k)\oplus H^u(x_k)$. Notice that $\Xi_k[H^{s/u}(x_k)]=H^{s/u}(y_k)$. 
Hence, by Lemma \ref{chainbounds}:
\begin{equation}\label{shlukit}
\frac{|C_0^{-1}(x_k)\xi|}{|C_0^{-1}(y_k)\Xi_k\xi|}=\sqrt{\frac{S^2(x_k,\xi_s)+U^2(x_k,\xi_u)}{S^2(y_k,\Xi_k\xi_s)+U^2(y_k,\Xi_k\xi_u)}}=e^{\pm2\Gamma Q_\epsilon^{\min\{\frac{\beta}{2}-\frac{1}{\gamma},
\frac{\beta}{4}-\frac{1}{4\gamma}\}
}(x_k)}  .  
\end{equation}
It remains to bound $\|\Xi_k\|$ and $\|\Xi_k^{-1}\|$. We begin by showing bounds on the norms of $\Xi_k^{s/u}$. By the bounds for  $\|\pi_{x,k}^{s/u}\|$,$\|(\pi_{x,k}^{s/u})^{-1}\|$, $\|\pi_{y,k}^{s/u}\|$,$\|(\pi_{y,k}^{s/u})^{-1}\|$
from Lemma \ref{boundimprove}
,
\begin{equation}\label{shluk}
\|\Xi_k^{s/u}\|\leq \exp\Big( Q_\epsilon(x_k)^{\frac{\beta}{2}-\frac{1}{2\gamma}}+
Q_\epsilon(y_k)^{\frac{\beta}{2}-\frac{1}{2\gamma}}
\Big)
.
\end{equation}

We continue to bound $\|\Xi_k\|$. To ease notations we will omit the '$k$' subscripts. Recall $\rho(M)$ from \textsection\ref{forrho}. By Lemma \ref{firstchapter2}, $d(x,f^k(p))<\frac{\sqrt{d}}{100}Q_\epsilon(x)<\rho(M)$ for $\epsilon$ small enough.  Similarly, $d(y,f^k(p))<\rho(M)$. Therefore the following is well defined: $\psi_x^{-1}(f^k(p))=:z_x'$, $\psi_y^{-1}(f^k(p))=:z_y'$. The vectors of the first $s(x)$ coordinates of $z_{x}'/z_y'$ will be called $z_{x}/z_y$ respectively.

\medskip
$\textit{Part 1}$: Let $\xi\in T_xM$, $\xi=\xi_s+\xi_u$, $\xi_{s/u}\in H^{s/u}(x)$, then $|\xi_s|,|\xi_{u}|\leq \|C_0^{-1}(x)\|\cdot|\xi|$.

\textit{Proof}: w.l.o.g $|\xi|=1$. Notice that the size of $\xi_{s/u}$ can be very big, even when $|\xi_s+\xi_u|=|\xi|=1$. This can happen when 
$\sphericalangle(H^s(x),H^u(x)):=\inf\limits_{\eta_s\in H^s(x),\eta_u\in H^u(x)}|\sphericalangle(\eta_s,\eta_u)|$ is very small, and $\xi_s$ and $\xi_u$ are almost parallel, of the same size, and are pointing to almost opposite directions. Consider the triangle created by the tips of $\xi_s$ and $-\xi_u$, and the origin. Denote its angle at the origin by $\alpha$ (the angle between $\xi_s$ and $-\xi_u$). The size of the edge in front of $\alpha$ has length 1 (since $|\xi_s-(-\xi_u)|=|\xi_s+\xi_u|=1$). Angles between two non-parallel vectors are in $(0,\pi)$, and their sine is always positive. By the sine theorem,
\begin{equation}\label{darksalt}
|\xi_s|,|\xi_u|\leq \frac{1}{|\sin\alpha|}\leq\frac{1}{\inf\limits_{\eta_s\in H^s(x),\eta_u\in H^u(x)}|\sin\sphericalangle(\eta_s,\eta_u)|}=\frac{1}{\sin\sphericalangle(H^s(x),H^u(x))}.
\end{equation}
Notice that $\frac{1}{|\sin\sphericalangle(\eta_s,\eta_u)|}\leq\|C_0^{-1}(x)\|$, $\forall\eta_{s/u}\in H^{s/u}(x)$. To see this, notice first the identity
$$\|C_0^{-1}(x)\|^2=\sup_{\eta_s\in H^s(x),\eta_u\in H^u(x),|\eta_s+\eta_u|=1}\{S^2(x,\eta_s)+U^2(x,\eta_u)\}.$$

Choose $\eta_s\in H^s(x)(1),\eta_u\in H^u(x)(1)$ which minimize $\sin\sphericalangle(\eta_s,\eta_u)$. w.l.o.g $\cos\sphericalangle(\eta_s,\eta_u)>0$. Let $\zeta:=\frac{\eta_s-\eta_u}{|\eta_s-\eta_u|}$. Since $C_0^{-1}(x)\eta_s\perp C_0^{-1}(x)\eta_u$ and $\|C_0(x)\|\leq1$,
$$|C_0^{-1}(x)\zeta|^2\geq \frac{|\eta_s|^2}{|\eta_s-\eta_u|^2}+\frac{|\eta_u|^2}{|\eta_s-\eta_u|^2}=\frac{2}{2-2\cos\sphericalangle(\eta_s,\eta_u))}\geq\frac{1}{1-\cos^2\sphericalangle(\eta_s,\eta_u)}=\frac{1}{\sin^2\sphericalangle(\eta_s,\eta_u)}.$$
So $\|C_0^{-1}(x)\|\geq\frac{1}{|\sin\sphericalangle(\eta_s,\eta_u)|}$. By \eqref{darksalt}, $|\xi_s|,|\xi_u|\leq\|C_0^{-1}(x)\|$.

\medskip
\noindent QED

$\medskip$
$\textit{Part 2}$: $\Xi=d_{C_0(x)z_x'}(\exp_y^{-1}\exp_x)+E_1$, where $E_1:T_xM\rightarrow T_yM$ is linear and $\|E_1\|\leq \frac{1}{2}Q_\epsilon(x)^{\frac{\beta}{2}-\frac{1}{\gamma}}$.

\textit{Proof}: We will begin by showing that the expression above is well defined for $\epsilon<\frac{\rho(M)}{2}$. This is true because $\exp_x[B_\epsilon(0)]\subset B_{\epsilon+d(x,y)}(y)\subset B_{2\epsilon}(y)$; $|C_0(x)z_x'|=d(x,f^k(p))<\epsilon$; so $\exp_y^{-1}\exp_x$ is defined on an open neighborhood of $C_0(x)z_x'$, and hence is differentiable on it.

By definition, $\Xi^s=\pi_{y,k}^s\circ(\pi_{x,k}^s)^{-1}$.
\begin{itemize}
\item $\xi_s\in H^s(x)$ is mapped by $(\pi_{x,k}^s)^{-1}$ to $\xi_s'$, a tangent vector at $f^k(p)$
given by $\xi_s'=d_{z_x'}\psi_x\Big(\begin{array}{c}
v\\
d_{z_x}Gv\\ \end{array}\Big)$, where $G$ represents $V^s_k$ in $\psi_{x_k}^{p_k^s,p^u_k}$ and $v=C_0^{-1}(x)\xi_s$. We write $\xi_s'$ as follows:
$$\xi_s'=d_{z_x'}\psi_x\Big(\begin{array}{c}
v\\
d_{z_x}Gv\\ \end{array}\Big)=(d_{z_y'}\psi_y)(d_{z_y'}\psi_y)^{-1}d_{z_x'}\psi_x\Big(\begin{array}{c}
v\\
d_{z_x}Gv\\ \end{array}\Big)=:d_{z_y'}\psi_y\Big(\begin{array}{c}
w\\
d_{z_y}Fw\\ \end{array}\Big),$$
for some $w$, where $F$ is the representing function of $U^s_k$ in $\psi_{y_k}^{q^s_k,q^u_k}$. This representation is possible because $\xi_s'$ is tangent to the stable manifold $U^s_k$ in $\psi_{y_k}^{q^s_k,q^u_k}$ at $f^k(p)=\psi_y(z_y')$.
\item $\xi_s'=(\psi_{x,k}^s)^{-1}\xi_s$ is mapped by $\pi_{y,k}^s$ to
$$\Xi^s\xi_s=\pi_{y,k}^s\xi_s'=d_0\psi_y\Big(\begin{array}{c}
w\\
0\\ \end{array}\Big).$$
\end{itemize}

By construction,
$$\Big(\begin{array}{c}
w\\
d_{z_y}Fw\\ \end{array}\Big)=(d_{z_y'}\psi_y)^{-1}(d_{z_x'}\psi_x)\Big(\begin{array}{c}
v\\
d_{z_x}Gv\\ \end{array}\Big).$$

\noindent We abuse notation and use $w,v$ to represent both vectors in $\mathbb{R}^{s(x)}\times \{0\}
$ and vectors in $\mathbb{R}^{s(x)}
$. Then, $$w=(d_{z_y'}\psi_y)^{-1}d_{z_x'}\psi_xv+\Big[ (d_{z_y'}\psi_y)^{-1}d_{z_x'}\psi_x\Big(\begin{array}{c}
0\\
d_{z_x}Gv\\ \end{array}\Big)-\Big(\begin{array}{c}
0\\
d_{z_y}Fw\\ \end{array}\Big)\Big].$$
Call the term in the brackets $E_s\xi_s$, then
$\Xi_s=C_\chi(y)(d_{z_y'}\psi_y)^{-1}d_{z_x'}\psi_xC_0^{-1}(x)+C_0(y)E_s$.
\begin{itemize}
  \item  $\Xi^s\xi_s=d_0\psi_y\Big(\begin{array}{c}
w\\
0\\ \end{array}\Big)\Rightarrow |w|=|(d_0\psi_y)^{-1}\Xi^s\xi_s|=|C_0^{-1}(y)\Xi^s \xi_s|\leq\|C_0^{-1}(y)\|\cdot\|\Xi^s\|\cdot|\xi_s|\leq$

$\leq\|C_0^{-1}(y)\|\cdot e^{Q_\epsilon(x_k)^{\frac{\beta}{2}-\frac{1}{2\gamma}}+
Q_\epsilon(y_k)^{\frac{\beta}{2}-\frac{1}{2\gamma}}}|\xi_s|\leq 2\|C_0^{-1}(y)\|\cdot |\xi_s|(\because \text{ \eqref{shluk} })$.
\item $\mathrm{Lip}(G)\leq Q_\epsilon(x)^\frac{\beta}{2},\mathrm{Lip}(F)\leq Q_\epsilon(y)^\frac{\beta}{2}$ (by the remark after Definition \ref{admissible}).
\item $\xi_s=C_\chi(x)v\Rightarrow|v|\leq \|C_\chi^{-1}(x)\|\cdot |\xi_s|$.
\end{itemize} Hence:

\begin{align}\label{Es}
|C_0(y)E_s\xi_s|\leq&\|d_{z_y'}\exp_y^{-1}\|\cdot\|d_{z_x'}\psi_x\|\cdot\|d_{z_x}G\|\cdot |v|+\|d_{z_y}F\|\cdot|w|\nonumber\\
\leq&2\cdot2Q_\epsilon(x)^\frac{\beta}{2}\cdot\|C_0^{-1}(x)\|\cdot|\xi_s|+Q_\epsilon(y)^\frac{\beta}{2}\cdot2\|C_0^{-1}(y)\|\cdot|\xi_s|\nonumber\\
\leq&(Q_\epsilon(x)^{\frac{\beta}{2}-\frac{1}{2\gamma}}+Q_\epsilon(y)^{\frac{\beta}{2}-\frac{1}{2\gamma}})|\xi_s|.
\end{align}
Similarly, $\Xi_u=C_0(y)(d_{z_y'}\psi_y)^{-1}d_{z_x'}\psi_xC_0^{-1}(x)+C_0(y)E_u$, where $E_u:H^u(x)\rightarrow T_yM$ and 
\begin{equation}\label{Eu}
\|C_0(y)E_u\|
\leq (Q_\epsilon(x)^{\frac{\beta}{2}-\frac{1}{2\gamma}}+Q_\epsilon(y)^{\frac{\beta}{2}-\frac{1}{2\gamma}})
.
\end{equation}
For a general tangent $\xi\in T_xM$, write $\xi=\xi_s+\xi_u,\xi_{s/u}\in H^{s/u}(x)$, then

\begin{align*}\Xi\xi=&\Xi(\xi_s+\xi_u)=\Xi_s\xi_s+\Xi_u\xi_u \\
=&\Big(C_0(y)(d_{z_y'}\psi_y)^{-1}d_{z_x'}\psi_xC_0^{-1}(x)+C_0(y)E_s\Big)\xi_s+\Big(C_0(y)(d_{z_y'}\psi_y)^{-1}d_{z_x'}\psi_xC_0^{-1}(x)+C_0(y)E_u\Big)\xi_u\\
=&C_0(y)(d_{z_y'}\psi_y)^{-1}d_{z_x'}\psi_xC_0^{-1}(x)(\xi_s+\xi_u)+C_0(y)(E_s\xi_s+E_u\xi_u)\\
=&C_0(y)(d_{z_y'}\psi_y)^{-1}d_{z_x'}\psi_xC_0^{-1}(x)\xi+C_0(y)(E_s\xi_s+E_u\xi_u).
\end{align*}

Hence, since $
d_{z'_{x}}\psi_{x}=d_{C_0(x)z_{x}'}\exp_{x}\circ C_0(x),d_{z'_{y}}\psi_{y}=d_{C_0(y)z_{y}'}\exp_{y}\circ C_0(y)$, we get
$$\Xi\xi=d_{f^k(p)}\exp_y^{-1}d_{C_0(x)z_x'}\exp_x\xi+C_0(y)(E_s\xi_s+E_u\xi_u)
\equiv d_{C_0(x)z_x'}(\exp_y^{-1}\exp_x)\xi+E_1\xi,$$
where
\begin{align*}|E_1\xi|\leq& |C_0(y)E_s\xi_s|+|C_0(y)E_u\xi_u|\leq((Q_\epsilon(x)^{\frac{\beta}{2}-\frac{1}{2\gamma}}+Q_\epsilon(y)^{\frac{\beta}{2}-\frac{1}{2\gamma}})|)(|\xi_s|+|\xi_u|)\text{ } (\because\text{ \eqref{Es},\eqref{Eu}})\\
\leq& ((Q_\epsilon(x)^{\frac{\beta}{2}-\frac{1}{2\gamma}}+Q_\epsilon(y)^{\frac{\beta}{2}-\frac{1}{2\gamma}})|)(\|C_0^{-1}(x)\|\cdot|\xi|+\|C_0^{-1}(x)\|\cdot|\xi|)\text{ }(\because\text{Part 1})\\
\leq&2\|C_0^{-1}(x)\|(Q_\epsilon(x)^{\frac{\beta}{2}-\frac{1}{2\gamma}}+Q_\epsilon(y)^{\frac{\beta}{2}-\frac{1}{2\gamma}})|\xi|.\end{align*}
Therefore,
$$\Xi=d_{C_0(x)z_x'}(\exp_y^{-1}\exp_x)+E_1\Rightarrow\|\Xi\|\leq \|d_\cdot\exp_y^{-1}\|\cdot\|d_\cdot\exp_x\|+\|E_1\|\leq4+2\|C_0^{-1}(x)\|(Q_\epsilon(x)^{\frac{\beta}{2}-\frac{1}{2\gamma}}+Q_\epsilon(y)^{\frac{\beta}{2}-\frac{1}{2\gamma}}).$$
Now using this again in \eqref{shlukit}:
\begin{align*}
\|C_0^{-1}(x)\|\leq& e^{2\Gamma Q_\epsilon^{\min\{\frac{\beta}{2}-\frac{1}{\gamma},
\frac{\beta}{4}-\frac{1}{4\gamma}\}
}(x_k)}\|C_0^{-1}(y)\Xi\|\leq e^{2\Gamma Q_\epsilon^{\min\{\frac{\beta}{2}-\frac{1}{\gamma},
\frac{\beta}{4}-\frac{1}{4\gamma}\}
}(x_k)}\|C_0^{-1}(y)\|\cdot\|\Xi\|\\
\leq& e^{2\Gamma Q_\epsilon^{\min\{\frac{\beta}{2}-\frac{1}{\gamma},
\frac{\beta}{4}-\frac{1}{4\gamma}\}
}(x_k)}\|C_0^{-1}(y)\|\Big(4+2\|C_0^{-1}(x)\|(Q_\epsilon(x)^{\frac{\beta}{2}-\frac{1}{2\gamma}}+Q_\epsilon(y)^{\frac{\beta}{2}-\frac{1}{2\gamma}})\Big)\\
=& \|C_0^{-1}(y)\|\cdot e^{2\Gamma Q_\epsilon^{\min\{\frac{\beta}{2}-\frac{1}{\gamma},
\frac{\beta}{4}-\frac{1}{4\gamma}\}
}(x_k)}\Big(4+2\|C_0^{-1}(x)\|Q_\epsilon(x)^{\frac{\beta}{2}-\frac{1}{2\gamma}}\Big)\\
+&\|C_0^{-1}(x)\|\cdot2e^{2\Gamma Q_\epsilon^{\min\{\frac{\beta}{2}-\frac{1}{\gamma},
\frac{\beta}{4}-\frac{1}{4\gamma}\}
}(x_k)}\Big(\|C_0^{-1}(y)\|Q_\epsilon(y)^{\frac{\beta}{2}-\frac{1}{2\gamma}}\Big)\\
\leq&5(1-\epsilon)\|C_0^{-1}(y)\|+\epsilon\|C_0^{-1}(x)\|
\Rightarrow (1-\epsilon)\|C_0^{-1}(x)\|\leq (1-\epsilon)5\|C_0^{-1}(y)\|\Rightarrow\|C_0^{-1}(x)\|\leq 5\|C_0^{-1}(y)\|.
\end{align*}
By symmetry we will also get $\|C_0^{-1}(y)\|\leq 5\|C_0^{-1}(x)\|$. In particular, $\frac{Q_\epsilon(y)}{Q_\epsilon(x)}=5^{2\gamma}$. Substituting these in the bounds for $\|E_1\|$, for all $\epsilon$ small enough:
\begin{align*}
\|E_1\|\leq&2\|C_0^{-1}(x)\|Q_\epsilon(x)^{\frac{\beta}{2}-\frac{1}{2\gamma}}+2\|C_0^{-1}(x)\|Q_\epsilon(y)^{\frac{\beta}{2}-\frac{1}{2\gamma}}\\
\leq&2\|C_0^{-1}(x)\|Q_\epsilon(x)^{\frac{\beta}{2}-\frac{1}{2\gamma}}+2\cdot(5^{2\gamma})^{\frac{\beta}{2}-\frac{1}{2\gamma}}\|C_0^{-1}(x)\|Q_\epsilon(x)^{\frac{\beta}{2}-\frac{1}{2\gamma}}\leq \frac{1}{2}Q_\epsilon(x)^{\frac{\beta}{2}-\frac{1}{\gamma}}.	
\end{align*}
QED

\medskip
$\textit{Part 3}$: $\forall u\in R_\epsilon(0)$: $\exists D\in \mathcal{D}$ s.t $\|\Theta_D d_{C_0(x)u}(\exp_y^{-1}\exp_x)-\Theta_D\|\leq Q_\epsilon(x)$ (see Definition \ref{isometries}).

\textit{Proof}: Begin by choosing $\epsilon$ smaller than $\frac{\varpi(\mathcal{D})}{2\sqrt{d}}$ (as in Definition \ref{isometries}). Then $\exists D\in\mathcal{D}$ such that $\exp_x[R_\epsilon(0)]\subset B_{\frac{\varpi(\mathcal{D})}{2}}(x)$, because $d(x,\exp_xC_0(x)v)=|C_0(x)v|_2\leq\sqrt{d}|v|_\infty<\epsilon\sqrt{d}$, so $\exp_x[R_\epsilon(0)]$ has diameter less than $2\sqrt{d}\epsilon<\varpi(\mathcal{D})$. Hence the choice of $D$ and of $\Theta_D$ is proper. Since

$$\Theta_D\exp_y^{-1}\exp_x=(\Theta_D\exp_y^{-1}-\Theta_D\exp_x^{-1})\exp_x+\Theta_D,$$
we get that for any $u\in R_\epsilon(0)$:
$$\Theta_Dd_{C_0(x)u}(\exp_y^{-1}\exp_x)=d_{C_0(x)u}(\Theta_D\exp_y^{-1}\exp_x)=d_{\exp_xC_0(x)u}(\Theta_D\exp_y^{-1}-\Theta_D\exp_x^{-1})d_{C_0(x)u}\exp_x+\Theta_D.$$
Then
$$\|\Theta_Dd_{C_0(x)u}(\exp_y^{-1}\exp_x)-\Theta_D\|\leq \|\Theta_D\exp_y^{-1}-\Theta_D\exp_x^{-1}\|_{C^2}\cdot \|d_{C_0(x)u}\exp_x\|\leq L_2d(x,y)\|d_{C_0(x)u}\exp_x\|,$$
where $L_2$ is the uniform Lipschitz const. of $x\mapsto\nu_x^{-1}\exp_x^{-1}$ (Proposition \ref{chartsofboxes}). This is due to the fact that $d(x,\exp_xC_0(x)u)<\sqrt{d}\epsilon\Rightarrow\exp_xC_0(x)u\in D$. In addition, $\|d_{C_0(x)u}\exp_x\|\leq\|Id_{T_xM}\|+E_0|C_0(x)u-0|\leq1+E_0\epsilon$ ($E_0$ is a constant introduced in Lemma \ref{boundimprove}). Therefore, in total,
\begin{equation}\label{explicitcalculation}\|\Theta_Dd_{C_0(x)u}(\exp_y^{-1}\exp_x)-\Theta_D\|\leq L_2(1+\epsilon E_0)\cdot d(x,y)<Q_\epsilon(x) \text{ }\because\text{Lemma \ref{firstchapter2}}.
\end{equation}
QED

Adding the estimates of part 2 and part 3, we get 
\begin{align*}
\|\Xi\|=&\|\Theta_Dd_{C_0(x)v}(\exp_y^{-1}\exp_x)-\Theta_D+\Theta_D+\Theta_D E_1\|\\ 
\leq&\|\Theta_D d_{C_0(x)v}(\exp_y^{-1}\exp_x)-\Theta_D\|+\|\Theta_D\|+\|E_1\|\leq Q_\epsilon(x)+1+\frac{1}{2}Q_\epsilon^{\frac{\beta}{2}-\frac{1}{\gamma}}(x)\leq e^{Q_\epsilon^{\frac{\beta}{2}-\frac{1}{\gamma}}(x)}.
\end{align*}

If we exchange the roles of $x_k,y_k$ then we get $\Xi_k^{-1}$, so $\|\Xi_k^{-1}\|$ is also less than $e^{Q_\epsilon^{\frac{\beta}{2}-\frac{1}{\gamma}}(x_k)}$.
\end{proof}

\begin{cor}\label{QforGamma} Under the assumptions and notations of the previous lemma:
$$\forall i\in\mathbb{Z}: \frac{\|C_0^{-1}(x_i)\|}{\|C_0^{-1}(y_i)\|}=e^{\pm(2\Gamma+1)Q_\epsilon(x_i)^{\min\{\frac{\beta}{2}-\frac{1}{\gamma},
\frac{\beta}{4}-\frac{1}{4\gamma}\}}},$$
and so
$$\frac{Q_\epsilon(x_i)}{Q_\epsilon(y_i)}=e^{\pm 2\gamma(2\Gamma+1)Q_\epsilon(x_i)^{\min\{\frac{\beta}{2}-\frac{1}{\gamma},
\frac{\beta}{4}-\frac{1}{4\gamma}\}}}.$$
\end{cor}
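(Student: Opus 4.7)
The strategy is to upgrade the vectorwise estimate of Proposition~\ref{Xi} to an operator-norm estimate on $C_0^{-1}$, and then feed it into the definition of $Q_\epsilon$. Abbreviate $\alpha := \min\{\tfrac{\beta}{2}-\tfrac{1}{\gamma},\tfrac{\beta}{4}-\tfrac{1}{4\gamma}\}$, which equals $\tfrac{\beta}{4}-\tfrac{1}{4\gamma}$ since $\gamma>\tfrac{5}{\beta}$.

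First I would fix $i$, choose a near-maximizing unit vector $\xi\in T_{x_i}M$ (so that $|C_0^{-1}(x_i)\xi|$ approximates $\|C_0^{-1}(x_i)\|$), and apply Proposition~\ref{Xi} to obtain
\[
|C_0^{-1}(x_i)\xi|\;\leq\; e^{2\Gamma\, Q_\epsilon(x_i)^{\alpha}}\,|C_0^{-1}(y_i)\,\Xi_i\xi|
\;\leq\; e^{2\Gamma\, Q_\epsilon(x_i)^{\alpha}}\,\|C_0^{-1}(y_i)\|\cdot\|\Xi_i\|.
\]
Since $\beta/2-1/\gamma\geq\alpha$ and $Q_\epsilon(x_i)<1$, the bound $\|\Xi_i\|\leq e^{Q_\epsilon(x_i)^{\beta/2-1/\gamma}}$ from Proposition~\ref{Xi} gives $\|\Xi_i\|\leq e^{Q_\epsilon(x_i)^{\alpha}}$. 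Taking the supremum over $\xi$ yields $\|C_0^{-1}(x_i)\|\leq e^{(2\Gamma+1)Q_\epsilon(x_i)^{\alpha}}\|C_0^{-1}(y_i)\|$. Interchanging the roles of $x_i$ and $y_i$ (Proposition~\ref{Xi} applies symmetrically, producing $\Xi_i^{-1}$ with the same norm bound) yields the reverse inequality, establishing the first claim.

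For the second claim, substitute $\widetilde{Q}_\epsilon(z)=C_{\beta,\epsilon}\|C_0^{-1}(z)\|^{-2\gamma}$ to obtain
\[
\frac{\widetilde{Q}_\epsilon(x_i)}{\widetilde{Q}_\epsilon(y_i)}
=\Big(\frac{\|C_0^{-1}(y_i)\|}{\|C_0^{-1}(x_i)\|}\Big)^{2\gamma}
= e^{\pm\, 2\gamma(2\Gamma+1)\,Q_\epsilon(x_i)^{\alpha}}.
\]
Passing from $\widetilde{Q}_\epsilon$ to $Q_\epsilon$ requires controlling the round-down $Q_\epsilon\in(I^{-1/4}(\widetilde{Q}_\epsilon),\widetilde{Q}_\epsilon]$, which produces multiplicative factors of the form $e^{\pm O(\widetilde{Q}_\epsilon^{1/\gamma})}$. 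Since we have just shown $\widetilde{Q}_\epsilon(x_i)\asymp\widetilde{Q}_\epsilon(y_i)$, the two rounding corrections are comparable and can be absorbed into the constant in front of $Q_\epsilon(x_i)^{\alpha}$ in the exponent (after possibly enlarging $\Gamma$ in the statement, or more precisely by noting that the resulting exponent bound has the same qualitative form $e^{\pm C\cdot Q_\epsilon(x_i)^{\alpha}}$).

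The main obstacle I anticipate is the last step: reconciling the round-down error (of order $Q_\epsilon^{1/\gamma}$) with the claimed exponent $Q_\epsilon^{\alpha}$. The cleanest way to handle this is to observe that the maps $\widetilde{Q}_\epsilon\mapsto Q_\epsilon$ on both sides are synchronized — they round down on the same ladder $\mathcal{I}$ — so the rounding errors appear as a ratio rather than as independent perturbations, and their contribution to the log-ratio is controlled by the difference $\widetilde{Q}_\epsilon(x_i)-\widetilde{Q}_\epsilon(y_i)$, which is already bounded by the first part. This bookkeeping, rather than any new dynamical input, is the technical point of the corollary.
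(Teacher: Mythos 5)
Your first display is proved exactly as in the paper's one-line proof: apply Proposition~\ref{Xi} on unit vectors, pass to operator norms by taking the supremum, absorb $\|\Xi_i\|\leq e^{Q_\epsilon(x_i)^{\beta/2-1/\gamma}}\leq e^{Q_\epsilon(x_i)^{\alpha}}$ (using $\beta/2-1/\gamma\geq\alpha:=\min\{\tfrac{\beta}{2}-\tfrac{1}{\gamma},\tfrac{\beta}{4}-\tfrac{1}{4\gamma}\}$ and $Q_\epsilon<1$), and swap roles for the reverse bound. That part is fine.

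For the second display you have correctly identified a subtlety that the paper silently elides (its proof stops after the first estimate with ``we are done''), but your proposed bridge from $\widetilde{Q}_\epsilon$ to $Q_\epsilon$ does not close the gap. Raising the first display to the power $-2\gamma$ gives the clean statement $\widetilde{Q}_\epsilon(x_i)/\widetilde{Q}_\epsilon(y_i)=e^{\pm 2\gamma(2\Gamma+1)Q_\epsilon(x_i)^{\alpha}}$, i.e.\ the tilde version. Passing to $Q_\epsilon=\max\{q\in\mathcal{I}:q\leq\widetilde{Q}_\epsilon\}$ incurs a multiplicative rounding error of size up to $I^{1/4}(Q_\epsilon)/Q_\epsilon=e^{\Theta(\Gamma Q_\epsilon^{1/\gamma})}$. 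Since $\gamma>5/\beta$ forces $1/\gamma<\alpha$, one has $Q_\epsilon^{1/\gamma}\gg Q_\epsilon^{\alpha}$ as $\epsilon\to0$: the rounding error is a \emph{different, larger power} of $Q_\epsilon$, not a bigger constant times the same power, so it cannot be absorbed by enlarging $\Gamma$ as you suggest. Moreover, your claim that the two roundings ``appear as a ratio'' controlled by the difference $\widetilde{Q}_\epsilon(x_i)-\widetilde{Q}_\epsilon(y_i)$ fails because $\widetilde{Q}_\epsilon\mapsto Q_\epsilon$ is a step function: two $\widetilde{Q}_\epsilon$-values that are arbitrarily close yet straddle a rung of $\mathcal{I}$ round to adjacent ladder elements, producing $Q_\epsilon(x_i)/Q_\epsilon(y_i)\approx e^{\Gamma Q_\epsilon^{1/\gamma}/4}$, which breaks the stated bound for small $\epsilon$. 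What actually holds, and is what the paper uses via the remark after the corollary (eq.~\eqref{IofQ}), is the ladder-commuting form $Q_\epsilon(y_i)=I^{\pm1}(Q_\epsilon(x_i))$: since the two $\widetilde{Q}_\epsilon$'s differ by a factor $e^{\pm2\gamma(2\Gamma+1)Q_\epsilon^{\alpha}}$ which is eventually much smaller than the rung spacing $\approx e^{\Gamma Q_\epsilon^{1/\gamma}/4}$, they can straddle at most one rung of $\mathcal{I}$, so the two roundings differ by at most $I^{\pm1/4}$, comfortably within $I^{\pm1}$.
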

\begin{proof}
By Proposition \ref{Xi}:
\begin{align*}
\|C_0^{-1}(x_i)\|\leq& e^{2\Gamma Q_\epsilon(x_i)^{\min\{\frac{\beta}{2}-\frac{1}{\gamma},
\frac{\beta}{4}-\frac{1}{4\gamma}\}}}\|C_0^{-1}(y_i)\Xi_i\|\leq e^{(2\Gamma+1)Q_\epsilon(x_i)^{\min\{\frac{\beta}{2}-\frac{1}{\gamma},
\frac{\beta}{4}-\frac{1}{4\gamma}\}}}\|C_0^{-1}(y_i)\|,	
\end{align*}

and by symmetry we get the other inequality, and we are done.
\end{proof}

\subsubsection{Comparing frame parameters}\label{forGammaBounds}\text{ }

\noindent
\textbf{Remark:} Notice that if $\gamma>\frac{5}{\beta}$, and $\epsilon>0$ is sufficiently small, then Corollary \ref{QforGamma} in fact tell us
\begin{equation}\label{IofQ}Q_\epsilon(y_i)=I^{\pm1}(Q_\epsilon(x_i)),\text{ }i\in\mathbb{Z}.\end{equation}
It is important that the bound we get on the resemblance of $Q_\epsilon(x_i)$ and $Q_\epsilon(y_i)$ is of a form which commutes with the map $I$. This is the reason that it is important to require $\gamma>\frac{5}{\beta}$ to make sure the bounds of Corollary \ref{QforGamma} can be expressed in terms of $I$ ($\geq\frac{5}{\beta}$ is sufficient, if one carries the constants more carefully). To see where this commutativity relation is needed, see \eqref{forCommentNov21} below.

\begin{cor}\label{newLifeOfPi}
$\pi[\Sigma^\#]=\RST$.	
\end{cor}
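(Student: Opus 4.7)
The plan is to prove the non-trivial inclusion $\pi[\Sigma^\#]\subseteq\RST$, since $\pi[\Sigma^\#]\supseteq\RST$ is already part of Theorem \ref{DefOfPi}(3). Fix a chain $\underline v=(\psi_{x_n}^{p_n^s,p_n^u})_{n\in\mathbb Z}\in\Sigma^\#$ and set $x:=\pi(\underline v)$; the task is to exhibit a function $q:\{f^n(x)\}_{n\in\mathbb Z}\to\mathcal I$ witnessing the three conditions of Definition \ref{strongtemp}.

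First I will verify that $x\in 0$-$\mathrm{summ}$. The forward orbit $\{f^n(x)\}_{n\ge 0}$ lies in the stable leaf $V^s((v_i)_{i\ge 0})$ and the backward orbit $\{f^{-n}(x)\}_{n\ge 0}$ in the unstable leaf $V^u((v_i)_{i\le 0})$ by Proposition \ref{firstofchapter}(3)--(4); both leaves stay in windows, so Corollary \ref{RegularOnVs} gives $S^2(x,\cdot)<\infty$ on $T_xV^s$ and its unstable counterpart gives $U^2(x,\cdot)<\infty$ on $T_xV^u$. This produces the splitting $T_xM=T_xV^s\oplus T_xV^u$ with $H^{s/u}(x)=T_xV^{s/u}$, which is exactly Definition \ref{0-summ}.

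The natural candidate kernel is $q(f^n(x)):=p_n^s\wedge p_n^u$ (to be replaced by $I^{-1}\circ q$ at the end to absorb a single $I$-shift, preserving $(1)$ and $(3)$ and keeping values in $\mathcal I$). Condition $(1)$ is Lemma \ref{lemma131}, and condition $(3)$ follows from $\underline v\in\Sigma^\#$: some symbol repeats along $n_k\uparrow\infty$ and some along $m_k\downarrow -\infty$, so $q(f^{n_k}(x))$ and $q(f^{m_k}(x))$ are constant positive sequences.

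The core is condition $(2)$, $q(f^n(x))\le Q_\epsilon(f^n(x))$. Since $p_n^s\wedge p_n^u\le Q_\epsilon(x_n)$, the task reduces to comparing $\|C_0^{-1}(x_n)\|$ with $\|C_0^{-1}(f^n(x))\|$, and the plan is to realize this via Corollary \ref{QforGamma}. For that I will construct an auxiliary chain $\underline w=(\psi_{f^n(x)}^{r_n^s,r_n^u})_{n\in\mathbb Z}$ centered at the orbit of $x$, with $\pi(\underline w)=x$, by invoking Lemma \ref{subordinatedchain} on $(Q_\epsilon(f^n(x)))_{n\in\mathbb Z}$. The apparent circularity — Lemma \ref{subordinatedchain} needs a positive $\mathcal I$-valued sequence dominated by $Q_\epsilon\circ f^n(x)$, which is essentially what we are after — is resolved by a preliminary rough estimate: the one-chain shadow of the arguments of Lemmas \ref{boundimprove}--\ref{chainbounds}, in which the role of the second chain's projection $\pi_y^{s/u}$ is replaced by the identity $T_{f^n(x)}V^{s/u}_n=H^{s/u}(f^n(x))$ (legitimate now that $x\in 0$-$\mathrm{summ}$), already yields a bound $p_n^s\wedge p_n^u\le I^{N_0}(Q_\epsilon(f^n(x)))$ with a universal $N_0$. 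Inspection of the proof of Lemma \ref{chainbounds} shows that the \emph{symbol repeating infinitely often} hypothesis on a chain is needed only to seed the initial $S$-bound through Corollary \ref{RegularOnVs}; in our setup $\underline v\in\Sigma^\#$ already supplies that bound at $x=\pi(\underline v)$, so the fact that $\underline w$ generically has all symbols distinct is not an obstruction. A clean application of Corollary \ref{QforGamma} to $(\underline v,\underline w)$ then gives $Q_\epsilon(x_n)=I^{\pm 1}(Q_\epsilon(f^n(x)))$ via \eqref{IofQ}, whence $p_n^s\wedge p_n^u\le I(Q_\epsilon(f^n(x)))$; passing to $I^{-1}\circ q$ gives condition $(2)$.

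The main obstacle is the circular flavor in producing $\underline w$, which I plan to break with the preliminary one-chain rough comparison described above; once $\underline w$ is in hand, the rest is a direct invocation of Corollary \ref{QforGamma}.
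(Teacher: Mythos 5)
Your skeleton agrees with the paper's: both inclusions, $0$-summability of $p=\pi(\underline v)$ via Corollary \ref{RegularOnVs}, the candidate kernel $I^{-1}(p_n^s\wedge p_n^u)$, condition (1) from Lemma \ref{lemma131} and condition (3) from $\underline v\in\Sigma^\#$. For condition (2) the paper does exactly what you call the ``preliminary rough estimate'' and nothing more: it re-runs the proofs of Lemma \ref{chainbounds} and Proposition \ref{Xi} with the shadowed orbit itself in the role of the second chain (the maps $\pi_{x,i}$, with $\|\pi_{x,i}^{\pm1}\|\le e^{Q_\epsilon(x_i)^{\beta/2-1/\gamma}}$), obtaining the \emph{sharp} comparison $\|C_0^{-1}(f^i(p))\|/\|C_0^{-1}(x_i)\|=e^{\pm2\Gamma Q_\epsilon(x_i)^{\min\{\beta/2-1/\gamma,\,\beta/4-1/(4\gamma)\}}}$, which by the remark in \textsection\ref{forGammaBounds} (this is where $\gamma>5/\beta$ is used) translates into $Q_\epsilon(f^i(p))=I^{\pm1}(Q_\epsilon(x_i))$ as in \eqref{IofQ}; then $I^{-1}(p_i^s\wedge p_i^u)\le I^{-1}(Q_\epsilon(x_i))\le Q_\epsilon(f^i(p))$ finishes the argument. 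Your additional loop --- build an auxiliary chain $\underline w$ on the orbit via Lemma \ref{subordinatedchain} and then cite Corollary \ref{QforGamma} for the pair $(\underline v,\underline w)$ --- is where the trouble sits.

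The concrete gap is the claim that a rough one-chain estimate yields $p_n^s\wedge p_n^u\le I^{N_0}(Q_\epsilon(f^n(x)))$ with a \emph{universal} $N_0$. A bound of that form is equivalent to a multiplicative error which commutes with $I$, i.e.\ of the shape $e^{\pm c\,Q_\epsilon^{\kappa}}$ with $\kappa\ge\frac1\gamma$: since $I^{N_0}(q)/q=\exp\bigl(\Gamma\sum_{j<N_0}(I^j(q))^{1/\gamma}\bigr)\to1$ as $q\to0$, even a constant multiplicative discrepancy (let alone a genuinely ``rough'' one) cannot be absorbed into finitely many applications of $I$ uniformly along the chain, where $Q_\epsilon(x_n)$ can be arbitrarily small. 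So either your preliminary estimate is carried out with the full precision of Lemmas \ref{boundimprove}--\ref{chainbounds} and Proposition \ref{Xi} --- in which case it already gives $Q_\epsilon(f^n(x))=I^{\pm1}(Q_\epsilon(x_n))$ and the detour through $\underline w$ and Corollary \ref{QforGamma} is redundant, your proof collapsing onto the paper's --- or it is weaker, and then the seeding of Lemma \ref{subordinatedchain} fails and condition (2) is not reached. Separately, the ``clean application'' of Corollary \ref{QforGamma} to $(\underline v,\underline w)$ is not clean: $\underline w$ has no symbol recurring in the future, so the stated hypotheses of Lemma \ref{chainbounds} (inherited by Proposition \ref{Xi} and Corollary \ref{QforGamma}) are violated; the remark following Lemma \ref{chainbounds} does say the scheme only needs a uniform bound on $S$ along a stable leaf infinitely often in the future (which $\underline v\in\Sigma^\#$ supplies at $x$), but exploiting that amounts to re-running the proof in the one-chain setting rather than citing the corollary --- which is precisely the paper's proof. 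I recommend dropping the auxiliary chain entirely and stating the one-chain comparison as the main step.
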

\begin{proof}
In Theorem \ref{DefOfPi} we saw	$\pi[\Sigma^\#]\supseteq \RST$. By Corollary \ref{RegularOnVs}, for every $\underline{u}=(\psi_{x_i}^{p^s_i,p^u_i})_{\in\mathbb{Z}}\in\Sigma^\#$, $p:=\pi(\underline{u})\in0$-summ. $\{p^s_i\wedge p^u_i\}_{i\in\mathbb{Z}}$ is $I$-strongly subordinated to $\{Q_\epsilon(x_i)\}$ by Lemma \ref{lemma131}. By the proof of Proposition \ref{Xi}, the map $\pi_{x,i}:T_{f^i(p)}M\to T_{x_k}M$ satisfies $\|\pi_{x,i}\|,\|\pi_{x,i}^{-1}\|\leq e^{Q_\epsilon(x_i)^{\frac{\beta}{2}-\frac{1}{\gamma}}}$. By the proof of Lemma \ref{chainbounds}, $\frac{\|C_0^{-1}(f^i(p))\|}{\|C_0^{-1}(x_i)\|}=e^{\pm2\Gamma Q_\epsilon(x_i)^{\min\{\frac{\beta}{2}-\frac{1}{\gamma},\frac{\beta}{4}-\frac{1}{4\gamma}\}}}$. As in \eqref{IofQ}, we get $Q_\epsilon(f^i(p))=I^{\pm1}(Q_\epsilon(x_i))$. Then it follows that $$I^{-1}(p_i^s\wedge p_i^u)\leq I^{-1}(Q_\epsilon(x_i))\leq Q_\epsilon(f^i(x)).$$
Thus $p\in ST$ (with the strongly tempered kernel $I^{-1}(p_i^s\wedge p_i^u)$). Since $\limsup_{n\to\pm\infty}p_i^s\wedge p_i^u>0$, $p\in\RST$.
\end{proof}

\begin{definition}
(Compare with \cite[Definition~8.1]{Sarig})
\medskip
\begin{enumerate}
\item If $v$ is a double chart, then $p^{u/s}(v)$ means the $p^{u/s}$ in $\psi_x^{p^s,p^u}=v$.
\item A negative chain $(v_i)_{i\leq0}$ is called {\em $I$-maximal} if it has a symbol recurring infinitely often, and $$p^u(v_0)\geq I^{-1}(p^u(u_0))$$ for every regular chain $(u_i)_{i\in\mathbb{Z}}$ for which there is a positive regular chain $(v_i)_{i\geq0}$ s.t $\pi((u_i)_{i\in\mathbb{Z}})=\pi((v_i)_{i\in\mathbb{Z}})$.
\item A positive chain $(v_i)_{i\geq0}$ is called {\em $I$-maximal} if it has a symbol recurring infinitely often, and $$p^s(v_0)\geq I^{-1}(p^s(u_0))$$ for every chain $(u_i)_{i\in\mathbb{Z}}\in\Sigma^\#$ s.t 
$\pi((u_i)_{i\in\mathbb{Z}})=\pi((v_i)_{i\in\mathbb{Z}})$.
\end{enumerate}
\end{definition}

\begin{prop}\label{prop213}
For every chain $(v_i)_{i\in\mathbb{Z}}\in\Sigma^\#$, $(u_i)_{i\leq0}$ and $(u_i)_{i\geq0}$ are $I$-maximal.
\end{prop}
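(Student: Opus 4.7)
The plan is to reduce $I$-maximality to an explicit formula for $p_0^u$ in terms of the past sequence $(Q_\epsilon(x_i))_{i\leq 0}$, and then apply the inverse problem result (Corollary \ref{QforGamma} together with \eqref{IofQ}), which forces $Q_\epsilon(x_i)$ and $Q_\epsilon(y_i)$ to agree up to a factor of $I^{\pm 1}$ whenever two chains in $\Sigma^\#$ shadow the same orbit. I only write the negative case; the positive case is symmetric.

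First I would show that for $(v_i)=(\psi_{x_i}^{p_i^s,p_i^u})_{i\in\mathbb{Z}}\in\Sigma^\#$, writing $Q_i:=Q_\epsilon(x_i)$, one has
\[
p_0^u \;=\; \min_{n\leq 0} I^{|n|}(Q_n).
\]
The inequality ``$\leq$'' is a direct iteration of the chain relation $p_{i+1}^u=\min\{I(p_i^u),Q_{i+1}\}$ together with the bound $p_n^u\leq Q_n$. For the reverse inequality, the hypothesis $(v_i)\in\Sigma^\#$ makes Lemma \ref{forrecurrenceinnextone} applicable (the sequence $(p_i^s,p_i^u)$ is $I$-strongly subordinated to $(Q_i)$ by the chain relation, and a recurring symbol provides $\limsup p_n^s\wedge p_n^u>0$ in each direction), giving infinitely many $n_0<0$ with $p_{n_0}^u=Q_{n_0}$. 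Starting from such an $n_0$ and iterating the recursion forward to $i=0$ yields $p_0^u=\min_{n_0\leq n\leq 0}I^{|n|}(Q_n)$; letting $n_0\to-\infty$ along the recurrence times produces the claimed formula, the minimum being achieved at some finite $n^*\leq 0$ because $p_0^u>0$ while $I^{|n|}(Q_n)\to\infty$ whenever $Q_n$ does not collapse to $0$.

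For the second step, given any $(u_i)=(\psi_{y_i}^{q_i^s,q_i^u})_{i\in\mathbb{Z}}\in\Sigma^\#$ with $\pi((u_i))=\pi((v_i))$, Corollary \ref{QforGamma} combined with $\gamma>5/\beta$ gives \eqref{IofQ}, namely $Q_\epsilon(y_n)\leq I(Q_n)$ for every $n\in\mathbb{Z}$. Iterating the chain relation for $(u_i)$ produces $q_0^u\leq I^{|n|}(Q_\epsilon(y_n))$ for every $n\leq 0$. Evaluating at $n=n^*$ (the minimizer from the first step) and using monotonicity of $I$:
\[
q_0^u \;\leq\; I^{|n^*|}(Q_\epsilon(y_{n^*})) \;\leq\; I^{|n^*|+1}(Q_{n^*}) \;=\; I(p_0^u),
\]
which is precisely $p_0^u\geq I^{-1}(q_0^u)$, the desired $I$-maximality inequality. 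The recurrence clause in the definition is immediate from $(v_i)\in\Sigma^\#$.

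The main subtle point is the explicit formula of the first step, and in particular the use of Lemma \ref{forrecurrenceinnextone} to ``reset'' the recursion at $p_{n_0}^u=Q_{n_0}$ so that $p_0^u$ depends only on the past $Q_n$ and not on the actual values of $p_n^u$ deep in the past. It is also important here that the bound in Corollary \ref{QforGamma} has the form $I^{\pm 1}$: a multiplicative bound that is \emph{not} of this form would be destroyed by applying $I^{|n|}$. This is exactly what the remark surrounding \eqref{IofQ} calls out, and why the constraint $\gamma>5/\beta$ is imposed in Definition \ref{calibrationParams}.
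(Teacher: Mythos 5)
Your proposal is correct and arrives at the same conclusion from the same key inputs --- Lemma \ref{forrecurrenceinnextone} and \eqref{IofQ} --- but packages the argument differently from the paper. The paper proceeds by an inductive propagation scheme: it first establishes a base case (Step 2: a negative chain ending with $p^u_0=Q_\epsilon(x_0)$ is $I$-maximal), then a propagation step (Step 3: $I$-maximality persists along edges $v_0\to v_1$, via the computation in \eqref{forCommentNov21}), and finally invokes Lemma \ref{forrecurrenceinnextone} to find a past time $n<0$ at which the base case applies, propagating forward to $i=0$. Your route instead extracts the closed-form expression $p_0^u=\min_{n\leq 0}I^{|n|}(Q_\epsilon(x_n))$ --- essentially the Ledrappier-type formula already implicit in Lemma \ref{subordinatedchain} --- and compares it directly against the iterated bound $q_0^u\leq I^{|n|}(Q_\epsilon(y_n))$ using \eqref{IofQ} at the minimizer. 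Both approaches make essential use of the fact that the bound in Corollary \ref{QforGamma} lands you in the range $I^{\pm1}$, so that it commutes with iterated applications of $I$; your commentary on the role of $\gamma>5/\beta$ is exactly the point of the remark preceding \eqref{IofQ}, and your observation that the recurrence time resets the recursion is the right way to see why only the past $(Q_\epsilon(x_n))_{n\leq 0}$ matters. Your closed-form version is arguably tighter conceptually, while the paper's Step 2/Step 3 decomposition isolates a reusable monotonicity lemma.

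Two small points of care. First, your justification that the minimum $\min_{n\leq 0}I^{|n|}(Q_\epsilon(x_n))$ is attained at a finite $n^*$ (``because $I^{|n|}(Q_n)\to\infty$ whenever $Q_n$ does not collapse'') is unnecessary and not obviously true in general; the cleaner statement is that for any fixed recurrence time $n_0$ with $p_{n_0}^u=Q_\epsilon(x_{n_0})$, one has $p_0^u=\min_{n_0\leq n\leq 0}I^{|n|}(Q_\epsilon(x_n))\geq\min_{n\leq 0}I^{|n|}(Q_\epsilon(x_n))\geq p_0^u$, so equality holds and the minimizer lies in the finite window $\{n_0,\ldots,0\}$. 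Second, the definition of $I$-maximality for $(v_i)_{i\leq 0}$ quantifies over chains $(u_i)_{i\in\mathbb{Z}}$ associated with \emph{any} regular positive extension of $(v_i)_{i\leq0}$, not merely the one furnished by the given $(v_i)_{i\in\mathbb{Z}}\in\Sigma^\#$; you should state that your comparison only uses the inverse-problem estimate at negative indices $n\leq 0$, where the charts of the extended chain agree with those of $(v_i)_{i\leq0}$, so the argument is independent of the choice of positive extension.
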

\begin{proof}\text{ } 

Step 1: Let $\underline{u},\underline{v}\in\Sigma^\#$ be two chains s.t $\pi(\underline u)=\pi(\underline v)$. If $u_0=\psi_x^{p^s,p^u}$ and $v_0=\psi_y^{q^s,q^u}$, then $Q_\epsilon(y)=I^{\pm1}(Q_\epsilon(x))$, by \eqref{IofQ}.


Step 2: Every negative chain $(v_i)_{i\leq0}$ with a symbol recurring infinitely often s.t $v_0=\psi_x^{p^s,p^u}$ where $p^u=Q_\epsilon(x)$ is $I$-maximal, and every positive chain with a symbol recurring infinitely often $(v_i)_{i\leq0}$ s.t $v_0=\psi_x^{p^s,p^u}$ where $p^s=Q_\epsilon(x)$ is $I$-maximal.

Proof: Let $(v_i)_{i\leq0}$ be a negative chain with a symbol recurring infinitely often, and assume $v_0=\psi_x^{p^s,p^u}$ where $p^u=Q_\epsilon(x)$. We show that it is $I$-maximal. Suppose $(v_i)_{i\in\mathbb{Z}}\in\Sigma^\#$ is an extension of $(v_i)_{i\leq0}$, and let $(u_i)_{i\in\mathbb{Z}}\in\Sigma^\#$ be a chain s.t $\pi((u_i)_{i\in\mathbb{Z}})=\pi((v_i)_{i\in\mathbb{Z}})$. Write $u_0=\psi_y^{q^s,q^u}$. Now, by step 1, $p^u=Q_\epsilon(x)\geq I^{-1}(Q_\epsilon(y))\geq I^{-1}(q^u)$. The second half of step 2 is shown similarly.

Step 3: Let $(v_i)_{i\leq0}$ be a negative chain with a symbol recurring infinitely often, and suppose $v_0\rightarrow v_1$. If $(v_i)_{i\leq0}$ is $I$-maximal, then also $(v_i)_{i\leq1}$ is $I$-maximal. Let $(v_i)_{i\geq0}$ be a positive chain with a symbol recurring infinitely often, and suppose $v_{-1}\rightarrow v_0$. If $(v_i)_{i\geq0}$ is $I$-maximal, then also $(v_i)_{i\geq-1}$ is $I$-maximal.

Proof: Let $(v_i)_{i\leq0}$ be an $I$-maximal negative chain with a symbol recurring infinitely often, and suppose $v_0\rightarrow v_1$. Suppose that $(u_i)_{i\in\mathbb{Z}}\in\Sigma^\#$ and that $(v_i)_{i\leq1}$ has a symbol repeating infinitely often, and that there is an extension $(v_i)_{i\leq1}$ to a chain $(v_i)_{i\in\mathbb{Z}}\in \Sigma^\#$ s.t $\pi((u_{i+1})_{i\in\mathbb{Z}})=\pi((v_{i+1})_{i\in\mathbb{Z}})$. We write $v_i=\psi_{x_i}^{p_i^s,p_i^u}$, and $u_i=\psi_{y_1}^{q_i^s,q_i^u}$; and we show that $p_1^u\geq I^{-1}(q_1^u)$. Since $\pi((u_{i+1})_{i\in\mathbb{Z}})=\pi((v_{i+1})_{i\in\mathbb{Z}})$ and $\pi\circ\sigma=f\circ\pi$: $$\pi((u_{i})_{i\in\mathbb{Z}})=\pi((v_{i})_{i\in\mathbb{Z}}).$$ Therefore, since $(v_i)_{i\leq0}$ is $I$-maximal, $p_0^u\geq I^{-1}(q_0^u)$. Also, by step 1: $Q_\epsilon(x_1)\geq I^{-1}(Q_\epsilon(y_1))$. It follows that 
\begin{align}\label{forCommentNov21}
p_1^u=&\min\{I(p_0^u),Q_\epsilon(x_1)\}\text{   }(\because v_0 \rightarrow v_1)\nonumber\\
\geq &\min\{ I(I^{-1}(q_0^u)),I^{-1}(Q_\epsilon(y_1))\}\nonumber\\
=& I^{-1}(\min\{I(q_0^u),Q_\epsilon(y_1)\})=I^{-1}(q_1^u)\text{   }(\because u_0\rightarrow u_1\text{ and }I^{-1}\circ I=I\circ I^{-1}).
\end{align}
This proves the case of step 3 dealing with negative chains. The proof for the case of positive chains is shown similarly.

Step 4: Proof of the proposition: Suppose $(v_i)_{i\in\mathbb{Z}}\in\Sigma^\#$, and write $v_i=\psi_{x_i}^{p_i^s,p_i^u}$. Since $(v_i)_{i_\mathbb{Z}}$ is a chain, we get that $\{(p_i^s,p_i^u)\}_{i\in\mathbb{Z}}$
is $I$-strongly subordinated to $\{Q_\epsilon(x_i)\}_{i\in\mathbb{Z}}$. Since $(v_i)_{i\in\mathbb{Z}}\in \Sigma^\#$, $\limsup_{i\rightarrow\infty}(p_i^s\wedge p_i^u)>0$. Therefore by Lemma \ref{forrecurrenceinnextone}, $p_n^u=Q_\epsilon(x_n)$ for some $n<0$, and $p_l^s=Q_\epsilon(x_l)$ for some $l>0$. By step 2, $(v_i)_{i\leq n}$ is an $I$-maximal negative chain, and $(v_i)_{i\geq l}$ is an $I$-maximal positive chain. By step 3: $(v_i)_{i\leq 0}$ is an $I$-maximal negative chain, and $(v_i)_{i\geq 0}$ is an $I$-maximal positive chain.
\end{proof}
\begin{lemma}\label{prop214}
Let $(\psi_{x_i}^{p_i^s,p_i^u})_{i\in\mathbb{Z}},(\psi_{y_i}^{q_i^s,q_i^u})_{i\in\mathbb{Z}}\in\Sigma^\#$ be two chains s.t $\pi((\psi_{x_i}^{p_i^s,p_i^u})_{i\in\mathbb{Z}})=\pi((\psi_{y_i}^{q_i^s,q_i^u})_{i\in\mathbb{Z}})=p$. Then for all $k\in\mathbb{Z}$, $p_k^u=I^{\pm1}(q_k^u),p_k^s=I^{\pm1}(q_k^s)$.
\end{lemma}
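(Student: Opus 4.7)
The plan is to observe that this lemma is, after one sets up the right shifts, an immediate consequence of Proposition~\ref{prop213} applied symmetrically in the two chains. Concretely, I will argue that at every index $k$, both $(\psi_{x_i}^{p_i^s,p_i^u})_{i\leq k}$ and $(\psi_{y_i}^{q_i^s,q_i^u})_{i\leq k}$ are $I$-maximal negative chains, and likewise on the positive side, and then feed each one into the other's maximality inequality.

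First I will reduce to the case $k=0$ by shifting: since $\Sigma^\#$ is $\sigma$-invariant, $\sigma^k((\psi_{x_i}^{p_i^s,p_i^u})_{i\in\mathbb{Z}})$ and $\sigma^k((\psi_{y_i}^{q_i^s,q_i^u})_{i\in\mathbb{Z}})$ both lie in $\Sigma^\#$, and from $\pi\circ\sigma=f\circ\pi$ they still project to the common point $f^k(p)$. So it suffices to prove $p_0^u=I^{\pm 1}(q_0^u)$ and $p_0^s=I^{\pm 1}(q_0^s)$ under the hypotheses of the lemma.

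Next, for the unstable parameter at index $0$: by Proposition~\ref{prop213}, the negative half $(\psi_{x_i}^{p_i^s,p_i^u})_{i\leq 0}$ is $I$-maximal. Taking the competitor chain in the definition of $I$-maximality to be $(\psi_{y_i}^{q_i^s,q_i^u})_{i\in\mathbb{Z}}$ — which by hypothesis is in $\Sigma^\#$ and satisfies $\pi(\underline y)=\pi(\underline x)$, so it provides a valid positive extension for the maximality test — yields
\[
p_0^u\;\geq\; I^{-1}(q_0^u).
\]
Reversing the roles of $\underline x$ and $\underline y$ and again invoking Proposition~\ref{prop213} for $(\psi_{y_i}^{q_i^s,q_i^u})_{i\leq 0}$ gives $q_0^u\geq I^{-1}(p_0^u)$, which is equivalent to $p_0^u\leq I(q_0^u)$. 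Together these two inequalities give $I^{-1}(q_0^u)\leq p_0^u\leq I(q_0^u)$, i.e.\ $p_0^u=I^{\pm 1}(q_0^u)$. The identical argument applied to the $I$-maximal positive chains $(\psi_{x_i}^{p_i^s,p_i^u})_{i\geq 0}$ and $(\psi_{y_i}^{q_i^s,q_i^u})_{i\geq 0}$ (again supplied by Proposition~\ref{prop213}) yields $p_0^s=I^{\pm 1}(q_0^s)$.

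There isn't really a hard step here — Proposition~\ref{prop213} already does the heavy lifting, and in turn it rests on the key earlier fact \eqref{IofQ} that $Q_\epsilon(x_i)=I^{\pm 1}(Q_\epsilon(y_i))$. The only thing one has to be careful about is the bookkeeping in applying the definition of $I$-maximality in both directions at once: one must check that each chain, when shifted so that index $k$ becomes index $0$, still satisfies the hypotheses (being in $\Sigma^\#$ and projecting to the same base point), which follows immediately from the $\sigma$-invariance of $\Sigma^\#$ and $\pi\circ\sigma=f\circ\pi$.
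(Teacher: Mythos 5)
Your proposal is correct and follows essentially the same route as the paper: apply Proposition~\ref{prop213} to get $I$-maximality of both negative (resp.\ positive) halves, use each chain as the competitor in the other's maximality inequality to obtain $p_0^u\geq I^{-1}(q_0^u)$ and $q_0^u\geq I^{-1}(p_0^u)$ (and similarly for $p_0^s,q_0^s$), and then handle general $k$ by shifting the sequences. No gaps.
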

\begin{proof}
By Proposition \ref{prop213}, $(\psi_{x_i}^{p_i^s,p_i^u})_{i\leq0}$ is $I$-maximal, so $p_0^u\geq I^{-1}(q_0^u)$. $(\psi_{x_i}^{p_i^s,p_i^u})_{i\leq0}$ is also $I$-maximal, so $q_0^u\geq I^{-1}(p_0^u)$. It follows that $p_0^u=I^{\pm1}(q_0^u)$. Similarly $p_0^s=I^{\pm1}(q_0^s)$. Working with the shifted sequences $(\psi_{x_{i+k}}^{p_{i+k}^s,p_{i+k}^u})_{i\in\mathbb{Z}}$, and $(\psi_{y_{i+k}}^{q_{i+k}^s,q_{i+k}^u})_{i\in\mathbb{Z}}$, we obtain $p_k^u=I^{\pm1}(q_k^u),p_k^s=I^{\pm1}(q_k^s)$ for all $k\in\mathbb{Z}$.
\end{proof}

\subsubsection{Comparing Pesin charts}

\begin{theorem}\label{beforefinal}
The following holds for all $\epsilon$ small enough.  Suppose $(\psi_{x_i}^{p_i^s,p_i^u})_{i\in\mathbb{Z}},(\psi_{y_i}^{q_i^s,q_i^u})_{i\in\mathbb{Z}}\in\Sigma^\#$ are chains s.t $\pi((\psi_{x_i}^{p_i^s,p_i^u})_{i\in\mathbb{Z}})=p=\pi((\psi_{y_i}^{q_i^s,q_i^u})_{i\in\mathbb{Z}})$ then for all i:
\begin{enumerate}
\item $d(x_i,y_i)<\epsilon$,
\item $p_i^u=I^{\pm1}(q_i^u),p_i^s=I^{\pm1}(q_i^s)$,
\item $(\psi_{y_i}^{-1}\circ\psi_{x_i})(u)=O_iu+a_i+\Delta_i(u)$ for all $u\in R_\epsilon(0)$, where $O_i\in O(d)$ is a $d\times d$ orthogonal matrix, $a_i$ is a constant vector s.t $|a_i|_\infty\leq10^{-1}(q_i^u\wedge q_i^s)$, and $\Delta_i$ is a vector field s.t $\Delta_i(0)=0,\|d_v\Delta_i\|<\frac{1}{2}\epsilon^\frac{1}{3}$. $O_i$ preserves $\mathbb{R}^{s(x)}$ and $\mathbb{R}^{d-s(x)}$.\footnote{Recall, in our notations $\mathbb{R}^{s(x)}:=C_0^{-1}(x_i)[H^s(x_i)]=C_0^{-1}(y_i)[H^s(y_i)]\subset \mathbb{R}^d$ and $\mathbb{R}^{d-s(x)}:=(\mathbb{R}^{s(x)})^\perp=C_0^{-1}(x_i)[H^u(x_i)]=C_0^{-1}(y_i)[H^u(y_i)]$.}
\end{enumerate}
\end{theorem}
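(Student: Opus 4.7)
The plan is as follows. Parts 1 and 2 are immediate: Part 1 follows from Lemma \ref{firstchapter2} together with the trivial bound $p_i^s\wedge p_i^u,q_i^s\wedge q_i^u \le C_{\beta,\epsilon}<\epsilon$; and Part 2 is literally the content of Lemma \ref{prop214}. The real work is in Part 3, and the structure I would use is a Taylor-type decomposition
\[
\psi_{y_i}^{-1}\circ\psi_{x_i}(u)=a_i+L_iu+R_i(u),\qquad L_i:=d_0(\psi_{y_i}^{-1}\circ\psi_{x_i}),
\]
then replace $L_i$ by a nearby orthogonal matrix $O_i$ and absorb the correction into $\Delta_i(u):=R_i(u)+(L_i-O_i)u$. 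Automatically $\Delta_i(0)=0$, and the task becomes to bound $a_i$, to extract $O_i$, and to control $\|d_v\Delta_i\|$ uniformly on $R_\epsilon(0)$.

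First I would bound $a_i=\psi_{y_i}^{-1}(x_i)=C_0^{-1}(y_i)\exp_{y_i}^{-1}(x_i)$. Since $\exp_{y_i}^{-1}$ is $2$-Lipschitz and $d(x_i,y_i)\le\frac{\sqrt d}{50}\max\{p_i^s\wedge p_i^u,q_i^s\wedge q_i^u\}^2$ (Lemma \ref{firstchapter2}), using Part 2 gives $\max\{\cdot\}\le I(q_i^s\wedge q_i^u)$, whence
\[
|a_i|_\infty\le 2\|C_0^{-1}(y_i)\|\cdot d(x_i,y_i)\le \tfrac{2\sqrt d}{25}\,C_{\beta,\epsilon}^{1/(2\gamma)}\,(q_i^s\wedge q_i^u)^{2-1/(2\gamma)}e^{2\Gamma(q_i^s\wedge q_i^u)^{1/\gamma}},
\]
which is $\le 10^{-1}(q_i^s\wedge q_i^u)$ as soon as $\epsilon$ is small, since $2-1/(2\gamma)>1$ for $\gamma>5/\beta$.

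To extract $O_i$, I would use Proposition \ref{Xi}. Set $A_i:=C_0^{-1}(y_i)\circ\Xi_i\circ C_0(x_i)$. Since $\Xi_i$ intertwines the stable/unstable splittings and $C_0(x_i),C_0(y_i)$ carry $\mathbb R^{s(x)},\mathbb R^{d-s(x)}$ to the corresponding $H^{s/u}$, the matrix $A_i$ preserves both coordinate subspaces; and because $C_0^{-1}(\cdot)$ is an isometry for the inner products used in Proposition \ref{Xi}, the distortion bound there yields $|A_iu|=e^{\pm 2\Gamma Q_\epsilon(x_i)^{\min\{\beta/2-1/\gamma,\beta/4-1/(4\gamma)\}}}|u|$. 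Polar decomposition $A_i=O_iP_i$ restricted to each of the two invariant subspaces then produces an $O_i\in O(d)$ preserving $\mathbb R^{s(x)},\mathbb R^{d-s(x)}$, with $\|P_i-\mathrm{Id}\|\le e^{2\Gamma Q_\epsilon(x_i)^{\min\{\cdots\}}}-1\lesssim Q_\epsilon(x_i)^{\beta/4-1/(4\gamma)}$. Thus $\|A_i-O_i\|=\|O_i(P_i-\mathrm{Id})\|\lesssim Q_\epsilon(x_i)^{\beta/4-1/(4\gamma)}$.

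Finally, to bound $\|d_v\Delta_i\|$ uniformly on $R_\epsilon(0)$, write $d_v(\psi_{y_i}^{-1}\circ\psi_{x_i})=C_0^{-1}(y_i)\,d_{C_0(x_i)v}G\,C_0(x_i)$ with $G:=\exp_{y_i}^{-1}\circ\exp_{x_i}$. Part 3 of the proof of Proposition \ref{Xi} gives $d_{C_0(x_i)v}G=T_i+E_v$ with $\|E_v\|\le Q_\epsilon(x_i)$ and $T_i$ an isometry between the tangent spaces; hence $d_v(\psi_{y_i}^{-1}\circ\psi_{x_i})=B_i+\text{(error)}$ where $B_i:=C_0^{-1}(y_i)T_iC_0(x_i)$ does not depend on $v$, and the error is uniformly at most
\[
\|C_0^{-1}(y_i)\|\cdot Q_\epsilon(x_i)\lesssim C_{\beta,\epsilon}^{1/(2\gamma)}Q_\epsilon(x_i)^{1-1/(2\gamma)}.
\]
Comparing $B_i$ with $A_i$: Part 2 of the proof of Proposition \ref{Xi} writes $\Xi_i=d_{C_0(x_i)z_{x,i}'}G+E_1$ with $\|E_1\|\le\tfrac12Q_\epsilon(x_i)^{\beta/2-1/\gamma}$, and Part 3 applied at $u=z_{x,i}'\in R_\epsilon(0)$ gives $d_{C_0(x_i)z_{x,i}'}G=T_i+E_{z_{x,i}'}$ with $\|E_{z_{x,i}'}\|\le Q_\epsilon(x_i)$; combining them yields $\|B_i-A_i\|\lesssim C_{\beta,\epsilon}^{1/(2\gamma)}Q_\epsilon(x_i)^{\beta/2-3/(2\gamma)}$. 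The hypothesis $\gamma>5/\beta$ makes all three exponents $1-1/(2\gamma)$, $\beta/2-3/(2\gamma)$, $\beta/4-1/(4\gamma)$ strictly positive, so adding the three bounds,
\[
\|d_v\Delta_i\|\le\|d_v(\psi_{y_i}^{-1}\psi_{x_i})-B_i\|+\|B_i-A_i\|+\|A_i-O_i\|<\tfrac12\epsilon^{1/3}
\]
for all $v\in R_\epsilon(0)$, once $\epsilon$ is small enough. The main obstacle is bookkeeping: $\|C_0^{-1}(\cdot)\|$ can be enormous, so each error term produced before applying the outer $C_0^{-1}$ must carry a genuine power of $Q_\epsilon$; this is exactly where the quantitative hypothesis $\gamma>5/\beta$ buys room, via Corollary \ref{QforGamma} to interchange $Q_\epsilon(x_i)$ and $Q_\epsilon(y_i)$ freely.
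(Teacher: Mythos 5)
Your proposal is correct and follows essentially the same route as the paper's (which delegates Part 3 to \cite[Theorem~4.13]{SBO}): the orthogonal part $O_i$ comes from the polar decomposition of $C_0^{-1}(y_i)\Xi_i C_0(x_i)$, whose near-isometry is Proposition \ref{Xi}, and the non-linear remainder is controlled by Parts 2 and 3 of the proof of Proposition \ref{Xi} together with $\|C_0^{-1}(y_i)\|\lesssim C_{\beta,\epsilon}^{1/(2\gamma)}Q_\epsilon(y_i)^{-1/(2\gamma)}$ and \eqref{IofQ}. Your splitting into the three differences $d_v(\psi_{y_i}^{-1}\psi_{x_i})-B_i$, $B_i-A_i$, $A_i-O_i$ is just a finer bookkeeping of the same two error sources the paper groups as $C_0^{-1}(y_i)[\exp_{y_i}^{-1}\exp_{x_i}-\Xi_i]C_0(x_i)$ and $A_i-O_i$.
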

\begin{proof} Parts $(1)$ and $(2)$ are the content of Lemma \ref{firstchapter2} and Lemma \ref{prop214} respectively; Part $(3)$ is shown similarly to \cite[Theorem~4.13]{SBO}.
\end{proof}
\subsection{Similar charts have similar manifolds}
\subsubsection{Comparing manifolds}
\begin{prop}\label{prop221} The following holds for all $\epsilon$ small enough. Let $V^s$ (resp. $U^s$) be an $s$-admissible manifold in $\psi_x^{p^s,p^u}$ (resp. $\psi_y^{q^s,q^u}$). Suppose $V^s, U^s$ stay in windows. If $x=y$ then either $V^s,U^s$ are disjoint, or one contains the other. The same statement holds for $u$-admissible manifolds.
\end{prop}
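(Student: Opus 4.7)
Assume without loss of generality that $p^s \le q^s$ (otherwise swap the roles of $V^s$ and $U^s$). Since the two charts share the same base point, the representing functions $F : R_{p^s}(0) \to \mathbb{R}^{d-s(x)}$ and $G : R_{q^s}(0) \to \mathbb{R}^{d-s(x)}$ are defined on nested rectangles, and a common point $p_0 \in V^s \cap U^s$ forces $F(t^*) = G(t^*)$ at the shared $s$-coordinate $t^*$. It therefore suffices to show $F \equiv G$ on $R_{p^s}(0)$, since then $V^s$ is literally the graph of $G\vert_{R_{p^s}(0)}$. The cleanest route is to verify, for each $t \in R_{p^s}(0)$, that the point $p_F(t) := \psi_x(t, F(t)) \in V^s$ also lies in $U^s$; by uniqueness of graphs, this forces $F(t) = G(t)$.

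For such a $t$, set $p_G(t) := \psi_x(t, G(t)) \in U^s$. Two applications of Proposition \ref{Lambda}(1) on the two leaves through the common anchor $p_0$ produce the key shadowing estimate
\[ d(f^k p_F(t), f^k p_G(t)) \le 4 I^{-k}(p^s) + 4 I^{-k}(q^s) \le 8 I^{-k}(q^s), \qquad k \ge 0. \]
Combined with the observation that any admissible $s$-manifold in $v_0$ staying in windows coincides with the Graph Transform limit $V^s((u_i)_{i\ge 0})$ (two admissible graphs over $R_{q^s}(0)$ that are nested as sets must agree as functions), the forward analogue of Proposition \ref{firstofchapter}(4) identifies
\[ U^s = \{ q \in \psi_x[R_{q^s}(0)] : f^k q \in \psi_{y_k}[R_{10 Q_\epsilon(y_k)}(0)]\ \text{for all}\ k \ge 0 \}. \]
The inclusion $p_F(t) \in \psi_x[R_{q^s}(0)]$ is automatic from $p^s \le q^s$ and admissibility, and transferring the $M$-distance estimate into chart coordinates via the $2\|C_0^{-1}(y_k)\|$-Lipschitz bound on $\psi_{y_k}^{-1}$ shows that the forward window condition reduces, using the defining inequality $\|C_0^{-1}(y_k)\|^{2\gamma} Q_\epsilon(y_k) \le C_{\beta,\epsilon}$, to checking
\[ I^{-k}(q^s) \le c \cdot Q_\epsilon(y_k)^{1 + 1/(2\gamma)} \qquad \text{for every } k \ge 0, \]
with $c$ depending only on the calibration parameters.

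The main obstacle is establishing this last inequality uniformly in $k$: while Lemma \ref{FaveForNow} gives polynomial decay of $I^{-k}(q^s)$ and Lemma \ref{omega0}(4) controls $Q_\epsilon(y_{k+1})/Q_\epsilon(y_k)$ by the factor $\omega_0^{\pm 1}$, an arbitrary chain can in principle have $Q_\epsilon(y_k)$ drifting to zero much faster than $I^{-k}$. The plan is to resolve this by anchoring the induction at the recurrence times of the chain: any admissible stable manifold staying in windows of a chain that sits inside the image of $\Sigma^\#$ (which is the relevant regime by Proposition \ref{prop213} and Lemma \ref{forrecurrenceinnextone}) has $\{q_k^s \wedge q_k^u\}$, and therefore $\{Q_\epsilon(y_k)\}$, bounded away from zero along a subsequence $k_n \uparrow \infty$. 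At such $k_n$ the shadowing distance $8 I^{-k_n}(q^s)$ is far smaller than $Q_\epsilon(y_{k_n})$, so the window condition holds trivially; between consecutive $k_n$ one propagates it forward by a short Graph Transform argument modeled on Theorem \ref{graphtransform}, exploiting the $I$-strong subordination of $\{q_k^s\}$ to $\{Q_\epsilon(y_k)\}$ so that the window condition, once established with some slack, is preserved by each iterate of the chain edge. Once the window condition is verified for every $k \ge 0$, $p_F(t) \in U^s$; since $t \in R_{p^s}(0)$ was arbitrary, $F \equiv G$ on $R_{p^s}(0)$ and $V^s \subseteq U^s$. The $u$-admissible case is symmetric, applying the argument to $f^{-1}$ and invoking the unstable versions of Propositions \ref{Lambda} and \ref{firstofchapter}.
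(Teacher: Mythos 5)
Your reduction to the inequality $I^{-k}(q^s)\le c\,Q_\epsilon(y_k)^{1+1/(2\gamma)}$ is essentially correct, and you are right to flag it as the crux. But the fix you sketch does not close the gap, and the inequality is in fact \emph{false} in general. By $I$-strong subordination along the positive chain carrying $U^s$ one has $q_k^s=\min\{I(q_{k+1}^s),Q_\epsilon(y_k)\}$, so $q_{k+1}^s\ge I^{-1}(q_k^s)$ and hence $Q_\epsilon(y_k)\ge q_k^s\ge I^{-k}(q^s)$. That bound runs the \emph{wrong} way: nothing forbids $Q_\epsilon(y_k)\approx I^{-k}(q^s)$ between recurrence times, and in that regime the required inequality reads $I^{-k}(q^s)\lesssim (I^{-k}(q^s))^{1+1/(2\gamma)}$, i.e.\ $(I^{-k}(q^s))^{-1/(2\gamma)}\lesssim 1$, which fails as $k\to\infty$ since $I^{-k}(q^s)\downarrow 0$. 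Concretely: the chart dilation $\|C_0^{-1}(y_k)\|\sim Q_\epsilon(y_k)^{-1/(2\gamma)}$ blows up exactly when the window shrinks, so converting the $M$-distance shadowing bound $8I^{-k}(q^s)$ into $\psi_{y_k}$-coordinates can exceed the available slack $\sim Q_\epsilon(y_k)$, no matter how much slack you had at the previous recurrence time. Recurrence only controls a subsequence; the Graph Transform propagation in between is precisely where the ratio degenerates, and the $I$-strong subordination gives no quantitative help here. (Restricting to chains in $\Sigma^\#$ is also a stronger hypothesis than the proposition asserts, although that is a lesser issue.)

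The paper avoids this obstruction entirely by never trying to verify the $U^s$-chain window condition for a point of $V^s$. Instead (following Sarig's Proposition 6.4): take the positive chain $(\psi_{x_i}^{p_i^s,p_i^u})_{i\ge 0}$ witnessing that $V^s$ stays in windows, and show that for all large $n$ \emph{both} $f^n[V^s]$ and $f^n[U^s]$ land inside $\psi_{x_n}[R_{Q_\epsilon(x_n)}(0)]$ --- i.e.\ inside the windows of \emph{one and the same} chain. This works because $U^s$ passes through a common point $p_0\in V^s\cap U^s$ and contracts toward its forward orbit (Proposition \ref{Lambda}(1)), and $f^n(p_0)$ sits deep inside $\psi_{x_n}[R_{\frac12 Q_\epsilon(x_n)}(0)]$ by the analogue of Sarig's Claim~1. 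Once both images lie inside the same window sequence, Proposition \ref{firstofchapter}(4) forces $f^n[V^s]$ and $f^n[U^s]$ to be \emph{subsets of the same limiting graph} $V^s((\psi_{x_i})_{i\ge n})$. Pulling back by $f^{-n}$ they become two connected, box-homeomorphic pieces of one graph with a common point, and a purely topological boundary argument (a boundary point of the ``smaller'' one would have $s$-coordinate of norm $q^s$ while lying in $\psi_x^{-1}[V^s]\subset R_{p^s}(0)$) yields the nesting $V^s\subset U^s$ or $U^s\subset V^s$. The point your proposal misses is that you never need a pointwise window verification against the $U^s$-chain --- the decisive move is to drag the \emph{entire} manifold $f^n[U^s]$ into the $V^s$-chain windows and then appeal to uniqueness of the limiting graph, not to the $U^s$-chain characterization.
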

Proof is similar to \cite[Proposition~4.15]{SBO}.

\section{Markov partitions and symbolic dynamics}\label{MarkoPolo}
In sections 1,2,3 of this paper we developed the shadowing theory needed to construct Markov partitions \`a la Bowen (\cite{B3,B4}), following the improvements of Sarig to the shadowing theory \cite{Sarig}, and its extension to higher dimensions \cite{SBO}, while carrying it in a setup where hyperbolicity is replaced by $0$-summability. What remains is to carry out this construction. This is the content of this part. As in \cite{SBO}, we follow \cite{B3} and \cite{Sarig} closely here.
\subsection{A locally finite countable Markov partition}
\subsubsection{The cover}
In \textsection \ref{epsilonchains} we constructed a countable Markov shift $\Sigma$ with countable alphabet $\mathcal{V}$ and a uniformly continuous map $\pi:\Sigma\rightarrow M$ which commutes with the left shift $\sigma:\Sigma\rightarrow\Sigma$, so that $\pi[\Sigma]\supseteq\RST$. Moreover, when
$$\Sigma^\#=\{u\in\Sigma: u \text{ is a regular chain}\}=\{u\in\Sigma: \exists v,w\in \mathcal{V}\exists n_k,m_k\uparrow\infty\text{ s.t   }u_{n_k}=v,u_{-m_k}=w\}$$
then $\pi[\Sigma^{\#}]\supset\RST$. However, $\pi$ is not finite-to-one. In this section we study the following countable cover of $\RST$:
\begin{definition}\label{ZV} $\mathcal{Z}:=\{Z(v):v\in\mathcal{V}\}$, where $Z(v):=\{\pi(u):u\in\Sigma^\#,u_0=v\}$.
\end{definition}
\begin{theorem}\label{Zlocallyfinite}
For every $Z\in\mathcal{Z}$, $|\{Z'\in\mathcal{Z}:Z'\cap Z\neq \varnothing\}|<\infty$.
\end{theorem}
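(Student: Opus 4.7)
The plan is to reduce local finiteness to the discreteness property of $\mathcal{A}$ established in Proposition \ref{discreteness}, by using the inverse problem machinery of Section \ref{chapter333} to constrain any $Z' = Z(v')$ that meets a fixed $Z = Z(v)$.

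Fix $v = \psi_{x}^{p^s,p^u} \in \mathcal{V}$. Suppose $v' = \psi_{y}^{q^s,q^u} \in \mathcal{V}$ satisfies $Z(v) \cap Z(v') \neq \varnothing$. By the definition of $Z(\cdot)$ (Definition \ref{ZV}) there exist chains $\underline u, \underline u' \in \Sigma^\#$ with $u_0 = v$, $u'_0 = v'$, and $\pi(\underline u) = \pi(\underline u')$. I can now apply the comparison theorems of \textsection\ref{chapter333} at coordinate $0$:
\begin{itemize}
\item By Lemma \ref{prop214}, $q^s = I^{\pm 1}(p^s)$ and $q^u = I^{\pm 1}(p^u)$. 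In particular, $q^s$ and $q^u$ each range over at most two values (namely $I(p^{s/u})$ and $I^{-1}(p^{s/u})$, intersected with $\mathcal{I}$).
\item By Theorem \ref{beforefinal}(1), $d(x,y) < \epsilon$.
\item By Corollary \ref{QforGamma} together with the remark preceding Corollary \ref{newLifeOfPi}, $Q_\epsilon(y) = I^{\pm 1}(Q_\epsilon(x))$; in particular $Q_\epsilon(y)$ is bounded below by a constant $c(v) > 0$ depending only on $v$.
\end{itemize}

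Combine these constraints. The chart $\psi_{y}^{q^s\wedge q^u}$ belongs to $\mathcal{A}$ (by the definition of $\mathcal{V}$) and its window parameter satisfies
\[
q^s\wedge q^u \;=\; I^{\pm 1}(p^s)\wedge I^{\pm 1}(p^u)\;\geq\; I^{-1}(p^s\wedge p^u) \;>\;0,
\]
a quantity depending only on $v$. By the discreteness property in Proposition \ref{discreteness}, the set
\[
\mathcal{A}_v \;:=\; \{\psi \in \mathcal{A} : \psi = \psi_{z}^{\eta},\ \eta \geq I^{-1}(p^s\wedge p^u)\}
\]
is finite. Therefore the center $y$ and the window size $q^s\wedge q^u$ jointly take only finitely many values. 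Since $(q^s,q^u)$ also takes only finitely many values (at most four pairs) once $q^s\wedge q^u$ is fixed, and $s(y) = s(x)$ is forced, the vertex $v' = \psi_{y}^{q^s,q^u}$ ranges over a finite set. This proves the theorem.

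The main step, and the reason that the proof is short at this stage, is that the entire difficulty has already been absorbed into the inverse problem results Lemma \ref{prop214} and Corollary \ref{QforGamma}: those are what guarantee that intersecting windows have comparable $Q_\epsilon$ parameters and $p^{s/u}$ parameters tied to one another by $I^{\pm 1}$. Once those are in hand, local finiteness follows almost tautologically from the a priori discreteness of $\mathcal{A}$.
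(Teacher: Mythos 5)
Your proposal is correct and takes essentially the same route as the paper: an intersection point yields two chains in $\Sigma^\#$ with the same image, Lemma \ref{prop214} forces $q^{s/u}=I^{\pm1}(p^{s/u})$ (hence $q^s\wedge q^u\geq I^{-1}(p^s\wedge p^u)$), and then the discreteness of $\mathcal{A}$ (Proposition \ref{discreteness}) together with the discreteness of $\mathcal{I}$ above a positive threshold gives finiteness of the possible vertices $\psi_y^{q^s,q^u}$. The only cosmetic slips are the counts ($q^{s/u}$ ranges over the finite set $\mathcal{I}\cap[I^{-1}(p^{s/u}),I(p^{s/u})]$, not merely the two endpoint values, and hence finitely many pairs rather than ``at most four''), and the appeals to Theorem \ref{beforefinal}(1) and Corollary \ref{QforGamma} are unnecessary; neither affects the validity of the argument.
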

\begin{proof} 
Fix some $Z=Z(\psi_x^{p^s,p^u})$. If $Z'=Z(\psi_y^{q^s,q^u})$ intersects $Z$ then there must exist two chains $v,w\in\Sigma^\#$ s.t $v_0=\psi_x^{p^s,p^u},w_0=\psi_y^{q^s,q^u}$ and $\pi(v)=\pi(w)$. Lemma \ref{prop214} says that in this case $q^u\geq I^{-1}(p^u),q^s\geq I^{-1}(p^s)$.
It follows that $Z'$ belongs to $\{Z(\psi_y^{q^s,q^u}):\psi_y^{q^s,q^u}\in\mathcal{V},q^u\wedge q^s\geq I^{-1}(p^u\wedge p^s)\}$. By definition, the cardinality of this set is less than or equal to:
$$|\{\psi_y^\eta\in\mathcal{A}:\eta\geq I^{-1}(p^s\wedge p^u)\}|\times|\{(q^s,q^u)\in \mathcal{I}\times \mathcal{I}: q^s\wedge q^u\geq I^{-1}(p^s\wedge p^u)\}|.$$
This is a finite number because of the discreteness of $\mathcal{A}$ (Proposition \ref{discreteness}).
\end{proof}
\subsubsection{Product structure}
Suppose $x\in Z(v)\in \mathcal{Z}$, then $\exists \underline{u}\in\Sigma^\#$ s.t $u_0=v$ and $\pi(\underline{u})=x$. Associated to $\underline{u}$ are two admissible manifolds in $v$: $V^u((u_i)_{i\leq 0})$ and $V^s((u_i)_{i\geq 0})$. These manifolds do not depend on the choice of $\underline{u}$: If $\underline{w}\in\Sigma^\#$ is another  chain s.t $w_0=v$ and $\pi(\underline{w})=x$ then $$V^u((u_i)_{i\leq 0})=V^u((w_i)_{i\leq 0})\text{ and }V^s((u_i)_{i\geq 0})=V^s((w_i)_{i\geq 0}),$$
because of Proposition \ref{prop221} and the equalities $p^{s/u}(w_0)=p^{s/u}(u_0)=p^{s/u}(v)$. We are therefore free to make the following definitions:
\begin{definition}
Suppose $Z=Z(v)\in\mathcal{Z}$. For any $x\in Z$:
\begin{enumerate}
    \item $V^u(x,Z):=V^u((u_i)_{i\leq 0})$ for some (any) $\underline{u}\in\Sigma^\#$ s.t $u_0=v$ and $\pi(\underline{u})=x$.
    
    We also set $W^u(x,Z):=V^u(x,Z)\cap Z$.
    \item $V^s(x,Z):=V^s((u_i)_{i\geq 0})$ for some (any) $\underline{u}\in\Sigma^\#$ s.t $u_0=v$ and $\pi(\underline{u})=x$.
    
    We also set $W^s(x,Z):=V^s(x,Z)\cap Z$.
\end{enumerate}
\end{definition}
\begin{prop}\label{forBuzzi1} Suppose $Z\in\mathcal{Z}$, then for any $x,y\in Z$: $V^u(x,Z),V^u(y,Z)$ are either equal or they are disjoint. Similarly for $V^s(x,Z),V^s(y,Z)$; and for $W^s(x,Z),W^s(y,Z)$ and for $W^u(x,Z),W^u(y,Z)$.
\end{prop}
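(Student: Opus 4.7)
The plan is to reduce everything to Proposition \ref{prop221} (the ``disjoint-or-nested'' dichotomy for admissible manifolds staying in windows) and then upgrade ``nested'' to ``equal'' using the fact that two admissible manifolds living in the same double chart have the same $q$-parameter.

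First I would observe that if $Z=Z(v)$ with $v=\psi_{x_0}^{p^s,p^u}$, then by the definitions of $V^{u/s}(x,Z)$ and $V^{u/s}(y,Z)$ together with the remark right after the definition of ``stays in windows'' (which says that $V^u((v_k)_{k\le 0})$ and $V^s((v_k)_{k\ge 0})$ automatically stay in windows whenever $\underline v$ is a chain), both $V^u(x,Z)$ and $V^u(y,Z)$ are $u$-admissible manifolds in the \emph{same} double chart $v$ that stay in windows; the same is true for $V^s(x,Z), V^s(y,Z)$. Proposition \ref{prop221} (applied with the formally identical choice of centre $x_0=y_0=x_0$) then yields that either these two $u$-admissible manifolds are disjoint or one is contained in the other. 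Likewise for the two $s$-admissible manifolds.

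Next I would upgrade ``contained'' to ``equal''. Both $V^u(x,Z)$ and $V^u(y,Z)$ are $u$-admissible in the same double chart $v=\psi_{x_0}^{p^s,p^u}$, so by Definition \ref{admissible} their $q$-parameter is exactly $p^u$, meaning their representing functions $F_x, F_y$ are defined on the same box $R_{p^u}(0)$. If $V^u(x,Z)\subseteq V^u(y,Z)$, then after applying $\psi_{x_0}^{-1}$ we obtain
\[
\{(F_x(t),t):t\in R_{p^u}(0)\}\subseteq\{(F_y(t'),t'):t'\in R_{p^u}(0)\}.
\]
Matching the unstable coordinates forces $t=t'$ and hence $F_x\equiv F_y$ on $R_{p^u}(0)$, so the two manifolds coincide. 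The same argument with stable/unstable roles swapped handles $V^s(x,Z),V^s(y,Z)$. This gives the claimed dichotomy for the $V^{s/u}$.

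Finally, the dichotomy for $W^{s/u}$ is immediate from intersecting with $Z$. If $V^{s/u}(x,Z)=V^{s/u}(y,Z)$, then $W^{s/u}(x,Z)=V^{s/u}(x,Z)\cap Z=V^{s/u}(y,Z)\cap Z=W^{s/u}(y,Z)$; if $V^{s/u}(x,Z)\cap V^{s/u}(y,Z)=\varnothing$, then a fortiori $W^{s/u}(x,Z)\cap W^{s/u}(y,Z)=\varnothing$. No real obstacle is expected here: the only substantive input is Proposition \ref{prop221}, whose applicability to the manifolds $V^{s/u}(x,Z)$ and $V^{s/u}(y,Z)$ is already guaranteed by the construction and by the independence of $V^{s/u}(\cdot,Z)$ from the chain chosen to represent a point (which itself uses Proposition \ref{prop221} and the matching of the $p^{s/u}$ parameters across chains projecting to the same point in the same $Z(v)$).
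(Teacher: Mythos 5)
Your proof is correct and follows essentially the same route as the paper, which simply invokes Proposition \ref{prop221} for the $V^{s/u}$ and notes the $W^{s/u}$ case as an immediate corollary. Your explicit upgrade from ``one contains the other'' to ``equal'' (via the common chart and common $q$-parameter) is the step the paper leaves implicit, and it is carried out correctly.
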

\begin{proof}
The statement holds for $V^{s/u}(x,Z)$ because of Proposition \ref{prop221}. The statement for $W^{s/u}$ is an immediate corollary.
\end{proof}
\begin{prop}\label{propForBracketZ} Suppose $Z\in\mathcal{Z}$, then for any $x,y\in Z$: $V^u(x,Z)$ and $V^s(y,Z)$ intersect at a unique point $z$, and $z\in Z$. Thus $W^u(x,Z)\cap W^s(y,Z)=\{z\}$.
\end{prop}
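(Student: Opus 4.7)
The plan is to build $z$ as the $\pi$-image of a concatenated chain. By Definition \ref{ZV} of $\mathcal{Z}$, since $x,y\in Z=Z(v)$ there exist chains $\underline u,\underline w\in\Sigma^\#$ with $u_0=w_0=v$ and $\pi(\underline u)=x$, $\pi(\underline w)=y$. By the definition of $V^u(\cdot,Z)$ and $V^s(\cdot,Z)$, the manifold $V^u(x,Z)=V^u((u_i)_{i\leq 0})$ is a $u$-admissible manifold in $v$, and $V^s(y,Z)=V^s((w_i)_{i\geq 0})$ is an $s$-admissible manifold in $v$ (Proposition \ref{firstofchapter}(2)). Proposition \ref{firstbefore} then gives a unique intersection point $z\in V^u(x,Z)\cap V^s(y,Z)$, with $z=\psi_{x_0}(\vec t)$, $|\vec t|_\infty\leq 10^{-2}(p^s\wedge p^u)^2$. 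This already settles existence and uniqueness in $V^u(x,Z)\cap V^s(y,Z)$, so a posteriori it settles uniqueness in $W^u(x,Z)\cap W^s(y,Z)$ as well.

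The substantive step is to show $z\in Z$, i.e.\ to produce a chain $\underline v\in\Sigma^\#$ with $v_0=v$ and $\pi(\underline v)=z$. The natural candidate is the concatenation
\[
v_i=\begin{cases} u_i & i\leq 0,\\ w_i & i\geq 0,\end{cases}
\]
which is well-defined since $u_0=w_0=v$. I would check three things. First, $\underline v$ is a chain in $\mathcal G$: for $i\leq -1$ and $i\geq 0$ the edge relation $v_i\to v_{i+1}$ is inherited from $\underline u$ and $\underline w$ respectively, and the only transition that needs the identification $u_0=w_0$ is trivial. Second, $\underline v\in\Sigma^\#$: by definition of $\Sigma^\#$, $\underline u$ has some symbol recurring infinitely often in the past and $\underline w$ has some symbol recurring infinitely often in the future; these recurrences survive in $\underline v$. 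Third, $\pi(\underline v)=z$: by construction the negative half of $\underline v$ coincides with that of $\underline u$ and the positive half with that of $\underline w$, so
\[
V^u((v_i)_{i\leq 0})=V^u((u_i)_{i\leq 0})=V^u(x,Z),\qquad V^s((v_i)_{i\geq 0})=V^s((w_i)_{i\geq 0})=V^s(y,Z),
\]
and the definition of $\pi$ in Theorem \ref{DefOfPi} identifies $\pi(\underline v)$ with the unique intersection point of these two admissible manifolds, which is $z$. Hence $z\in Z(v)=Z$, giving $z\in W^u(x,Z)\cap W^s(y,Z)$.

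Combining the two paragraphs, $W^u(x,Z)\cap W^s(y,Z)\subseteq V^u(x,Z)\cap V^s(y,Z)=\{z\}$ and $z$ lies in both $W^u(x,Z)$ and $W^s(y,Z)$, so $W^u(x,Z)\cap W^s(y,Z)=\{z\}$. There is no real obstacle here beyond bookkeeping; the only point that deserves care is the membership $\underline v\in\Sigma^\#$, which relies crucially on the definition of $Z(v)$ through chains in $\Sigma^\#$ (rather than in $\Sigma$), so that both chains $\underline u,\underline w$ supplying $x$ and $y$ carry recurrent symbols on the appropriate sides for concatenation.
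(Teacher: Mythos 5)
Your proof is correct and is essentially the argument the paper relies on: the paper simply cites \cite[Proposition~10.5]{Sarig}, and the proof there is exactly your construction — concatenate the past of a chain coding $x$ with the future of a chain coding $y$ at the common symbol $v_0=v$, note the concatenation stays in $\Sigma^\#$, and identify $\pi$ of it with the unique intersection point supplied by Proposition \ref{firstbefore}. Nothing further is needed.
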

\begin{proof} See \cite[Proposition~10.5]{Sarig}.
\end{proof}
\begin{definition}\label{bracketZ} The {\em Smale bracket} of two points $x,y\in Z\in\mathcal{Z}$ is the unique point $[x,y]_Z\in W^u(x,Z)\cap W^s(y,Z)$.
\end{definition}
\begin{lemma} Suppose $x,y\in Z(v_0)$, and $f(x),f(y)\in Z(v_1)$. If $v_0\rightarrow v_1$ then $f([x,y]_{Z(v_0)})=[f(x),f(y)]_{Z(v_1)}$.
\end{lemma}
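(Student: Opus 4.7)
The plan is to produce a single bi-infinite chain $\underline r\in\Sigma^\#$ that simultaneously realizes $z:=[x,y]_{Z(v_0)}$ and has $r_0=v_0$, $r_1=v_1$; then $f(z)=\pi(\sigma\underline r)$ will automatically lie in the intersection defining $[f(x),f(y)]_{Z(v_1)}$. The construction is a splicing argument that exploits the fact that $V^u((u_i)_{i\le 0})$ depends only on the negative tail of a chain while $V^s((u_i)_{i\ge 0})$ depends only on the positive tail, together with Proposition \ref{firstofchapter}(4), which characterizes these leaves intrinsically as the sets of points whose (backward/forward) orbits stay in the prescribed windows.

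First I would pick chains $\underline u,\underline w\in\Sigma^\#$ with $u_0=w_0=v_0$ and $\pi(\underline u)=x$, $\pi(\underline w)=y$, and similarly $\underline u',\underline w'\in\Sigma^\#$ with $u'_0=w'_0=v_1$ and $\pi(\underline u')=f(x)$, $\pi(\underline w')=f(y)$; these exist by the definition of $Z(v_0),Z(v_1)$. I would then normalize $\underline u$ by splicing it with $\underline u'$: set $\tilde u_i=u_i$ for $i\le 0$ and $\tilde u_i=u'_{i-1}$ for $i\ge 1$. The junction is legal because $\tilde u_0=v_0\to v_1=\tilde u_1$ by assumption, and $\tilde u$ inherits a recurrent symbol from $\underline u$ in the past and from $\underline u'$ in the future, so $\tilde u\in\Sigma^\#$. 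Using Proposition \ref{firstofchapter}(4) I would check $\pi(\tilde u)=x$: the negative tail is unchanged, so $V^u((\tilde u_i)_{i\le 0})=V^u(x,Z(v_0))$; and the forward orbit of $x$ lies in the windows of $(\tilde u_i)_{i\ge 0}$ because $(\tilde u_{i+1})_{i\ge 0}$ is precisely $\underline u'$, whose projection is $f(x)$. Performing the same normalization on $\underline w$, I may assume from here on that $u_1=w_1=v_1$.

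Now I would splice $\underline u$ with $\underline w$ at time $0$: let $r_i=u_i$ for $i\le 0$ and $r_i=w_i$ for $i\ge 1$. Again $r_0\to r_1$ is $v_0\to v_1$, so $\underline r$ is a chain, and it lies in $\Sigma^\#$ because it has a recurrent past symbol from $\underline u$ and a recurrent future symbol from $\underline w$. Its projection equals $z$: indeed $(r_i)_{i\le 0}=(u_i)_{i\le 0}$ gives $V^u((r_i)_{i\le 0})=V^u(x,Z(v_0))$, while $(r_i)_{i\ge 0}$ agrees with $(w_i)_{i\ge 0}$ (since $r_0=v_0=w_0$ and $r_i=w_i$ for $i\ge 1$), so $V^s((r_i)_{i\ge 0})=V^s(y,Z(v_0))$; hence $\pi(\underline r)$ is the unique intersection of these two leaves, namely $z=[x,y]_{Z(v_0)}$.

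To conclude, applying $\sigma$: $f(z)=\pi(\sigma\underline r)$ and $(\sigma\underline r)_0=r_1=v_1$, so $f(z)\in Z(v_1)$. The unstable leaf through $f(z)$ in $Z(v_1)$ is $V^u((r_i)_{i\le 1})=V^u((u_i)_{i\le 1})$, which, read from the chain $\sigma\underline u\in\Sigma^\#$ with $0$-th symbol $v_1$ and projection $f(x)$, equals $V^u(f(x),Z(v_1))$; analogously the stable leaf is $V^s(f(y),Z(v_1))$. Since $f(z)$ lies in both and in $Z(v_1)$, it is their unique common point, so $f(z)=[f(x),f(y)]_{Z(v_1)}$. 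The only nonroutine step is the splicing lemma itself, i.e.\ verifying that the cut-and-paste chain projects to the expected point; this is exactly where Proposition \ref{firstofchapter}(4) is essential, because it lets one identify local leaves without tracking which particular chain is used to define them.
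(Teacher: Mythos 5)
Your argument is correct, and it takes a somewhat different route from the one the paper intends. The paper disposes of this lemma by citing Sarig's Lemma~10.7 ``using Theorem \ref{graphtransform}(2)'': there one proves the manifold inclusions $f[V^s(y,Z(v_0))]\subset V^s(f(y),Z(v_1))$ and $V^u(f(x),Z(v_1))\subset f[V^u(x,Z(v_0))]$ (which themselves come from concatenating chains across the edge $v_0\to v_1$), and then invokes the uniqueness of the intersection of $f[V^u]$ with any $s$-admissible manifold in $v_1$ to conclude that $f(z)$ and $[f(x),f(y)]_{Z(v_1)}$ are the same point. You instead splice everything into a single bi-infinite chain $\underline r\in\Sigma^\#$ realizing $z$ with $r_0=v_0$, $r_1=v_1$, and finish purely symbolically via $\pi\circ\sigma=f\circ\pi$ (Theorem \ref{DefOfPi}), the chain-independence of $V^{u/s}(\cdot,Z)$ (which rests on Proposition \ref{prop221}), and the uniqueness of the bracket point (Propositions \ref{firstbefore}, \ref{propForBracketZ}); the graph transform enters only implicitly through Proposition \ref{firstofchapter}. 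What your version buys is a shorter, more conceptual proof that never mentions images of admissible manifolds; what the paper's version buys is that the two inclusions it establishes are exactly the ones reused for the symbolic Markov property (Proposition \ref{symbolicmarkovproperty}). One small point worth making explicit in your normalization step: when you verify $\pi(\tilde u)=x$ via the window characterization of Proposition \ref{firstofchapter}(4), the $k\ge1$ windows come from $\pi(\underline u')=f(x)$ as you say, but the $k=0$ condition $x\in\psi_{x_0}[R_{p_0^s}(0)]$ also needs a word: it holds because $x\in Z(v_0)$ forces $x=\psi_{x_0}(\vec t)$ with $|\vec t\,|_\infty\le 10^{-2}(p_0^s\wedge p_0^u)^2$ by Proposition \ref{firstbefore}. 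With that remark added, the proof is complete.
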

\begin{proof} This is proved as in \cite[Lemma~10.7]{Sarig} using Theorem \ref{graphtransform}(2).
\end{proof}

\begin{lemma}
Suppose $Z,Z'\in\mathcal{Z}$. If $Z\cap Z'\neq\varnothing$ then for any $x\in Z,y\in Z'$: $V^s(x,Z)$ and $V^s(y,Z')$ intersect at a unique point.

We do not claim that this point is in $Z$ nor $Z'$.
\end{lemma}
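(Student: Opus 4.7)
Write $Z = Z(v)$ and $Z' = Z(v')$, with $v = \psi_{x_0}^{p^s,p^u}$ and $v' = \psi_{x'_0}^{q^s,q^u}$. Because $Z \cap Z' \neq \varnothing$, I can pick $z_0 \in Z \cap Z'$ together with chains $\underline u, \underline u' \in \Sigma^\#$ satisfying $u_0 = v$, $u'_0 = v'$, and $\pi(\underline u) = \pi(\underline u') = z_0$. Applying Theorem~\ref{beforefinal} to these chains (and to their shifts $\sigma^k \underline u, \sigma^k \underline u'$) yields, at every index $k \in \mathbb{Z}$, that $d(x_k, x'_k) < \epsilon$, $p_k^{s/u} = I^{\pm 1}(q_k^{s/u})$, and the transition $\psi_{x'_k}^{-1} \circ \psi_{x_k}$ takes the normal form $u \mapsto O_k u + a_k + \Delta_k(u)$ of Theorem~\ref{beforefinal}(3), with $O_k$ orthogonal preserving the stable/unstable splitting, $|a_k|_\infty \leq 10^{-1}(q_k^s \wedge q_k^u)$, and $\|d_\cdot \Delta_k\| < \tfrac12 \epsilon^{1/3}$.

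I would first use the $k = 0$ normal form to transport $V^s(x, Z)$ into the frame of $v'$. Since $O_0$ preserves the coordinate splitting and $\Delta_0$ is a small $C^{1+\beta/2}$-perturbation, $\psi_{x'_0}^{-1}[V^s(x,Z)]$ is still the graph of a $C^{1+\beta/2}$ function over the stable directions of $v'$, whose $(\sigma,\iota,\varphi)$-parameters (Definition~\ref{def135}) are inflated by at most an $\epsilon^{1/3}$-correction relative to those of the representing function of $V^s(x, Z)$. Setting $P^{s/u} := \max\{p^{s/u}, q^{s/u}\}$ and $\widehat v' := \psi_{x'_0}^{P^s, P^u}$, the ladder identity $p^{s/u} = I^{\pm 1}(q^{s/u})$ guarantees that the transported object is an $s$-admissible manifold $\widehat V^s$ in $\widehat v'$; trivially, $V^s(y, Z')$ is already $s$-admissible in $\widehat v'$, call it $\widehat U^s$.

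Next I would verify that $\widehat V^s$ stays in windows in $\widehat v'$. The witnessing positive chain in $\mathcal V$ is constructed by iterating the previous transport step at every $k \geq 0$ via the $k$-th normal form: at each $k$ the ladder compatibility $p_k^{s/u} = I^{\pm 1}(q_k^{s/u})$ from Lemma~\ref{prop214} lets one concatenate the alphabet letters coming from $\underline u'$ with the admissibility data inherited from $\underline u$, producing an honest positive chain in $\Sigma$ along which $f^k[V^s(x,Z)]$ sits inside an admissible manifold centered near $x'_k$. The same property for $\widehat U^s$ is built into its construction.

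With both admissible manifolds in the common chart $\widehat v'$ staying in windows, Proposition~\ref{prop221} reduces the geometry to two alternatives: $\widehat V^s \cap \widehat U^s = \varnothing$ or one contains the other. The characterization of $V^s$ in Proposition~\ref{firstofchapter}(4) as the set of points whose forward orbit remains in the prescribed windows, applied to the concatenated chain constructed above, identifies the set-theoretic intersection $V^s(x, Z) \cap V^s(y, Z')$ with a single candidate point, and the Lipschitz uniqueness of Proposition~\ref{firstbefore} rules out multiple intersections. The main obstacle will be the bookkeeping in the second step: producing the honest positive chain in $\mathcal V$ witnessing $\widehat V^s$ as staying in windows. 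The iterated normal form of Theorem~\ref{beforefinal} supplies the geometric content, but it is the ladder compatibility from Lemma~\ref{prop214} that makes the symbolic concatenation syntactically legal inside $\Sigma$, and this is the whole payoff of the inverse-problem analysis of \textsection\ref{chapter333}.
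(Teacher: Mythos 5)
There is a genuine problem, and it starts with the statement itself: as printed, the lemma has a typo (two stable manifolds), and your proposal takes that literal reading at face value. With two stable manifolds the claim is false --- take $Z=Z'$ and $x,y\in Z$ on distinct stable leaves; by Proposition \ref{forBuzzi1} (equivalently Proposition \ref{prop221}) $V^s(x,Z)$ and $V^s(y,Z)$ are either disjoint or one contains the other, never a single point. What the paper actually proves (following \cite[Lemma~5.8]{SBO}) is that $V^u(x,Z)$ and $V^s(y,Z')$ meet at a unique point: unique intersection is a transversality phenomenon between a $u$-graph and an $s$-graph, exactly as in Proposition \ref{firstbefore}. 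Your scheme cannot deliver that conclusion for either reading. The tool you lean on, Proposition \ref{prop221}, yields precisely the dichotomy ``disjoint or nested'', which is incompatible with ``unique intersection point''; and the closing appeal to Proposition \ref{firstofchapter}(4) plus the ``Lipschitz uniqueness'' of Proposition \ref{firstbefore} does not bridge the gap, because Proposition \ref{firstbefore} compares a $u$-admissible manifold against an $s$-admissible one, not two graphs over the same stable coordinate plane.

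Even under the charitable reading ($V^u(x,Z)$ versus $V^s(y,Z')$), your key intermediate claim --- that the transported manifold is again admissible in an enlarged double chart $\widehat v'$ --- is unjustified. Theorem \ref{beforefinal}(3) and Corollary \ref{final} only give $\psi_{y_0}^{-1}\circ\psi_{x_0}=O+a+\Delta$ with $|a|_\infty\le 10^{-1}(q^s\wedge q^u)$ and $\|d_\cdot\Delta\|<\epsilon^{1/3}$; pushing a graph through this map inflates the $\varphi$-parameter from $10^{-3}(\cdot)^2$ to order $10^{-1}(\cdot)$, the Lipschitz bound from $\epsilon$ to order $\epsilon^{1/3}$, and after reparametrizing over the new unstable coordinates the domain is no longer a box $R_{q^u}(0)$, so Definition \ref{admissible} genuinely fails and neither Proposition \ref{firstbefore} nor Proposition \ref{prop221} applies as a black box (moreover $\max\{p^{s/u},q^{s/u}\}$ may exceed $Q_\epsilon$ of the second center, so $\widehat v'$ need not even be a legal double chart). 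The paper's argument needs none of your later machinery: it uses only the index-$0$ normal form to rewrite $V^u(x,Z)$ in the $\psi_{y_0}$-chart as a Lipschitz graph over the unstable directions with these weaker constants, and then reruns the Banach fixed-point argument of Proposition \ref{firstbefore} directly --- a contraction of a suitable box into itself for existence, and McShane Lipschitz extensions on a larger domain for uniqueness. No staying-in-windows verification, no concatenated-chain construction, and no use of Proposition \ref{prop221} occurs; and, as explained above, those ingredients could not produce a unique intersection point in any case.
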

Proof is similar to \cite[Lemma~5.8]{SBO}.
\subsubsection{The symbolic Markov property}
\begin{prop}\label{symbolicmarkovproperty}
If $x=\pi((v_i)_{i\in\mathbb{Z}})$, where $\underline{v}\in\Sigma^\#$, then $f[W^s(x,Z(v_0))]\subset W^s(f(x),Z(v_1))$ 

and $f^{-1}[W^u(f(x),Z(v_1))]\subset W^u(x,Z(v_0))$.
\end{prop}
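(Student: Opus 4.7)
The plan is to reduce both inclusions to a splicing argument; I write the stable inclusion in detail, the unstable one being symmetric. Unpacking, $W^s(x,Z(v_0)) = V^s((v_i)_{i\ge 0})\cap Z(v_0)$ and, using that $\pi(\sigma\underline v)=f(x)$, $(\sigma\underline v)_0=v_1$, and $\sigma\underline v\in\Sigma^\#$, also $W^s(f(x),Z(v_1))=V^s((v_{i+1})_{i\ge 0})\cap Z(v_1)$. Proposition~\ref{firstofchapter}(3) already gives $f[V^s((v_i)_{i\ge 0})]\subset V^s((v_{i+1})_{i\ge 0})$, so for any $y\in W^s(x,Z(v_0))$ the containment $f(y)\in V^s(f(x),Z(v_1))$ is automatic. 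What remains is to exhibit a chain in $\Sigma^\#$ with $0$-th symbol $v_1$ and $\pi$-image $f(y)$.

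For this, pick $\underline u\in\Sigma^\#$ with $u_0=v_0$ and $\pi(\underline u)=y$ (available since $y\in Z(v_0)$), and splice the past of $\underline u$ with the future of $\underline v$ by
\[
w_i := \begin{cases} u_i, & i\le 0,\\ v_i, & i\ge 0,\end{cases}
\]
which is consistent at $i=0$ since $u_0=v_0$. Each edge $w_i\to w_{i+1}$ is inherited from $\underline u$ (for $i\le -1$) or from $\underline v$ (for $i\ge 0$), so $\underline w\in\Sigma(\mathcal G)$; the negative recurrences of $\underline u$ together with the positive recurrences of $\underline v$ put $\underline w\in\Sigma^\#$.

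The main (but mild) step is $\pi(\underline w)=y$. Since $V^u((\cdot)_{i\le 0})$ and $V^s((\cdot)_{i\ge 0})$ depend only on the corresponding tails of a chain (Proposition~\ref{firstofchapter}(1)),
\[
V^u((w_i)_{i\le 0}) = V^u((u_i)_{i\le 0}) \ni y, \qquad V^s((w_i)_{i\ge 0}) = V^s((v_i)_{i\ge 0}) \ni y,
\]
the first containment because $y=\pi(\underline u)$, the second because $y\in V^s(x,Z(v_0))$. These two admissible manifolds in $v_0=w_0$ meet at a single point (Proposition~\ref{firstbefore}), namely $\pi(\underline w)$, so $\pi(\underline w)=y$. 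Applying $\sigma$ then gives $\sigma\underline w\in\Sigma^\#$, $(\sigma\underline w)_0=w_1=v_1$, and $\pi(\sigma\underline w)=f(y)$, so $f(y)\in Z(v_1)$ by Definition~\ref{ZV}, finishing the stable inclusion.

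The unstable inclusion is the mirror splice: given $z\in W^u(f(x),Z(v_1))$ and $\underline{u}'\in\Sigma^\#$ with $u'_0=v_1$, $\pi(\underline{u}')=z$, set $w_i=v_i$ for $i\le 0$ and $w_i=u'_{i-1}$ for $i\ge 1$; then $V^u((w_i)_{i\le 0})=V^u((v_i)_{i\le 0})\ni f^{-1}(z)$ by Proposition~\ref{firstofchapter}(3), while the chart-containment characterization of Proposition~\ref{firstofchapter}(4) checks directly that $f^{-1}(z)\in V^s((w_i)_{i\ge 0})$ (its forward iterates lie in the charts of $v_0,v_1,u'_1,u'_2,\ldots$ as required). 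Hence $\pi(\underline w)=f^{-1}(z)$ and $f^{-1}(z)\in W^u(x,Z(v_0))$. The only genuine point of care is the projection identity $\pi(\underline w)=y$ (and its unstable analogue); everything else is bookkeeping against the shadowing/Graph Transform machinery of Section~\ref{GTrans}, and I foresee no substantive obstacle.
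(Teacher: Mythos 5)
Your overall route is the right one, and it is essentially the argument of Sarig's Proposition~10.9, which is exactly what the paper cites for this statement: splice the past of a coding chain of the point with the future of $\underline v$, identify the projection of the spliced chain by the uniqueness of the intersection of the limit admissible manifolds, and then shift. Your stable half is complete and correct as written.

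The unstable half, however, has an unjustified step. You claim that Proposition~\ref{firstofchapter}(4) "checks directly" that $f^{-1}(z)\in V^s((w_i)_{i\geq0})$ because the forward iterates of $f^{-1}(z)$ lie in the charts of $v_0,v_1,u'_1,\dots$. But the characterization in Proposition~\ref{firstofchapter}(4) is not only a window condition: it also requires the point to lie in $\psi_{x_0}[R_{p_0^s}(0)]$, where $v_0=\psi_{x_0}^{p_0^s,p_0^u}$. What you actually know at this stage is $f^{-1}(z)\in V^u((v_i)_{i\leq0})$, which (by the containment of a $u$-admissible manifold in its chart) only places $f^{-1}(z)$ in $\psi_{x_0}[R_{p_0^u}(0)]$; since $p_0^u$ may well exceed $p_0^s$, the initial box condition of size $p_0^s$ does not follow "directly" and would need a separate quantitative localization (e.g.\ locating $z$ within $10^{-2}(p_1^s\wedge p_1^u)^2$ of $x_1$ via Proposition~\ref{firstbefore} and then pulling back through the overlap estimates for the edge $v_0\rightarrow v_1$). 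The clean repair is simply to mirror your own stable argument: for the spliced chain $\underline w$ with $w_i=v_i$ $(i\le0)$, $w_i=u'_{i-1}$ $(i\ge1)$, verify instead that $\pi(\sigma\underline w)=z$ — both memberships are immediate, since $z\in V^u(f(x),Z(v_1))=V^u((v_{i+1})_{i\le0})=V^u(((\sigma\underline w)_i)_{i\le0})$ by hypothesis and $z=\pi(\underline u')\in V^s((u'_i)_{i\ge0})=V^s(((\sigma\underline w)_i)_{i\ge0})$ — and then use $\pi\circ\sigma=f\circ\pi$ to get $\pi(\underline w)=f^{-1}(z)$, hence $f^{-1}(z)\in Z(v_0)$; combined with Proposition~\ref{firstofchapter}(3), which you already invoke, this finishes the unstable inclusion without ever appealing to the chart-containment characterization.
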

\begin{proof} See \cite[Proposition~10.9]{Sarig}.
%
%
%
%
\end{proof}
\begin{lemma}\label{lastOne} Suppose $Z,Z'\in\mathcal{Z}$ and $Z\cap Z'\neq\varnothing$, then:
\begin{enumerate}
    \item If $Z=Z(\psi_{x_0}^{p_0^s,p_0^u})$ and $Z'=Z(\psi_{y_0}^{q_0^s,q_0^u})$ then $Z\subset \psi_{y_0}[R_{q_0^s\wedge q_0^u}(0)]$
    \item For any $x\in Z\cap Z'$: $W^u(x,Z)\subset V^u(x,Z')$ and $W^s(x,Z)\subset V^s(x,Z')$
\end{enumerate}
\end{lemma}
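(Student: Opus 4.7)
\medskip
\noindent
\emph{Plan of proof.}
For part (1) the approach is to pick an arbitrary $z\in Z$ and push its coordinates through the change of chart $\psi_{y_0}^{-1}\circ\psi_{x_0}$. Writing $z=\pi(\underline u)$ for some $\underline u\in\Sigma^\#$ with $u_0=\psi_{x_0}^{p_0^s,p_0^u}$, Proposition \ref{firstbefore} gives $z=\psi_{x_0}(\vec t\,)$ with $|\vec t\,|_\infty\le 10^{-2}(p_0^s\wedge p_0^u)^2$, so in particular $\vec t\in R_\epsilon(0)$. Next, because $Z\cap Z'\neq\varnothing$, we choose $x\in Z\cap Z'$ together with chains $\underline v,\underline w\in\Sigma^\#$ having $v_0=\psi_{x_0}^{p_0^s,p_0^u}$, $w_0=\psi_{y_0}^{q_0^s,q_0^u}$ and $\pi(\underline v)=\pi(\underline w)=x$. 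Then Theorem~\ref{beforefinal}(3) applied at index $0$ writes
\[
\psi_{y_0}^{-1}\circ\psi_{x_0}(u)=Ou+a+\Delta(u),\qquad u\in R_\epsilon(0),
\]
with $|a|_\infty\le 10^{-1}(q_0^s\wedge q_0^u)$, $\Delta(0)=0$ and $\|d_v\Delta\|<\tfrac12\epsilon^{1/3}$. Combining this with Lemma~\ref{prop214} ($p_0^{s/u}=I^{\pm1}(q_0^{s/u})$) and the fact that $(p_0^s\wedge p_0^u)^2\ll q_0^s\wedge q_0^u$ when $\epsilon$ is small, we get
\[
|\psi_{y_0}^{-1}(z)|_\infty\le (1+\tfrac12\epsilon^{1/3})\cdot 10^{-2}(p_0^s\wedge p_0^u)^2+10^{-1}(q_0^s\wedge q_0^u)\le q_0^s\wedge q_0^u,
\]
which is exactly the content of (1).

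For part (2), I treat the unstable case; the stable case is symmetric with time reversed. Given $y\in W^u(x,Z)$, the characterization in Proposition~\ref{firstofchapter}(4) of $V^u(x,Z')$ as a set says it suffices to verify (a) $y\in\psi_{y_0}[R_{q_0^u}(0)]$ and (b) $f^{-k}(y)\in\psi_{y_{-k}}[R_{10Q_\epsilon(y_{-k})}(0)]$ for every $k\ge 0$, where $(y_{-k})$ is the center sequence of the chain $\underline w$ chosen for $Z'$. Part (a) follows immediately from part (1) since $W^u(x,Z)\subseteq Z\subseteq\psi_{y_0}[R_{q_0^s\wedge q_0^u}(0)]\subseteq\psi_{y_0}[R_{q_0^u}(0)]$. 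For (b), $u$-admissibility of $\mathcal F_u^k(V^u(x,Z))$ in $v_{-k}$ gives $\vec s_k:=\psi_{x_{-k}}^{-1}(f^{-k}(y))$ with $|\vec s_k|_\infty\le p_{-k}^u\le Q_\epsilon(x_{-k})<\epsilon$. Now shift $\underline v$ and $\underline w$ by $-k$ (they still lie in $\Sigma^\#$ and project to $f^{-k}(x)$), and apply Theorem~\ref{beforefinal}(3) again to get
\[
\psi_{y_{-k}}^{-1}(f^{-k}(y))=O_{-k}\vec s_k+a_{-k}+\Delta_{-k}(\vec s_k).
\]
Bounding the right side by $2p_{-k}^u+10^{-1}q_{-k}^u$ and using Lemma~\ref{prop214} (so $p_{-k}^u\le I(q_{-k}^u)\le 2q_{-k}^u$) together with $q_{-k}^u\le Q_\epsilon(y_{-k})$ yields $|\psi_{y_{-k}}^{-1}(f^{-k}(y))|_\infty\le 5Q_\epsilon(y_{-k})\le 10Q_\epsilon(y_{-k})$, which is (b). Hence $y\in V^u(x,Z')$. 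The stable statement follows by replacing backward iteration with forward iteration and using the admissibility of $\mathcal F_s^k(V^s(x,Z))$ in $v_k$.

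The only point that requires a little care is that the estimates of Theorem~\ref{beforefinal}(3) must be applied at every negative (resp. positive) time $-k$ (resp. $k$); this is legitimate because $\pi(\sigma^{-k}\underline v)=\pi(\sigma^{-k}\underline w)=f^{-k}(x)$ and both shifted chains remain in $\Sigma^\#$, so the theorem's hypotheses persist. The rest is a bookkeeping of the scales $p_{\pm k}^{s/u}$, $q_{\pm k}^{s/u}$, and $Q_\epsilon(\cdot)$ along the chains, all controlled uniformly once $\epsilon$ is taken sufficiently small.
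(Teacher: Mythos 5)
Your part (1) is correct and is essentially the paper's own argument: $Z\subset\psi_{x_0}[R_{10^{-2}(p_0^s\wedge p_0^u)^2}(0)]$ by Proposition \ref{firstbefore}, and this tiny box is pushed through $\psi_{y_0}^{-1}\circ\psi_{x_0}=O+a+\Delta$ (Theorem \ref{beforefinal}) into $R_{q_0^s\wedge q_0^u}(0)$; the point is that the radius being pushed is of order $(p_0^s\wedge p_0^u)^2\ll q_0^s\wedge q_0^u$, so any fixed constant coming from the chart change is absorbed once $\epsilon$ is small.

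Part (2), however, has a genuine gap. You push the \emph{full} admissible manifold through the chart comparison: $\vec s_k=\psi_{x_{-k}}^{-1}(f^{-k}(y))$ satisfies only $|\vec s_k|_\infty\le p_{-k}^u$, and $p_{-k}^u$ is comparable to $q_{-k}^u\le Q_\epsilon(y_{-k})$ (by Lemma \ref{prop214}), not much smaller than it. Your bound ``$|O_{-k}\vec s_k+a_{-k}+\Delta_{-k}(\vec s_k)|_\infty\le 2p_{-k}^u+10^{-1}q_{-k}^u\le 5Q_\epsilon(y_{-k})$'' implicitly treats $O_{-k}$ as an isometry of the sup norm; but $O_{-k}$ is orthogonal, hence a Euclidean isometry, and its $|\cdot|_\infty\to|\cdot|_\infty$ operator norm can be as large as $\sqrt d$. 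So the honest bound is of order $\sqrt d\,p_{-k}^u+10^{-1}q_{-k}^u\approx(2\sqrt d+10^{-1})Q_\epsilon(y_{-k})$, and the dimensional factor cannot be killed by shrinking $\epsilon$. Since the characterization of Proposition \ref{firstofchapter}(4) only offers the fixed slack $10Q_\epsilon(y_{-k})$, your estimate fails once $d$ is moderately large, and the argument does not close in the generality of the paper (closed manifolds of any dimension $\ge2$). The paper avoids exactly this: it first applies the symbolic Markov property (Proposition \ref{symbolicmarkovproperty}) to get $f^{-k}[W^u(x,Z)]\subset W^u(f^{-k}(x),Z(v_{-k}))\subset Z(v_{-k})$, and then applies part (1) at each index $-k$ to the intersecting sets $Z(v_{-k}),Z(w_{-k})$ (both contain $f^{-k}(x)$), giving $Z(v_{-k})\subset\psi_{y_{-k}}[R_{q_{-k}^s\wedge q_{-k}^u}(0)]\subset\psi_{y_{-k}}[R_{Q_\epsilon(y_{-k})}(0)]$; here the object pushed through the chart change again has radius of order $(p_{-k}^s\wedge p_{-k}^u)^2$, so the $\sqrt d$ is harmless, and then Proposition \ref{firstofchapter}(4) concludes $W^u(x,Z)\subset V^u(x,Z')$. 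Note also that this is where the hypothesis $x\in Z\cap Z'$ (not merely $Z\cap Z'\neq\varnothing$) is really used; your route only uses it to produce the chains $\underline v,\underline w$, whereas the paper's route needs $y\in W^u(x,Z)\subset Z$ to stay inside the small sets $Z(v_{-k})$ at every step. A minor additional slip: $\mathcal F_u^k$ maps admissible manifolds forward along the chain, so the fact you want is $f^{-k}[V^u((v_i)_{i\le0})]\subset V^u((v_{i-k})_{i\le0})$ from Proposition \ref{firstofchapter}(3), not admissibility of ``$\mathcal F_u^k(V^u(x,Z))$ in $v_{-k}$''.
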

Proof is similar to \cite[Lemma~5.10]{SBO}.
\section{Proof of Theorem \ref{t4.1.1} and of Theorem \ref{t4.1.2}}\label{chapter5}
\subsection{Summary of the properties of the Markov extension}\label{summaryextension}

We present a summary of the objects we have constructed so far, and their properties.

We have constructed a family of sets $\mathcal{Z}$ s.t
\begin{enumerate}
    \item $\mathcal{Z}$ is \textbf{countable}  (Definition \ref{ZV}; see also Proposition \ref{discreteness} and Definition \ref{graphosaurus}).
    \item $\mathcal{Z}$ covers (exactly) $\RST$ 
(Corollary \ref{newLifeOfPi}).
    \item $\mathcal{Z}$ is \textbf{locally finite}: $\forall Z\in \mathcal{Z}, \#\{Z'\in\mathcal{Z}:Z\cap Z'\neq\varnothing\}<\infty$  (Theorem \ref{Zlocallyfinite}).
    \item Every $Z\in\mathcal{Z}$ has \textbf{product structure}: There are subsets $W^{s}(x,Z),W^u(x,Z)\subset Z$ $(x\in Z)$
s.t \begin{enumerate}
    \item $x\in W^{s}(x,Z)\cap W^u(x,Z)$.
    \item $\forall x,y\in Z \exists ! z\in Z$ s.t $W^u(x,Z)\cap W^s(y,Z)=\{z\}$, we write $z=[x,y]_Z.$
    \item $\forall x,y\in Z$ $W^u(x,Z),W^u(y,Z)$ are equal or disjoint (similarly for $W^s(x,Z),W^s(y,z)$).
\end{enumerate}
 (Proposition \ref{forBuzzi1}, Proposition \ref{propForBracketZ}).
\item The \textbf{symbolic Markov property}: if $x\in Z(u_0), f(x)\in Z(u_1)$,  and $u_0\rightarrow u_1$, then $f[W^s(x,Z_0)]\subset W^s(f(x),Z_1)$ and $f^{-1}[W^u(f(x),Z_1)]\subset W^u(x,Z_0)$ (Proposition \ref{symbolicmarkovproperty}).\footnote{To apply Proposition \ref{symbolicmarkovproperty}, represent $x=\pi(v)$ with $v\in\Sigma^\#$, $v_0=u_0$, and $f(x)=\pi(w)$ with $w\in\Sigma^\#$, $w_0=u_1$; and note that $x=\pi(u), u\in \Sigma^\#$ where $u_i=v_i$ $(i\leq0)$, $u_i=w_{i-1}$ $(i\geq1)$.}
\item 
\textbf{Uniform expansion/contraction on $W^{s}(x,Z),W^{u}(x,Z)$}: 
$\forall Z\in \mathcal{Z}, \forall z\in Z$, $\forall x\in W^s(z,Z),y\in W^u(z,Z)$, $\forall n\geq0$, $d(f^n(x),f^n(z)),d(f^{-n}(y), f^{-n}(z))\leq 4I^{-n}(1)$ (Proposition \ref{Lambda}(1)).

\item 
\textbf{Bowen's property with finite degree} (see \cite[Definition~1.11]{BoyleBuzzi}): \begin{enumerate}
    \item 
    $\forall u,v\in \Sigma:$ $\Big(\forall i, Z(u_i)\cap Z(v_i)\neq\varnothing\Big)\Rightarrow \pi(u)=\pi(v)$  (Proposition \ref{firstofchapter}(4), proof of Proposition \ref{prop213}).
    \item $\forall u,v\in \Sigma^\#$: $\pi(u)=\pi(v)\Leftrightarrow \Big(Z(u_i)\cap Z(v_i)\neq\varnothing$ $\forall i\in\mathbb Z\Big)$ ((a), Definition \ref{ZV}).
    \item $\forall Z\in \mathcal{Z}$:  $\#\{Z'\in\mathcal{Z}: Z'\cap Z\neq\emptyset\}<\infty$  (Theorem \ref{Zlocallyfinite}).
\end{enumerate}
\end{enumerate}
For the sake of completeness, we present below the refinement of $\mathcal{Z}$ into a partition, while the same arguments appear in verbatim in \cite[\textsection~6]{SBO} and beforehand in \cite{Sarig}. This refinment is used to prove Theorem \ref{t4.1.1} and Theorem \ref{t4.1.2}.

\subsection{Construction of a partition with a finite-to-one almost everywhere coding}\label{Forpartitione} We explain how the results which are listed in \textsection \ref{summaryextension} are sufficient to prove Theorem \ref{t4.1.1} and Theorem \ref{t4.1.2}.

\medskip
\underline{Step 1:} $\mathcal{Z}$ can be refined into a countable \textbf{partition} $\mathcal{R}$ with the following properties:
\begin{enumerate}
    \item[(a)] Product structure.
    \item[(b)] Symbolic Markov property.
    \item[(c)] Every $Z\in\mathcal{Z}$ contains only finitely many $R\in\mathcal{R}$.
\end{enumerate}

\underline{Proof:} This is done exactly as in \cite[\textsection~11]{Sarig}, using the Bowen-Sinai refinement procedure \cite{B1,B4,B3}, the local-finiteness property (3), and the product structure (4) of $Z_i\in\mathcal{Z}$. The partition elements are the equivalence classes of the following equivalence relation on $\bigcup\mathcal{Z}$ (see \cite[Proposition~11.2]{Sarig}):
$$x\sim y \iff \forall Z,Z'\in \mathcal{Z}, 
\begin{bmatrix}
    x\in Z\iff y\in Z  \\
    W^u(x,Z)\cap Z'\neq\varnothing \iff W^u(y,Z)\cap Z'\neq\varnothing   \\
     W^s(x,Z)\cap Z'\neq\varnothing \iff W^s(y,Z)\cap Z'\neq\varnothing
\end{bmatrix}.$$


\underline{Step 2:} For every finite chain $R_0\rightarrow\cdots\rightarrow R_m$,  $\bigcap_{j=0}^{m}f^{-j}[R_j]\neq\varnothing$.

\underline{Proof:} This follows from the fact that $\mathcal{R}$ is a Markov partition, using a classical argument due to Adler and Weiss, and Sinai. The proof is by induction on $m$. Assume the statement holds for $m$, and prove for $m+1$: Take $p\in f^m\Big[\bigcap_{j=0}^{m}f^{-j}[R_j]\Big]$, and take $p'\in R_{m}\cap f^{-1}R_{m+1}$. $p,p'\in R_{m}$, whence belong to some $Z\in\mathcal{Z}$; therefore $[p,p']_Z$ is well defined by Definition \ref{bracketZ} and Proposition \ref{propForBracketZ}. By the symbolic Markov property, $f^{-m}([p,p']_Z)\in \bigcap_{j=0}^{m+1}f^{-j}[R_{j}]$.

\underline{Step 3:} Let $\widehat{\mathcal{G}}$ be a directed graph with set of vertices $\mathcal{R}$ and set of edges $\{(R,S)\in \mathcal{R}\times\mathcal{R}: R\cap f^{-1}[S]\neq\varnothing\}$. Let $\widehat{\Sigma}:=\Sigma(\widehat{\mathcal{G}})$ and define $\widehat{\pi}:\widehat{\Sigma}\rightarrow M$ by $\{\widehat{\pi}(\underline{R})\}=\bigcap\limits_{n\geq0}\overline{\bigcap\limits_{i=-n}^n f^{-i}[R_i]}$. Then $\widehat{\pi}$ is well defined, and:
\begin{enumerate}
    \item[(a)] $\widehat{\pi}[\widehat{\Sigma}]\supset \RST$.
    \item[(b)] $\widehat{\pi}\circ\sigma=f\circ\widehat{\pi}$.
    \item[(c)] $\widehat{\pi}$ is uniformly continuous.
    \item[(d)] $\widehat{\Sigma}$ is locally compact.
\end{enumerate}

\underline{Proof:} This proof is a concise presentation of \cite[Theorem~12.3, theorem~12.5]{Sarig}. Step 2 guarantees that the intersection in the expression defining $\{\widehat{\pi}\}$ is not empty. 
The intersection is a singleton, because $\mathrm{diam}(\bigcap_{i=-n}^nf^{-i}[R_i])\leq 8I^{-n}(1)$. To see this, fix $x,y\in \bigcap_{i=-n}^nf^{-i}[R_i]$, and fix $\underline{u}\in\Sigma^\#$ s.t $\pi(\underline{u})=x$. It follows that $\bigcap_{i=-n}^nf^{-i}[R_i]\subset \bigcap_{i=-n}^nf^{-i}[Z(u_i)]$, whence $y\in\bigcap_{i=-n}^nf^{-i}[Z(u_i)]$. Let $z:=[x,y]_{Z(u_0)}$. By (4) and (5), $z\in \bigcap_{i=-n}^nf^{-i}[Z(u_i)]$, $f^n(z)\in W^u(f^n(x),Z(u_n))$, $f^{-n}(z)\in W^s(f^{-n}(y),Z(u_{-n}))$, whence by (6) $d(x,y)\leq d(x,z)+d(z,y)=d(f^{-n}(f^n(x)),f^{-n}(f^n(z)))+d(f^{n}(f^{-n}(z)),f^{n}(f^{-n}(y)))\leq 8I^{-n}(1)$.
Therefore $\widehat{\pi}$ is well defined. 
\begin{enumerate}
    \item[(a)] Let $x\in \RST$. By Theorem \ref{DefOfPi}, $f^i(x)\in\pi[\Sigma^\#]$ for all $i\in\mathbb{Z}$. By definition $\mathcal{Z}$ is a cover of $\pi[\Sigma^\#]$, and $\mathcal{R}$ refines $\mathcal{Z}$; whence $f^i(x)\in\bigcupdot\mathcal{R}$ $\forall i\in\mathbb{Z}$. Denote by $R_i$ the unique partition member of $\mathcal{R}$ which contains $f^i(x)$. $\underline{R}$ is an admissible chain by definition, and $\widehat{\pi}(\underline{R})=x$. Let $\underline{u}\in\Sigma^\#$ s.t $\pi(\underline{u})=x$, then $Z(u_i)\supset R_i$ $\forall i\in\mathbb{Z}$. $\mathcal{Z}$ is locally-finite, so by the pigeonhole principle, $\vec{R}$ must be in $\widehat{\Sigma}^\#$.
    \item[(b)] $$\{(f\circ\widehat{\pi})(\underline{R})\}=\bigcap_{n\geq0}\bigcap_{i=-n}^n\overline{f^{-(i-1)}[R_{i-1+1}]}=\{\widehat{\pi}(\sigma \underline{R})\}\text{ }(\because f\text{ is a diffeomorphism}).$$
    \item[(c)] Suppose $d(\underline{R},\underline{S})=e^{-n}$, then $R_i=S_i$ for $|i|\leq n$, whence $\widehat{\pi}(\underline{R}),\widehat{\pi}(\underline{S})\in\overline{\bigcap_{i=-n}^nf^{-i}[R_i]}$. We just saw in the beginning of the proof of step 3 that $\mathrm{diam}(\bigcap_{i=-n}^nf^{-i}[R_i])\leq 8I^{-n}(1)$. So $d(\widehat{\pi}(\underline{R}),\widehat{\pi}(\underline{S}))\leq 8\cdot I^{ \lceil\log d(\underline{R},\underline{S})\rceil}(1)$.
    \item[(d)] Fix a vertex $R\in \mathcal{R}$. Assume $R\rightarrow S$. Let $u\in \mathcal{V}$ s.t $R\subset Z(u)$. Take $x\in R\cap f^{-1}[S]\subset Z(u)$, then $\exists \underline{u}\in\Sigma^\#$ s.t $u_0=u$ and $\pi(\underline{u})=x$. Then $f(x)\in S\cap Z(u_1)$. Thus, by step 1(c), and Lemma \ref{lemma131}, $$\#\{S\in\mathcal{R}:R\rightarrow S\}\leq\sum_{u\in \mathcal{V}:R\subset Z(u)}\sum_{v\in\mathcal{V}:u\rightarrow v}\#\{S\in\mathcal{R}:S\subset Z(v)\}
    <\infty.$$
    A similar calculation shows $\#\{S':S'\rightarrow R\}<\infty$.
\end{enumerate}

\underline{Step 4:} $\widehat{\pi}[\widehat{\Sigma}^\#]=\RST$
, and $\widehat{\pi}|_{\widehat{\Sigma}^\#}$ is finite-to-one.

\underline{Proof:} In Corollary \ref{newLifeOfPi} we saw $\bigcup\mathcal{Z}=\pi[\Sigma^\#]=\RST$. $\mathcal{R}$ is a refinement of $\mathcal{Z}$. \cite[Proposition~4.11]{LifeOfPi} shows $\widehat{\pi}[\widehat{\Sigma}^\#]=\pi[\Sigma^\#]$ (with our $\epsilon>0$)\footnote{The proof of \cite[Proposition~4.11]{LifeOfPi} does not depend on $\epsilon$ at all; $\epsilon$ only appears in the statement to satisfy the equality $\pi[\Sigma^\#]=\RST$ which we have already seen independently in Corollary \ref{newLifeOfPi}.}, and completes the proof that $\widehat{\pi}[\widehat{\Sigma}^\#]=\RST$. The finite-to-one property follows from Theorem \ref{t4.1.2}, whose proof follows verbatim the proof of \cite[Theorem~1.3]{SBO} (which follows the proof of Sarig in \cite{Sarig}, which in turn adapts the earlier works of Bowen \cite{B1,B2,B3}).

\medskip
\begin{prop}
For every $x\in \widehat{\pi}[\widehat{\Sigma}]$, $T_xM=E^s(x)\oplus E^u(x)$, where
$$(1)\text{ }\sum_{n\geq0}\|d_xf^n|_{E^s(x)}\|^2<\infty\text{ , }(2)\text{ }\sum_{n\geq0}\|d_xf^{-n}|_{E^u(x)}\|^2<\infty.
$$
The maps $\underline{R}\mapsto E^{s/u}(\widehat{\pi}(\underline{R}))$ have summable variations as maps from $\widehat{\Sigma}$ to $TM$.
\end{prop}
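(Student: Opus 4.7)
First I would make $E^{s/u}$ intrinsic to the point. Given $\underline R\in\widehat\Sigma$, for each $i$ the partition element $R_i\in\mathcal R$ is contained in the non-empty finite set $\mathcal Z(R_i):=\{Z\in\mathcal Z:R_i\subset Z\}$ (finite by Theorem~\ref{Zlocallyfinite}); using a greedy admissible completion as in step~3(a) of the proof of Theorem~\ref{t4.1.1}, I would produce a chain $\underline u\in\Sigma$ with $R_i\subset Z(u_i)$ for all $i$ that shadows $\widehat\pi(\underline R)$. Setting
\[
E^s(\widehat\pi(\underline R)):=T_{\widehat\pi(\underline R)}V^s\bigl((\underline u)_{i\ge0}\bigr),\qquad E^u(\widehat\pi(\underline R)):=T_{\widehat\pi(\underline R)}V^u\bigl((\underline u)_{i\le0}\bigr),
\]
independence of the choice of $\underline u$ follows from Proposition~\ref{prop221} together with Lemma~\ref{lastOne}(2): two competing $s$-admissible manifolds through $\widehat\pi(\underline R)$ staying in windows must germ-wise coincide at the point. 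In the chart $\psi_{u_0}$ the leaves are graphs of $F^s$ and $F^u$ with $\|d_{\cdot}F^{s/u}\|\le Q_\epsilon(u_0)^{\beta/2}$ (Definition~\ref{admissible}); since $\mathbb R^{s(u_0)}$ and $\mathbb R^{u(u_0)}$ are orthogonal (Theorem~\ref{pesinreduction}), the two graphs are transverse, so $T_xM=E^s(x)\oplus E^u(x)$.

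\textbf{Summability of the iterates.} Proposition~\ref{Lambda}(2) applied to the $s$-admissible manifold $V^s((\underline u)_{i\ge0})$ (which stays in windows by the remark after Definition~\ref{edges}) yields, for every unit $\xi\in E^s(x)$,
\[
|d_xf^n\xi|\le 4\,I^{-n}(p_0^s)\cdot\frac{\|C_0^{-1}(u_0)\|}{p_0^s},
\]
so $\sum_{n\ge0}\|d_xf^n|_{E^s(x)}\|^2\le C(u_0)\sum_{n\ge0}(I^{-n}(1))^2$, which is finite by Lemma~\ref{FaveForNow}: picking $\tau$ with $(1+\tau)/\gamma=2$ (so $\tau=2\gamma-1>0$ since $\gamma>5/\beta>1$) is legitimate. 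The analogous Proposition~\ref{Lambda}(2) for $u$-manifolds and $f^{-1}$ gives the unstable bound.

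\textbf{Summable variations.} Suppose $R_i=R'_i$ for $|i|\le N$. I would exhibit lifts $\underline u,\underline u'\in\Sigma$ of $\underline R,\underline R'$ with $u_i=u'_i$ for $|i|\le N-K_0$, where $K_0=K_0(f,M,\Gamma,\gamma)$ is a uniform constant. This is obtained by fixing a deterministic selector from finite sets $\mathcal Z(R)$ and performing a greedy admissible completion: inside the common window $[-N,N]$ the chosen $u_i$ depends only on $(R_{-N},\dots,R_N)$ and the selector, hence coincides with $u'_i$, while $K_0$ absorbs the boundary effect of admissibility $u_{-N-1}\to u_{-N}$ needing initial data from outside the window. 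Proposition~\ref{firstofchapter}(5), applied to $\underline u,\underline u'$, then gives
\[
\mathrm{dist}_{C^1}\bigl(\widecheck V^s((\underline u)_{i\ge0}),\widecheck V^s((\underline u')_{i\ge0})\bigr)\le C_1\bigl[I^{-(N-K_0)}(p_0^s\wedge p_0^u)\bigr]^{1/\gamma+\tau},
\]
and similarly for the unstable case. Since $\widehat\pi(\underline R),\widehat\pi(\underline R')$ lie in the $10^{-2}(p_0^s\wedge p_0^u)^2$-neighbourhood of the chart centre (Proposition~\ref{firstbefore}), they lie in the domains of the leaf cores, and the $C^1$ bound on the representing functions translates, via the uniform smoothness of $\psi_{u_0}$ and admissibility, into a bound of the same order on the distance in $TM$ between $E^{s/u}(\widehat\pi(\underline R))$ and $E^{s/u}(\widehat\pi(\underline R'))$. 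Summability in $N$ of the right-hand side is Lemma~\ref{FaveForNow} again.

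\textbf{Main obstacle.} The delicate step is the \emph{coherent lifting} $\widehat\Sigma\to\Sigma$: the partition $\mathcal R$ refines $\mathcal Z$ but the refinement is not canonical, so two independent lifts of $\underline R,\underline R'$ may fail to agree on any initial window even when $R_i=R'_i$ on $[-N,N]$. The fix is the deterministic selector above, combined with an admissibility bookkeeping that controls the constant $K_0$; all subsequent estimates reduce to the summable-variations content already established in Proposition~\ref{firstofchapter}(5) and the shadowing estimates in Proposition~\ref{Lambda}.
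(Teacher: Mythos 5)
Your proposal is correct and follows essentially the same route as the paper's (deliberately brief) proof: lift $\underline R$ to a chain $\underline u\in\Sigma$ using the fact that $\mathcal R$ refines $\mathcal Z$, define $E^{s/u}$ as tangent spaces of the (core) leaves $\widecheck V^{s/u}(\underline u)$, and conclude summability from Proposition \ref{Lambda}(2) together with Lemma \ref{FaveForNow}, and summable variations from Proposition \ref{firstofchapter}(5). The coherent-lifting bookkeeping you add (selector anchored in the common window, then extend each chain separately, which is legitimate since $E^{s/u}(x)$ is lift-independent) is precisely the detail the paper leaves implicit, so there is no substantive divergence.
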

\begin{proof}
The proof is similar to \cite[Proposition~12.6]{Sarig}, so as in \cite[Proposition~6.1]{SBO}, we limit ourselves to a brief sketch; One first writes $x$ as the image of a chain $\underline{u}$ in $\Sigma$ (it is possible since the partition $\mathcal{R}$ is a refinement of the cover $\mathcal{Z}$). Then the tangent space $T_xM$ splits by $T_x\widecheck{V}^s(\underline{u})\oplus T_x\widecheck{V}^u(\underline{u})$. The rest is clear by Lemma \ref{Lambda}(2) and Proposition \ref{firstofchapter}(5).
\end{proof}
\section{Absolute Continuity of Weak Foliations}\label{chapter7}

In \textsection \ref{GTrans} we constructed local stable and unstable leaves for points in $\RST$. In fact, by using a strengthened method of the Graph Transform, we constructed local stable and unstable manifolds for every chain, while Theorem \ref{DefOfPi}
says that every point in $\RST$ can be coded by a chain in $\Sigma^\#$.

\medskip
These local stable manifolds contract forwards in time, as shown in Proposition \ref{Lambda}. However, they may contract in a strictly slower than an exponential rate, which distinguishes their behavior from the behavior of classical local stable leaves of non-uniformly hyperbolic orbits. For that reason, the foliations by such leaves are called {\em weak foliations}. In an upcoming paper, we present a few examples of systems which exhibit orbits with \textbf{strictly weak foliations}, while these orbits are shown to carry invariant measures of interest (alas not probability measures, as every invariant probability measure carried by $\RST$ is hyperbolic), and we show furthermore that these orbits have full Riamannian volume. 

\medskip
In this section of the paper we wish to prove that weak foliations are absolutely continuous w.r.t the holonomy map and the Riemannian volume.\footnote{This terminology is common- however may be slightly misleading; since in the general case the stable leaves may not generate a foliation, and may not exist for every point.} This property was shown by Anosov and Sinai for the stable foliations of Anosov systems in \cite{Anosov_1967}, and later extended by Pesin to the non-uniformly hyperbolic setup (see \cite[\textsection~8.6]{BP} for proof). Pesin's proof deals with the difficulty that not every point may admit a local stable leaf, and with the fact that the size of local stable leaves may vary greatly in small neighborhoods. Pesin's approach is to restrict to a regular set (i.e Pesin block), and consider the leaves which pass through points in a fixed such set. 

\medskip
The proof we present below follows the idea of Pesin, while utilizing one more property: A careful examination of Pesin's proof shows that a minimal requirement for his scheme to work is that the leaves contract fast enough so that the sequence of their diameters over a forwards-orbit is summable. In our proof, restricting to a partition element takes the role of restricting to a regular set. Proposition \ref{Lambda} and Lemma \ref{FaveForNow} show the summability property on partition elements, which is needed for the scheme to work. Although, one still has to show that the $\beta$-H\"older continuity of $d_\cdot f$ is sufficient for summability when $\beta$ may be arbitrarily small, which was obvious for the exponentially contracting rate in Pesin's case.

\medskip
For the simplicity of presentation we prove the absolute continuity of the holonomy map between two unstable leaves of chains in a fixed chart; but the proof easily extends for two admissible $u$-manifolds in the chart similarly (recall Definition \ref{admissible}). The principle which is used is that the contraction on the stable leaves overpowers any possible contraction in the transversals in a summable manner. 

\subsection{Bounded leaf measures}
Let $\mathcal{V}$ be the collection of double charts from Definition \ref{graphosaurus}. Fix $u\in\mathcal{V}$, and the set $A(u):=\pi[\Sigma\cap[u]]$. Notice, $A(u)$ is compact and $A(u)\supseteq Z(u)$ which is covered by finitely many partition elements (recall \textsection \ref{Forpartitione}). Fix $(u_i^1)_{i\leq0}$ and $(u_i^2)_{i\leq0}$ two negative chains 
s.t $u^1_0=u^2_0= u=\psi_x^{p^s,p^u}$. Denote the respective representative functions of $V^u((u_i^1)_{i\leq0})$ and $V^u((u_i^2)_{i\leq0})$ by $F^1$ and $F^2$.

\medskip
\noindent Notice: every point in $V^u((u_i^j)_{i\leq0})\cap A(u)$ can be coded by a chain $\underline{u}\in\Sigma
$ s.t $u_i^j=u_i$ for all $i\leq 0$ and $j=1,2$. 

\begin{lemma}\label{niniloulou}
Let $x\in V^u((u_i^1)_{i\leq0})\cap Z(u)$, and let $\underline{u}\in\Sigma
$ s.t $u_i^j=u_i$ for all $i\leq 0$ and $\pi(\underline{u})=x$. Write $y:= \pi((u_i^2)_{i\leq0}\cdot (u_i)_{i\geq0})$, where $\cdot$ denotes an admissible concatenation. Write $u_i=\psi_{x_i}^{p^s_i,p^u_i}$, $i\in\mathbb{Z}$.

For every $n\geq0$, let $\ul{u}^{1,n}:=(u_i^1)_{i\leq0}\cdot(u_j)_{j=0}^n$ and $\ul{u}^{2,n}:=(u_i^2)_{i\leq0}\cdot(u_j)_{j=0}^n$ $F_n^1$, and let $F_n^2$ be the representative functions of $V^u(\ul{u}^{1,n})$ and $V^u(\ul{u}^{2,n})$, respectively. Write $\widetilde{F}^j_n(v):=(F^j_n(v),v)$, $n\geq0$.

Then for all $n\geq 0$, for all $\eta\in (0, (I^{-n}(p^s\wedge p^u))^2)$, 
$$\left|\frac{\mathrm{Vol}(\psi_{x_n}\circ \widetilde{F}^1_n[R_\eta(0)])}{\mathrm{Vol}(\psi_{x_n}\circ \widetilde{F}^2_n[R_\eta(0)])}-1\right|\leq \epsilon^2 (I^{-n}(p^s\wedge p^u))^{\frac{1}{\gamma}+\tau'},$$
where $\tau'=\frac{1}{4\gamma}>0$, $\Vol$ denotes the intrinsic volume of a submanifold with an induced Riemannian volume, and $R_\eta(0)$ is an $\|\cdot\|_\infty$-norm ball of radius $\eta$ around $0$ in $\mathbb{R}^{u(x)}$.
\end{lemma}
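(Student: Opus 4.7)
The plan is to reduce this volume-ratio estimate to a pointwise comparison of Jacobian determinants, which in turn is controlled by a quantitative $C^1$-closeness of the representing functions $F^1_n$ and $F^2_n$. The $C^1$-closeness is precisely the output of Proposition \ref{firstofchapter}(5), and what remains is a finite-dimensional distortion computation.

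First I would reindex so that the time-$n$ chart $u_n$ becomes the ``present'' symbol: set $\bar v_i := (\ul{u}^{1,n})_{i+n}$ and $\bar w_i := (\ul{u}^{2,n})_{i+n}$ for $i \le 0$. Then $\bar v_0 = \bar w_0 = u_n$ and $\bar v_i = \bar w_i$ for $-n \le i \le 0$. Applying Proposition \ref{firstofchapter}(5) to these chains gives
\[
\|F^1_n - F^2_n\|_\infty + \|d_\cdot F^1_n - d_\cdot F^2_n\|_\infty \;\le\; C_1 \bigl[I^{-n}(p^s_n\wedge p^u_n)\bigr]^{\frac{1}{\gamma}+\tau}
\]
on the leaf-core domain $R_{(p^s_n\wedge p^u_n)^2}(0)$. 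Iterating the admissibility relations $p^s_{k+1}\ge I^{-1}(p^s_k)$ and the parallel inequality for $p^u$ along the chain yields $p^s_n\wedge p^u_n\ge I^{-n}(p^s\wedge p^u)$, so the hypothesis $\eta<(I^{-n}(p^s\wedge p^u))^2$ places $R_\eta(0)$ safely inside the leaf-core domain, and the estimate above is valid on $R_\eta(0)$ with $I^{-n}(p^s\wedge p^u)$ in place of $I^{-n}(p^s_n\wedge p^u_n)$ up to a multiplicative constant (absorbed later by taking $\tau'<\tau$).

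Next I would invoke the area formula
\[
\Vol\bigl(\psi_{x_n}\circ \widetilde F^j_n[R_\eta(0)]\bigr) = \int_{R_\eta(0)} \sqrt{\det M^j(v)}\,dv, \qquad M^j(v):=\bigl(d_v(\psi_{x_n}\circ \widetilde F^j_n)\bigr)^{\!t}\, d_v(\psi_{x_n}\circ \widetilde F^j_n).
\]
Since $d_v\widetilde F^j_n$ stacks $d_vF^j_n$ over the identity, $M^j(v)$ is a smooth function of the base point $\widetilde F^j_n(v)$ (through $d_\cdot\psi_{x_n}$) and of $d_vF^j_n$. The variation of $d_\cdot\psi_{x_n}$ between the two base points is controlled by $\|F^1_n-F^2_n\|_\infty$ combined with the $\beta/2$-H\"older continuity of $d_\cdot\psi_{x_n}$ (which holds since $\psi_{x_n}=\exp_{x_n}\circ C_0(x_n)$ and $f\in C^{1+\beta}$), while the variation in $d_vF^j_n$ is simply $\|d_\cdot F^1_n - d_\cdot F^2_n\|_\infty$. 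A standard determinant perturbation, most cleanly via $\log\det$, then yields
\[
\log\frac{\det M^1(v)}{\det M^2(v)} = O\!\bigl([I^{-n}(p^s\wedge p^u)]^{\frac{1}{\gamma}+\tau}\bigr)
\]
uniformly in $v\in R_\eta(0)$.

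Taking square roots, integrating over $R_\eta(0)$, and dividing cancels $\Vol(R_\eta(0))$ and gives the desired ratio bound. The hard part will be the bookkeeping: the implicit $O$-constant and the distortion coming from the H\"older exponent $\beta$ must be absorbed into the prefactor $\epsilon^2$ on the right-hand side. This is done by choosing $\tau'<\tau$ and exploiting both $[I^{-n}(p^s\wedge p^u)]^{\tau-\tau'}\to 0$ and the smallness $p^s\wedge p^u\le Q_\epsilon\le C_{\beta,\epsilon}$ to tame the constants. No further geometric input is required, since the leaf cores already enjoy the uniform $C^{1+\beta/2}$-bounds built into admissibility (Definition \ref{admissible}).
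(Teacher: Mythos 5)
Your plan follows the same two-step strategy as the paper's proof: reduce the volume ratio to a pointwise Jacobian comparison via a change of variables on the two admissible manifolds, then feed in the $C^1$-closeness estimate from Proposition~\ref{firstofchapter}(5) applied to the shifted chains. The reindexing and the lower bound $p^s_n\wedge p^u_n\geq I^{-n}(p^s\wedge p^u)$ are correct, and the determinant-perturbation step is essentially what the paper does (the paper phrases it as $|\Jac(d_z I)-1|$ for the holonomy-type diffeomorphism $I:\widecheck V^u(\ul u^{1,n})\to\widecheck V^u(\ul u^{2,n})$, which is equivalent to comparing $\sqrt{\det M^1}$ and $\sqrt{\det M^2}$).

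There is, however, a genuine gap in the final ``bookkeeping'' step. When you pass from the $C^1$ distance of the representing functions to the Jacobian comparison, the change of chart $\psi_{x_n}^{-1}$ (equivalently the pseudoinverse of $d_\cdot(\psi_{x_n}\circ\widetilde F^j_n)$) contributes a factor $\|C_0^{-1}(x_n)\|$, and this is \emph{not} an $n$-uniform constant: since $p^s_n\wedge p^u_n\leq Q_\epsilon(x_n)\leq C_{\beta,\epsilon}\|C_0^{-1}(x_n)\|^{-2\gamma}$, one only has
\[
\|C_0^{-1}(x_n)\|\;\lesssim\; C_{\beta,\epsilon}^{\frac{1}{2\gamma}}\bigl(I^{-n}(p^s\wedge p^u)\bigr)^{-\frac{1}{2\gamma}},
\]
which grows with $n$. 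Absorbing this into the prefactor $\epsilon^2$ by ``choosing $\tau'<\tau$'' is not sufficient as stated; what is needed is the \emph{strengthened} form of Proposition~\ref{firstofchapter}(5) available precisely because $\gamma>\frac{5}{\beta}$ (so that $\beta-\frac{2}{\gamma}>\frac{3}{\gamma}$), namely the bound
\[
d_{C^1}(\widecheck{V}^u(\ul{u}^{1,n}),\widecheck{V}^u(\ul{u}^{2,n}))\;\leq\; C'\bigl(I^{-n}(p^s\wedge p^u)\bigr)^{\frac{1}{\gamma}+\frac{1}{2\gamma}+\tau'},
\]
whose extra power $(I^{-n}(p^s\wedge p^u))^{\frac{1}{2\gamma}}$ is exactly what cancels $\|C_0^{-1}(x_n)\|$. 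Only after this cancellation does the remaining $n$-independent prefactor (a product of $C_{\beta,\epsilon}^{\frac{1}{2\gamma}}$, the determinant Lipschitz constant, and $C'$) become small enough to be dominated by $\epsilon^2$. This is the step your proposal treats too lightly, and it is also the source of the specific calibration constraint $\gamma>\frac{5}{\beta}$ in Definition~\ref{calibrationParams}.
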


\begin{proof} We fix $n\geq0$, and omit the subscript in the proof where it is clear, as all analysis is in a fixed chart $u_n=\psi_{x_n}^{p^s_n,p^u_n}$. 

\medskip
\textit{Step 1:} Define $$I:\psi_{x_n}(F^{1}(t),t)\mapsto\psi_{x_n}(F^{2}(t),t).$$ Equivalently, $I=\psi_{x_n}\circ \tilde{F}^{2}\circ \pi_u\circ \psi_{x_n}^{-1}$, and  $\tilde{F}^{1/2}$ are diffeomorphisms onto their images, where $\pi_u$ is the projection onto the $u$-coordinates. So $I$ is a diffeomorphism from $\widecheck{V}^u(\ul{u}^{1,n})$ to $\widecheck{V}^u(\ul{u}^{2,n})$. $I$ satisfies
\begin{equation}\label{adidax}
d(I(x'),x')\leq \|\psi_{x_n}\|d_{C^0}(F^{1},F^{2})\leq 2d_{C^0}(F^{1},F^{2}).	
\end{equation}

The identity map of $\widecheck{V}^u(\ul{u}^{1,n})$ can be written as $\mathrm{Id}=\psi_{x_n}\circ \tilde{F}^{1}\circ\pi_u\circ\psi_{x_n}^{-1}:\widecheck{V}^u(\ul{u}^{1,n})\rightarrow \widecheck{V}^u(\ul{u}^{1,n})$. Let $z\in \widecheck{V}^u(\ul{u}^{1,n})$, and let $\Theta_{D}:TD\rightarrow \mathbb{R}^d$ be a local isometry as in Definition \ref{isometries} s.t. $D$ is a neighborhood which contains $z,I(z)$. Then,
\begin{align}\label{ams}
\|\Theta_D d_zI-\Theta_D d_z\mathrm{Id}\|&=\|\Theta_Dd_z(\psi_{x_n}\circ \tilde{F}^{2}\circ\pi_u\circ\psi_{x_n}^{-1})-\Theta_Dd_z(\psi_{x_n}\circ \tilde{F}^{1}\circ\pi_u\circ\psi_{x_n}^{-1}) \nonumber\|\\
&= \|[\Theta_Dd_t(\psi_{x_n}\circ\tilde{F}^{\ul{v}})-\Theta_Dd_t(\psi_{x_n}\tilde{F}^{\ul{u}})]\pi_u\circ d_z(\psi_{x_n}^{-1}))\|, \hspace{0.25cm}\text{where } t=\pi_u\psi_{x_n}^{-1}(z),
\nonumber\\
&\leq 2\|C_0^{-1}(x_n)\|\cdot\|\Theta_Dd_t(\psi_{x_n}\circ\tilde{F}^{2})-\Theta_Dd_t(\psi_{x_n}\circ\tilde{F}^{1})\|\nonumber\\
&=2\|C_0^{-1}(x_n)\|\cdot\|\Theta_Dd_{\tilde{F}^{2}(t)}\psi_{x_n}d_t\tilde{F}^{2}-\Theta_Dd_{\tilde{F}^{1}(t)}\psi_{x_n}d_t\tilde{F}^{1}\|\nonumber\\
&=2\|C_0^{-1}(x_n)\|\cdot\|\Theta_Dd_{\tilde{F}^{\ul{v}}(t)}\psi_{x_n}(d_t\tilde{F}^{2}-d_t\tilde{F}^{1})+(\Theta_Dd_{\tilde{F}^{2}(t)}\psi_{x_n}-\Theta_Dd_{\tilde{F}^{1}(t)}\psi_{x_n})d_t\tilde{F}^{1}\|\nonumber\\
&\leq2\|C_0^{-1}(x_n)\|\cdot\left[d_{C^1}(\tilde{F}^{1},\tilde{F}^{2})\|d_\cdot\psi_{x_n}\|+\mathrm{Lip}(d_\cdot\psi_{x_n})d_{C^0}(\tilde{F}^{1},\tilde{F}^{2})\cdot\|d_\cdot\tilde{F}^{1}\|\right]\nonumber\\
&\leq 8\|C_0^{-1}(x_n)\|\cdot d_{C^1}(\widecheck{V}^u(\ul{u}^{1,n}),\widecheck{V}^u(\ul{u}^{2,n}))\leq 8(C_{\beta,\epsilon}^{-1} \cdot p_n^s\wedge p_n^u)^\frac{-1}{2\gamma}\cdot d_{C^1}(\widecheck{V}^u(\ul{u}^{1,n}),\widecheck{V}^u(\ul{u}^{2,n}))\nonumber\\
\leq& 8(C_{\beta,\epsilon}^{-1} \cdot I^{-n}(p^s\wedge p^u))^\frac{-1}{2\gamma}\cdot d_{C^1}(\widecheck{V}^u(\ul{u}^{1,n}),\widecheck{V}^u(\ul{u}^{2,n})).
\end{align}
It follows that,
\begin{align}\label{adidax2}
	|\Jac(d_zI)-1|=&|\Jac(d_zI)-\Jac(d_z\mathrm{Id})|\leq C_1 \|\Theta_D d_zI-\Theta_D d_z\mathrm{Id}\|
	\nonumber\\
	\leq& 8C_1(C_{\beta,\epsilon}^{-1} \cdot I^{-n}(p^s\wedge p^u))^\frac{-1}{2\gamma}\cdot d_{C^1}(\widecheck{V}^u(\ul{u}^{1,n}),\widecheck{V}^u(\ul{u}^{2,n})),
\end{align}
where $C_1$ is the Lipschitz constant for the absolute value of the determinant on the ball of $(d\times d)$-matrices with an operator norm bounded by $2
$, where $\|d_zI\|\leq 2
$ by \eqref{ams}.\footnote{Write $u=dim V^u(\ul{R})$, and let $A, B$ be $u\times u$ matrices s.t. $|a_{ij}-b_{ij}|\leq \delta$ for all $i,j\leq u$. So $\det A=\sum_{\sigma\in S_{u}}\mathrm{sgn}(\sigma)\prod_{i=1}^{u}a_{i\sigma(i)}$. Then $|\det A-\det B|\leq \sum_{\sigma\in S_{u}}|\prod_{i=1}^{u}a_{i\sigma(i)}-\prod_{i=1}^{u}b_{i\sigma(i)}|\leq |S_{u}|\cdot u\|B\|_{Fr}^{u-1}\delta$. Take maximum over $u\leq d-1$.}

By Proposition \ref{firstofchapter}, $d_{C^1}(\widecheck{V}^u(\ul{u}^{1,n}),\widecheck{V}^u(\ul{u}^{2,n}))\leq C(I^{-n}(p^s\wedge p^u))^{\frac{1}{\gamma}+\tau}$, for $C,\tau>0$ constants depending on the calibration parameters (but not on $\epsilon$)
. Furthermore, by \eqref{formyPesinAC} in Proposition \ref{firstofchapter}, when $\gamma>\frac{5}{\beta}$, $\beta-\frac{2}{\gamma}>\frac{3}{\gamma}=(\frac{1}{\gamma}+\tau')+(\frac{1}{\gamma}+\frac{1}{2\gamma}+\tau')$, where $\tau':=\frac{1}{4\gamma}>0$. Then for $C':=3\sup_{N\geq0}\{N\cdot (I^{-N}(1))^{\frac{1}{\gamma}+\tau'}\}<\infty$, we get 

\begin{equation*}
d_{C^1}(\widecheck{V}^u(\ul{u}^{1,n}),\widecheck{V}^u(\ul{u}^{2,n}))\leq C'(I^{-n}(p^s\wedge p^u))^{\frac{1}{\gamma}+\frac{1}{2\gamma}+\tau'}.
\end{equation*}


\noindent Substituting this in \eqref{adidax2} gives, for all sufficiently small $\epsilon>0$,
\begin{equation}\label{DeltaAir}|\Jac(d_zI)-1|\leq8C_1C_{\beta,\epsilon}^\frac{1}{2\gamma}\cdot C'(I^{-n}(p^s\wedge p^u))^{\frac{1}{\gamma}+\tau'}\leq \epsilon^2 (I^{-n}(p^s\wedge p^u))^{\frac{1}{\gamma}+\tau'}.\end{equation}

\textit{Step 2:}

\begin{align*}
| \mathrm{Vol}(\psi_{x_n}\circ \widetilde{F}^1[R_\eta(0)])-\mathrm{Vol}(\psi_{x_n}\circ \widetilde{F}^2[R_\eta(0)]) | =&|\int\limits_{R_\eta(0)}\Jac(d_t(\psi_{x_n}\tilde{F}^{1}))d\mathrm{Leb}(t)-\int_{R_\eta(0)}\Jac(d_t(\psi_{x_n}\tilde{F}^{2}))d\mathrm{Leb}(t)|\\
\leq&\int_{R_\eta(0)}|\Jac(d_t(\psi_{x_n}\tilde{F}^{1}))-\Jac(d_t(\psi_{x_n}\tilde{F}^{2}))|d\mathrm{Leb}(t)\\
=& \int\limits_{R_\eta(0)}\Jac(d_t (\psi_{x_n}\widetilde{F}^1))\cdot |1-\Jac(d_t(\psi_{x_n}\tilde{F}^{2}))\\
&\cdot \Jac(d_{\psi_{x_n}\tilde{F}^{1}(t)}((\psi_{x_n}\tilde{F}^{1})^{-1}))|d\mathrm{Leb}(t) \\
=& \int\limits_{R_\eta(0)} \Jac(d_t(\psi_{x_n}\widetilde{F}^1))\cdot|1-\Jac(d_{\psi_{x_n}\tilde{F}^{1}(t)}I)|d\mathrm{Leb}(t).
\end{align*}
By  \eqref{DeltaAir},

\begin{align}\label{BestWest5}
\left|\frac{\Vol(\psi_{x_n}\circ \widetilde{F}^2[R_\eta(0)])}{\Vol(\psi_{x_n}\circ \widetilde{F}^1[R_\eta(0)])}-1\right|=&\frac{|\Vol(\psi_{x_n}\circ \widetilde{F}^1[R_\eta(0)])-\Vol(\psi_{x_n}\circ \widetilde{F}^2[R_\eta(0)])|}{\Vol(\psi_{x_n}\circ \widetilde{F}^1[R_\eta(0)])} \nonumber\\\leq&\frac{\epsilon^2 (I^{-n}(p^s\wedge p^u))^{\frac{1}{\gamma}+\tau'}\int_ {R_\eta(0)} \Jac(d_t(\psi_{x_n}\circ \widetilde{F}^1[R_\eta(0)])\mathrm{Leb}(t)}{\int_{R_\eta(0)} \Jac(d_t(\psi_{x_n}\circ \widetilde{F}^1[R_\eta(0)])\mathrm{Leb}(t)}\nonumber\\
= &\epsilon^2 (I^{-n}(p^s\wedge p^u))^{\frac{1}{\gamma}+\tau'}.
\end{align}
\end{proof}

\begin{lemma}\label{Bethisda}
In the notation of Lemma \ref{niniloulou}, $\forall n\geq0$

$$\frac{|d_{f^n(x)}f^{-n}\omega_u(f^n(x))|}{|d_{f^n(y)}f^{-n}\omega_u(f^n(y))|}= e^{\pm \epsilon^2(\frac{d(x,y)}{(p^s\wedge p^u)^3})^\frac{\beta}{4}},$$
where $\omega_u(\cdot)$ is the normalized volume form of $T_\cdot V^u(\cdot)$.	
\end{lemma}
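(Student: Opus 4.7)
The strategy adapts the scheme of Proposition \ref{Lambda}(3) to the unstable setting, with one essentially new feature: at each time $k$ the points $f^k(x)$ and $f^k(y)$ lie on \emph{different} $u$-admissible manifolds $V^u(\ul u^{1,k})$ and $V^u(\ul u^{2,k})$ in the chart $u_k$. The structural observation unlocking the argument is that both $x$ and $y$ are intersections of their respective unstable leaves with the same $s$-admissible manifold $V^s((u_i)_{i\geq 0})$ in $u_0$; hence $x,y\in V^s((u_i)_{i\geq 0})$, and Proposition \ref{Lambda}(1) together with \eqref{chromecastsonic} yields $d(f^k(x), f^k(y)) \leq 4\|C_0^{-1}(x_0)\|(I^{-k}(p^s))^{1-1/(2\gamma)}\cdot d(x,y)/p^s$.

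I would begin by telescoping
\[
A := \Bigl|\log|d_{f^n(x)}f^{-n}\omega_u(f^n(x))| - \log|d_{f^n(y)}f^{-n}\omega_u(f^n(y))|\Bigr| \leq \sum_{k=0}^{n-1}\Bigl|\log|d_{f^k(x)}f\omega_u(f^k(x))| - \log|d_{f^k(y)}f\omega_u(f^k(y))|\Bigr|
\]
and, via the local isometries $\Theta_{D_k}$ from Definition \ref{isometries} with $D_k\ni f^k(x), f^k(y)$ (justified for $\epsilon$ small by the stable-contraction bound above), reduce each summand to the two contributions of \eqref{NewA}: a $\beta$-H\"older change-of-basepoint $d(f^k(x),f^k(y))^\beta$ and a volume-form difference $|\Theta_{D_k}\omega_u(f^k(x)) - \Theta_{D_k}\omega_u(f^k(y))|$. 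I would handle the latter by inserting the intermediate point $z_k\in V^u(\ul u^{1,k})$ sharing the $u$-coordinate $t_y$ of $f^k(y)$, so that the difference splits into an on-leaf piece $|\omega_u(f^k(x))-\omega_u(z_k)|$ and a leaf-to-leaf piece $|\omega_u(z_k)-\omega_u(f^k(y))|$. The on-leaf piece is controlled exactly as in \eqref{shezifhadash}--\eqref{wubbalubba}, using the $\beta/2$-H\"older regularity of $d_\cdot \widetilde F_k^1$ and the displacement $|t_x - t_y| \leq 2\|C_0^{-1}(x_k)\|\cdot d(f^k(x), f^k(y))$.

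The main obstacle is the leaf-to-leaf piece. The raw $C^1$ bound from Proposition \ref{firstofchapter}(5), $\|\widetilde F_k^1 - \widetilde F_k^2\|_{C^1}\leq C_1[I^{-k}(p^s\wedge p^u)]^{1/\gamma+\tau}$, is summable but insensitive to $d(x,y)$; by itself it would produce a spurious constant contribution contradicting the vanishing of the LHS at $x=y$ (forced by the Oseledets uniqueness of the unstable direction, which guarantees $T_{f^k(x)}V^u(\ul u^{1,k})=H^u(f^k(x))=T_{f^k(x)}V^u(\ul u^{2,k})$). The cure is to exploit this pointwise vanishing: whenever the two graphs intersect at some $t^*$, both $\widetilde F_k^1(t^*)=\widetilde F_k^2(t^*)$ and $d_{t^*}\widetilde F_k^1=d_{t^*}\widetilde F_k^2$, so one interpolates between the pointwise equality at $t^*$ and the global $C^1$ bound using the $\beta/2$-H\"older continuity of $d_\cdot \widetilde F_k^j$ and the proximity $|t_y - t^*|\lesssim\|C_0^{-1}(x_k)\|\cdot d(f^k(x), f^k(y))$. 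Once this interpolation is performed, both the on-leaf and leaf-to-leaf contributions combine into a single geometric series of the shape $\sum_{k\geq 0}(I^{-k}(p^s\wedge p^u))^{\beta/2}(d(x,y)/(p^s\wedge p^u)^3)^{\beta/4}$, which by Lemma \ref{FaveForNow} (using $\gamma > 5/\beta$) is uniformly summable in $n$; choosing $\epsilon$ small absorbs the resulting constants into the target bound $\epsilon^2(d(x,y)/(p^s\wedge p^u)^3)^{\beta/4}$.
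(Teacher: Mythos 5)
You correctly isolate a structural subtlety that the paper's one-line proof (``Proposition~\ref{Lambda} treats the case for $\omega_s$; the case for $\omega_u$ is similar'') does not spell out: in Proposition~\ref{Lambda}(3) the forms $\omega_s(f^k(y))$ and $\omega_s(f^k(z))$ are tangent to the \emph{same} $s$-admissible manifold $W_k^s$, so the comparison at step $k$ reduces to the H\"older regularity of a single representing function; here $\omega_u(f^k(x))$ and $\omega_u(f^k(y))$ are tangent to \emph{different} $u$-admissible manifolds $V^u(\ul u^{1,k})$ and $V^u(\ul u^{2,k})$, and a leaf-to-leaf estimate is genuinely needed in addition to the on-leaf H\"older bound. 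Your telescoping, your splitting into on-leaf and leaf-to-leaf pieces, and your observation that the raw $C^1$ summable-variations bound of Proposition~\ref{firstofchapter}(5) does not by itself scale with $d(x,y)$, are all on target.

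The proposed cure, however, cannot be carried out. Suppose the graphs of $\widetilde F_k^1$ and $\widetilde F_k^2$ intersected at some $t^*$ in the common domain $R_{p_k^u}(0)$. Both $V^u(\ul u^{1,k})$ and $V^u(\ul u^{2,k})$ are $u$-admissible in the \emph{same} chart $u_k$, with the same $q$-parameter $p_k^u$, and both stay in windows; by Proposition~\ref{prop221} two such manifolds sharing a point must have one contain the other, hence must coincide. Then $f^k(x)$ and $f^k(y)$ would both equal the unique intersection of this single $u$-manifold with $V^s(\sigma^k\ul u)$ (Proposition~\ref{firstbefore}(1)), forcing $x=y$. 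So for $x\neq y$ the two graphs are disjoint, the anchor $t^*$ does not exist, and the derivative identity $d_{t^*}\widetilde F_k^1=d_{t^*}\widetilde F_k^2$ — which you correctly trace to uniqueness of $H^u$ at a shadowed point — is only ever available in the trivial case. Extending the graphs arbitrarily (e.g.\ by McShane) does not help, as the extensions are not dynamically invariant and carry no such derivative-matching property. The spurious constant you want to remove is therefore still standing, and some other mechanism — or a weaker formulation of the leaf-to-leaf term, say in terms of $d_{C^0}(V^u(\ul u^{1,0}),V^u(\ul u^{2,0}))$, which is in fact what the downstream applications (Corollaries~\ref{afterafterniniloulou} and \ref{CorBesiPesi}, Lemma~\ref{measuretheoretic}, and \eqref{finallyAC}) actually consume — is needed.
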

\begin{proof}
Showing $\frac{|d_{x}f^{n}\omega_u(x)|}{|d_{y}f^{n}\omega_u(y)|}=e^{\pm \epsilon^2(\frac{d(x,y)}{(p^s\wedge p^u)^3})^\frac{\beta}{4}}$ is enough. Proposition \ref{Lambda} 
treats the case for the normalized volume forms of $T_\cdot V^s(\cdot)$. The case for the normalized volume forms of $T_\cdot V^u(\cdot)$ is similar. 
\end{proof}

\begin{cor}\label{afterafterniniloulou}
In the notation of Lemma \ref{niniloulou}, $\forall n\geq0$ sufficiently large (w.r.t $d(x,y)$ and $p^s\wedge p^u$), $\forall \eta\in(0,(I^{-n}(p^s\wedge p^u))^{3+\frac{4}{\beta}(\frac{1}{\gamma}+\tau')}]$,

$$\frac{\Vol(f^{-n}[\psi_{x_n}\circ \widetilde{F}^1_n[R_\eta(0)]])}{\Vol(f^{-n}[\psi_{x_n}\circ \widetilde{F}^2_n[R_\eta(0)]])}=e^{\pm2\epsilon^2(\frac{d(x,y)}{(p^s\wedge p^u)^3})^\frac{\beta}{4}},$$

where $\tau'=\frac{1}{4\gamma}>0$.
\end{cor}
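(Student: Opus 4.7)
The plan is to apply the change-of-variables formula for $f^{-n}$ along each unstable leaf and separate the pointwise Jacobian from the induced leaf-volume. Concretely, $M^j_n := \psi_{x_n}\circ\widetilde{F}^j_n[R_\eta(0)]$ sits inside the $u$-admissible manifold $V^u(\ul{u}^{j,n})$ and contains the reference point $f^n(p_j)$ (with $p_1=x$, $p_2=y$), so
\begin{equation*}
\Vol(f^{-n}[M^j_n]) = \int_{M^j_n}\bigl|\det\bigl(d_z f^{-n}|_{T_z V^u(\ul{u}^{j,n})}\bigr)\bigr|\,d\Vol_{V^u(\ul{u}^{j,n})}(z).
\end{equation*}

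First I would pull the integrand out at the reference point using the symmetric (unstable) version of Proposition \ref{Lambda}(3): since $V^u(\ul{u}^{j,n})$ stays in windows backwards in the chart $u_n$, for every $z\in M^j_n$ one has
\begin{equation*}
\bigl|\log|d_z f^{-n}\omega_u(z)|-\log|d_{f^n(p_j)}f^{-n}\omega_u(f^n(p_j))|\bigr|\leq \epsilon^2\bigl(d(z,f^n(p_j))/(p^u_n)^3\bigr)^{\beta/4}.
\end{equation*}
Since $\mathrm{diam}(M^j_n)$ is controlled by a constant multiple of $\eta$, and $(p^u_n)^3\geq (I^{-n}(p^s\wedge p^u))^3$ in our setting, the hypothesis $\eta\leq (I^{-n}(p^s\wedge p^u))^{3+(4/\beta)(1/\gamma+\tau')}$ makes this distortion at most $\epsilon^2 (I^{-n}(p^s\wedge p^u))^{1/\gamma+\tau'}$. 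Hence
\begin{equation*}
\Vol(f^{-n}[M^j_n])= |d_{f^n(p_j)}f^{-n}\omega_u(f^n(p_j))|\cdot \Vol(M^j_n)\cdot e^{\pm\epsilon^2(I^{-n}(p^s\wedge p^u))^{1/\gamma+\tau'}}.
\end{equation*}

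Next I would take the ratio for $j=1,2$ and invoke the two previous lemmas. By Lemma \ref{Bethisda} the ratio of Jacobian factors is $e^{\pm\epsilon^2(d(x,y)/(p^s\wedge p^u)^3)^{\beta/4}}$, and by Lemma \ref{niniloulou} the ratio of leaf volumes is $1\pm\epsilon^2(I^{-n}(p^s\wedge p^u))^{1/\gamma+\tau'}$, which for $\epsilon$ small enough can be rewritten as $e^{\pm 2\epsilon^2(I^{-n}(p^s\wedge p^u))^{1/\gamma+\tau'}}$. For $n$ large enough that $4\epsilon^2(I^{-n}(p^s\wedge p^u))^{1/\gamma+\tau'}\leq \epsilon^2(d(x,y)/(p^s\wedge p^u)^3)^{\beta/4}$ --- which is possible since $I^{-n}(p^s\wedge p^u)\to 0$ by Lemma \ref{ladderFunc} --- all three error contributions collapse into the required bound $2\epsilon^2(d(x,y)/(p^s\wedge p^u)^3)^{\beta/4}$.

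The main obstacle is the distortion step: extracting a bound of the desired form from $\epsilon^2(4\eta/(p^u_n)^3)^{\beta/4}$ requires both the $u$-manifold analogue of Proposition \ref{Lambda}(3) and a lower bound on $p^u_n$ in terms of $I^{-n}(p^s\wedge p^u)$, carefully combined with the specific hypothesis on $\eta$ to land precisely at the same scale of error as that provided by Lemma \ref{niniloulou}. Once that calibration is made, the rest of the argument is bookkeeping.
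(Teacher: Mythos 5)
Your proposal is correct and follows the same route as the paper's proof: apply the change-of-variables formula along each leaf, pull the Jacobian out at the reference point via the unstable version of Proposition \ref{Lambda}(3) (noting $(p^u_n)^3\geq (I^{-n}(p^s\wedge p^u))^3$ and using the hypothesis on $\eta$), and then combine Lemma \ref{Bethisda} with Lemma \ref{niniloulou}, absorbing the $(I^{-n}(p^s\wedge p^u))^{1/\gamma+\tau'}$-scale errors into the target bound by taking $n$ large. The only difference is that you drop the explicit $(\tfrac{3}{2})^{\beta/4}$ constant that the paper tracks; this is harmless since the final step is ``for $n$ sufficiently large'' anyway.
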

\begin{proof}
For any $t_1,t_2\in R_\eta(0)$, $d(\psi_{x_n}(t_1),\psi_{x_n}(t_2))\leq \frac{3}{2}\eta$. We have, $$\Vol(f^{-n}[\psi_{x_n}\circ \widetilde{F}^1_n[R_\eta(0)]]) =\int_{\psi_{x_n}\circ \widetilde{F}^1_n[R_\eta(0)])}|d_{f^n(t)}f^{-n}\omega_u(f^n(t))|d\Vol_{V^u(f^n(x))}(t).$$
By Proposition \ref{Lambda}, part (3), since $\eta\leq I^{-n}(p^s\wedge p^u))^{3+\frac{4}{\beta}(\frac{1}{\gamma}+\tau')} $ and $(p^s_n)^3\geq (p^s_n\wedge p^u_n)^3\geq  (I^{-n}(p^s\wedge p^u))^3 $,
	$$\Vol(f^{-n}[\psi_{x_n}\circ \widetilde{F}^1_n[R_\eta(0)]])=\Vol(\psi_{x_n}\circ \widetilde{F}^1_n[R_\eta(0)])\cdot |d_{f^n(x)}f^{-n}\omega_u(f^n(x))| e^{\pm \epsilon^2(\frac{3}{2})^\frac{\beta}{4}(I^{-n}(p^s\wedge p^u))^{\frac{1}{\gamma}+\tau'}}.$$
		Similarly, 
		$$\Vol(f^{-n}[\psi_{x_n}\circ \widetilde{F}^2_n[R_\eta(0)]])=\Vol(\psi_{x_n}\circ \widetilde{F}^2_n[R_\eta(0)])\cdot |d_{f^n(y)}f^{-n}\omega_u(f^n(y))| e^{\pm \epsilon^2(\frac{3}{2})^\frac{\beta}{4}(I^{-n}(p^s\wedge p^u))^{\frac{1}{\gamma}+\tau'}}.$$
		
By Lemma \ref{Bethisda}, 

$$\frac{\Vol(f^{-n}[\psi_{x_n}\circ \widetilde{F}^2_n[R_\eta(0)]])}{\Vol(f^{-n}[\psi_{x_n}\circ \widetilde{F}^1_n[R_\eta(0)]])}=\frac{\Vol(\psi_{x_n}\circ \widetilde{F}^2_n[R_\eta(0)])}{\Vol(\psi_{x_n}\circ \widetilde{F}^1_n[R_\eta(0)])}e^{\pm 2\cdot (\frac{3}{2})^\frac{\beta}{4}\epsilon^2 (I^{-n}(p^s\wedge p^u))^{\frac{1}{\gamma}+\tau'}} e^{\pm \epsilon^2(\frac{d(x,y)}{(p^s\wedge p^u)^3})^\frac{\beta}{4}}.$$
		By Lemma \ref{niniloulou}, for all $\epsilon$ sufficiently small, $\frac{\Vol(\psi_{x_n}\circ \widetilde{F}^2[R_\eta(0)])}{\Vol(\psi_{x_n}\circ \widetilde{F}^1[R_\eta(0)])}=e^{\pm(1+(3-2(\frac{3}{2})^\frac{\beta}{4}))\epsilon^2 (I^{-n}(p^s\wedge p^u))^{\frac{1}{\gamma}+\tau'}}$. Thus in total, $\frac{\Vol(f^{-n}[\psi_{x_n}\circ \widetilde{F}^2_n[R_\eta(0)]])}{\Vol(f^{-n}[\psi_{x_n}\circ \widetilde{F}^1_n[R_\eta(0)]])}= e^{\pm2\epsilon^2(\frac{d(x,y)}{(p^s\wedge p^u)^3})^\frac{\beta}{4}}$ for all $n$ sufficiently large.
\end{proof}

\subsection{Covering lemmas and the proof of the absolute continuity of weak foliations}

\begin{lemma}[Besicovitch-Pesin]\label{BesiPesi}
	For all sufficiently large $n\geq0$, there exist $\{x^{(i)}\}_{i=1}^\ell\subseteq A(u)$, and sets $\{D_i^1\}_{i=1}^\ell$ s.t
	\begin{enumerate}
	\item $x^{(i)}=\pi(\underline{u}^{(i)})$ where $u^{(i)}_j=u^1_j$, $j\geq0$; and $u_j^{(i)}=\psi_{x_j^{(i)}}^{p^{s,i}_j,p^{u,i}_j}$.	
	\item $D_i^1\subseteq \psi_{x_n^{(i)}}\circ \widetilde{F}^{1,i}_n[R_\eta(0)]$ where $F^{1,i}$ is the representative function of $V^u((u^{(i)}_j)_{j\leq n})$ in $u_n^{(i)}$, and $\eta\in(0, (I^{-n}(p^s\wedge p^u))^{3+\frac{4}{\beta}(\frac{1}{\gamma}+\tau')}]$, $\tau':=\frac{1}{4\gamma}>0$.
	\item $\forall i\neq i'$, $\Vol(D_i^1\cap D_{i'}^1)=0$ ($\Vol$ is the intrinsic $f^n[V^u((u^1_i)_{i\leq0})]$-volume).
	\item $A(u)\cap V^u((u^1_i)_{i\leq0})\subseteq \bigcup_{i=1}^{\ell}f^{-n}[D_i^1]$.
	\item There exist $R_n$,  and a constant $c_m>0$ depending on the dimension of $M$ s.t for $\eta_n:= (I^{-n}(p^s\wedge p^u))^{3+\frac{4}{\beta}(\frac{1}{\gamma}+\tau')}$, for all $1\leq i\leq \ell$, $$B(x^{(i)},c_m R_n)\subseteq D_i^1\subseteq f^{-n}[\psi_{x_n^{(i)}}\circ \widetilde{F}^{1,i}_n[R_{\eta_n}(0)]]) \subseteq B(x^{(i)},R_n).$$
	\end{enumerate}

The collection depends on the leaf $V^u((u^1_i)_{i\leq0})$, but a collection $\{D_i^2\}_{i=1}^{\ell} $ with similar properties exists for $V^u((u^2_i)_{i\leq0})$, and constitutes a cover of $A(u)\cap V^u((u^2_i)_{i\leq0})$. In particular, the two covers share the same multiplicity $\ell<\infty$. In addition, the collections $\{D_i^{1/2}\}_{i=1}^\ell$ can be chosen in such a way that $\forall 1\leq i \leq \ell$,
\begin{equation}\label{afterniniloulou}
\frac{\Vol(D_i^1)}{\Vol(D_i^2)}=e^{\pm \epsilon^2 (I^{-n}(p^s\wedge p^u))^{\frac{1}{\gamma}+\tau'}}.
\end{equation}
\end{lemma}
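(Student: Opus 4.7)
The strategy is to run a Besicovitch-type covering argument on the leaf $V^u((u^1_i)_{i\leq 0})$ with \emph{shape regular pieces} built from pulled-back $\|\cdot\|_\infty$-balls in the chart at time $n$, then transport the cover to $V^u((u^2_i)_{i\leq 0})$ via the holonomy-like map $I$ from Lemma \ref{niniloulou}; the volume estimate \eqref{afterniniloulou} is then immediate from Corollary \ref{afterafterniniloulou}.

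Set $\eta_n := (I^{-n}(p^s\wedge p^u))^{3+\frac{4}{\beta}(\frac{1}{\gamma}+\tau')}$, and for each candidate center $x \in A(u)\cap V^u((u^1_i)_{i\leq 0})$, choose a coding chain $\underline{u}^{(x)}\in\Sigma$ satisfying item~(1) (such a chain exists by Proposition \ref{firstofchapter}(4) and the product structure of $Z(u)$), and define
\[
\mathcal{B}(x) := f^{-n}\bigl[\psi_{x_n^{(x)}}\circ \widetilde{F}^{1,x}_n[R_{\eta_n}(0)]\bigr].
\]
The first task is to verify that $\mathcal{B}(x)$ is shape regular in the leaf metric, i.e.\ $B(x, c_m R_n) \subseteq \mathcal{B}(x) \subseteq B(x, R_n)$ for some $c_m>0$ depending only on $\dim M$. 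The upper bound follows from Proposition \ref{Lambda}(1) applied to $f^{-1}$, and the lower bound from Proposition \ref{Lambda}(2) together with the fact that $\widetilde{F}^{1,x}_n$ has controlled $C^1$-norm (Definition \ref{admissible}) and $\psi_{x_n^{(x)}}$ is bilipschitz with constants depending on $\|C_0^{-1}(x_n^{(x)})\|$; the specific exponent in $\eta_n$ is tailored precisely so that the $\beta/2$-H\"older distortion of $F^{1,x}_n$ does not spoil $c_m$.

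Since $A(u)$ is compact and $V^u((u^1_i)_{i\leq 0})$ is a smooth compact submanifold, their intersection is compact. Apply the classical Besicovitch covering lemma to the family $\{\mathcal{B}(x)\}_x$ to extract finitely many centers $\{x^{(i)}\}_{i=1}^\ell$ whose outer balls still cover and whose inner balls $B(x^{(i)}, c_m R_n)$ are pairwise disjoint. A standard thinning---successively removing from each $\mathcal{B}(x^{(i)})$ its intersection with the union of previously chosen $\mathcal{B}(x^{(j)})$ ($j<i$) while retaining the inner ball---produces sets $D_i^1$ satisfying (2)--(5); (1) is ensured by the choice of $\underline{u}^{(i)}:=\underline{u}^{(x^{(i)})}$. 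To build the matching cover on the second leaf, introduce the holonomy $H_n^{(i)} := f^{-n}\circ I_n^{(i)} \circ f^n$, where $I_n^{(i)}:\psi_{x_n^{(i)}}(\widetilde{F}^{1,i}_n(t))\mapsto \psi_{x_n^{(i)}}(\widetilde{F}^{2,i}_n(t))$ is the ``same $u$-coordinate'' map of Lemma \ref{niniloulou}, and set $D_i^2 := H_n^{(i)}(D_i^1)$. Since $x^{(i)}$ and $H_n^{(i)}(x^{(i)})$ lie on a common local stable manifold and $A(u)$ is saturated under the symbolic stable holonomy, the image stays in $A(u)\cap V^u((u^2_i)_{i\leq 0})$; the multiplicities coincide because $D_i^1\leftrightarrow D_i^2$ is a bijection; and \eqref{afterniniloulou} is exactly Corollary \ref{afterafterniniloulou} applied at the chart $u_n^{(i)}$ with radius $\eta_n$ (which, by construction, lies within the admissible range of that corollary).

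The main obstacle is the shape regularity of $\mathcal{B}(x)$. In the uniformly hyperbolic setting, $df^{-n}|_{TV^u}$ is bilipschitz with uniformly bounded eccentricity because the contraction is exponential, and Pesin's classical argument uses precisely this to dominate Lyapunov chart distortion. In the present (possibly weak) setting, the contraction rate along $V^u$ is non-uniform and history-dependent, so the bilipschitz constants of $df^{-n}$ could in principle degrade as $n\to\infty$. The remedy is to work at the scale $\eta_n$ rather than at a scale proportional to the contraction, absorbing the sub-exponential distortion into the extra factor $(I^{-n}(p^s\wedge p^u))^{\frac{1}{\gamma}+\tau'}$; this is precisely why the exponent $3+\frac{4}{\beta}(\frac{1}{\gamma}+\tau')$ appears, and why the same exponent already ensures applicability of Corollary \ref{afterafterniniloulou}. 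Once shape regularity is in place, the combinatorics of the Besicovitch lemma and the bijectivity of $H_n^{(i)}$ are completely standard.
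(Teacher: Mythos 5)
Your overall architecture -- a Besicovitch covering of the leaf by shape-regular pieces built from chart balls at time~$n$, transported to the second leaf by the coordinate-matching map $I$ of Lemma~\ref{niniloulou}, with the volume comparison supplied by the Jacobian control -- matches what the paper intends (it simply cites \cite[Lemma~8.6.3]{BP} and points to Lemma~\ref{niniloulou}'s Jacobian bounds for \eqref{afterniniloulou}). However, there is a discrepancy in where the sets $D_i^1$ live, and it affects the derivation of \eqref{afterniniloulou}.

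You place $D_i^1$ at time~$0$ (as pullbacks $f^{-n}[\,\cdot\,]$, following item~(5)'s phrasing), define $D_i^2 := f^{-n}\circ I_n^{(i)}\circ f^n(D_i^1)$, and then claim \eqref{afterniniloulou} is ``exactly Corollary~\ref{afterafterniniloulou}.'' But Corollary~\ref{afterafterniniloulou} controls the \emph{pulled-back} volume ratio and yields the bound $e^{\pm 2\epsilon^2(d(x,y)/(p^s\wedge p^u)^3)^{\beta/4}}$, which is not the same quantity as the stated bound $e^{\pm \epsilon^2(I^{-n}(p^s\wedge p^u))^{1/\gamma+\tau'}}$. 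The rest of the lemma's statement -- items~(2), (3), (4), together with the way \eqref{afterniniloulou} is subsequently pushed through $f^{-n}$ in Corollary~\ref{CorBesiPesi} to \emph{produce} the $e^{\pm 2\epsilon^2(\cdots)^{\beta/4}}$ bound -- make it clear that the $D_i^{1/2}$ must be subsets of the chart image $\psi_{x_n^{(i)}}\circ\widetilde{F}^{1/2,i}_n[R_\eta(0)]$ at time~$n$ (item~(5)'s ``$D_i^1 \subseteq f^{-n}[\cdots]$'' appears to be a typo), that $D_i^2 = I_n^{(i)}(D_i^1)$ with no conjugation by $f^{\pm n}$, and that \eqref{afterniniloulou} follows directly from the pointwise Jacobian estimate \eqref{DeltaAir} in Lemma~\ref{niniloulou}'s proof (which applies verbatim to any measurable subset of $\widecheck V^u$, not just the full chart ball), \emph{not} from Corollary~\ref{afterafterniniloulou}. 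Once this time-slice is corrected, your Besicovitch construction, shape-regularity discussion, and multiplicity-matching are all consistent with the intended argument.
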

For proof	see \cite[Lemma~8.6.3]{BP}. For the estimate of $\eqref{afterniniloulou}$ use the Jacobian bounds of the map $I$ from Lemma \ref{niniloulou}.

\medskip
\noindent\textbf{Remark:} The collections $\{D_i^{1/2}\}_{i=1}^\ell$ depends on $n$. To ease notation we omit notating it, when dependence is clear by context. In addition, similarly.

\begin{cor}\label{CorBesiPesi}
For all $n\geq0$ sufficiently large, for all $1\leq i\leq \ell$,	$$\frac{\Vol(f^{-n}[D_i^1])}{\Vol(f^{-n}[D_i^2])}=e^{\pm2\epsilon^2(\frac{d(x,y)}{(p^s\wedge p^u)^3})^\frac{\beta}{4}} .$$
\end{cor}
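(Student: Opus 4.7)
The plan is to imitate the argument of Corollary~\ref{afterafterniniloulou}, but with the sets $D_i^{1/2}$ replacing the full chart-images $\psi_{x_n^{(i)}}\circ\widetilde{F}_n^{j,i}[R_{\eta_n}(0)]$. Concretely, the sets $D_i^{1/2}$ sit inside $u$-admissible manifolds that stay in windows, so I can write
\[
\Vol(f^{-n}[D_i^{j}])=\int_{D_i^{j}}|d_w f^{-n}\omega_u(w)|\,d\Vol_{V^u(f^n(x^{(i)}))}(w),\qquad j=1,2,
\]
and reduce the estimate to (a) the ratio $\Vol(D_i^1)/\Vol(D_i^2)$, which is already controlled by \eqref{afterniniloulou}, and (b) a control of the Jacobian density $|d_\cdot f^{-n}\omega_u(\cdot)|$ both \emph{across} $D_i^{j}$ and \emph{between} the two leaves.

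The first step is to exploit the diameter bound $D_i^{j}\subseteq \psi_{x_n^{(i)}}\circ\widetilde{F}_n^{j,i}[R_{\eta_n}(0)]$, so any two points of $D_i^{j}$ are at distance $\lesssim \eta_n=(I^{-n}(p^s\wedge p^u))^{3+\frac{4}{\beta}(\frac{1}{\gamma}+\tau')}$. Applying Proposition~\ref{Lambda}(3) to the $u$-admissible manifold carrying $D_i^{j}$ (with $p^{s,i}_n\geq I^{-n}(p^s\wedge p^u)$ as in Lemma~\ref{BesiPesi}), the density $|d_\cdot f^{-n}\omega_u(\cdot)|$ is constant on $D_i^{j}$ up to a factor
\[
\exp\!\Big(\pm\epsilon^2(\eta_n/(p^{s,i}_n)^3)^{\beta/4}\Big)\leq \exp\!\Big(\pm\epsilon^2(I^{-n}(p^s\wedge p^u))^{\frac{1}{\gamma}+\tau'}\Big).
\]
Consequently
\[
\Vol(f^{-n}[D_i^{j}])=\Vol(D_i^{j})\cdot |d_{f^n(x^{(i)})}f^{-n}\omega_u(f^n(x^{(i)}))|\cdot e^{\pm\epsilon^2(I^{-n}(p^s\wedge p^u))^{\frac{1}{\gamma}+\tau'}}
\]
for $j=1$, and the analogous identity at the corresponding base-point $y^{(i)}$ on $V^u((u_j^2)_{j\leq0})$ for $j=2$.

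The second step is to take the ratio of the two identities. The volume ratio $\Vol(D_i^1)/\Vol(D_i^2)$ contributes $e^{\pm\epsilon^2(I^{-n}(p^s\wedge p^u))^{\frac{1}{\gamma}+\tau'}}$ by \eqref{afterniniloulou}, and the two error factors above contribute the same order. The only new quantity is the ratio of Jacobian densities at the base-points, which is precisely what Lemma~\ref{Bethisda} controls: since $x^{(i)}$ and $y^{(i)}$ lie on the two holonomy-related leaves in the same chart $u$, that lemma gives
\[
\frac{|d_{f^n(x^{(i)})}f^{-n}\omega_u(f^n(x^{(i)}))|}{|d_{f^n(y^{(i)})}f^{-n}\omega_u(f^n(y^{(i)}))|}=\exp\!\Big(\pm\epsilon^2\big(d(x^{(i)},y^{(i)})/(p^s\wedge p^u)^3\big)^{\beta/4}\Big).
\]
Since $x^{(i)},y^{(i)}$ are the holonomy images in $A(u)$ of corresponding points across the two chains, Proposition~\ref{firstbefore} (Smale-bracket Lipschitzness) gives $d(x^{(i)},y^{(i)})\leq 3\,\mathrm{dist}(V^u((u_i^1)),V^u((u_i^2)))\lesssim d(x,y)$, so this factor is bounded by $e^{\pm\epsilon^2(d(x,y)/(p^s\wedge p^u)^3)^{\beta/4}}$ up to an innocuous multiplicative constant (which, by choosing $\epsilon$ smaller if needed, can be absorbed into a factor of $2$ on the exponent).

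Collecting, for all sufficiently large $n$ the product of the four exponential factors is dominated by $e^{\pm2\epsilon^2(d(x,y)/(p^s\wedge p^u)^3)^{\beta/4}}$, the summability of $(I^{-n}(p^s\wedge p^u))^{\frac{1}{\gamma}+\tau'}$ (Lemma~\ref{FaveForNow}) ensuring that the $n$-dependent terms become negligible. The main obstacle is step two: one needs to verify that $d(x^{(i)},y^{(i)})$ is indeed controlled by $d(x,y)$ uniformly in $i$. This relies on the fact that the Besicovitch-Pesin cover $\{D_i^1\}_{i=1}^\ell$ is built by selecting base-points on $V^u((u^1_i)_{i\leq0})\cap A(u)$, and each $y^{(i)}$ is defined as the point on $V^u((u^2_i)_{i\leq0})$ obtained by the same $i$-th chain concatenation as in Lemma~\ref{niniloulou}, so the holonomy argument of Proposition~\ref{firstbefore} applies uniformly.
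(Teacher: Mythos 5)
Your proposal follows exactly the scheme the paper intends when it says "shown similarly as in Corollary~\ref{afterafterniniloulou}": write each $\Vol(f^{-n}[D_i^j])$ as an integral of the Jacobian density, use the diameter bound $\eta_n$ together with Proposition~\ref{Lambda}(3) to freeze the density at the base point up to $e^{\pm O(\epsilon^2(I^{-n}(p^s\wedge p^u))^{1/\gamma+\tau'})}$, compare base-point Jacobians via Lemma~\ref{Bethisda}, and combine with \eqref{afterniniloulou}. This is the paper's argument, and the steps you lay out are correct.

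One small caveat worth making precise: when you conclude $d(x^{(i)},y^{(i)})\leq 3\,\mathrm{dist}(V^u((u^1_i)_{i\leq0}),V^u((u^2_i)_{i\leq0}))\lesssim d(x,y)$, the second inequality is not really what is wanted or what holds; the quantity that is uniformly bounded over the cover is $\sup_i d(x^{(i)},y^{(i)})$, and the natural uniform bound is $3\,d_{C^0}(V^u((u^1_i)_{i\leq0}),V^u((u^2_i)_{i\leq0}))$ rather than $d(x,y)$ for the single pair of Lemma~\ref{niniloulou}. This matches how the corollary is actually invoked downstream (Corollary~\ref{lastoneplease} substitutes $2\,d_{C^0}(\cdot,\cdot)$ for $d(x,y)$), so the reading of $d(x,y)$ in the statement should be as a uniform holonomy-displacement bound rather than the distance of one particular holonomy pair; with that understanding your argument is complete.
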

Shown similarly as in Corollary \ref{afterafterniniloulou}.

\medskip
Now that we have useful covers in our arsenal, and good estimates on the leaf measures of the cover elements, we wish to begin the proof of the absolute continuity property. If the leaf volume of $A(u)$ is $0$ in both leaves, we are done. So we may assume that the volume in one of them is positive- w.l.o.g $V^u((u_i^1)_{i\leq0})$- and get estimates for the measure in the other. We start by recalling a useful measure theoretical lemma:
\begin{lemma}\label{measuretheoretic}
	Given $\delta>0$, let $U_\delta^1$ be a $\delta$-neighborhood of $V^u((u_i^1)_{i\leq0})\cap A(u)$ in the leaf $V^u((u_i^1)_{i\leq0})$. Then $\exists \delta_0>0$ s.t $\forall \delta\in(0,\delta_0]$
	$$\frac{\Vol(A(u))}{\Vol(U_\delta^1)}\geq e^{-2\epsilon^2(\frac{2d_{C_0}(V^u((u_i^1)_{i\leq0}),V^u((u_i^2)_{i\leq0}))}{(p^s\wedge p^u)^3})^\frac{\beta}{4}},$$
	where $\Vol$ is the intrinsic $V^u((u_i^1)_{i\leq0})$-volume.
\end{lemma}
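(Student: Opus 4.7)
The plan is to deduce the bound from outer regularity of the intrinsic Riemannian volume on the leaf $V^u((u_i^1)_{i\leq 0})$, combined with compactness of $A(u)$. Set $K := A(u) \cap V^u((u_i^1)_{i\leq 0})$. The set $A(u) = \pi[\Sigma \cap [u]]$ is compact, as noted in the text preceding Lemma \ref{niniloulou}, and the leaf $V^u((u_i^1)_{i\leq 0})$ is a compact $C^{1+\beta/2}$ graph over $\overline{R_{p^u}(0)}$ by admissibility (Definition \ref{admissible}), so $K$ is compact. The family $\{U_\delta^1\}_{\delta > 0}$ is a nested, decreasing family of relatively open neighborhoods of $K$ in $V^u((u_i^1)_{i\leq 0})$ whose intersection is precisely $K$.

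The key step is continuity of measure from above. Since $V^u((u_i^1)_{i\leq 0})$ has finite intrinsic Riemannian volume, so does every $U_\delta^1$; hence
\begin{equation*}
\Vol(U_\delta^1) \;\downarrow\; \Vol(K) \;=\; \Vol(A(u)) \quad \text{as } \delta \downarrow 0.
\end{equation*}
Setting $\alpha := 2\epsilon^2\left(\frac{2\, d_{C_0}(V^u((u_i^1)_{i\leq 0}), V^u((u_i^2)_{i\leq 0}))}{(p^s \wedge p^u)^3}\right)^{\beta/4} > 0$, the quantitative bound $\Vol(U_\delta^1)/\Vol(A(u)) \leq e^{\alpha}$ then holds for all sufficiently small $\delta$ whenever $\Vol(A(u)) > 0$, which is the stated inequality after rearrangement. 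The case $\Vol(A(u)) = 0$ is handled trivially in the subsequent absolute continuity argument, where Corollary \ref{CorBesiPesi} forces $\Vol(A(u) \cap V^u((u_i^2)_{i\leq 0})) = 0$ as well, so the lemma can be applied only in the content-bearing case.

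This lemma presents no real obstacle; it is a soft statement whose role is bookkeeping. Its point is not the convergence itself, but the calibration of $\delta_0$ against the specific exponent $\alpha$, chosen so that when $U_\delta^1$ is later covered by the Besicovitch-Pesin family $\{f^{-n}[D_i^1]\}$ of Lemma \ref{BesiPesi} with the scale $R_n$ matched to $\delta$, the resulting distortions --- namely this factor $e^{\alpha}$, together with the transfer factor of Corollary \ref{CorBesiPesi} and the volume ratio \eqref{afterniniloulou} --- combine multiplicatively into a bound of the same exponential form, which is what is ultimately required for the absolute continuity estimate of the holonomy map.
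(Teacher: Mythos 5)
Your proof is correct and supplies the argument the paper leaves unstated (Lemma \ref{measuretheoretic} is introduced as ``a useful measure theoretical lemma'' and cited without proof): compactness of $K = A(u)\cap V^u((u_i^1)_{i\leq0})$ together with continuity of measure from above give $\Vol(U_\delta^1)\downarrow\Vol(K)$, so the stated bound holds for all small $\delta$ whenever $\Vol(K)>0$, which is exactly the standing assumption made just before the lemma (``we may assume the volume in one of them is positive''). Your closing remark on the role of the specific exponent is also accurate; it is bookkeeping calibrated so that this factor multiplies compatibly with the distortions from Lemma \ref{BesiPesi}, \eqref{afterniniloulou}, and Corollary \ref{CorBesiPesi}.
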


In particular, $\delta_0$ does not depend on $n$. Therefore, $\forall \delta\in(0,\delta_0)$ we may choose $n_\delta$ s.t $\forall n\geq n_\delta$, 
\begin{equation}\label{forfinalprodbounds}
	\bigcup_{i=1}^\ell f^{-n}[D_i^2]\subseteq U_\delta^2,
\end{equation}
where $U^2_{\delta}$ is the $\delta$-neighborhood of $V^u((u_i^2)_{i\leq0})\cap A(u)$ in the leaf $V^u((u_i^2)_{i\leq0})$.

\begin{cor}\label{lastoneplease} $\forall \delta\in (0,\delta_0)$
$$\frac{\Vol(U^2_{\delta})}{\Vol(U^1_{\delta})}\geq e^{-4\epsilon^2(\frac{2d_{C_0}(V^u((u_i^1)_{i\leq0}),V^u((u_i^2)_{i\leq0}))}{(p^s\wedge p^u)^3})^\frac{\beta}{4}}.$$
\end{cor}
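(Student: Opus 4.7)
The plan is to sandwich $\Vol(U_\delta^2)$ between $\Vol(U_\delta^1)$ and two copies of the exponential loss factor, by combining the Besicovitch--Pesin covers of Lemma \ref{BesiPesi}, the leaf-wise Jacobian comparison of Corollary \ref{CorBesiPesi}, and the near-density bound of Lemma \ref{measuretheoretic}. Fix $\delta\in(0,\delta_0)$ and choose $n\geq n_\delta$ large enough that both $\bigcup_{i=1}^\ell f^{-n}[D_i^2]\subseteq U_\delta^2$ (guaranteed by \eqref{forfinalprodbounds}) and $\bigcup_{i=1}^\ell f^{-n}[D_i^1]\subseteq U_\delta^1$ (possible since each cover element is contained in a ball of radius $R_n\downarrow 0$ about a basepoint in $V^u((u_i^1)_{i\leq0})\cap A(u)$).

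First, the cover property (item (4) of Lemma \ref{BesiPesi}) together with the $\Vol$-disjointness of the $D_i^1$'s (item (3)), which is preserved by the diffeomorphism $f^{-n}$, gives
$$\sum_{i=1}^\ell \Vol(f^{-n}[D_i^1])=\Vol\Bigl(\bigcup_i f^{-n}[D_i^1]\Bigr)\geq \Vol\bigl(A(u)\cap V^u((u_i^1)_{i\leq0})\bigr).$$
Second, the two covers $\{D_i^1\}$ and $\{D_i^2\}$ share an index set coming from chains $\underline u^{(i)}$ that differ only in their negative past, so their basepoints $x^{(i)}\in V^u((u_i^1)_{i\leq0})$ and $y^{(i)}\in V^u((u_i^2)_{i\leq0})$ are comparable: since Pesin charts are $2$-Lipschitz and the two leaves are represented by functions $F^1,F^2$ in the common chart $u$, we have $d(x^{(i)},y^{(i)})\leq 2\, d_{C^0}(V^u((u_i^1)_{i\leq0}),V^u((u_i^2)_{i\leq0}))$ uniformly in $i$. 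Combining \eqref{forfinalprodbounds}, the $\Vol$-disjointness of $\{f^{-n}[D_i^2]\}$, and the per-index estimate of Corollary \ref{CorBesiPesi},
$$\Vol(U_\delta^2)\geq\sum_{i=1}^\ell \Vol(f^{-n}[D_i^2])\geq e^{-2\epsilon^2\bigl(\tfrac{2d_{C^0}(V^u((u_i^1)_{i\leq0}),V^u((u_i^2)_{i\leq0}))}{(p^s\wedge p^u)^3}\bigr)^{\beta/4}}\sum_{i=1}^\ell \Vol(f^{-n}[D_i^1]).$$

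Third, Lemma \ref{measuretheoretic} yields the matching lower bound
$$\Vol\bigl(A(u)\cap V^u((u_i^1)_{i\leq0})\bigr)\geq e^{-2\epsilon^2\bigl(\tfrac{2d_{C^0}(V^u((u_i^1)_{i\leq0}),V^u((u_i^2)_{i\leq0}))}{(p^s\wedge p^u)^3}\bigr)^{\beta/4}}\Vol(U_\delta^1).$$
Chaining the three estimates produces the factor $e^{-4\epsilon^2(\cdot)}$ advertised in the statement. The genuine difficulty of the absolute continuity argument already lies behind us, namely the summable-variation estimate \eqref{formyPesinAC} of Proposition \ref{firstofchapter}, which forces $\gamma>\tfrac{5}{\beta}$ and powers the Jacobian comparison of Lemma \ref{niniloulou}. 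The only remaining subtlety worth flagging is that $\delta_0$ from Lemma \ref{measuretheoretic} must be chosen independently of $n$, so that the $n\to\infty$ refinement of the covers happens inside a fixed $\delta$-tube; this is automatic once one notes that $\delta_0$ depends only on the two leaves and the chart data, not on the index $n$ used to build the cover.
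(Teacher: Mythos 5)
Your proof is correct and is essentially the same argument as the paper's, just presented as a chain of inequalities rather than as the triple-product decomposition
\[
\frac{\Vol(U^2_{\delta})}{\Vol(U^1_{\delta})}= \frac{\Vol(U^2_{\delta})}{\Vol(\bigcup_{i} f^{-n}[D_i^2])}\cdot \frac{\Vol(\bigcup_{i} f^{-n}[D_i^2])}{\Vol(\bigcup_{i} f^{-n}[D_i^1])}\cdot \frac{\Vol(\bigcup_{i} f^{-n}[D_i^1])}{\Vol(U^1_{\delta})}
\]
where the three factors are bounded below by $1$, $e^{-2\epsilon^2(\cdot)}$, and $e^{-2\epsilon^2(\cdot)}$ respectively. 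One small note: the extra containment $\bigcup_{i=1}^\ell f^{-n}[D_i^1]\subseteq U_\delta^1$ that you arrange at the outset is never actually invoked in your chain (your lower bound on $\sum_i\Vol(f^{-n}[D_i^1])$ uses only the covering property, and the comparison with $\Vol(U_\delta^1)$ goes through Lemma \ref{measuretheoretic}), so it is a harmless redundancy; the paper's decomposition makes it clear that only the $D_i^2$-containment in $U_\delta^2$ is needed.
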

\begin{proof} Let $n_\delta$ as in \eqref{forfinalprodbounds}. Then
\begin{align*}
	\frac{\Vol(U^2_{\delta})}{\Vol(U^1_{\delta})}= \frac{\Vol(U^2_{\delta})}{\Vol(\bigcup_{i=1}^\ell f^{-n}[D_i^2])}\cdot \frac{\Vol(\bigcup_{i=1}^\ell f^{-n}[D_i^2])}{\Vol(\bigcup_{i=1}^\ell f^{-n}[D_i^1])}\cdot \frac{\Vol(\bigcup_{i=1}^\ell f^{-n}[D_i^1])}{\Vol(U^1_{\delta})}.
\end{align*}
	By \eqref{forfinalprodbounds}, the first factor is greater or equal to $1$. By Lemma \ref{measuretheoretic}, the third factor is greater or equal to $e^{-2\epsilon^2(\frac{2d_{C_0}(V^u((u_i^1)_{i\leq0}),V^u((u_i^2)_{i\leq0}))}{(p^s\wedge p^u)^3})^\frac{\beta}{4}}$. By Lemma \ref{BesiPesi} and Corollary \ref{BesiPesi},
	$$\frac{\Vol(\bigcup_{i=1}^\ell f^{-n}[D_i^2])}{\Vol(\bigcup_{i=1}^\ell f^{-n}[D_i^1])}= \frac{\sum_{i=1}^\ell \Vol(f^{-n}[D_i^2])}{\sum_{i=1}^\ell \Vol(f^{-n}[D_i^1])}=e^{\pm 2\epsilon^2(\frac{2d_{C_0}(V^u((u_i^1)_{i\leq0}),V^u((u_i^2)_{i\leq0}))}{(p^s\wedge p^u)^3})^\frac{\beta}{4}}.$$
	This completes the proof.
\end{proof}
\begin{cor}
$\Vol(A(u)\cap V^u((u_i^2)_{i\leq0}))>0$.
\end{cor}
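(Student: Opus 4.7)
The plan is to deduce this corollary directly from Corollary \ref{lastoneplease} by passing to the limit $\delta\to 0^+$. The setup already gives us (as a w.l.o.g.\ assumption just before the statement) that $\Vol(A(u)\cap V^u((u_i^1)_{i\leq0}))>0$, and the work of Lemmas \ref{niniloulou}--\ref{Bethisda} together with the Besicovitch--Pesin covering Lemma \ref{BesiPesi} has already packaged the absolute continuity estimate into the uniform-in-$\delta$ lower bound
\[
\frac{\Vol(U^2_\delta)}{\Vol(U^1_\delta)}\;\geq\; e^{-4\epsilon^2\bigl(\tfrac{2d_{C^0}(V^u((u^1_i)_{i\leq0}),V^u((u^2_i)_{i\leq0}))}{(p^s\wedge p^u)^3}\bigr)^{\beta/4}}\;=:\;\kappa>0
\]
for all $\delta\in(0,\delta_0)$. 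So the essential task here is just a continuity-of-measure argument.

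First I would observe that $A(u)=\pi[\Sigma\cap[u]]$ is compact (as noted in the paragraph introducing $A(u)$), hence $K_j:=A(u)\cap V^u((u^j_i)_{i\leq0})$ is a closed, bounded subset of the leaf $V^u((u^j_i)_{i\leq0})$ for $j=1,2$, and therefore has finite intrinsic leaf-volume. The neighbourhoods $U^j_\delta$ form a decreasing family as $\delta\downarrow 0$ with $\bigcap_{\delta>0}U^j_\delta=K_j$, so by downward continuity of the (finite) leaf-volume measure,
\[
\lim_{\delta\to 0^+}\Vol(U^j_\delta)=\Vol(K_j),\qquad j=1,2.
\]

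Now I would apply Corollary \ref{lastoneplease}: since the lower bound $\Vol(U^2_\delta)\geq\kappa\,\Vol(U^1_\delta)$ holds for every $\delta\in(0,\delta_0)$ with $\kappa>0$ independent of $\delta$, letting $\delta\to 0^+$ yields
\[
\Vol(K_2)\;\geq\;\kappa\cdot\Vol(K_1)\;>\;0,
\]
where the strict positivity of the right-hand side is the standing assumption on $K_1$. This is precisely the desired conclusion $\Vol(A(u)\cap V^u((u_i^2)_{i\leq0}))>0$.

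I do not expect any serious obstacle at this step: the hard estimates---controlling the Jacobian of the holonomy between the two leaves along the cover elements and running a Besicovitch-type covering argument adapted to the summable (rather than exponential) contraction rates---have all been carried out in the preceding lemmas. The only subtlety worth verifying is that $\delta_0$ and the constant $\kappa$ in Corollary \ref{lastoneplease} do not degenerate as $\delta\to 0^+$, which is explicit in its statement (the bound depends only on the fixed $C^0$-distance between the two leaves and on the fixed chart data $p^s\wedge p^u$), so the limit argument above is safe.
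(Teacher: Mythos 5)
Your proof is correct and is essentially the paper's argument rephrased contrapositively: the paper assumes $\Vol(K_2)=0$, uses compactness of $A(u)$ to get $\lim_{\delta\to 0}\Vol(U^2_\delta)=0$, and derives a contradiction with Corollary \ref{lastoneplease}, whereas you run the same limit argument directly. No gaps.
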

\begin{proof}
	$\Sigma$ is locally compact, $\pi$ is uniformly continuous, and so $A(u)$ is compact. Therefore, if $\Vol(A(u)\cap V^u((u_i^2)_{i\leq0}))=0$, then $\lim_{\delta\to0}\Vol(U^2_\delta)=0$ as well, but this is a contradiction to Corollary \ref{lastoneplease}.
\end{proof}

\medskip
 Now we may apply Lemma \ref{measuretheoretic} to the set $A(u)\cap V^u((u_i^2)_{\leq0})$, and get a $\delta_1\in (0,\delta_0]$ s.t $\forall \delta\in (0,\delta_1)$, 
 $$\frac{\Vol(A(u))}{\Vol(U_\delta^2)}\geq e^{-2\epsilon^2(\frac{2d_{C_0}(V^u((u_i^1)_{i\leq0}),V^u((u_i^2)_{i\leq0}))}{(p^s\wedge p^u)^3})^\frac{\beta}{4}},$$
where $\Vol$ is the intrinsic $V^u((u_i^2)_{i\leq0})$-volume.

\medskip
Therefore, we may use Lemma \ref{lastoneplease} with replacing the roles of $U^1_\delta$ and $U^2_\delta$, for all $\delta\in (0,\delta_1)$,
\begin{equation}\label{inofficern}
\frac{\Vol(U^1_\delta)}{\Vol(U^2_\delta)}=e^{\pm4\epsilon^2(\frac{2d_{C_0}(V^u((u_i^1)_{i\leq0}),V^u((u_i^2)_{i\leq0}))}{(p^s\wedge p^u)^3})^\frac{\beta}{4}}.	
\end{equation}

It then follows from Lemma \ref{measuretheoretic} and Corollary \ref{lastoneplease} that $\forall \delta\in(0,\delta_1)$,
\begin{align}\label{finallyAC}
	\frac{\Vol(V^u((u_i^1)_{i\leq0})\cap A(u))}{\Vol(V^u((u_i^2)_{i\leq0})\cap A(u))}=&\frac{\Vol(V^u((u_i^1)_{i\leq0})\cap A(u))}{\Vol(U^1_\delta)}\cdot \frac{\Vol(U^1_\delta)}{\Vol(U^2_\delta)}\cdot \frac{\Vol(U^2_\delta)}{\Vol(V^u((u_i^2)_{i\leq0})\cap A(u))}\nonumber\\
	=&e^{\pm6\epsilon^2(\frac{2d_{C_0}(V^u((u_i^1)_{i\leq0}),V^u((u_i^2)_{i\leq0}))}{(p^s\wedge p^u)^3})^\frac{\beta}{4}}.
\end{align}

\subsection{Construction of strictly weak leaves}
In \textsection \ref{GTrans} we construct weak manifolds- local stable and unstable leaves for orbits shadowed by $I$-chains. In particular, this construction does not make use of an asymptotic exponential contraction on the stable or unstable subspaces of the shadowed orbit; when exponentially-fast contracting manifolds are the standard stable and unstable manifolds in Pesin's stable manifold theorem. A natural question which rises is whether there are weak manifolds, which cannot be constructed using Pesin's stable manifolds theorem. That is, local stable and unstable leaves which exhibit strictly sub-exponential contraction. Such manifolds are {\em strictly weak} stable and unstable manifolds. 

In this chapter we give a general result regarding the existence of strictly weak leaves. It uses the observation that every hyperbolic periodic orbit, and every uniformly hyperbolic measure, can be coded by our coding. In \textsection \ref{codability} we discuss a general condition for the ``codability" of measures- i.e property of having a lift in our coding space.

\medskip
Given $x\in\RST$, let $W^s(x):=\bigcup_{n\geq0} f^{-n}[V^s(f^n(x))]$, $W^u(x):=\bigcup_{n\geq0} f^{n}[V^s(f^{-n}(x))]$, which are increasing unions. 

\begin{definition}[Ergodic homoclinic class]
	Let $p$ be a periodic hyperbolic point. Define
	$$H(p):=\{x\in\RST: W^u(x)\pitchfork W^s(p)\neq\varnothing, W^u(p)\pitchfork W^s(x)\neq\varnothing\}.$$
\end{definition}
This definition is due to Rodriguez-Hertz, Rodriguez-Hertz, Tahzibi, and Ures (\cite{RodriguezHertz}).

\begin{definition}[Maximal irreducible components]

Let $\mathcal{R}$ be partition constructed in \textsection \ref{Forpartitione}.
\begin{enumerate}
\item  We say that $R\overset{n}{\to} S$ if there exists a word of length $n$, $(w_0,w_1,\ldots,w_{n-2},w_{n-1})$, s.t $w_i\in\mathcal{R}$ for all $i\leq n-1$, $w_0=R$ and $w_{n_1}=S$, and for all $i\leq n-2$ $f^{-1}[w_{i+1}]\cap w_i\neq\varnothing$.
\item We write for two partition elements $R,S\in\mathcal{R}$, we write $R\sim S$ if $\exists n,m\geq0$ s.t $R \overset{n}{\to} S$ and	 $S\overset{m}{\to}R$. $\sim$ defines a symmetric and transitive relation on $\mathcal{R}$. When restricting to $R\in \mathcal{R}$ s.t $R\sim R$, $\sim$ defines an equivalence relation.
\item Let $R\in\mathcal{R}$ s.t $R\sim R$, then $\langle R \rangle$ is the equivalence class of $R$ w.r.t $\sim$.
\end{enumerate}

We call equivalence classes of the equivalence relation $\sim$, {\em maximal irreducible components}.
	
\end{definition}

\begin{lemma}[Transitive coding]\label{transitivecoding}
	Given an ergodic homoclinc class, there is a maximal irreducible component of a $\widehat{\Sigma}$, $\widetilde{\Sigma}$, s.t $\widetilde{\Sigma}$ can code every conservative measure on $H(p)$. In particular, it codes every periodic orbit of every hyperbolic periodic point $q$ which is homoclinically related to $p$. 
\end{lemma}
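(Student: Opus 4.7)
The plan is to identify $\widetilde{\Sigma}$ as the maximal irreducible component containing the periodic lift of $p$, and then show every conservative measure on $H(p)$ lifts into $\widetilde{\Sigma}$. Since $p$ is a hyperbolic periodic point of some period $k$, a direct check shows $p\in\RST$ (its $H^{s/u}(p)$ decomposition is the standard hyperbolic splitting, and a constant strongly tempered kernel works). By Corollary~\ref{newLifeOfPi} and the construction of \textsection\ref{Forpartitione}, the orbit of $p$ lifts to a periodic chain $\underline{R}^{p}\in\widehat{\Sigma}^\#$ whose period divides $k$. Because this chain is a closed loop in the graph $\widehat{\mathcal{G}}$, all its symbols $R^p_j$ are $\sim$-equivalent, so they lie in a single maximal irreducible component $\widetilde{\Sigma}:=\langle R^p_0\rangle$.

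The main technical step is to show that every $x\in H(p)$ admits some chain in $\widetilde{\Sigma}$ projecting to it. Fix $x\in H(p)$ and pick $y\in W^u(x)\pitchfork W^s(p)$ and $z\in W^u(p)\pitchfork W^s(x)$ from the homoclinic class definition. Then $y,z\in\RST$, and one chooses chains $\underline{R}^y,\underline{R}^z$ with $\widehat{\pi}(\underline{R}^y)=y$, $\widehat{\pi}(\underline{R}^z)=z$. The key is that these chains can be selected so that they match $\underline{R}^p$ in one tail and a chain $\underline{R}^x$ of $x$ in the other tail. Indeed, because $y\in W^s(p)$, property (6) in \textsection\ref{summaryextension} gives $d(f^n(y),f^n(p))\to 0$; combined with the symbolic Markov property (5), one can push this localization onto the level of partition elements and arrange $R^y_n=R^p_{n\bmod k}$ for all $n$ larger than some $N_+$. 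Similarly, since $y\in W^u(x)$, one has $R^y_{-n}=R^x_{-n}$ for all $n$ larger than some $N_-$. The resulting chain $\underline{R}^y$ then witnesses an admissible path in $\widehat{\mathcal{G}}$ from $R^x_{-N_-}$ to $R^p_{N_+\bmod k}$, whence, using admissibility of $\underline{R}^x$ and periodicity of $\underline{R}^p$, one obtains an admissible path from $R^x_0$ to $R^p_0$. The symmetric argument applied to $z$ yields an admissible path from $R^p_0$ to $R^x_0$, giving $R^x_0\sim R^p_0$. Since any chain for $x$ satisfies $R^x_i\sim R^x_0$ for every $i$, we conclude $\underline{R}^x\in\widetilde{\Sigma}$. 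Applied to a periodic hyperbolic $q$ homoclinically related to $p$, this is precisely the statement that the periodic chain of $q$ lies in $\widetilde{\Sigma}$.

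Finally, for the coding of conservative measures: let $\mu$ be a conservative (possibly $\sigma$-finite) $f$-invariant measure on $H(p)$. By the previous step, for every $x\in H(p)$ the fiber $\widehat{\pi}^{-1}(x)\cap \widehat{\Sigma}^\#$ has a nonempty intersection with $\widetilde{\Sigma}^\#$, and by Theorem~\ref{t4.1.2} this intersection is finite. Define
\[
\widetilde{\mu}(A)\;:=\;\int_{H(p)}\frac{\bigl|A\cap \widehat{\pi}^{-1}(x)\cap\widetilde{\Sigma}^\#\bigr|}{\bigl|\widehat{\pi}^{-1}(x)\cap\widetilde{\Sigma}^\#\bigr|}\,d\mu(x)
\]
on Borel sets $A\subseteq\widetilde{\Sigma}$. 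Since the cardinality function $x\mapsto|\widehat{\pi}^{-1}(x)\cap\widetilde{\Sigma}^\#|$ is $f$-invariant and uniformly bounded on $f$-orbits (by Theorem~\ref{t4.1.2}), $\widetilde{\mu}$ is well defined, $\sigma$-invariant, and satisfies $\widehat{\pi}_*\widetilde{\mu}=\mu$; conservativity is inherited from $\mu$ via the finite-to-one factorization.

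The hard part is the middle paragraph, and specifically the assertion that the chains for $y$ and $z$ can be chosen to match $\underline{R}^p$ and $\underline{R}^x$ eventually at the level of partition elements. One cannot simply argue that \emph{every} chain of $y$ enjoys this property, because the finite symbolic fiber over $y$ may contain chains associated to different components; the point is rather to exhibit one such chain using the symbolic Markov property together with the fact that the partition $\mathcal{R}$ is the Bowen--Sinai refinement of the cover $\mathcal{Z}$, so that symbolic matching in $\mathcal{Z}$ (from shadowing of nearby orbits) lifts to symbolic matching in $\mathcal{R}$ with an appropriate choice inside the refinement. This choice is the subtle step, and its justification will lean on the product structure (4) and the symbolic Markov property (5) from \textsection\ref{summaryextension}.
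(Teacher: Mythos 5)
The paper does not prove this lemma; it is cited as a result of Buzzi--Crovisier--Sarig \cite{BCS}, with the present formulation taken from \cite[\textsection~5]{LifeOfPi}. So there is no in-paper proof to compare against directly, but your proposal does try to reconstruct one, and the reconstruction has a real gap in the middle step that you yourself flag but do not close.

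Specifically, the assertion that from $y\in W^s(p)$ and $d(f^n(y),f^n(p))\to 0$ you can ``push this localization onto the level of partition elements and arrange $R^y_n=R^p_{n\bmod k}$ for all $n$ larger than some $N_+$'' is not a consequence of proximity plus the symbolic Markov property. Partition elements $R\in\mathcal{R}$ are not open, their boundaries can be complicated, and two points that are close need not lie in the same $R$ (nor even in the same $Z\in\mathcal{Z}$). Worse, the statement is not about whether \emph{a given} chain of $y$ eventually matches the chain of $p$ --- for an arbitrary chain in the finite fiber $\widehat{\pi}^{-1}(y)\cap\widehat\Sigma^\#$ this will generally fail. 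What one must actually do (and what \cite{BCS}, \cite{LifeOfPi} do) is \emph{construct} a specific admissible chain for $y$ whose forward tail agrees with the chain of $p$, by propagating $y$ along the stable manifolds $W^s(f^n(p),Z(u_n))$ using the symbolic Markov property (5) and the containment $V^s(\cdot,Z)\subset Z$, and then verifying that the resulting concatenation of a backward tail for $y$ (inherited from $x$) with this forward tail is an admissible chain that lies in $\widehat\Sigma^\#$. This requires keeping track of overlaps $Z\cap Z'\neq\varnothing$ along the way and invoking the affiliation/product-structure arguments of \textsection\ref{summaryextension}; it is not a formal manipulation, which is precisely why the paper delegates it to the references. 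Your outer structure (taking $\widetilde\Sigma=\langle R^p_0\rangle$, using $y$ and $z$ to build paths in both directions, and the fiber-averaging construction of $\widetilde\mu$ at the end) is the right shape, and the final measure-lifting paragraph is standard once the middle step is supplied, but as written the argument has a hole exactly where the substance of \cite{BCS} lives.
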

 This lemma is due to Buzzi, Crovisier, and Sarig \cite{BCS}. This formulation of the lemma appears in \cite[\textsection~5]{LifeOfPi} (with another set taking the role of $\RST$ in the definition of ergodic homoclinic classes, but both sets carry all uniformly hyperbolic measures and it is sufficient for the proof).

\begin{definition}
Given an ergodic invariant hyperbolic measure $\mu$, we write $\chi^u(\mu):=\sum_{i=1}^d \max(\chi_i(\mu),0)$ and $\chi^s(\mu):=\sum_{i=1}^d \max(-\chi_i(\mu),0)$, where $\chi_1(\mu),\ldots,\chi_d(\mu)$ are the Lyapunov exponents of $\mu$ (with multiplicity). If $p$ is a hyperbolic periodic point, $\chi^{s/u}(p)$ means $\chi^{s/u}(\mu_p)$, where $\mu_p$ is the invariant measure supported on the orbit of $p$.\footnote{The Lyapunov exponents of a periodic orbit are well-defined, as working in coordinates, the differential can be put in a Jordan-block form, with respective eigenvalues. See \cite[Lemma~5.4]{LifeOfPi}
 for more details.}
\end{definition}

\begin{prop}[Existence of strictly weak foliations]\label{sweak}
	Let $H(p)$ s.t $\inf\{\chi^s(q):q\in H(p)\text{ periodic}\}=0$. Then $H(p)$ admits strictly weak stable manifolds. A similar statement applies to strictly weak unstable manifolds.
\end{prop}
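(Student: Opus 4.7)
The plan is to construct an orbit in $H(p)$ that spends ever-longer stretches shadowing periodic orbits whose stable Lyapunov exponents tend to zero, and then to verify that the associated stable leaf from Proposition \ref{firstofchapter} contracts strictly sub-exponentially. The key tool is Lemma \ref{transitivecoding}, which provides a single maximal irreducible component $\widetilde\Sigma\subseteq\widehat\Sigma$ simultaneously coding every hyperbolic periodic orbit homoclinically related to $p$.

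Concretely, first I would extract from the hypothesis a sequence of periodic points $q_n\in H(p)$ with $\chi^s(q_n)<\tfrac{1}{n^2}$, and let $w^{(n)}\in\widetilde\Sigma$ be a periodic word coding $q_n$, of period $T_n$. By irreducibility of $\widetilde\Sigma$, I can produce finite admissible connecting words $c^{(n)}$ between $w^{(n)}$ and $w^{(n+1)}$, and route all of them through a single distinguished vertex $R_\ast\in\widetilde\Sigma$. Then I would form the chain
$$\underline v \;=\; \ldots\, w^{(1)}\,w^{(1)}\,c^{(1)}\,\underbrace{w^{(2)}\cdots w^{(2)}}_{k_2\text{ copies}}\,c^{(2)}\,\underbrace{w^{(3)}\cdots w^{(3)}}_{k_3\text{ copies}}\,c^{(3)}\,\ldots$$
where the negative tail is the periodic extension of $w^{(1)}$, and the integers $k_n$ are chosen so that $k_nT_n\to\infty$ but $k_nT_n\,\chi^s(q_n)\to 0$ (possible because $\chi^s(q_n)\to 0$). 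Since $R_\ast$ appears infinitely often in both directions, $\underline v\in\widehat\Sigma^\#$; setting $x:=\widehat\pi(\underline v)$, Corollary \ref{newLifeOfPi} and the fact that $\underline v\in\widetilde\Sigma$ place $x$ in $H(p)\cap\RST$.

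Next I would analyze the stable leaf $V^s(x)$ constructed via Proposition \ref{firstofchapter}: it exists and contracts by Proposition \ref{Lambda}(2), while Corollary \ref{RegularOnVs} yields $S^2(x)<\infty$. For iterates falling inside the $n$-th block of $k_n$ repetitions of $w^{(n)}$, the forward contraction along $T_xV^s(x)$ is essentially that of a Pesin orbit shadowing $q_n$, hence at most a factor of order $e^{-k_nT_n\chi^s(q_n)}$ times a bounded distortion coming from entering and leaving the block (the transitions $c^{(n)}$ contribute uniformly bounded multiplicative constants independent of $k_n$). Summing the block contributions, and using the choice $k_nT_n\chi^s(q_n)\to 0$ with $k_nT_n\to\infty$, gives $\limsup_{m\to\infty}\tfrac{1}{m}\log\|d_xf^m|_{T_xV^s(x)}\|=0$. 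Thus the Lyapunov exponent along $V^s(x)$ vanishes while $V^s(x)$ is still a genuine contracting leaf, so it is strictly weak.

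The hard part will be calibrating the growth of $k_n$ against the decay of $\chi^s(q_n)$ so that both the admissibility of $\underline v$ inside $\widetilde\Sigma$ and the block-by-block Pesin estimates remain valid. The windows $Q_\epsilon(q_n)$ may shrink rapidly as $\chi^s(q_n)\to 0$, and the transition words $c^{(n)}$ must interpolate admissibly between double charts whose $Q_\epsilon$-parameters differ; ensuring this requires using the discreteness of $\mathcal{A}$ from Proposition \ref{discreteness} and the ladder structure of $\mathcal{I}$ to insert additional ``cushioning'' symbols in each $c^{(n)}$ so that admissibility is preserved while the total length of $c^{(n)}$ stays uniformly bounded in $n$. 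Once this combinatorial interpolation is achieved, the Graph Transform estimates of Theorem \ref{graphtransform} and the recurrence at $R_\ast$ guarantee that $V^s(x)$ has the required weak contraction, proving the proposition; the unstable case follows by applying the same argument to $f^{-1}$.
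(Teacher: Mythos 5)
Your overall framing---use Lemma \ref{transitivecoding} to get a single irreducible component $\widetilde\Sigma$, concatenate longer and longer runs of periodic words $w^{(n)}$ with $\chi^s\to 0$ through connecting words, then read off $\chi^s(x)=0$ for the shadowed orbit---is exactly the paper's approach, but your calibration of the repetition counts $k_n$ and your handling of the connecting words both break.

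The first problem is your requirement $k_nT_n\chi^s(q_n)\to 0$. This is the opposite of what is needed. The paper chooses $p_i$ so that the total contraction accumulated inside the $i$-th block, $\chi_ip_il_i$, is \emph{at least} $d\log M_f$ times the length of everything surrounding it (connectors, the previous block, the next periodic word); see \eqref{tooverpower}. The block's contraction must \emph{overpower} the worst-case multiplicative distortion $M_f^{\pm d}$ per step coming from the segments whose behavior you cannot estimate. The $0$-exponent then comes out because the \emph{rate} in block $i$ is only $\approx\chi_i\to 0$, while the block time $p_il_i\approx\chi_i^{-1}\cdot(\text{surroundings})$ grows and dominates. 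If instead you force $k_nT_n\chi_n\to 0$, the accumulated contraction in each block shrinks to nothing and the uncontrolled connector segments dominate the estimate; worse, this requirement is generically incompatible with the separate requirement (which you do need) that $k_nT_n$ grows fast enough to swamp the connector lengths.

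The second problem is your claim that the connecting words $c^{(n)}$ can be kept of ``uniformly bounded'' length by inserting cushioning symbols. This is false, and the ladder structure is the reason, not the cure. Since $\chi^s(q_n)\to 0$, the quantities $S^2(q_n)$ and hence $\|C_0^{-1}(q_n)\|$ blow up, so $Q_\epsilon(q_n)\to 0$. The edge condition forces $q^u=\min\{I(p^u),Q_\epsilon(\cdot)\}$, so chart sizes change by at most one $I$-step per edge, and $I^{-\ell}$ decays only polynomially (Lemma \ref{FaveForNow}). Stepping a double chart of size $\sim Q_\epsilon(R_\ast)$ down to size $\sim Q_\epsilon(q_n)$ therefore requires on the order of $Q_\epsilon(q_n)^{-1/\gamma}$ edges, which is unbounded in $n$. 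The paper's construction embraces this: $n_i,m_i$ are allowed to grow, and $p_i$ in \eqref{tooverpower} is scaled up to compensate.

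Finally, your phrase ``bounded distortion coming from entering and leaving the block'' is doing too much work. The actual estimate in the paper goes through the identity $|d_xf\omega^s(x)|=\frac{\widehat S(x)}{\widehat S(f(x))}\sqrt{1-2/\widehat S^2(x)}$ (a telescoping version of \eqref{lowerboundstoonice}) together with Lemma \ref{chainbounds}, which transfers $\widehat S$-values from the shadowed orbit to the centers $x_j^{(i)}$ of the charts in the chain with multiplicative error controlled by $Q_\epsilon$. Without a quantitative bridge of this kind, the connector contribution in your scheme cannot be bounded below by anything better than $M_f^{-dL_n}$ with $L_n$ unbounded, and the conclusion $\chi^s(x)=0$ does not follow.
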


\begin{proof}
By Lemma \ref{transitivecoding} we have a maximal irreducible component $\widetilde{\Sigma}$ which codes all $q\in H(p)$ which are periodic. Therefore, there are periodic words in $\widetilde{\Sigma}$, $\{\underline{w}^{(i)}\}_{i\geq0}$, $\underline{w}^{(i)}=(w^{(i)}_0,\ldots,w^{(i)}_{l_i-1})$ s.t the Lyapunov exponents $\chi_i:=\chi^s(\underline{w}^{(i)})\to 0$.

Fix $w_0^{(0)}$ and denote it by $R$. Let $n_i:=\min\{n\geq0: R\overset{n}{\to}w_0^{(i)}\}$ (which is well-defined since $\widetilde{\Sigma}$ is an irreducible component); and let $\underline{c}^{(i)}$ be a connecting word of such length. Similarly, let $m_i:=\min\{m\geq0: w_0^{(i)} \overset{m}{\to}R\}$; and let $\underline{d}^{(i)}$ be a connecting word of such length.

Define for all $i\geq1$, 
\begin{equation}\label{tooverpower}p_i:=\min\{p\geq0: p\cdot l_i\geq \frac{1}{\chi_i}\cdot d\cdot\log M_f \cdot (m_{i}+p_{i-1}+n_{i+1}+l_{i+1})\}.
\end{equation}

Consider the chain $$\underline{R}:=(\underline{w}^{(0)})\cdot \underline{c}^{(1)}\cdot (\underline{w}^{(1)})^{p_1}\cdot \underline{d}^{(1)}\cdots (\underline{w}^{(n)})\cdot \underline{c}^{(n+1)}\cdot (\underline{w}^{(n+1)})^{p_{n+1}}\cdot \underline{d}^{(n+1)}\cdots,$$

where $\cdot$ means a concatenation, and $(\cdot)^\ell$ means a concatenation of a word to itself $\ell$ times. By construction, this is as an admissible chain, which returns to $R$ infinitely often. Take any recurrent extension of $\underline{R}$ to the negative coordinates, $\underline{R}^\pm\in \widehat{\Sigma}^\#$, and write $x:=\widehat{\pi}(\underline{R}^\pm)$.

\medskip
For simplicity of presentation, we first prove the proposition under the assumption that $\mathrm{dim} H^s(x)=1$, and then show how it extends to higher dimensions. When $\mathrm{dim} H^s=1$, there is only one normalized stable tangent vector at the point (up to a sign), denoted by $\omega^s(\cdot)$. Then the quantity $S^2(\cdot)$ is well-defined. Consider the chain $\underline{w}^{(i,\pm)}$ which is the periodic extension of $\underline{w}^{(i)}$ to $\widehat{\Sigma}$. Write $w_j^{(i)}=\psi_{x_j^{(i)}}^{p^{s,(i)}_j,p^{u,(i)}_j}$. Then \eqref{lowerboundstoonice} tells us that

\begin{align}\label{overpowered}
\log|d_{\widehat{\pi}(\underline{w}^{(i,\pm)})}f^{l_i}\omega^s(\widehat{\pi}(\underline{w}^{(i,\pm)}))|=&\sum_{j=0}^{l_i-1}\log\sqrt{1-\frac{2}{S^2(f^j(\widehat{\pi}(\underline{w}^{(i,\pm)})))}}+\log S(\widehat{\pi}(\underline{w}^{(i,\pm)}))-\log S(\widehat{\pi}(\underline{w}^{(i,\pm)})),\nonumber\\
\log|d_{f^{t_i}(x)}f^{l_i}\omega^s(f^{t_i(x)})|=&\sum_{j=0}^{l_i-1}\log\sqrt{1-\frac{2}{S^2(f^{j+t_i}(x))}}+\log S(f^{t_i}(x))-\log S(f^{t_i+l_i}(x)).
\end{align}

The proof of Lemma \ref{chainbounds} allows to compare multiplicatively the quantities of both equations of \eqref{overpowered}, through the centers of charts $\{x^{(i)}_j\}_{j=t_i}^{t_i+l_i}$ (notice $S^2(\cdot)\geq \frac{2}{1-M_f^{-2}}$). The choice of $p_i$ in \eqref{tooverpower} guarantees, together with the fact that $\chi_i\to0$, that $\chi^s(x)=0$.

\medskip
Now, in order to see the proof when $\mathrm{dim}H^s(x)>1$, the role of $\omega^s(\cdot)$ is replaced by the unique (up to sign) normalized volume form on $H^s(\cdot)$. One can define

$$\widehat{S}^2(x):=2\sum_{m\geq0}|d_xf^m\omega^s(x)|^2,$$
which is finite by the Hadamard inequality and the fact that $x\in0$-summ. \eqref{lowerboundstoonice} derives similarly $|d_xf\omega^s(x)|=\frac{\widehat{S}(x)}{\widehat{S}(f(x))}\sqrt{1-\frac{2}{\widehat{S}^2(x)}}$, and so \eqref{overpowered} follows. The comparison of the two quantities in \eqref{overpowered} through the center of charts is done similarly by an analogous version of Lemma \ref{chainbounds}, which follows from Lemma \ref{chainbounds}: the map $\pi_x^{i,j}:H^s(f^{t_i+j}(x))\to H^s(x_j^{(i)})$ satisfies $\frac{|\pi_x^{i,j}\xi|}{|\xi|}=e^{\pm Q_\epsilon(x_j)^{\frac{\beta}{2}-\frac{1}{2\gamma}}}$ (Lemma \ref{boundimprove}). Therefore $\Jac(\pi_x^{i,j})=e^{\pm d Q_\epsilon(x_j)^{\frac{\beta}{2}-\frac{1}{2\gamma}}}$; whence the vector estimates of Lemma \ref{chainbounds} pull to volume form estimates as well. Strictly sub-exponential contraction of the volume form on $V^s(\underline{R})$ implies no tangent vector at $x$ which contracts exponentially fast.  
\end{proof}

\section{Codability}\label{codability}


In \textsection \ref{Forpartitione} we saw that $\widehat{\pi}[\widehat{\Sigma}^\#]=\RST$, so in particular we can code any measure which is carried by $\RST$. It is clear that every hyperbolic measure is carried by $0$-summ, but it is not clear for which ergodic measures almost every orbit is strongly temperable (recall Definition \ref{NUHsharp}). 

First, we should notice that every periodic hyperbolic orbit, and every uniformly hyperbolic measure, are carried by $\RST$, since $\|C_0^{-1}(\cdot)\|$ is bounded on every orbit. Being able to code uniformly hyperbolic measures is important, since it allows to ``lift" the pressure of H\"older continuous potentials (as well as the geometric potential) over hyperbolic measures. That is,

$$\sup\{h_\mu(f)+\int \phi d\mu:\mu\text{ hyperbolic}\}=\sup\{h_\mu(f)+\int \phi d\mu:\mu\text{ uniformly hyperbolic}\},$$
for a H\"older continuous $\phi:M\to\mathbb{R}$. This is true since every hyperbolic measure can be coded by a TMS which lifts the potential \cite{SBO}, and \cite[Theorem~4.3]{SarigTDF} shows that it can be approximated by the pressure over compact subshifts (which project to uniformly hyperbolic horseshoes). 

In particular, since our coding lifts all hyperbolic periodic orbits, one can always construct invariant probability measures on the shift space. Such measures can even be Markov measures and fully supported on the shift space. They project to invariant probability measures on the manifold; and as long as the dynamics we code are not uniformly hyperbolic, these measures will be non-uniformly hyperbolic, and carried by $\RST$. 

One can nonetheless ask, {\em in what generality can measures lift to $\widehat{\Sigma}$?} Below we give a general condition which is sufficient (though may be hard to test), and in a closing discussion later in this chapter, we review in what sense this condition is optimal. 

\begin{prop}\label{forcodability}
Let $x\in0\mathrm{-summ}$. If $\|C_0^{-1}(f^n(x))\|^2=o(|n|)$, $n\to\pm\infty$, then $x\in \ST$.
\end{prop}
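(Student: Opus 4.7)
My plan is to construct a strongly tempered kernel $q$ directly from the ``optimal'' sequence $Q_n := Q_\epsilon(f^n(x))$ via a sliding-envelope regularization.

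First I would establish a clean polynomial asymptotic $I^{-N}(1)\sim (\gamma/\Gamma)^\gamma/N^\gamma$ as $N\to\infty$. Decomposing $I = S\circ J\circ S^{-1}$ with $S(y)=y^\gamma$ and $J(y) = ye^{\Gamma y/\gamma}$ (as in the proof of Lemma \ref{FaveForNow}), one has $I^{-N}(1) = (J^{-N}(1))^\gamma$. The recursion $y_{n+1}=J^{-1}(y_n) = y_n - (\Gamma/\gamma)y_n^2 + O(y_n^3)$ yields $1/y_{n+1} - 1/y_n = \Gamma/\gamma + O(y_n)$, so that $1/y_n = (\Gamma/\gamma)n + O(\log n)$ and $y_n\sim \gamma/(\Gamma n)$. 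Writing $\ell_n^*\in \mathbb{Z}_{\geq 0}$ for the unique integer with $Q_n = I^{-\ell_n^*/4}(1)$, this asymptotic together with the definition $\widetilde Q_\epsilon(f^n(x)) = C_{\beta,\epsilon}/\|C_0^{-1}(f^n(x))\|^{2\gamma}$ gives $\ell_n^*\asymp \|C_0^{-1}(f^n(x))\|^2$, and the hypothesis of the proposition then forces $\ell_n^* = o(|n|)$.

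Second, I would define the envelope
$$L_n := \max_{k\in\mathbb{Z}}\bigl(\ell_k^* - 4|k-n|\bigr), \qquad n\in\mathbb{Z}.$$
Since $\ell_k^* = o(|k|)$, the function $k\mapsto \ell_k^* - 4|k-n|$ tends to $-\infty$ as $|k|\to\infty$, so the maximum is attained and $L_n\in\mathbb{Z}_{\geq 0}$. Taking $k=n$ gives $L_n\geq \ell_n^*$; and if the max at $n$ is realised at $k^*$, then $L_{n\pm 1}\geq \ell_{k^*}^* - 4|k^*-(n\pm 1)|\geq L_n - 4$, so $|L_{n+1} - L_n|\leq 4$. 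Finally, set $q(f^n(x)) := I^{-L_n/4}(1)\in \mathcal{I}$. Monotonicity of $\ell\mapsto I^{-\ell/4}(1)$ turns $L_n\geq \ell_n^*$ into condition (2) of Definition \ref{NUHsharp}, and the bound $|L_{n+1}-L_n|\leq 4$ translates, under the $I^{\pm 1}$-convention adopted in \textsection \ref{notations} (cf.\ the proof of Lemma \ref{prop214}, where $p_0^u = I^{\pm 1}(q_0^u)$ means $I^{-1}(q_0^u)\leq p_0^u\leq I(q_0^u)$), into $I^{-1}(q(f^n(x)))\leq q(f^{n+1}(x))\leq I(q(f^n(x)))$, i.e.\ $q\circ f = I^{\pm 1}(q)$. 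Hence $q$ is a strongly tempered kernel and $x\in \ST$.

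The only non-trivial ingredient is the polynomial asymptotic $I^{-N}(1)\sim C_0/N^\gamma$: this is exactly what converts sub-linear growth of $\|C_0^{-1}(f^n(x))\|^2$ into sub-linearity of $\ell_n^*$, which is in turn exactly what is needed to make the envelope finite. The discrete ladder $\mathcal{I}$ and the ``four ladder-steps per iterate'' slack built into the $I^{\pm 1}$-condition supply precisely the room required for the sliding-envelope construction to succeed; no further hypothesis on $x$ is used.
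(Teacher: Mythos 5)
Your proof is correct, and it is worth noting how it relates to the paper's own argument. Both proofs ultimately produce the maximal kernel subordinated to $Q_\epsilon$ along the orbit: your envelope $L_n=\max_k(\ell^*_k-4|k-n|)$ is exactly the Ledrappier-type formula $p^{s}_k=\max\{t\in\mathcal{I}:I^{-n}(t)\leq\cdot\ \forall n\geq0\}$ (and its $p^u$ counterpart, combined via the minimum) rewritten additively on ladder indices, and your $4$-Lipschitz estimate plus $L_n\geq\ell^*_n$ reproduces conditions (1) and (2) of Definition \ref{NUHsharp} under the interval reading of $I^{\pm1}$, which is indeed the paper's convention. The genuine difference is in how non-degeneracy of this maximal kernel is established. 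The paper never needs the precise decay rate of the ladder: it exhibits the explicit model sequence $\min\{c_\delta,\,C_{\beta,\epsilon}/(\delta n)^{\gamma}\}$, checks the one-step inequality $\frac{C_{\beta,\epsilon}}{(\delta n)^{\gamma}}\geq I^{-1}\bigl(\frac{C_{\beta,\epsilon}}{(\delta(n-1))^{\gamma}}\bigr)$ for $\delta$ small, and concludes that the Ledrappier maxima are taken over nonempty sets. You instead convert the hypothesis into sub-linearity of the ladder index, $\ell^*_n=o(|n|)$, which makes the envelope finite directly; this requires the sharp upper bound $I^{-N}(1)=O(N^{-\gamma})$ (the weaker exponent recorded in \eqref{forSummVar} would not suffice, since the divergence of $\widetilde Q_\epsilon(f^n(x))\,|n|^{\gamma}$ can be arbitrarily slow). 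Your derivation of that bound is fine, modulo one small expository gap: passing from $1/y_{n+1}-1/y_n=\Gamma/\gamma+O(y_n)$ to $1/y_n=(\Gamma/\gamma)n+O(\log n)$ tacitly uses the bootstrap $y_n=O(1/n)$ (e.g.\ from $1/y_{n+1}-1/y_n\geq\Gamma/(2\gamma)$ eventually) before summing the error terms; with that said explicitly, the asymptotic, and hence the whole argument, is complete. The trade-off is that the paper's route is more elementary (no asymptotics, only a one-step comparison), while yours isolates the quantitative content — the ladder decays like $N^{-\gamma}$, so any sub-linear growth of $\|C_0^{-1}(f^n(x))\|^{2}$ leaves room for a kernel — which makes the mechanism behind the proposition more transparent.
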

In particular, this condition does not depend on the calibrating parameters $\gamma,\Gamma$, nor $\beta$, nor $\epsilon$. So an invariant probability measure carried by such orbits will be carried by any Markov partition constructed per this paper. 
\begin{proof}
	Let $\delta>0$ to be determined later. Let $n_\delta\in\mathbb{N}$ s.t $\forall n\geq n_\delta$, $\|C_0^{-1}(f^n(x))\|^2\leq \delta n$. Set $c_\delta:=\min\{\frac{C_{\beta,\epsilon}}{\|C_0^{-1}(f^n(x))\|^{2\gamma}}:0\leq n\leq n_\delta\}$.
	
	For every $n\geq 0$, define $q^+(f^n(x)):=\min\{c_\delta,\frac{C_{\beta,\epsilon}}{(\delta n)^{\gamma}}\}\leq \frac{C_{\beta,\epsilon}}{\|C_0^{-1}(f^n(x))\|^{2\gamma}}$. In order to bound the variation of $q^+$, it is enough to bound the variation of $\frac{C_{\beta,\epsilon}}{(\delta n)^{\gamma}}$ for $n\geq n_\delta+1$. Then for any $n\geq n_\delta+1$, (w.l.o.g $n_\delta$ is big enough so $(1-\frac{1}{n})\geq e^{-2\frac{1}{n}}$)
	
	\begin{align*}
	\frac{\frac{C_{\beta,\epsilon}}{(\delta n)^{\gamma}}}{\frac{C_{\beta,\epsilon}}{(\delta (n-1))^{\gamma}}}\geq (\frac{n-1}{n})^\gamma\geq e^{-2\frac{\gamma}{n}}=  e^{-2\gamma\cdot \frac{\delta}{(C_{\beta,\epsilon})^\frac{1}{\gamma}}\cdot (\frac{C_{\beta,\epsilon}}{(\delta n)^\gamma})^\frac{1}{\gamma}}\geq 	e^{-\Gamma\cdot (\frac{C_{\beta,\epsilon}}{(\delta n)^\gamma})^\frac{1}{\gamma}},
	\end{align*}
where the last inequality is when we choose $\delta>0$ sufficiently small.  Then we get,
$$I(\frac{C_{\beta,\epsilon}}{(\delta n)^\gamma})=\frac{C_{\beta,\epsilon}}{(\delta n)^{\gamma}}e^{\Gamma\cdot (\frac{C_{\beta,\epsilon}}{(\delta n)^\gamma})^\frac{1}{\gamma}}\geq \frac{C_{\beta,\epsilon}}{(\delta (n-1))^{\gamma}},\text{ whence }\frac{C_{\beta,\epsilon}}{(\delta n)^\gamma}\geq I^{-1}(\frac{C_{\beta,\epsilon}}{(\delta (n-1))^{\gamma}}).$$

This construction can be done for $f^k(x)$, $k\in\mathbb{Z}$, the same way. Therefore the Ledrappier formula is well-defined for every $k\in\mathbb{Z}$,

$$p^s_k:=\max\{t\in\mathcal{I}:I^{-n}(t)\leq \frac{C_{\beta,\epsilon}}{\|C_0^{-1}(f^n(f^k(x)))\|^{2\gamma}}\text{ for all }n\geq0\}.$$
	
Similarly, the following is well-defined as well:

$$p^u_k:=\max\{t\in\mathcal{I}:I^{-n}(t)\leq \frac{C_{\beta,\epsilon}}{\|C_0^{-1}(f^{-n}(f^k(x)))\|^{2\gamma}}\text{ for all }n\geq0\}.$$

It follows from the Ledrappier formula that	for all $k\in\mathbb{Z}$,
\begin{enumerate}
\item $p^s_k,p^u_k\leq \frac{C_{\beta,\epsilon}}{\|C_0^{-1}(f^k(x))\|^{2\gamma}}$,
\item 
$p^u_{k+1}=\min\{I(p_k^u),\frac{C_{\beta,\epsilon}}{\|C_0^{-1}(f^{k+1}(x))\|^{2\gamma}}\}$, $p^s_{k-1}=\min\{I(p_k^s),\frac{C_{\beta,\epsilon}}{\|C_0^{-1}(f^{k-1}(x))\|^{2\gamma}}\}$.
\end{enumerate}
Therefore $q(f^k(x)):=\min\{p^s_k,p^u_k\}$ is a strongly tempered kernel for $x$, as in Lemma \ref{lemma131}.
\end{proof}

\subsection{Examples and discussion}
In this section we consider a family of systems which is called {\em almost Anosov}, introduced by Hu and Young in \cite{YoungCounterExample}, and we mention a few results regarding our coding of these systems. The full details appear in an upcoming work. We then have a short discussion regarding optimality of our construction (i.e the notions of $0$-summability and strong temperability, and the condition in Proposition \ref{forcodability} for codability).

	\begin{definition}[Almost-Anosov]
	Let $f\in \mathrm{Diff}^r(\mathbb{T}^2)$, $r\geq2$, be a topologically transitive diffeomorphism of the two dimensional torus. Assume that $f$ admits an invariant splitting of the tangent bundle everywhere $T_xM=E^s(x)\oplus E^u(x)$ and an indifferent fixed point $p$, s.t
	\begin{enumerate}
	\item $\exists K^s<1$, $\forall x$ $|d_xf|_{E^s}|\leq K^s$
	\item $\exists K^u(\cdot)\geq1$ continuous on $M$, s.t $\forall x$ $|d_xf|_{E^u}|\geq K^u(x)$,
	\item For any $x\neq p$, $K^u(x)>1$, and $|d_pf|_{E^u}|=K^u(p)=1$.
	\end{enumerate}
\end{definition}

Hu and Young introduce this class of systems to show that while these systems are very nicely behaving (``almost" uniform hyperbolicity), the admit no SRB measures. 

The following result appears with details in an upcoming work, and for this paper we assume it. 
\begin{theorem}\label{inmyexamples}
Given an almost Anosov diffeomorphism, every $x\neq p$ is $0$-summable.
\end{theorem}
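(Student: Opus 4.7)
The stable direction is immediate: since $|d_xf|_{E^s(x)}|\le K^s<1$ uniformly, we get $|d_xf^m\xi^s|\le (K^s)^m|\xi^s|$, so $\sum_{m\ge0}|d_xf^m\xi^s|^2\le |\xi^s|^2/(1-(K^s)^2)<\infty$ for every $x\in\mathbb{T}^2$. The entire content of the theorem is therefore to control the unstable series $\sum_{m\ge0}|d_xf^{-m}\xi^u|^2$ on the backward orbit of a point $x\neq p$; this is the only place where the indifference of $p$ creates a difficulty.

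The plan is to fix once and for all a small neighborhood $U$ of $p$ on which $f$ admits a normal form along $E^u$ of Manneville--Pomeau type: writing the unstable coordinate as $u$, one has $f(u)=u+cu^{1+\alpha}+O(u^{2+\alpha})$ for some $\alpha>0$, $c>0$. Outside $U$, the continuous function $K^u(\cdot)$ attains a minimum strictly greater than $1$, so $f^{-1}$ is \emph{uniformly} contracting on $E^u$ over $\mathbb{T}^2\setminus U$; thus iterates spent outside $U$ contribute only exponentially small terms to the series. The issue is therefore purely the blocks of consecutive backward iterates that stay inside $U$.

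Inside $U$, the unstable component $u_k:=\pi_u(f^{-k}(x))$ of the backward orbit satisfies $u_k=u_{k-1}-cu_{k-1}^{1+\alpha}+O(u_{k-1}^{2+\alpha})$, which is exactly the setup of Claim~1 in the proof of Lemma~\ref{FaveForNow}. The same inductive argument used there gives $u_k\asymp k^{-1/\alpha}$ during a sojourn in $U$, and hence
\[
|d_xf^{-m}|_{E^u(x)}|=\prod_{k=1}^{m}\frac{1}{K^u(f^{-k}(x))}=O\bigl(\exp\bigl(-c'\sum_{k}u_k^{\alpha}\bigr)\bigr)=O\bigl(k^{-1-\tau}\bigr)
\]
for some $\tau>0$, once we know the normal form is of quadratic type or faster (which is the standard definition of an almost Anosov map; if $\alpha\ge 2$ one loses square-summability, so either the definition of almost Anosov in \cite{YoungCounterExample} forces $\alpha<2$, or this case must be excluded). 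Using this polynomial decay, and summing via the estimate of Lemma~\ref{FaveForNow}, the block inside $U$ contributes a finite amount.

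To finish, I would handle the alternation between inside-$U$ blocks and outside-$U$ blocks by a standard \emph{induced map / first-return} argument: let $n_1<n_2<\cdots$ be the successive backward hitting times of $\mathbb{T}^2\setminus U$. Between $n_j$ and $n_{j+1}$ the contribution is summable by the paragraph above, and the contribution from the outside-$U$ times gives a geometric series with ratio controlled by $\min_{\mathbb{T}^2\setminus U}(K^u)^{-2}<1$. Chaining these estimates produces a global bound $\sum_{m\ge0}|d_xf^{-m}\xi^u|^2\le C(x)|\xi^u|^2<\infty$ for every $x\neq p$. The main obstacle I anticipate is purely bookkeeping rather than a conceptual difficulty: one must verify that $C(x)$ can be made finite even when the backward orbit of $x$ hits $U$ with arbitrarily long escape times, which is where the polynomial rate from the Manneville--Pomeau normal form is used in an essential way, and where the almost-Anosov assumption (uniform exterior contraction by $f^{-1}$ on $E^u$ away from $p$) is used to keep the inter-visit growth under control.
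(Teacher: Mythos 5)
The paper does not actually give a proof of Theorem~\ref{inmyexamples}: immediately before the statement it says that the result ``appears with details in an upcoming work, and for this paper we assume it.'' So there is nothing in this manuscript to compare your argument against directly, and I will assess your sketch on its own merits.

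Your overall structure is the right one: the stable series is trivial from the uniform bound $K^s<1$, the unstable series is controlled by a Manneville--Pomeau analysis near the indifferent fixed point $p$ plus uniform contraction of $f^{-1}|_{E^u}$ away from $p$, and the block-by-block chaining of excursions with geometric decay between blocks is exactly the mechanism one expects. However, there is a concrete error in your exponent analysis that led you to a false alarm. If the unstable normal form along $W^u(p)$ is $f(u)=u+cu^{1+\alpha}+O(u^{2+\alpha})$, then the backward orbit satisfies $u_k\asymp(c\alpha k)^{-1/\alpha}$ and the expansion factor is $f'(u_k)=1+c(1+\alpha)u_k^\alpha+O(u_k^{2\alpha})\approx 1+\tfrac{1+\alpha}{\alpha k}$. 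Hence
\[
\prod_{j=1}^{m}\frac{1}{f'(u_j)}\asymp m^{-(1+\alpha)/\alpha}=m^{-1-1/\alpha},
\]
and therefore $\sum_{m}|d_xf^{-m}\xi^u|^2=O\bigl(\sum_m m^{-2-2/\alpha}\bigr)<\infty$ \emph{for every} $\alpha>0$. You appear to have confused square-summability of the derivative product with square-summability of the coordinate sequence $u_k$ itself (the latter, $\sum u_k^2\asymp\sum k^{-2/\alpha}$, indeed diverges once $\alpha\ge2$, but it is not the series that matters). In particular the Hu--Young quadratic degeneracy, which is $\alpha=2$ in your notation and gives exponent $3/2$, is nowhere near borderline, and no case needs to be excluded.

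Beyond that, the ``bookkeeping'' you anticipate is real but, I agree, not a conceptual obstacle: each inside-$U$ block starting with derivative $D_j$ contributes at most a universal constant times $D_j^2$ (the constant $\sum_k k^{-2(1+\alpha)/\alpha}<\infty$ from the Manneville--Pomeau rate, with the stable coordinate's growth only improving the contraction), and between blocks one has $D_{j+1}\le\rho D_j$ with $\rho=\max_{\mathbb{T}^2\setminus U}(K^u)^{-1}<1$. Summing $\sum_j\mathrm{const}\cdot D_j^2\le\mathrm{const}\cdot\sum_j\rho^{2j}D_0^2<\infty$ closes the argument, and no lower bound on escape times is needed. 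One genuine caveat you should make explicit: the paper's Definition of almost Anosov gives only $K^u\ge 1$ with equality exactly at $p$ and $K^u$ continuous; the polynomial rate you use is not a consequence of these axioms alone but of the additional normal-form hypothesis in \cite{YoungCounterExample}, which the paper implicitly imports. Your sketch should cite that hypothesis rather than treat the power-law behavior as automatic.
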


\begin{cor}
Given an almost Anosov system, every invariant measure ($\neq \delta_p$) is carried by $\RST$ (for any choice of the calibrating parameters), and hence codable.
\end{cor}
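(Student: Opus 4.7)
The plan is to combine Theorem \ref{inmyexamples} with Proposition \ref{forcodability} and a Poincar\'e recurrence argument. By ergodic decomposition, it suffices to treat the case of an ergodic invariant probability $\mu \neq \delta_p$, which then satisfies $\mu(\{p\}) = 0$; Theorem \ref{inmyexamples} then gives that $\mu$-a.e.\ $x$ lies in $0$-$\mathrm{summ}$.

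The central step is to verify the pointwise growth hypothesis of Proposition \ref{forcodability}, namely $\|C_0^{-1}(f^n(x))\|^2 = o(|n|)$ as $n \to \pm\infty$ for $\mu$-a.e.\ $x$. In an almost Anosov system, $\|C_0^{-1}\|$ is bounded on compact subsets of $M \setminus \{p\}$ and blows up polynomially as one approaches $p$, with an exponent determined by the order of tangency of $d_pf$ to the identity on $E^u(p)$. Crucially, this is the same Julia-type expansion $J(x) = x + \frac{\Gamma}{\gamma}x^2 + O(x^3)$ appearing in Lemma \ref{FaveForNow}, which simultaneously governs the escape time from small neighborhoods of $p$. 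I would work with the induced map on the complement of a fixed neighborhood of $p$ (which is uniformly hyperbolic) and combine a Kac-type occupation-time estimate with the polynomial blow-up rate to show that the contributions of excursions near $p$ accumulate sublinearly in $n$ for $\mu$-typical orbits. Proposition \ref{forcodability} then places $\mu$-a.e.\ $x$ in $\ST$.

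To upgrade from $\ST$ to $\RST$, I would apply Poincar\'e recurrence to the optimal strongly tempered kernel. For each $x \in \ST$, let $Q(x)$ denote the value at $x$ of the pointwise supremum over all strongly tempered kernels along its orbit, which is well-defined and invariant by the remark following Definition \ref{strongtemp}. Then $Q : \ST \to \mathcal{I}$ is measurable and $\mathcal{I}$ is countable, so the level sets $\{Q = I^{-\ell/4}(1)\}$ partition a full-$\mu$-measure subset of $\ST$ into countably many measurable pieces. Poincar\'e recurrence applied to each positive-measure piece yields $\limsup_{n \to \pm\infty} Q(f^n(x)) \geq Q(x) > 0$ for $\mu$-a.e.\ $x$, whence $x \in \RST$.

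The main obstacle is bridging the standard Pesin tempered kernel lemma---which, via Birkhoff applied to the bounded log-cocycle from Theorem \ref{SandU}, yields only the subexponential estimate $\log\|C_0^{-1}(f^n(x))\| = o(|n|)$---to the polynomial estimate $\|C_0^{-1}(f^n(x))\|^2 = o(|n|)$ demanded by Proposition \ref{forcodability}. This sharpening is where the almost Anosov geometry (polynomial blow-up of $\|C_0^{-1}\|$ near $p$, combined with the inducing construction on $M \setminus B(p,\delta)$ and the quantitative balance between excursion length and blow-up exponent inherited from the Julia expansion at $p$) plays an essential role, and is where the bulk of the technical work lies.
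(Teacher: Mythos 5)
Your overall scheme matches the paper's skeleton — ergodic reduction, $0$-summability from Theorem \ref{inmyexamples}, verification of the $o(|n|)$ hypothesis of Proposition \ref{forcodability}, and a Poincar\'e-recurrence upgrade from $\ST$ to $\RST$ (a step the paper itself glosses over, so filling it in is worthwhile). The genuine divergence is in the central estimate, and there the paper is substantially slicker than what you propose. You frame the problem as a sharpening of Pesin's tempered kernel lemma, and you reach for an inducing scheme on $M\setminus B(p,\delta)$ plus a Kac-type occupation-time estimate against the polynomial blow-up near $p$ — a plausible route that you acknowledge you haven't carried out, and one that would require a careful quantitative analysis of return times and the blow-up exponent. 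The paper avoids all of this with a single-line \emph{coboundary} observation: the almost-Anosov hypothesis $|d_x f|_{E^u(x)}|\geq 1$ \emph{everywhere} forces
\begin{align*}
U^2(f(x))=2+|d_{f(x)}f^{-1}|_{H^u(f(x))}|^2\,U^2(x)\leq 2+U^2(x),
\end{align*}
so $(U^2\circ f-U^2)^+\leq 2$ is bounded, hence integrable. A classical measure-theoretic fact (if $(g\circ f-g)^+\in L^1(\mu)$ for $\mu$ $f$-invariant, then $g\circ f-g\in L^1(\mu)$ with $\int(g\circ f-g)\,d\mu=0$) combined with Birkhoff then gives $U^2\circ f^n=o(n)$ $\mu$-a.e.\ directly, with no inducing, no estimate on escape times, and no reference to the order of tangency of $d_pf$. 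Since $S^2$ is uniformly bounded by the $K^s<1$ contraction and the angle between the splittings is bounded below by compactness and continuity, one gets $\|C_0^{-1}\|^2\leq C_M\,U^2$ and the hypothesis of Proposition \ref{forcodability} follows. The moral is that the relevant quantity to run Birkhoff on is not the cocycle $\log\|C_0^{-1}\|$ (which only yields subexponential growth) but $U^2$ itself, whose forward increments are globally bounded above by the almost-Anosov structure; this is the trick your proposal misses, and it replaces the hardest part of your outline with an elementary observation. Your Poincar\'e step for $\RST$ is essentially right, though state the conclusion as $\limsup_{n\to\pm\infty}Q(f^n(x))\geq I^{-m}(1)>0$ for $x$ in a positive-measure level set $\{Q\geq I^{-m}(1)\}$ rather than as "$\geq Q(x)$," since $Q$ is not $f$-invariant but only satisfies $Q\circ f=I^{\pm1}(Q)$.
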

\begin{proof}
Let $\mu$ be an invariant probability measure. W.l.o.g $\mu$ is ergodic. If $\mu(p)=0$, then $\mu$-a.e point is summable by Theorem \ref{inmyexamples}. In addition, $|d_xf|_{H^u(x)}|\geq 1$ everywhere, therefore
\begin{align}\label{fordiscussion}
U^2(f(x))=&2\sum_{m\geq0}|d_{f(x)}f^{-m}|_{H^u(f(x))}|^2=2+|d_{f(x)}f^{-1}|_{H^u(f(x))}|^2\cdot 2\sum_{m\geq0}|d_{x}f^{-m}|_{H^u(x)}|^2\nonumber\\
=&2+|d_{f(x)}f^{-1}|_{H^u(f(x))}|^2\cdot U^2(x)\leq 2+ U^2(x).
\end{align}

Thus, $U^2(f^n(x))\leq U^2(x)+2n$ for all $n\geq0$. Then $(U^2\circ f-U^2)^+\in L^1(\mu)$. One can then show that necessarily $U^2\circ f-U^2\in L^1(\mu)$ and $\int (U^2\circ f-U^2)d\mu=0$ (full details appear in the upcoming work where we show Theroem \ref{inmyexamples}). Then by the point-wise ergodic theorem, $\mu$-a.e,
$$\frac{U^2\circ f^n}{n}=\frac{1}{n}\sum_{k=0}^{n-1}(U^2\circ f-U^2)\circ f^k+\frac{U^2}{n}\to0.$$
Similarly $U^2\circ f^{-n}=o(n)$, $n\to\infty$, $\mu$-a.e.

Finally, notice that the continuous splitting on a compact manifold implies that $\sphericalangle(H^s(x),H^u(x))$ is bounded from below everywhere. In addition, $S^2\leq 2\sum_{m\geq0}(K^s)^{2m}=\frac{2}{1-(K^s)^2}<\infty$. Then $\|C_0^{-1}(x)\|^2\leq C_M\cdot U^2(x)$ for all $x\neq p$, where $C_M$ is some global constant. Thus $\|C_0^{-1}\circ f^n\|^2=o(|n|)$, $n\to\pm\infty$, $\mu$-a.e. By Proposition \ref{forcodability}, $\mu$ is carried by $\RST$ and is codable.
\end{proof}

\noindent\textbf{Remark:} We saw that $\widehat{\pi}[\widehat{\Sigma}^\#]=\RST$ carries every invariant probability measure in the almost Anosov setup. Furthermore, we can see that in some cases we must have measures with arbitrarily small Lyapunov exponents, while we can code all of them simultaneously in a finite-to-one almost everywhere fashion. This follows from the fact that almost Anosov systems can be achieved as a perturbation (or a ``slow-down") of a mixing hyperbolic toral automorphism (which admit a mixing finite Markov partition). One can make the element of the partition which contains $p$ arbitrarily small, call it $a$. Fix another element $b$, and construct a word from $b$ to $b$ with an arbitrarily long segment $a\ldots a$ (admissible since $p$ is a fixed point). Then This codes a periodic (hence hyperbolic) orbit, with arbitrarily small Lyapunov exponent. One then gets the existence of strictly weak manifolds by Proposition \ref{sweak}.

\medskip
After establishing the relationship between being codable and a sub-linear growth-rate of $\|C_0^{-1}(f^n(\cdot))\|^2$, one can ask the following three questions:
\begin{enumerate}
\item Can the definition of $0$-summable orbits be changed to get stronger contractions in charts? (Thus allowing us to require weaker properties than strong temperability).
\item Can the definition of strong temperability be weakened and still allow for a shadowing theory?
\item Is there a weaker condition than sub-linear growth-rate of $\|C_0^{-1}(f^n(\cdot))\|^2$ to guarantee codability?
\end{enumerate}
We will give non-formal heuristic answers to these questions, which justify the ``optimality" of the notions in this paper.

\begin{enumerate}
\item The answer to this question has three parts. First, one can ask wether we can consider orbits which converge in a power higher than $2$. First notice that for every hyperbolic orbit, the sum converges regardless of the power, and that every periodic orbit which converges in some power must be hyperbolic. Furthermore, every measure which is uniformly summable in some power must be hyperbolic. Therefore one does not lose much by choosing the power $2$, but gains a lot by the fact that the parallelogram law dictates that only for the power $2$, the scaling functions will constitute a norm on the tangent space which is induced by an inner-product. This bilinear form which induces the scaling functions (i.e the Lyapunov inner-product) is paramount to our construction of the Lyapunov change of coordinates.

One can also ask if perhaps adding an exponential term in the definition of summability, which depends on the Lyapunov exponent of the orbit, can yield a simpler construction while still coding all ``sufficiently nice" hyperbolic orbits. The heuristic answer is that this is conceptually wrong. A Lyapunov exponent is not a property of a point (resp. symbol), it is a property of the orbit (resp. chain). Imposing an edge condition which requires some correspondence between the Lyapunov exponents of neighboring charts, goes against the Markov structure which we sought to construct. Without imposing some correspondence on the exponents through an edge condition, one cannot use a composition of Pesin charts $d_xf^n=C_0(f^n(x))\circ(C_0^{-1}(f^n(x))d_{f^{n-1}(x)}f C_0(f^{n-1}x))\circ \cdots\circ(C_0^{-1}(f(x))d_xf C_0(x))\circ C_0^{-1}(x)$ without accumulating errors. 

Lastly, once can ask if some weights can be added in the definition of summability to aid/hinder convergence. A natural choice of such weights which has been studied before is exponential weights. Their short-comes are clear, as they restrict us to $\chi$-hyperbolic orbits, where $\chi$ depends on the exponent in the weights. If one chooses rates where the ratio between them does not go to a constant number, then shifting the sequence does not change it by a factor, and again, creates discrepancy. For these heuristic reason, the notion of $0$-summability is quite natural. 
\item \eqref{fordiscussion} shows us that even for a family of systems which are very nicely behaving, one gets naturally the relation $\frac{1}{U^{2\gamma}\circ f}\geq \frac{1}{U^{2\gamma}}e^{-2(\frac{1}{U^{2\gamma}})^\frac{1}{\gamma}}$. Therefore, heuristically, when we wish to give a general description to a phenomenon which has a relatively easily understood example, we cannot expect behavior nicer than that of the example generally. It turns out that strong temperability, which utilizes exactly the relation of the form  $\frac{C_{\beta,\epsilon}}{\|C_0^{-1}\|^{2\gamma}\circ f}\geq \frac{C_{\beta,\epsilon}}{\|C_0^{-1}\|^{2\gamma}}e^{-\Gamma(\frac{C_{\beta,\epsilon}}{\|C_0^{-1}\|^{2\gamma}\circ f})^\frac{1}{\gamma}}$, is sufficient for a shadowing theory in the general case.  
\item In Lemma \ref{FaveForNow} we saw that being strongly temperable implies that $\|C_0^{-1})\|^{2\gamma}\circ f^{n}=O(n^\gamma)$, where the big $O$ constant depends on $\gamma$ and $\Gamma$. Therefore an orbit which is coded by our coding or is strongly temperable for any choice of the calibrating parameters, will have to satisfy $\|C_0^{-1})\|^{2}\circ f^{n}=o(n)$. This implies that this condition captures a significant portion of the orbits we wish to code. 
\end{enumerate}

\section{Temperability and rate of contraction}\label{duality}

In this section we present a relationship between two properties which are studied in this paper: Temperability and the rate of contraction on stable and unstable manifolds.

We begin by showing that as long as an orbit visits a level set of $S^2$ with some positive frequency, it must contract tangent vectors on the stable direction exponentially fast asymptotically (similarly for $U^2$ and the unstable direction). This shows us that hyperbolicity emerges as a phenomenon of positive recurrence to level sets of the scaling function. In particular, every invariant probability measure carried by $0$-summ must be hyperbolic. However, the lack of positive recurrence does not imply the lack of recurrence. 

We therefore continue to study the rate of contraction of tangent vectors in the stable or unstable subspaces, without assuming positive recurrence. We show that the temperability properties  induce a rate of contraction. By temperability, we mean asymptotic growth-rate bounds for $\|C_0^{-1}\|^2$ (or $S^2/U^2$). We saw in Proposition \ref{forcodability} that a sub-linear growth-rate implies being strongly temperable. Proposition \ref{rateo} considers more rates.

\subsection{Hyperbolicity as positive recurrence of the scaling functions}

\begin{prop}
	Let $x\in0\mathrm{-summ}$, and let $\xi\in H^s(x)$ s.t $|\xi|=1$. Write $\xi_k:=\frac{d_xf^k\xi}{|d_xf^k\xi|}$. Then if $\lim\frac{1}{n}\log S(f^n(x))=0$,
	\begin{enumerate}
		\item $\limsup\limits_{n\to\infty}\frac{1}{n}\log |d_xf^n\xi|<0$ if and only if $\exists C>0$ s.t $\liminf\limits_{n\to\infty} \frac{1}{n}\#\{0\leq i\leq n: S^2(f^k(x),\xi_k)\leq C\}>0$.
		\item $\liminf\limits_{n\to\infty}\frac{1}{n}\log |d_xf^n\xi|<0$ if and only if $\exists C>0$ s.t $\limsup\limits_{n\to\infty} \frac{1}{n}\#\{0\leq i\leq n: S^2(f^k(x),\xi_k)\leq C\}>0$.
	\end{enumerate}
\end{prop}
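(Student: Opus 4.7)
My plan is to convert the statement into a question about time-averages of $\log(1-2/S^2(f^k(x),\xi_k))$, using the one-step identity from \eqref{lowerboundstoonice}, and then to read off each equivalence from elementary two-sided bounds on this logarithm.

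First I would apply \eqref{lowerboundstoonice} at each iterate. Since $|\xi_k|=1$ and $d_{f^k(x)}f\,\xi_k=|d_{f^k(x)}f\,\xi_k|\cdot\xi_{k+1}$, the identity reads
\[
|d_{f^k(x)}f\,\xi_k|^{2}=\frac{S^{2}(f^{k}(x),\xi_k)-2}{S^{2}(f^{k+1}(x),\xi_{k+1})}.
\]
Taking logarithms and telescoping gives
\[
2\log|d_xf^n\xi|=\log S^{2}(x,\xi)-\log S^{2}(f^n(x),\xi_n)+\sum_{k=0}^{n-1}\log\!\Bigl(1-\tfrac{2}{S^{2}(f^{k}(x),\xi_k)}\Bigr).
\]
Since $2\le S^{2}(f^{n}(x),\xi_n)\le S^{2}(f^n(x))$, the hypothesis $\frac{1}{n}\log S(f^n(x))\to 0$ makes the two boundary terms $o(1)$ after dividing by $n$. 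Thus the whole problem reduces to understanding the Ces\`aro averages of $a_k:=\log(1-2/t_k)$, where $t_k:=S^{2}(f^k(x),\xi_k)$.

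Next I would record two elementary bounds on the nonpositive quantity $a_k$. The paper's inequality $\langle v,v\rangle'_{x,s}\ge 2(1+M_f^{-2})|v|^{2}$ gives $t_k\ge 2(1+M_f^{-2})$, hence $|a_k|\le\log(1+M_f^{2})$. On the other hand, for any threshold $C\ge 4$, if $t_k>C$ then $|a_k|\le 4/C$, while if $t_k\le C$ then $|a_k|\ge -\log(1-2/C)>0$. Writing $A_n(C):=\{0\le k<n:t_k\le C\}$, these yield
\[
|A_n(C)|\cdot(-\log(1-\tfrac{2}{C}))\ \le\ \sum_{k=0}^{n-1}|a_k|\ \le\ |A_n(C)|\log(1+M_f^{2})+(n-|A_n(C)|)\cdot\tfrac{4}{C}.
\]

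With these two sandwich inequalities, both equivalences become routine limsup/liminf bookkeeping. For (1), if $\limsup n^{-1}\log|d_xf^n\xi|<-\delta$, then $n^{-1}\sum|a_k|>\delta$ eventually; choosing $C$ so that $4/C<\delta/2$ in the upper bound forces $\liminf|A_n(C)|/n\ge \delta/(2\log(1+M_f^{2}))>0$. Conversely, if $\liminf|A_n(C)|/n\ge c>0$ for some $C>2$, the lower bound yields $\liminf n^{-1}\sum|a_k|\ge c(-\log(1-2/C))$, so $\limsup n^{-1}\log|d_xf^n\xi|<0$. Part (2) follows by the same argument transported to subsequences: each $\liminf$ on the density side becomes a $\limsup$, and each $\limsup$ on the contraction side becomes a $\liminf$. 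The only step that is more than bookkeeping is the disposal of the boundary term $n^{-1}\log S^{2}(f^n(x),\xi_n)$, and this is precisely what the hypothesis $\frac{1}{n}\log S(f^n(x))\to 0$ is designed to do, via $\sqrt{2}\le S(f^n(x),\xi_n)\le S(f^n(x))$.
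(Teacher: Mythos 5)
Your proposal is correct and essentially identical to the paper's argument: both telescope the one-step identity from \eqref{lowerboundstoonice} to express $\log|d_xf^n\xi|$ through $\sum_k \log(1-2/S^2(f^k(x),\xi_k))$ plus boundary terms killed by the hypothesis $\frac{1}{n}\log S(f^n(x))\to0$, then sandwich the Ces\`aro average of these nonpositive terms between multiples of the level-set density $|A_n(C)|/n$. The paper phrases this via $\zeta^u_k=-\frac{1}{2}\log(1-2/S^2(f^k(x),\xi_k))$ and the same two-sided estimates; your explicit uniform bound $|a_k|\le\log(1+M_f^2)$ is in fact the sharp one following from $S^2\ge 2(1+M_f^{-2})$, where the paper's stated bound $\zeta^u_k\le\log M_f$ should read $\zeta^u_k\le\log\kappa=\frac12\log(1+M_f^2)$.
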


A similar statement holds for the backwards orbit and $U^2$.
\begin{proof}

Write $\zeta^u_n:=\log\frac{1}{\sqrt{1-\frac{2}{S^2(f^n(x),\xi_n)}}}\in (0,\log M_f]$. By \eqref{lowerboundstoonice}, $\forall n\geq0$,

\begin{equation}\label{enddddalready}\frac{1}{n}\log |d_xf^n\xi|=-\frac{1}{n}\sum_{k=0}^{n-1}\zeta^u_k+\frac{1}{n}\log S(x,\xi)-\frac{1}{n}\log S(f^n(x),\xi_n)\leq -\frac{1}{n}\sum_{k=0}^{n-1}\zeta^u_k+\frac{1}{n}\log S(x,\xi).\end{equation}

Then the following claim implies the proposition.

Claim:
\begin{enumerate}
\item	 $\limsup\frac{1}{n}\sum_{i=0}^{n-1}\zeta^u_i>0$ if and only if $\exists \delta>0$ s.t $\limsup\frac{1}{n}\#\{0\leq i\leq n: S^2(f^i(x),\xi_i)\leq \frac{1}{\delta}\}>\delta$,
\item  $\liminf\frac{1}{n}\sum_{i=0}^{n-1}\zeta^u_i>0$ if and only if $\exists \delta>0$ s.t $\liminf\frac{1}{n}\#\{0\leq i\leq n: S^2(f^i(x),\xi_i)\leq \frac{1}{\delta}\}>\delta$.
\end{enumerate}

Proof: We start by showing item (1). Let $n_k$ be a subsequence.

 \underline{$\Rightarrow$:} Assume for contradiction that for any $\delta>0$  there exists $k_\delta\geq 0$ s.t $\forall k\geq k_\delta$,
$$\frac{1}{n_k}\#\{i\leq n_k: S^2(f^i(x),\xi_i)\leq \frac{1}{\delta}\}\leq \delta.$$
Notice, if $S^2(f^i(x),\xi_i)\geq \frac{1}{\delta}$, then $\zeta^u_i\leq \log\frac{1}{\sqrt{1-2\delta}}\leq 2\delta$ (for all sufficiently small $\delta>0$). Therefore,

\begin{equation}\label{arewetherealready}
	\frac{1}{n_k}\sum_{i=0}^{n_k-1}\zeta^u_i=\frac{1}{n_k}\sum_{i\leq n_k:S^2(f^i(x),\xi_i)< \frac{1}{\delta}}\zeta^u_i+\frac{1}{n_k}\sum_{i\leq n_k:S^2(f^i(x),\xi_i)\geq\frac{1}{\delta}}\zeta^u_i\leq \log M_f\cdot \delta+2\delta \leq (\log M_f+2)\cdot\delta.
\end{equation}

This is a contradiction when $0<\delta<\frac{1}{2}\frac{\limsup\frac{1}{n}\sum_{i=0}^{n-1}\zeta^u_i}{\log M_f+2}=\frac{1}{2}\frac{\lim\frac{1}{n_k}\sum_{i=0}^{n_k-1}\zeta^u_i}{\log M_f+2}$, for some subsequence $n_k$.

\underline{$\Leftarrow$:} Assume that $\exists \delta>0$ s.t $\limsup\frac{1}{n}\#\{0\leq i\leq n: S^2(f^i(x),\xi_i)\leq \frac{1}{\delta}\}>\delta$. Notice that if $S^2(f^i(x),\xi)\leq \frac{1}{\delta}$ then $\zeta^u_i\geq \log \frac{1}{\sqrt{1-2\delta}}\geq \delta$. Then by \eqref{arewetherealready}, for $n_k$ s.t $\frac{1}{n_k}\#\{0\leq i\leq n_k: S^2(f^i(x),\xi_i)\leq \frac{1}{\delta}\}>\frac{\delta}{2}$,

$$\frac{1}{n_k}\sum_{i=0}^{n_k-1}\zeta^u_i>\frac{1}{n_k}\sum_{i\leq n_k:S^2(f^i(x),\xi_i)< \frac{1}{\delta}}\zeta^u_i\geq \frac{\delta}{2} \cdot \delta>0.$$

This concludes item (1). The proof of item (2) is similar, and we leave it to the reader.	
\end{proof}

\noindent\textbf{Remark:} \eqref{enddddalready} shows that for the ``if" direction, the assumption $\lim \frac{1}{n}\log S\circ f^n=0$ is not needed. This is in fact the direction which implies how hyperbolicity emerges from positive recurrence.

\subsection{Temperability and rate of contraction}

Let $F(x):=e^{\alpha x}$, where $\alpha,x>0$. We get that $\frac{F(x)}{F'(x)}=\frac{1}{\alpha}<\infty$. That is, for an exponential $F$, this quantity is bounded away from infinity uniformly in $x$ (however big the bound may be). 

We wish to consider all rates which are strictly weaker than exponential, in the following sense: Let $F:\mathbb{R}^+\to \mathbb{R}^+$ be a differentiable function s.t $$\lim_{x\to\infty}\frac{F(x)}{F'(x)}=\infty,$$
and assume that $\frac{F(x)}{F'(x)}$ is non-decreasing. So, as long as $\frac{F(x)}{F'(x)}$ grows and is unbounded (no matter how slowly it grows), we consider it. The following proposition tells us that for any rate which is weaker than exponential in the sense above, there is a temperability rate which implies such contraction on the stable or unstable subspaces. 

\begin{prop}\label{rateo}
	Let $F:\mathbb{R}^+\to \mathbb{R}^+$ differentiable s.t $\frac{F(x)}{F'(x)}\uparrow\infty$. 
	Let $x\in 0\mathrm{-summ}$, $\xi\in H^s(x)$ s.t $|\xi|=1$. Write $\xi_n:=\frac{d_xf^n\xi}{|d_xf^n\xi|}$, and assume that $\exists C>0$ s.t $
	\forall n\geq 0$, $\frac{1}{2}S^2(f^n(x),\xi_n)\leq C \frac{F(n)}{F'(n)}$. Then $|d_xf^n\xi|= O(\frac{1}{(F(n))^\frac{1-\tau}{C}})$, $\forall \tau>0$. A similar statement holds for contraction on $H^u(x)$.
\end{prop}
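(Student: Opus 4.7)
The plan is to reduce the estimate to a sum-versus-integral comparison via a telescoping product. I would begin by setting $s_k:=S^2(f^k(x),\xi_k)$ and recording the one-step identity
\begin{equation*}
|d_{f^k(x)}f\,\xi_k|^2\;=\;\frac{s_k-2}{s_{k+1}},
\end{equation*}
which is a direct consequence of the identity $S^2(f(y),d_yf\eta)=S^2(y,\eta)-2|\eta|^2$ (the same identity that underlies eq.~\eqref{lowerboundstoonice} in the proof of Theorem~\ref{pesinreduction}), applied with $y=f^k(x)$, $\eta=\xi_k$, together with $\xi_{k+1}=d_{f^k(x)}f\xi_k/|d_{f^k(x)}f\xi_k|$. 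Multiplying these one-step equalities for $k=0,\dots,n-1$ telescopes to
\begin{equation*}
|d_xf^n\xi|^2\;=\;\frac{s_0}{s_n}\prod_{k=0}^{n-1}\Big(1-\tfrac{2}{s_k}\Big).
\end{equation*}

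The next step is to pass to logarithms and use the elementary bound $\log(1-t)\leq -t$ on $t\in(0,1)$, together with the hypothesis $\tfrac12 s_k\leq C\,F(k)/F'(k)$, which yields $2/s_k\geq F'(k)/(C F(k))$. This gives
\begin{equation*}
\log|d_xf^n\xi|^2\;\leq\;\log\frac{s_0}{s_n}\;-\;\frac{1}{C}\sum_{k=0}^{n-1}\frac{F'(k)}{F(k)}.
\end{equation*}
The standing assumption that $F/F'$ is non-decreasing means that $F'/F$ is non-increasing, so one may estimate the sum from below by the Riemann integral,
\begin{equation*}
\sum_{k=0}^{n-1}\frac{F'(k)}{F(k)}\;\geq\;\int_0^n \frac{F'(x)}{F(x)}\,dx\;=\;\log F(n)-\log F(0).
\end{equation*}
Combining the above inequalities and using $s_n\geq 2$ (from the $m=0$ summand in the definition of $S^2$) yields the clean polynomial decay
\begin{equation*}
|d_xf^n\xi|^2\;\leq\;C'\,F(n)^{-1/C},
\end{equation*}
with $C'$ depending only on $s_0$ and $F(0)$. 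The small parameter $\tau>0$ appearing in the conclusion then absorbs this multiplicative constant (and any sub-polynomial slack in the integral comparison) into a factor of $F(n)^{\tau/C}$, which is harmless for $n$ large; the unstable analogue follows by applying the same argument to $f^{-1}$ in place of $f$.

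The main subtlety is conceptual rather than computational: the inequality $\log(1-t)\leq -t$ is asymptotically sharp in the regime $s_k\to\infty$ (which is the regime forced by the hypothesis once $F/F'\to\infty$), so the decay exponent is essentially dictated by the hypothesis itself, and the role of $\tau>0$ is precisely to swallow universal constants that cannot be removed. A minor care point is the behaviour of the integral comparison near $k=0$, which is harmless since one may simply start the sum at some $k_0\geq 1$ large enough that the monotonicity of $F'/F$ is effective, and fold the finitely many initial factors into the implicit constant.
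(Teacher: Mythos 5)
Your route is essentially the paper's: the telescoping identity $|d_xf^n\xi|^2=\frac{s_0}{s_n}\prod_{k=0}^{n-1}\bigl(1-\frac{2}{s_k}\bigr)$ is the same as \eqref{enddddalready} (there phrased via $\zeta^u_k=\log\bigl(1-\frac{2}{s_k}\bigr)^{-1/2}$), the hypothesis is used in the same way to lower bound the per-step loss by a multiple of $F'(k)/F(k)$, and your sum--integral comparison is the paper's \eqref{finallyIwannaGoHome}. The genuine gap is the last step. What you actually prove is $|d_xf^n\xi|^2\leq C'\,F(n)^{-1/C}$, i.e. $|d_xf^n\xi|\leq \sqrt{C'}\,F(n)^{-\frac{1}{2C}}$, and the discrepancy between the exponents $\frac{1}{2C}$ and $\frac{1-\tau}{C}$ is \emph{not} a multiplicative constant or sub-polynomial slack that $\tau$ can absorb: for $\tau<\frac12$ and $F(n)\to\infty$ the ratio $F(n)^{\frac{1-\tau}{C}-\frac{1}{2C}}$ is unbounded, so your estimate does not imply the stated conclusion. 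Concretely, the hypothesis $\frac12 s_k\leq C\,F(k)/F'(k)$ gives $\frac{2}{s_k}\geq\frac{1}{C}\frac{F'(k)}{F(k)}$, but the per-step decay of $|d_xf^n\xi|$ (not of its square) is $e^{-\zeta^u_k}$ with $\zeta^u_k=-\frac12\log\bigl(1-\frac{2}{s_k}\bigr)\approx\frac{1}{s_k}\geq\frac{F'(k)}{2CF(k)}$, so this argument honestly yields the exponent $\frac{1}{2C}$ (or $\frac{1-\tau}{2C}$), not $\frac{1-\tau}{C}$. The sentence claiming that $\tau$ swallows the square-root loss is the error.

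For fairness, the paper's own proof has the same factor of two at the analogous point: it asserts $\zeta^u_n\geq\frac{1-\tau}{C}\frac{F'(n)}{F(n)}$ for large $n$, which fails when $S^2$ saturates the hypothesis ($S^2=2C F/F'$), since then $\zeta^u_n\approx\frac{F'(n)}{2CF(n)}$. The stated exponent would follow if the hypothesis read $S^2\leq C\,F/F'$ (without the $\frac12$), or one must settle for $\frac{1-\tau}{2C}$ (equivalently, restrict to $\tau\geq\frac12$). So your computation up to the final bound is sound and parallels the paper; either carry the exponent $\frac{1}{2C}$ honestly and flag the mismatch with the statement, or adjust the constant bookkeeping in the hypothesis — but do not claim the gap is absorbed by $\tau$.
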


\begin{proof}
We start by proving an inequality. Let $n_1> n_0\geq 1$. Write $G(t):=\frac{F'(t)}{F(t)}$, which is monotone, 
then,
\begin{equation}\label{finallyIwannaGoHome}
|\int_{n_0}^{n_1}G(t)dt-\sum_{k=n_0}^{n_1-1}G(k)	|=\sum_{k=n_0}^{n_1-1}G(k)-\int_{k}^{k+1}G(t)dt\leq \sum_{k=n_0}^{n_1-1}G(k)-G(k+1)=G(n_0)-G(n_1)\leq G(1).
\end{equation}

We now may continue with proving the proof. Write $\zeta_n^u:=\log\frac{1}{\sqrt{1-\frac{2}{S^2(f^n(x),\xi_n)}}}$. Let $\tau>0$, then for all $n\geq n_x$, where $n_x$ is large enough so $\frac{F'(n_x)}{C F(n_x)}$ is sufficiently small, $\zeta^u_n\geq \frac{(1-\tau)}{C}\frac{F'(n)}{F(n)}$.	 Then, for all $n\geq n_x$, by \eqref{enddddalready},
\begin{align*}
|d_xf^n\xi|\leq &S(x,\xi)\cdot\exp\left(-\sum_{k=n_x}^{n-1}\frac{(1-\tau)}{C}\frac{F'(k)}{F(k)}\right)\leq S(x) \cdot	\exp\left(-\frac{(1-\tau)}{C}\sum_{k=n_x}^{n-1}\frac{F'(k)}{F(k)}\right)\\
\leq&  S(x) \cdot	\exp\left(\frac{F'(1)}{F(1)C}-\frac{(1-\tau)}{C}\int_{n_x}^{n}\frac{F'(t)}{F(t)}dt\right)=C_x\cdot \exp\left(-\frac{(1-\tau)}{C}\int_{n_x}^{n}(\log F(t))'dt\right)\text{ }\because\eqref{finallyIwannaGoHome}\\
=& C_x\cdot \exp\left(-\frac{(1-\tau)}{C}(\log F(n)-\log F(n_x))dt\right)=C_x'\cdot \frac{1}{(F(n))^\frac{1-\tau}{C}}.
\end{align*}

In particular, when $\|C_0^{-1}(f^n(x))\|^2=o(n)$ then $\|d_xf^n|_{H^s(x)}\|$ decreases super-polynomially. Unfortunately, this relationship is not easily reversible. That is, super-polynomial contraction does not necessarily imply strong temperability, since we cannot commute a bound on the sum in the exponent, into a bound onto its terms (while the other way around works).

\end{proof}

\bibliographystyle{alpha}
\bibliography{Elphi}

\end{document}